\newtheorem{teor}{Theorem}[section]
\newtheorem{defi}{Definition}
\newtheorem{lema}[teor]{Lemma}
\newtheorem{prop}[teor]{Proposition}
\newtheorem{cor}[teor]{Corollary}
\newtheorem{rem}[teor]{Remark}
\newtheorem{convention}[teor]{Convention}
\begin{document}

\title{The symmetry, period and Calabi-Yau dimension  of
finite dimensional mesh algebras}
\author{Estefan\'{i}a Andreu Juan\\ eaj1@um.es
\and Manuel Saor\'{i}n\\msaorinc@um.es}

\date{}

\maketitle

\begin{center}

Departamento de Matem\'{a}ticas

Universidad de Murcia

Campus de Espinardo, 30100 Murcia

Spain \end{center}

\vspace{1cm}

\begin{abstract}
Within the class of finite dimensional mesh algebras, we identify
those which are symmetric and those whose stable module category is
Calabi-Yau. We also give, in combinatorial terms, explicit
formulas for the $\Omega$-period of any such algebra, where $\Omega$
is the syzygy functor, and for the  Calabi-Yau Frobenius and the
stable Calabi-Yau dimensions, when they are defined.
\end{abstract}

\vspace{0.4cm}

\noindent \textbf{Keywords}: Dynkin quiver, mesh  algebra, Nakayama
automorphism,  periodic algebra, Calabi-Yau triangulated category.

\vspace{0.4cm}

\noindent Classification Code: 16Gxx ; Representation theory of
rings and algebras.

\section{Introduction}\oddsidemargin=
3cm \evensidemargin=1.5cm \textheight=24cm \voffset=-1.5cm

A $Hom$ finite triangulated $K$-category $\mathcal{T}$, with
suspension functor $\sum : \mathcal{T}\longrightarrow \mathcal{T}$,
is called \emph{Calabi-Yau} (see \cite{Ko}), when there is a natural
number $n$ such that $\sum^n$ is a Serre functor (i.e.
$DHom_{\mathcal{T}}(X,-)$) and $Hom_{\mathcal{T}}(-, \sum^nX)$ are
naturally isomorphic as cohomological functors
$\mathcal{T}^{op}\longrightarrow K-\text{mod}$). In such a case, the
smallest natural number $m$ such that $\sum^m$ is a Serre functor is
called the Calabi-Yau dimension (\emph{CY-dimension} for short) of
$\mathcal{T}$. Calabi-Yau triangulated categories appear in many
fields of Mathematics and Theoretical Physics. In Representation
Theory of  algebras, the concept plays an important role in the
study of cluster algebras and cluster categories (see   \cite{K3}).

When $\Lambda$ is a self-injective finite dimensional (associative
unital) algebra, its stable module category
$\Lambda-\underline{\text{mod}}$ is a triangulated category and the
Calabi-Yau condition on this category naturally  appears and has
been deeply studied (see, e.g., \cite{ESk}, \cite{BS},  \cite{ES},
\cite{D2}, \cite{I}, \cite{IV},...). The concept is related with
that of Frobenius Calabi-Yau algebra, as defined by Eu and Schedler
(\cite{ES}). The algebra $\Lambda$ is called \emph{Calabi-Yau
Frobenius} when $\Omega_{\Lambda^e}^{r+1}(\Lambda )$ is isomorphic
to $D(\Lambda )=\text{Hom}_K(\Lambda ,K)$ as $\Lambda$-bimodule, for
some integer $r\geq 0$. If the algebra $\Lambda$ is  Calabi-Yau
Frobenius, then $\Lambda-\underline{\text{mod}}$ is Calabi-Yau and the
Calabi-Yau dimension of this category is less or equal than the
smallest $r$ such that $\Omega_{\Lambda^e}^{-r-1}(\Lambda )$ is
isomorphic to $D(\Lambda )$, a number which is called here the
\emph{Calabi-Yau Frobenius dimension} of $\Lambda$. In general, it
is not known whether these two numbers are equal.

A basic finite dimensional algebra $\Lambda$ is self-injective
precisely when there is an isomorphism of $\Lambda$-bimodules
between $D(\Lambda )$ and the twisted bimodule ${}_1\Lambda_\eta$,
for some automorphism $\eta$ of $\Lambda$. This automorphism is
uniquely determined up to inner automorphism and is called the
\emph{Nakayama automorphism} of $\Lambda$ (see section 2 for more
details). Then the problem of deciding when $\Lambda$ is Calabi-Yau
Frobenius is part of a more general problem, namely, to determine
under which conditions $\Omega_{\Lambda^e}^r(\Lambda )$ is
isomorphic to a twisted bimodule ${}_1\Lambda_\varphi$, for some
automorphism $\varphi$ of $\Lambda$, which is then determined up to
inner automorphism. By a result of Green-Snashall-Solberg
(\cite{GSS}), this condition on a finite dimensional algebra forces
it to be self-injective. When $\varphi$ is the identity (or an inner
automorphism), the algebra $\Lambda$ is called \emph{periodic} and
the problem of determining the self-injective algebras which are
periodic is widely open. However, there is a lot of work in the
literature were several classes of periodic algebras have been
identified (see, e.g., \cite{BBK}, \cite{ESk2}, \cite{D}). Even when
an algebra $\Lambda$ is known to be periodic, it is usually hard to
calculate explicitly its \emph{period}, that is, the smallest of the
integers $r>0$ such that $\Omega_{\Lambda^e}^r(\Lambda )$ is
isomorphic to $\Lambda$ as a bimodule.

Another interesting problem in the context of finite dimensional
self-injective algebras is that of determining when such an algebra
is weakly symmetric or symmetric. An algebra is \emph{symmetric}
when $D(\Lambda )$ is isomorphic to $\Lambda$ as a
$\Lambda$-bimodule. This is equivalent to saying that the
\emph{Nakayama functor} $DHom_\Lambda (-,\Lambda )\cong D(\Lambda
)\otimes_\Lambda -:\Lambda-\text{Mod}\longrightarrow\Lambda -\text{Mod}$ is
naturally isomorphic to the identity functor. The algebra is
\text{weakly symmetric} when this functor just preserves the
iso-classes of simple modules.

In this paper we tackle the problems mentioned above for a special
class of finite dimensional self-injective algebras, which has
deserved a lot of attention in recent times. Following \cite{ESk2},
if $\Delta$ is one of the Dynkin quivers $\mathbb{A}_r$,
$\mathbb{D}_r$ of $\mathbb{E}_n$ ($n=6,7,8$),  an  \emph{$m$-fold
mesh algebra of type $\Delta$}  is the mesh algebra of the stable
translation quiver $\mathbb{Z}\Delta /G$, where $G$ is a weakly
admissible group of automorphisms of $\mathbb{Z}\Delta$ (see
subsections \ref{subsect.stable translation quivers} and
\ref{subsect.definition mesh algebra} for definitions and details).
By a result of Dugas (\cite{D2}[Theorem 3.1]), the $m$-fold mesh
algebras are precisely the mesh algebras of translation quivers
which are finite dimensional. This class of algebras properly
contains the stable Auslander algebras of all standard
representation-finite self-injective algebras (see \cite{D2}) and,
also,  the Auslander algebras of several hypersurface
singularities (see \cite{ESk2}[Section 8]). In fact, due to the classification by Amiot \cite{A} of the standard algebraic triangulated categories of finite type, we know that the $m$-fold mesh algebras are precisely the Auslander algebras of these triangulated categories. Then any such triangulated category can be identified with the category $\Lambda -\text{proj}$ of finitely generated projective $\Lambda$-modules, where $\Lambda$ is an $m$-fold mesh algebra, when taking as  suspension functor in $\Lambda-\text{proj}$ the functor $\Omega_{\Lambda^e}^{-3}(\Lambda )\otimes -$. Similarly the Serre functor on this latter category is identified with the Nakayama functor $D(\Lambda )\otimes_\Lambda -$. It is well-known (see \cite{ESk2} or \cite{D2}) that $\Omega_{\Lambda^e}^{3}(\Lambda )\cong {}_\mu\Lambda_1$, for an automorphism $\mu$ which has finite order as an outer automorphism, so that in particular, all the algebras in the class are periodic, a fact proved in
\cite{BBK}[Section 6] even before the class of algebras was introduced. Therefore the explicit identification of the automorphisms $\eta$ and $\mu$ for all  $m$-fold mesh algebra, which we obtain in this paper (see Theorem \ref{teor.G-invariant Nakayama automorphism} and Corollary \ref{cor. mu for Lambda}) translates to an explicit description of the Serre functor and a quasi-inverse of the suspension functor for any algebraic triangulated category of finite type. From previous papers it seems that only the action of these functors on objects was known.

A particular case of standard algebraic triangulated category of finite type is the stable category $A-\underline{mod}$ of a representation-finite selfinjective algebra $A$. Within the class of $m$-fold mesh algebras $\Lambda$ which are the Auslander algebra of such a stable category, those which are Calabi-Yau-Frobenius and those for which $\Lambda-\underline{mod}$ is Calabi-Yau  have been completely determined. The task was initiated in
 in \cite{BS} and \cite{ESk}, but the main part of the work was done in  \cite{D2} and \cite{IV}.
In the first of these two papers,  Dugas  identified such an algebra
by the type $(\Delta ,f,t))$ of the original representation-finite selfinjective algebra $A$, as defined by Asashiba (\cite{Asa})
inspired by the work of Riedtmann (\cite{Ri}). He completed the task
when $t$ is $1$ or $3$, and also in many cases with $t=2$. In fact the author even described a relationship between the Calabi-Yau condition of $A-\underline{mod}$ and of $\Lambda-\underline{mod}$ (see \cite{D2}[Proposition 2.1]). The
remaining cases
 for $t=2$ have been recently settled by Ivanov-Volkow (\cite{IV}).
On the question of periodicity,  the explicit calculation of the period of an $m$-fold mesh algebra
has been also done in a few cases. From the papers \cite{RS},
\cite{ES3} and \cite{BES} we know that the period is 6 for all
preprojective algebras of generalized Dynkin type. In \cite{D} and \cite{D2} the
period is calculated when $\Lambda$ is the stable Auslander algebra
of a standard representation-finite self-injective algebra of  type
$(\Delta ,f ,t)$, when $t=1$ or when this type is  equal to $(\mathbb{D}_4,f,3)$,
$(\mathbb{D}_n,f,2)$, with $n>4$ and $f>1$ odd, or
$(\mathbb{E}_6,f,2)$. In fact, the author uses these results to calculate the period of the original finite type selfinjective algebra A.

We now explain the main results of our paper. Let $B=B(\Delta )$ be
the mesh algebra of the translation quiver $\mathbb{Z}\Delta$, where
$\Delta$ is one of the  Dynkin quivers mentioned above, and let $G$
be a weakly admissible automorphism of $\mathbb{Z}\Delta$ which is
viewed also as an automorphism of $B$. The algebra $B$ is graded
Quasi-Frobenius
(see definition \ref{defi.pseudo-Frobenius graded
algebras}), which roughly means that $B$ and its category of graded
modules behave as  a self-injective finite dimensional algebra and
its category of modules. The crucial result of our paper is Theorem
\ref{teor.G-invariant Nakayama automorphism}, which explicitly
defines a graded Nakayama automorphism of $B$ which commutes with
the elements of $G$, for any choice of $(\Delta ,G)$. Since each
$m$-fold mesh algebra $\Lambda$ is isomorphic to the orbit category
$B/G$,  the consequence is that one derives an explicit definition
of a graded Nakayama automorphism, for each $m$-fold mesh algebra.
We then use the key Lemma \ref{lem.criterion of inner automorphism},
which determines when a $G$-invariant graded automorphism of $B$
induces an inner automorphism of $\Lambda =B/G$.
Using the extended type $(\Delta ,m,t)$ (see definition
\ref{defi.extended type of a mesh algebra}) to identify an $m$-fold
mesh algebra, we get, expressed in terms of this type, the main
results of the paper, all referred to $m$-fold mesh algebras:

\begin{enumerate}
\item An identification of all weakly symmetric  and symmetric algebras in the
class (Theorem \ref{teor.weakly symmetric m-fold algebras});
\item An explicit formula for the period of any algebra in the
class (Proposition \ref{prop.periods of Nakayama algebras}, when
$\Delta =\mathbb{A}_2$, and Theorem \ref{teor.period of Lambda} for
all the other cases).
\item An identification of the precise relation between the stable Calabi-Yau dimension and the
Calabi-Yau Frobenius dimension of an $m$-fold algebra, showing that
both dimensions may differ when $\Delta=\mathbb{A}_2$, but are
always equal when $\Delta\neq\mathbb{A}_r$, for $r=1,2$
(Propositions \ref{prop.CY-dim and CYF-dim of 2-nilpotent algebras}
and \ref{prop.CY-dimension-versus-Eu-Schedler})
\item A criterion for an $m$-fold mesh algebra to be stably
Calabi-Yau, together with the identification in such case of the
stable Calabi-Yau dimension (Proposition \ref{prop.2-nilpotent
selfinjective algebras}, for the case $\Delta =\mathbb{A}_2$,
Corollary \ref{cor.CY-criterion in characteristic 2}, for
characteristic $2$, and Theorem \ref{teor.CY-criterion and
CY-dimension} for all other cases).
\end{enumerate}

We now describe the organization of the paper. In section 2 we recall from \cite{AS} the concept of  pseudo-Frobenius graded  algebra with enough idempotents and the corresponding results which we need in the rest of the paper.

In section 3, we recall the definition of the mesh algebra of a
stable translation quiver and give a list of essentially known
properties (Proposition \ref{prop:Coxeter-Nakayama}) for the case of
the mesh algebra $B=B(\Delta )$ of $\mathbb{Z}\Delta$, when $\Delta$
is a Dynkin quiver. We then recall the definition of an $m$-fold mesh
algebra and their known properties, and introduce the notion of
extended type of such an algebra, which plays a crucial role in the
rest of the paper. With the idea of simplifying some calculations,
we end the section by performing a change of relations which,
roughly speaking, transforms sums of paths of length 2 into
differences.

In section 4 we give the explicit definition of the $G$-invariant
graded Nakayama automorphism of $B$ and give the crucial Lemma
\ref{lem.criterion of inner automorphism} mentioned above. We then
give the list of all weakly symmetric and symmetric $m$-fold mesh
algebras.

In the final section 5 we first calculate explicitly  the initial
part of a '$G$-invariant' minimal projective resolution of $B$ as a
graded $B$-bimodule, We prove in particular that $\Omega_{B^e}^3(B)$
is always isomorphic to ${}_\mu B_1$, for a graded automorphism
$\mu$ of $B$ which is in the centralizer of $G$ and which is
explicitly calculated. Then the induced automorphism $\bar{\mu}$ of
$\Lambda =B/G$ has the property that $\Omega_{\Lambda^e}^3(\Lambda
)\cong {}_{\bar{\mu}}\Lambda_1$ and this is fundamental in the rest
of the paper. We then calculate the period of all $m$-fold mesh
algebras and
 find the precise relation between the Calabi-Yau Frobenius
condition of $\Lambda$, in the sense of \cite{ES}, and the condition
that $\Lambda -\underline{\text{mod}}$ be a Calabi-Yau category. We end the
paper by giving necessary and sufficient conditions for a mesh
algebra to be stably Calabi-Yau and, when this is the case, we
calculate explicitly the Calabi-Yau dimension of $\Lambda
-\underline{\text{mod}}$.

\section{Pseudo-Frobenius graded algebras with enough idempotents}

This section is devoted to compiling from \cite{AS}  information concerning the class of
 pseudo-Frobenius graded algebras with enough idempotents,  which will very useful in the subsequent sections. The reader is referred
to that paper for proofs and further details.

\vspace{0.2cm}

Let $A$ be an associative algebra over a field $K$. Such an
algebra is said to be an \emph{algebra with enough idempotents}
when there is a  family $(e_i)_{i\in I}$ of nonzero orthogonal
idempotents, called \emph{distinguished family}, such that
$\oplus_{i\in I}e_iA=A=\oplus_{i\in I}Ae_i$. If in addition, $H$ is
a fixed abelian group with additive notation, an \emph{$H$-graded algebra with enough
idempotents} will be an algebra with enough idempotents $A$,
together with an $H$-grading $A=\oplus_{h\in H}A_h$, such that one
can choose a distinguished family of orthogonal idempotents which
are homogeneous of degree $0$. Frequently, we will interpret such an algebra as a (small) graded $K$-category with $I$ as set of objects, where $e_iAe_j$ is the set of morphisms from $i$ to $j$ and where the composition of morphisms is just the anti-multiplication:  $b\circ a=ab$. Then the concept of functor between such algebras makes sense and will be used sometimes.

Throughout this section by $A$ we will mean an $H$-graded algebra
with enough idempotents on which  a distinguished family of
orthogonal idempotents is fixed. All considered left (resp. right) $A$-modules are supposed to be
unital, i.e.,  $AM=M$ (resp. $MA=M$) for any left (resp. right) $A$-module $M$.
We will also denote by $A-Gr$ (resp.  $Gr-A$) the
category of ($H$-)graded unital left (resp. right) modules,
where the morphisms are the graded homomorphisms of degree $0$.


The enveloping algebra of $A$ is the algebra $A^e=A\otimes A^{op}$,
where if $a,b\in A$ we will denote by $a\otimes b^o$ the
corresponding element of $A^e$. This is also an $H$-graded algebra with enough
idempotents where $(A\otimes
A^{op})_h=\oplus_{s+t=h}A_s\otimes A^{op}_t$. The distinguished family of orthogonal idempotents with
which we will work is the family $(e_i\otimes e_j^o)_{(i,j)\in
I\times I}$. A left graded $A^e$-module $M$ will be identified with a graded
$A$-bimodule by putting $axb=(a\otimes b^o)x$, for all $x\in M$ and
$a,b\in A$. Similarly, a right graded $A^e$-module is identified with a graded
$A$-bimodule by putting $axb=x(b\otimes a^o)$, for all $x\in M$ and
$a,b\in A$. In this way, we identify the three categories
$A^e-Gr$, $Gr-A^e$ and $A-Gr-A$, where the
last one is the category of graded unitary $A$-bimodules, which we will
simply name `graded bimodules'.

Recall that if $M=\oplus_{h\in H}M_h$ is a graded $A$-module and $k\in H$ is any
element, then we get a graded module $M[k]$ having the same
underlying ungraded $A$-module as $M$, but where $M[k]_h=M_{k+h}$
for each $h\in H$. If $M$ and $N$ are graded left $A$-modules, then
$\text{HOM}_{A}(M,N):=\oplus_{h\in H}\text{Hom}_{A-Gr}(M,N[h])$
 has a structure of graded $K$-vector space, where the homogeneous
 component of degree $h$ is
 $\text{HOM}_{A}(M,N)_h:=\text{Hom}_{A-Gr}(M,N[h])$, i.e.,
 $\text{HOM}_{A}(M,N)_h$ consists of the graded homomorphisms $M\longrightarrow N$ of
 degree $h$.


The graded algebras we are interested in have some extra properties.
For reader's convenience we recall the following definitions.

\begin{defi} \label{defi.locally f.d. graded algebra}
Let $A=\oplus_{h\in H}A_h$ be a graded algebra with enough
idempotents. It will be called \emph{locally finite dimensional} when
$e_iA_he_j$ is finite dimensional, for all $(i,j,h)\in I\times
I\times H$. Such a graded algebra $A$ will be called \emph{graded
locally bounded} when the following two conditions hold:

\begin{enumerate}
\item For each $(i,h)\in I\times H$, the set $I^{(i,h)}=\{j\in I:$ $e_iA_he_j\neq
0\}$ is finite.
\item For each $(i,h)\in I\times H$, the set $I_{(i,h)}=\{j\in I:$ $e_jA_he_i\neq
0\}$ is finite.
\end{enumerate}
\end{defi}

Observe that these definitions do not depend on the distinguished family $(e_i)$ considered.

\begin{defi} \label{def.basic split graded algebra}
A locally finite dimensional graded algebra with enough idempotents
$A=\oplus_{h\in H}A_h$ will be called \emph{weakly basic} when it
has a distinguished family $(e_i)_{i\in I}$ of orthogonal
homogeneous idempotents of degree $0$ such that:

\begin{enumerate}

\item   $e_iA_0e_i$ is a local algebra, for each $i\in I$. \item
$e_iAe_j$ is contained in the graded Jacobson radical
$J^{gr}(A)$, for all $i,j\in I$, $i\neq j$.
\end{enumerate}
It will be called \emph{basic} when, in addition,
$e_iA_he_i\subseteq J^{gr}(A)$, for all $i\in I$ and $h\in
H\setminus\{0\}$.

We will use also the term `(weakly) basic' to denote any
distinguished family $(e_i)_{i\in I}$ of ortho\-go\-nal idempotents
satisfying the above conditions.

A weakly basic graded algebra with enough idempotents will be called
\emph{split} when  $e_iA_0e_i/e_iJ(A_0)e_i$ $\cong K$, for each $i\in
I$.

\end{defi}

We can consider $K$ as an $H$-graded algebra by putting $K_h=0$, for
all $h\neq 0$.  Given a graded $K$-vector space $V=\oplus_{h\in H}V_h$, where $V_h$
is finite dimensional $\forall h\in H$,
we will denote by $D(V)$ the subspace of the vector space
$\text{Hom}_K(V,K)$ consisting of the linear forms
$f:V\longrightarrow K$ such that $f(V_h)=0$, for all but finitely
many $h\in H$. Then, $D(V)$ can be identified with the graded
$K$-vector space $\oplus_{h\in H}\text{Hom}_K(V_h,K)$, where
$D(V)_h=\text{Hom}_K(V_{-h},K)$ for all $h\in H$.

\begin{defi}
Let $V=\oplus_{h\in H}V_h$ and $W=\oplus_{h\in H}W_h$  be graded
$K$-vector spaces, where the homogeneous components are finite
dimensional, and let $d\in H$ be any element. A bilinear form
$(-,-):V\times W\longrightarrow K$ is said to be \emph{of degree
$d$} if $(V_h,W_k)\neq 0$ implies that $h+k=d$. Such a form will

be called \emph{nondegenerate} when the induced maps $W\longrightarrow
D(V)$ ($w\rightsquigarrow (-,w)$)) and $V\longrightarrow D(W)$
($v\rightsquigarrow (v,-)$)) are bijective.
\end{defi}

Note that, in the above situation, if $(-,-):V\times
W\longrightarrow K$ is a nondegenerate bilinear form of degree $d$,
then the bijective map $W\longrightarrow D(V)$ ( resp
$V\longrightarrow D(W)$) given above gives an isomorphism of graded
$K$-vector spaces
 $W[d]\stackrel{\cong}{\longrightarrow}D(V)$ (resp. $V[d]\stackrel{\cong}{\longrightarrow}D(W)$).

 The following concept is fundamental for us.

 \begin{defi} \label{defi.graded Nakayama form}
 Let $A=\oplus_{h\in H}A_h$ be a weakly basic graded algebra with
 enough idempotents. A bilinear form $(-,-):A\times A\longrightarrow
 K$ is said to be a \emph{graded Nakayama form} when the following
 assertions hold:

 \begin{enumerate}
 \item $(ab,c)=(a,bc)$, for all $a,b,c\in A$
 \item For each $i\in I$ there is a unique $\nu (i)\in I$ such that
 $(e_iA,Ae_{\nu (i)})\neq 0$ and the assignment $i\rightsquigarrow\nu
 (i)$ defines a bijection $\nu :I\longrightarrow I$. \item There is
 a map $\mathbf{h}:I\longrightarrow H$ such that the
 induced map $(-,-):e_iAe_j\times e_jAe_{\nu (i)}\longrightarrow K$ is a
 nondegenerate graded bilinear form of degree $h_i=\mathbf{h}(i)$,
 for all $i,j\in I$.
 \end{enumerate}
The bijection $\nu$ is called the \emph{Nakayama permutation} and
$\mathbf{h}$ will be called the \emph{degree map}. When $\mathbf{h}$
is a constant map and $\mathbf{h}(i)=h$, we
 will say that $(-,-):A\times A\longrightarrow K$ is a \emph{graded
 Nakayama form of degree $h$}.
 \end{defi}

 Recall that  a Quillen
 exact category $\mathcal{E}$ (e.g. an abelian category) is said to be a
 \emph{Frobenius category} when it has enough projectives and enough
 injectives and the projective and the injective objects are the
 same in $\mathcal{E}$.

 \begin{defi}\label{defi.pseudo-Frobenius graded
algebras} A weakly basic locally finite dimensional
 graded algebra $A$ with enough idempotents is called \emph{graded pseudo-Frobenius} (resp. \emph{graded Quasi-Frobenius})  if it admits a graded Nakayama form $(-,-):A\times A\longrightarrow K$ (resp. both
 $A-Gr$ and $Gr-A$ are Frobenius categories).
\end{defi}

 A graded Quasi-Frobenius algebra A is always graded pseudo-Frobenius and the converse is true whenever $A$ is graded locally Noetherian i.e., whenever $Ae_i$  and $e_iA$ satisfies the ACC on graded
 submodules, for each $i\in I$. Note then that, for a finite dimensional algebra $A$, viewed as a graded algebra concentrated in zero degree,   the notions of self-injective,
 graded pseudo-Frobenius and graded Quasi-Frobenius coincide.

 When $A$ is a graded locally bounded pseudo-Frobenius algebra, its graded Nakayama form $(-,-)$ induces
 an automorphism of (ungraded) algebras $\eta\in Aut(A)$ such that $D(A)\cong {}_{1}A_{\eta}$. This automorphism is unique , up to inner automorphism, and called \emph{Nakayama automorphism}. It is given by the rule $(a,-)=(-,\eta(a))$, for every $a\in A$, and turns out to be an automorphism of
 graded algebras whenever the associated map $\mathbf{h}: I\longrightarrow H$ takes constant value $h$. Indeed, in this latter case
 we get an isomorphism of graded algebras $D(A)\cong {}_{1}A_{\eta}[h]$.

The following result gives a handy criterion, in the locally Noetherian
 case, for $A$ to be graded Quasi-Frobenius.

 \begin{cor} \label{cor.Frobenius=simple socle}
 Let $A=\oplus_{h\in H}A_h$ be a weakly basic locally Noetherian
  graded algebra with enough idempotents. The following
 assertions are equivalent:

 \begin{enumerate}

\item The following two conditions hold:

\begin{enumerate}
 \item For each $i\in I$, $Ae_i$ and $e_iA$ have a simple essential
socle in $A-Gr$ and $Gr-A$, respectively \item There are bijective
maps $\nu ,\nu' :I\longrightarrow I$ such that
$\text{Soc}_{gr}(e_iA)\cong \frac{e_{\nu (i)}A}{e_{\nu
 (i)}J^{gr}(A)}[h_i]$ and  $\text{Soc}_{gr}(Ae_i)\cong
 \frac{Ae_{\nu '
 (i)}}{[J^{gr}(A)e_{\nu '
 (i)}}[h'_i]$, for certain $h_i,h'_i\in H$

\end{enumerate}

\item $A$ is graded
Quasi-Frobenius
 \end{enumerate}
 \end{cor}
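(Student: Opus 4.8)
The plan is to prove the equivalence of Corollary~\ref{cor.Frobenius=simple socle} by characterizing the graded Quasi-Frobenius condition through the existence of a graded Nakayama form, and then translating that form into socle data. Since the algebra is assumed locally Noetherian, I may invoke the remark recorded just before the corollary, namely that in the locally Noetherian case the notions of graded pseudo-Frobenius and graded Quasi-Frobenius coincide. Thus it suffices to prove that condition (1) is equivalent to the existence of a graded Nakayama form $(-,-):A\times A\longrightarrow K$.

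\medskip

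For the implication $(2)\Rightarrow(1)$, I would start from a graded Nakayama form and extract the socle information. The associativity axiom $(ab,c)=(a,bc)$ makes the form into a bimodule pairing, so the induced map $A\longrightarrow D(A)$ is a homomorphism of graded bimodules, which is bijective because the form is nondegenerate on each block $e_iAe_j\times e_jAe_{\nu(i)}$. Restricting to $e_iA$ and using that $D(A)e_i\cong D(Ae_i)$ is the graded dual, one reads off that $\text{Soc}_{gr}(e_iA)$ is simple and essential: the socle of $e_iA$ corresponds under the duality to the top of $Ae_i$, which is $\frac{Ae_i}{J^{gr}(A)e_i}$ since $e_iA_0e_i$ is local by the weakly basic hypothesis. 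Tracking the degree shift $h_i=\mathbf{h}(i)$ through the identification $e_iAe_j[h_i]\cong D(e_jAe_{\nu(i)})$ produces precisely the isomorphism $\text{Soc}_{gr}(e_iA)\cong\frac{e_{\nu(i)}A}{e_{\nu(i)}J^{gr}(A)}[h_i]$ stated in (1)(b), and a symmetric argument on $Ae_i$ yields the companion statement with $\nu'$ and $h'_i$. The essentiality in (1)(a) follows because a nondegenerate pairing forces every nonzero graded submodule of $e_iA$ to meet the socle.

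\medskip

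For the harder direction $(1)\Rightarrow(2)$, the task is to manufacture a graded Nakayama form out of the socle hypotheses, equivalently to build a graded bimodule isomorphism $A\cong D(A)$. I would proceed blockwise: for each $i$, the simple essential socle condition (1)(a) guarantees that $e_iA$ is a graded-finitely-cogenerated module whose injective envelope in $Gr-A$ is computed from its socle, and (1)(b) pins down that socle as a shifted simple top $\frac{e_{\nu(i)}A}{e_{\nu(i)}J^{gr}(A)}[h_i]$. Dualizing, $D(e_iA)=e_iD(A)$ is a graded left module with simple top, hence a quotient of $Ae_{\nu(i)}$ up to the shift, and the local-bounded hypothesis together with the Noetherian condition lets me match dimensions in each homogeneous block $e_jA_he_i$ to conclude $e_iD(A)\cong Ae_{\nu(i)}[h_i]$ as graded left modules. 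The delicate point, and what I expect to be the main obstacle, is to glue these blockwise left-module isomorphisms into a single \emph{bimodule} isomorphism $A\cong D(A)$: one must check that the right $A$-action is respected simultaneously, which amounts to verifying that the two permutations $\nu$ and $\nu'$ coincide and that the degree maps are compatible (so that $D(A)$ is Nakayama on both sides with the same combinatorial data). I would resolve this by exploiting the double-duality $D(D(A))\cong A$ and the uniqueness of simple socles/tops to force $\nu'=\nu^{-1}$ and $h'_{\nu(i)}=h_i$, after which the left and right structures are rigid enough that a choice of nonzero socle generators determines the bimodule isomorphism uniquely up to scalars, yielding the desired nondegenerate associative form. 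Finally, appealing again to the locally Noetherian hypothesis converts this graded Nakayama form into the graded Quasi-Frobenius conclusion.
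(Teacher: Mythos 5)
First, a point of reference: the paper contains no proof of this corollary at all. Section 2 is explicitly a compilation from the companion paper \cite{AS} (``The reader is referred to that paper for proofs and further details''), so there is no in-paper argument to match yours against, and your proposal must be judged on its own merits. Your opening reduction — using the locally Noetherian hypothesis to replace ``graded Quasi-Frobenius'' by ``admits a graded Nakayama form'' — is legitimate given the facts the paper compiles, and your direction $(2)\Rightarrow(1)$ is essentially sound, modulo a slip worth noting: the map $a\mapsto (a,-)$ is only right $A$-linear, not a homomorphism of graded bimodules (what is true is $D(A)\cong {}_1A_\eta$, a \emph{twisted} bimodule isomorphism). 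Since you only use the one-sided restrictions $e_iA\cong D(Ae_{\nu(i)})[h_i]$, this does no damage there, though essentiality of the socle comes from the annihilator correspondence plus the graded Nakayama lemma, not from nondegeneracy ``per se.''

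The genuine gap is in $(1)\Rightarrow(2)$, in two places. First, the target you set yourself — a graded \emph{bimodule} isomorphism $A\cong D(A)$ — is the wrong one: such an isomorphism says precisely that $A$ is graded symmetric, which is strictly stronger than pseudo-Frobenius and false in general under hypothesis (1), since the permutation $\nu$ in (1)(b) need not be the identity (the mesh algebras of this very paper have nontrivial Nakayama permutation). What a graded Nakayama form actually requires is only a one-sided isomorphism: if $\phi:A_A\to D(A)$ is an isomorphism of graded right modules carrying $e_iA$ onto $e_{\nu(i)}D(A)$ with shift $h_i$, then $(a,c):=\phi(a)(c)$ is automatically associative, because $(ab,c)=(\phi(a)\cdot b)(c)=\phi(a)(bc)=(a,bc)$, and nondegeneracy of each block $e_iA_he_j\times e_jA_{h_i-h}e_{\nu(i)}\longrightarrow K$ on the \emph{other} side transposes for free, since these homogeneous components are finite dimensional by the weakly basic hypothesis. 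So the ``delicate gluing of the right action'' you anticipate as the main obstacle is a self-inflicted one, and pursuing it would mean trying to prove a false statement. Second, and more seriously, your mechanism for establishing $e_iD(A)\cong Ae_{\nu(i)}[h_i]$ — ``match dimensions in each homogeneous block'' using ``the local-bounded hypothesis together with the Noetherian condition'' — is not available: locally bounded is not among the corollary's hypotheses, the modules $Ae_i$ and $D(e_iA)$ are in general infinite dimensional, and condition (1) pins down socles and tops but says nothing about componentwise dimensions. From (1)(a)–(b) and the graded Nakayama lemma one does obtain a \emph{surjection} $Ae_{\nu(i)}[\text{shift}]\twoheadrightarrow D(e_iA)$ (since $D(e_iA)$ has simple top with superfluous radical, by essentiality of $\text{Soc}_{gr}(e_iA)$), but injectivity is exactly the hard step; it requires a structural argument — for instance, dualizing to an embedding $e_iA\hookrightarrow D(Ae_{\nu(i)})$ and playing the left-hand socle conditions of (1) against the right-hand ones through the essential simple socles, which is also where $\nu'=\nu^{-1}$ genuinely enters — and cannot be replaced by the dimension count you propose.
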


The following result shows that if $A$ is a
split graded pseudo-Frobenius algebra, then all possible graded Nakayama
forms for $A$ come in similar way. Recall from \cite{AS} if $A$ is such an algebra, then $\text{Soc}_{gr}({}_A)=\text{Soc}_{gr}(A_A)$. Recall also that if $V=\oplus_{h\in
H}V_h$ is a graded vector space, then its \emph{support}, denoted
$\text{Supp}(V)$,  is the set of $h\in H$ such that $V_h\neq 0$.

\begin{prop} \label{prop.graded Nak-form via basis}
Let $A$ be a split pseudo-Frobenius graded algebra and $(e_i)_{i\in
I}$ a weakly basic distinguished family of orthogonal idempotents.
The following assertions hold:

\begin{enumerate}
\item All graded Nakayama forms for $A$ have the same Nakayama
permutation. It assigns to each $i\in I$ the  unique $\nu (i)\in I$
such that $e_i\text{Soc}_{gr}(A)e_{\nu (i)}\neq 0$.
\item If $h_i\in \text{Supp}(e_i\text{Soc}_{gr}(A))$, then
$\text{dim}(e_i\text{Soc}_{gr}(A))_{h_i})=1$ \item For a bilinear
form $(-,-):A\times A\longrightarrow K$,  the following statements
are equivalent:

\begin{enumerate}
\item $(-,-)$ is a graded Nakayama form for $A$ \item There exists
an element $\mathbf{h}=(h_i)\in\prod_{i\in
I}\text{Supp}(e_i\text{Soc}_{gr}(A))$ and a basis $\mathcal{B}_i$ of
$e_iA_{h_i}e_{\nu (i)}$, for each $i\in I$, such that:

\begin{enumerate}
\item $\mathcal{B}_i$ contains a (unique) element $w_i$ of
$e_i\text{Soc}_{gr}(A)_{h_i}$ \item If $a,b\in\bigcup_{i,j}e_iAe_j$
are homogeneous elements, then $(e_iA_h,A_ke_j)=0$ unless $j=\nu
(i)$ and $h+k=h_i$ \item If $(a,b)\in e_iA_h\times A_{h_i-h}e_{\nu
(i)}$, then $(a,b)$ is the coefficient of $w_i$  in the expression
of $ab$ as a linear combination of the elements of $\mathcal{B}_i$.
\end{enumerate}
\end{enumerate}
\end{enumerate}
\end{prop}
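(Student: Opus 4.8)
The plan is to prove Proposition~\ref{prop.graded Nak-form via basis} by systematically exploiting the structure of the graded socle of a split pseudo-Frobenius algebra, together with the defining axioms of a graded Nakayama form. Let me sketch the approach for each of the three parts.

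\medskip

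\textbf{Part (1): Uniqueness of the Nakayama permutation.} First I would fix any graded Nakayama form $(-,-)$ with Nakayama permutation $\nu$. By axiom (2) of Definition~\ref{defi.graded Nakayama form}, $\nu(i)$ is the unique index with $(e_iA, Ae_{\nu(i)})\neq 0$, and by axiom (3) the induced pairing $e_iAe_j\times e_jAe_{\nu(i)}\to K$ is nondegenerate of degree $h_i$. The key step is to identify $\nu(i)$ intrinsically, i.e.\ independently of the chosen form. I would argue that nondegeneracy of the pairing on $e_iAe_i\times e_iAe_{\nu(i)}$ forces the top-degree (socle) part of $e_iAe_{\nu(i)}$ to pair nontrivially against the degree-$0$ idempotent component $e_iA_0e_i$, which is local and split (so $e_iA_0e_i/e_iJ(A_0)e_i\cong K$). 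Concretely, pairing $e_i\in e_iA_0e_i$ against the homogeneous socle elements shows that $\text{Soc}_{gr}(A)e_{\nu(i)}$ meets $e_iAe_{\nu(i)}$ nontrivially, whence $e_i\text{Soc}_{gr}(A)e_{\nu(i)}\neq 0$. Since the graded socle has simple essential socle on each indecomposable projective (by Corollary~\ref{cor.Frobenius=simple socle}, using that $A$ is pseudo-Frobenius hence covered by the hypotheses there), there is exactly one $j$ with $e_i\text{Soc}_{gr}(A)e_j\neq 0$, and this $j$ must equal $\nu(i)$; this characterization makes no reference to the form, giving uniqueness.

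\medskip

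\textbf{Part (2): One-dimensionality of homogeneous socle components.} Here I would use the simplicity of the socle of the indecomposable projective $e_iA$ in $Gr-A$. Since $\text{Soc}_{gr}(e_iA)$ is simple, it is isomorphic to $\frac{e_{\nu(i)}A}{e_{\nu(i)}J^{gr}(A)}[h_i]$ up to a shift (by Corollary~\ref{cor.Frobenius=simple socle}), and the split condition $e_jA_0e_j/e_jJ(A_0)e_j\cong K$ forces each homogeneous component of this simple top to be at most one-dimensional. Restricting to the $e_{\nu(i)}$-column and reading off the degree-$h_i$ piece gives $\dim(e_i\text{Soc}_{gr}(A))_{h_i}=1$ for any $h_i$ in the support. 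The argument is essentially that a simple graded module over a split algebra is one-dimensional in each occupied degree.

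\medskip

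\textbf{Part (3): The basis description, which is the main obstacle.} This is the technical heart. The implication (b)$\Rightarrow$(a) is the easier direction: given the data $\mathbf{h}=(h_i)$ and bases $\mathcal{B}_i$ with the distinguished socle generators $w_i$, I would \emph{define} $(a,b)$ as the coefficient of $w_i$ in $ab$ and then verify the three axioms of Definition~\ref{defi.graded Nakayama form} by direct computation: associativity $(ab,c)=(a,bc)$ follows from associativity in $A$ and the fact that $w_i$ spans the top-degree socle line; the degree and nondegeneracy conditions follow from Part~(2) and the multiplication structure $e_iAe_h\cdot Ae_{\nu(i)}\to e_i\text{Soc}_{gr}(A)_{h_i}e_{\nu(i)}=Kw_i$. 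The harder implication (a)$\Rightarrow$(b) requires showing that \emph{every} graded Nakayama form arises this way: given an abstract form, I would set $w_i$ to be a nonzero element of the one-dimensional space $e_i\text{Soc}_{gr}(A)_{h_i}$ and then reconstruct the pairing as the $w_i$-coefficient functional. The delicate point—and where I expect the real work to lie—is verifying that the nondegenerate pairing on each $e_iAe_j\times e_jAe_{\nu(i)}$ is governed globally by a \emph{single} socle element $w_i$ for all $j$ simultaneously, so that the multiplication map $e_iAe_j\otimes e_jAe_{\nu(i)}\to e_iA_{h_i}e_{\nu(i)}=Kw_i$ captures the form exactly; this amounts to checking that the form's top-degree value agrees with the $w_i$-coefficient of the product, which I would establish by comparing the induced isomorphism $e_jAe_{\nu(i)}[h_i]\cong D(e_iAe_j)$ from nondegeneracy against the multiplication pairing into the socle line. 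Controlling the normalization of $w_i$ across the different $j$-columns, and ensuring the chosen bases $\mathcal{B}_i$ are compatible with the form in all degrees at once, is the crux.
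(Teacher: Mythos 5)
A preliminary caveat: the paper contains no proof of this proposition --- Section 2 explicitly compiles it from \cite{AS} and refers the reader there --- so your attempt can only be judged on its own mathematical merits. Judged so, your architecture (socle characterization for part 1, components of graded simples for part 2, coefficient-functional reconstruction for part 3) is the right one, but there are two genuine gaps. The first is your appeal to Corollary \ref{cor.Frobenius=simple socle} to obtain simplicity of the graded socles, ``using that $A$ is pseudo-Frobenius hence covered by the hypotheses there''. That corollary assumes $A$ is weakly basic and \emph{locally Noetherian} and characterizes graded \emph{Quasi}-Frobenius algebras; the present proposition assumes only split pseudo-Frobenius, and the paper itself warns that the two notions coincide only in the locally Noetherian case, so the citation does not apply. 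The socle facts must instead be extracted from the Nakayama form itself: axiom (1) of Definition \ref{defi.graded Nakayama form} makes $a\mapsto (a,-)$ a homomorphism of graded right modules, and axioms (2),(3) make it an isomorphism $e_iA\cong D(Ae_{\nu (i)})[h_i]$, whence $\text{Soc}_{gr}(e_iA)\cong D(\text{top}(Ae_{\nu (i)}))[h_i]$ is graded simple and concentrated in column $\nu (i)$; this gives part 1, and part 2 then follows from your (correct) observation that each homogeneous component of a graded simple module over a split algebra is at most one-dimensional. Relatedly, your opening step in part 1 --- that nondegeneracy ``forces'' socle elements to pair nontrivially against $e_i$ --- does not follow as stated: nondegeneracy produces \emph{some} element pairing nontrivially with $e_i$, not a socle element. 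What makes it work is $J^{gr}(A)\cdot \text{Soc}_{gr}(A)=0$ (using $\text{Soc}_{gr}({}_AA)=\text{Soc}_{gr}(A_A)$) together with splitness: for $0\neq w\in e_i\text{Soc}_{gr}(A)_{h_i}$ and $a=\lambda e_i+n\in e_iA_0e_i$ with $n\in e_iJ(A_0)e_i$, associativity gives $(a,w)=\lambda (e_i,w)+(e_i,nw)=\lambda (e_i,w)$, so $(e_i,w)=0$ would contradict nondegeneracy.

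The second gap is in part 3, where you correctly locate the crux --- a single $w_i$ must govern the pairings $e_iAe_j\times e_jAe_{\nu (i)}\to K$ for all $j$ simultaneously --- but your proposed resolution (``comparing the induced isomorphism $e_jAe_{\nu (i)}[h_i]\cong D(e_iAe_j)$ against the multiplication pairing'') is not yet an argument. The missing idea is to use axiom (1) to collapse the whole form to one functional per index: $(a,b)=(e_ia,b)=(e_i,ab)$, so the restriction of $(-,-)$ to row $i$ is $\phi_i\circ (\text{multiplication})$, where $\phi_i:=(e_i,-):e_iA_{h_i}e_{\nu (i)}\to K$. For (a)$\Rightarrow$(b) one then checks $\phi_i(w_i)\neq 0$ (the computation above), normalizes $w_i$ so that $\phi_i(w_i)=1$, and takes $\mathcal{B}_i=\{w_i\}\cup \mathcal{C}_i$ with $\mathcal{C}_i$ a basis of $\ker \phi_i$: the $w_i$-coefficient with respect to this basis is exactly $\phi_i$, which settles the ``all $j$ at once'' difficulty in one line. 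Conversely, for (b)$\Rightarrow$(a) the verification of nondegeneracy is not the ``direct computation'' you suggest: given $0\neq b\in e_jAe_{\nu (i)}$ one needs a multiple of $b$ landing on the line $Kw_i$ in degree $h_i$, which uses simplicity \emph{and essentiality} of $\text{Soc}_{gr}(Ae_{\nu (i)})$ --- again facts that must come from the duality isomorphism $e_iA\cong D(Ae_{\nu (i)})[h_i]$ attached to some ambient Nakayama form, not from Corollary \ref{cor.Frobenius=simple socle}.
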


\begin{defi} \label{defi.graded Nakayama form associated to basis}
Let $A=\oplus_{h\in H}A_h$ be a split pseudo-Frobenius graded
algebra, with $(e_i)_{i\in I}$ as weakly basic distinguished family
of idempotents and $\nu :I\longrightarrow I$ as Nakayama
permutation. Given a pair $(\mathcal{B},\mathbf{h})$ consisting of
an element $\mathbf{h}=(h_i)_{i\in I}$ of $\prod_{i\in
I}\text{Supp}(e_i\text{Soc}_{gr}(A))$ and a family
$\mathcal{B}=(\mathcal{B}_i)_{i\in I}$, where $\mathcal{B}_i$ is a
basis of $e_iA_{h_i}e_{\nu (i)}$ containing an element of
$e_iSoc_{gr}(A)$,  for each $i\in I$, we call \emph{graded Nakayama
form associated to $(\mathcal{B},\mathbf{h})$} to the bilinear form
$(-,-):A\times A\longrightarrow K$ determined by the conditions b.ii
and b.iii of last proposition. When $\mathbf{h}$ is constant, i.e.
there is $h\in H$ such that $h_i=h$ for all $i\in I$,  we will call
$(-,-)$ the graded Nakayama form of $A$ of degree $h$ associated to
$\mathcal{B}$.
\end{defi}

We now assume that $G$ is a group
acting on $A$ as a group of graded automorphisms (of degree $0$)
which permutes the $e_i$. That is, if $\text{Aut}^{gr}(A)$ denotes
the group of graded automorphisms of degree $0$ which permute the
$e_i$, then we have a group homomorphism $\varphi
:G\longrightarrow\text{Aut}^{gr}(A)$. We will write $a^g=\varphi
(g)(a)$, for each $a\in A$ and $g\in G$.

The following definition will be needed for our purposes.

\begin{defi}
Let $A=\oplus_{h\in H}A_h$ be a graded pseudo-Frobenius algebra and
$G$ be a group acting on $A$ as graded automorphisms. A graded
Nakayama form $(-,-):A\times A\longrightarrow K$ will be called
\emph{$G$-invariant} when $(a^g,b^g)=(a,b)$, for all $a,b\in A$ and
all $g\in G$.
\end{defi}

We say that a group $G$ as above \emph{acts
freely on objects} when $g(i)\neq i$, for all $i\in I$ and $g\in
G\setminus\{1\}$. In such case one can consider the \emph{orbit category}
$A/G$. The objects of this category are the $G$-orbits $[i]$ of
indices $i\in I$ and the morphisms from $[i]$ to $[j]$ are formal
sums $\sum_{g\in G}[a_g]$, where $[a_g]$ is the $G$-orbit of an
element $a_g\in e_iAe_{g(j)}$. This definition does not depend on
$i,j$, but just on the orbits $[i],[j]$. The anticomposition of
morphisms extends by $K$-linearity the following rule. If
$a,b\in\bigcup_{i,j\in I}e_iAe_j$ and $[a]$, $[b]$ denote the
$G$-orbits of $a$ and $b$, then $[a]\cdot [b]=0$, in case
$[t(a)]\neq [i(b)]$, where $t(a)$ and $i(b)$ denote the terminus vertex of $a$ and the initial vertex of $b$, and $[a]\cdot [b]=[ab^g]$, in case
$[t(a)]=[i(b)]$,  where $g$ is the unique element of $G$ such that
$g(i(b))=t(a)$.

We now recall a result from \cite{AS} concerning the preservation of
the pseudo-Frobenius condition via the canonical projection $\pi: A\longrightarrow A/G$.
with takes $a\rightsquigarrow [a]$.

\begin{prop} \label{prop.gr-Frobenius via pushdown functor}
Let $A=\oplus_{h\in H}A_h$ be a (split basic)  locally
bounded graded pseudo-Frobenius algebra, with $(e_i)_{i\in I}$ as weakly basic distinguished family of
orthogonal homogeneous idempotents,  and let $G$ be a group which
acts on $A$ as graded automorphisms which permute the $e_i$ and
which acts freely on objects. If there exists a $G$-invariant graded Nakayama form
$(-,-):A\times A\longrightarrow K$. Then $\Lambda =A/G$ is a (split basic)  locally bounded
graded pseudo-Frobenius algebra whose
graded Nakayama form is induced from $(-,-)$.
\end{prop}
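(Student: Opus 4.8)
The plan is to push the $G$-invariant form down along the canonical projection $\pi:A\longrightarrow\Lambda=A/G$ and verify directly the three axioms of Definition \ref{defi.graded Nakayama form}. I fix a representative $i$ of each orbit $[i]$, so that, writing $\bar{e}_{[i]}=[e_i]$ for the idempotent of $\Lambda$ attached to $[i]$, the map $a\rightsquigarrow[a]$ identifies $\bar{e}_{[i]}\Lambda\bar{e}_{[j]}$ with $\bigoplus_{g\in G}[e_iAe_{g(j)}]$; this map is injective on each $e_iAe_{g(j)}$ precisely because $G$ acts freely on objects. Since $G$ acts by degree-$0$ automorphisms, $\Lambda$ inherits the $H$-grading, and both local finite dimensionality and local boundedness of $\Lambda$ follow from those of $A$ together with freeness (for a fixed degree $h$ only finitely many $g$ satisfy $g(j)\in I^{(i,h)}$, and only finitely many orbits meet $I^{(i,h)}$). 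Before building the form I would record two consequences of $G$-invariance. First, the Nakayama permutation of $A$ is $G$-equivariant, $\nu\circ g=g\circ\nu$: from $(a,b)=(a^g,b^g)$ one gets $(e_{g(i)}A,Ae_{g\nu(i)})\neq 0$, and comparison with the defining property of $\nu(g(i))$ forces $g\nu(i)=\nu(g(i))$; hence $\bar{\nu}([i]):=[\nu(i)]$ is a well-defined bijection of the orbit set. Second, the degree map is $G$-invariant, $h_{g(i)}=h_i$, because the automorphisms have degree $0$, so $\bar{\mathbf{h}}([i]):=h_i$ is well defined.

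The induced form I propose is $(\pi(a),\pi(b))_\Lambda:=\sum_{g\in G}(a,b^g)_A$. The key observation is that this sum has at most one nonzero term: for homogeneous $a\in e_iAe_j$ and $b\in e_{j'}Ae_k$ the summand $(a,b^g)_A$ vanishes unless $t(a)=i(b^g)$, i.e. $j=g(j')$, which by freeness determines a single $g_0$; thus $(\pi(a),\pi(b))_\Lambda=(a,b^{g_0})_A$, where $ab^{g_0}$ is exactly the $A$-representative of the composite $[a]\cdot[b]$. Using the invariance identity $(a^h,b^g)_A=(a,b^{h^{-1}g})_A$ one checks at once that the value is independent of the chosen representatives of $[a]$ and $[b]$, so the form descends to $\Lambda$, and by $G$-invariance of $\mathbf{h}$ it is graded of degree $h_i$ on the relevant blocks. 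Axiom 1, $(\alpha\beta,\gamma)_\Lambda=(\alpha,\beta\gamma)_\Lambda$, then reduces to the associativity of $(-,-)_A$ once representatives are aligned so that the surviving twists compose consistently with the anticomposition rule of $A/G$; this is a routine bookkeeping.

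For axioms 2 and 3 I would compute the pairing $\bar{e}_{[i]}\Lambda\bar{e}_{[j]}\times\bar{e}_{[j]}\Lambda\bar{e}_{[\nu(i)]}\longrightarrow K$ in these coordinates. Taking $a\in e_iAe_{g(j)}$ and $b\in e_jAe_{g'(\nu(i))}$, the single surviving term is $(a,b^{g})_A$, and it is nonzero only if $b^{g}\in Ae_{\nu(i)}$, i.e. $gg'(\nu(i))=\nu(i)$, i.e. $g'=g^{-1}$ by freeness. Hence the pairing is block anti-diagonal: the $g$-block of the first factor pairs, via the isomorphism $b\rightsquigarrow b^{g}$ and the nondegenerate $A$-form on $e_iAe_{g(j)}\times e_{g(j)}Ae_{\nu(i)}$, with the $g^{-1}$-block of the second, while all other blocks pair to zero. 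A direct sum of nondegenerate pairings arranged anti-diagonally is nondegenerate, so $(-,-)_\Lambda$ is a nondegenerate graded pairing of degree $\bar{\mathbf{h}}([i])$ on these blocks, with $\bar{\nu}$ as Nakayama permutation and $\bar{\mathbf{h}}$ as degree map. This proves that $\Lambda$ is graded pseudo-Frobenius with the form induced from $(-,-)$.

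The step I expect to be the main obstacle is not the form but the assertion that $\Lambda$ stays (split) basic, i.e. that each $\bar{e}_{[i]}$ remains a primitive homogeneous idempotent with $\bar{e}_{[i]}\Lambda_0\bar{e}_{[i]}$ local of residue $K$. This endomorphism algebra equals $\bigoplus_{g}(e_iAe_{g(i)})_0$; it is finite dimensional and carries a $G$-grading (the $g$-component times the $g'$-component lands in the $gg'$-component), with identity component $e_iA_0e_i$ local of residue $K$. The subtlety is that a product of two non-identity components can land back in the identity component, so one cannot simply discard the off-diagonal part; the projection onto the identity component followed by $e_iA_0e_i\twoheadrightarrow K$ must be shown to be an algebra map with nilpotent kernel. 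Equivalently, one must show that $\pi$ carries $J^{gr}(A)$ onto $J^{gr}(\Lambda)$, so that every off-identity component and the radical $e_iJ(A_0)e_i$ land in the graded radical of $\Lambda$. This is the covering-theoretic heart of the statement; I would invoke it as established in \cite{AS}, or reprove it using the radical filtration and the finite dimensionality of each $\bar{e}_{[i]}\Lambda_0\bar{e}_{[i]}$, to conclude that $\Lambda$ is split basic locally bounded.
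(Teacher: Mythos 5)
The paper never proves Proposition \ref{prop.gr-Frobenius via pushdown functor}: it is explicitly recalled from \cite{AS}, so there is no internal argument to compare yours against, and your proposal must be judged on its own merits. On those merits it is correct in its main construction, and it reconstructs what is almost certainly the intended proof. The induced form $\langle [a],[b]\rangle:=\sum_{g\in G}(a,b^g)$, the observation that freeness on objects leaves at most one surviving summand, the equivariance statements $\nu\circ g=g\circ\nu$ and $h_{g(i)}=h_i$ extracted from $G$-invariance (the first of these is in fact recorded in the paper as part of Corollary \ref{cor.G-invariant basis and Nakayama form}), the check of $(\alpha\beta,\gamma)=(\alpha,\beta\gamma)$ via the unique surviving twist, and the block anti-diagonal nondegeneracy (the $g$-block pairing perfectly with the $g^{-1}$-block in each complementary pair of degrees, by local finiteness) all hold up. A useful confirmation that your formula is the ``right'' induced form is that it yields $[x]^{*}=[x^{*}]$ for a $G$-invariant basis and its dual basis, which is exactly how the induced form is used later in the proof of Corollary \ref{cor. mu for Lambda}. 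You have also correctly located the one genuinely nontrivial remaining ingredient: since Definition \ref{defi.pseudo-Frobenius graded algebras} requires $\Lambda$ to be weakly basic before ``graded pseudo-Frobenius'' even makes sense, one must show that $\bar{e}_{[i]}\Lambda_0\bar{e}_{[i]}$ is local with the right residue field and that the components $e_iA_0e_{g(i)}$, $g\neq 1$, land in $J^{gr}(\Lambda)$ — equivalently, the covering-theoretic fact that $\pi$ carries the graded radical onto the graded radical. Your diagnosis is accurate: in the finite-dimensional $G$-graded algebra $\bar{e}_{[i]}\Lambda_0\bar{e}_{[i]}=\oplus_{g}(e_iA_0e_{g(i)})$ the kernel of the projection-to-residue map is a codimension-one ideal, and codimension-one ideals need not be nilpotent, so a genuine radical-covering argument (Gabriel-style, adapted to the locally bounded graded setting) is required and cannot be waved away. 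Delegating that step to \cite{AS} is legitimate in this context, since that is precisely the reference the present paper invokes for the whole proposition; a fully self-contained write-up would need to supply that radical statement, but nothing in your outline would fail.
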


Under the assumptions of last proposition, it is known that
the functor $\pi$ is a \emph{covering
functor}, that is, it is surjective on objects (i.e., vertices) and induces bijective
maps $\oplus_{i'\in \pi^{-1}(j)}e_iA_he_{i'}\longrightarrow e_{\pi(i)}\Lambda_he_j$
and $\oplus_{i'\in \pi^{-1}(j)}e_{i'}A_he_{i}\longrightarrow
e_{j}\Lambda_he_{\pi(i)}$, for each
$(i,j,h)\in I\times J\times H$. Furthermore, the pushdown functor $\pi_{\lambda}: A-Gr\rightarrow \Lambda-Gr$
which takes $Ae_i\rightsquigarrow \Lambda e_{[i]}$ is exact (see, e.g., \cite{CM} and \cite{Asa2} for further
details).

The following result ensures that, in the split case, $G$-invariant graded Nakayama forms always exist, a fact which allows to apply last proposition.

\begin{cor} \label{cor.G-invariant basis and Nakayama form}
Let $A=\oplus_{h\in H}A_h$ be a split  graded pseudo-Frobenius
algebra with Nakayama permutation $\nu$ and let $G$ be a group of graded automorphisms of $A$ which
permute the $e_i$ and acts freely on objects. The equality $\nu (g(i))=g(\nu (i))$ holds, for all $g\in G$ and $i\in I$.  Moreover, there exist an element
$\mathbf{h}=(h_i)_{i\in I}\in\prod_{i\in I}Supp(e_iSoc_{gr}(A))$ and a
basis $\mathcal{B}_i$ of $e_iA_{h_i}e_{\nu (i)}$, for each $i\in I$, satisfying
the following properties:

\begin{enumerate}
\item $h_i=h_{g(i)}$, for all $i\in I$ \item
$g(\mathcal{B}_i)=\mathcal{B}_{g(i)}$ and $\mathcal{B}_i$ contains an
element of $e_iSoc_{gr}(A)$, for all $i\in I$
\end{enumerate}
In such case, letting $\mathcal{B}=\bigcup_{i\in I}\mathcal{B}_i$, the graded Nakayama form  associated to the pair $(\mathcal{B},\mathbf{h})$
 (see definition \ref{defi.graded Nakayama form associated to basis})
is $G$-invariant.
\end{cor}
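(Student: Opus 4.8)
The plan is to prove the three assertions of Corollary \ref{cor.G-invariant basis and Nakayama form} in order, using Proposition \ref{prop.graded Nak-form via basis} as the main tool. First I would establish the equivariance of the Nakayama permutation, namely $\nu(g(i))=g(\nu(i))$. By Proposition \ref{prop.graded Nak-form via basis}(1), $\nu(i)$ is characterized as the unique index with $e_i\text{Soc}_{gr}(A)e_{\nu(i)}\neq 0$. Since each $g\in G$ acts as a graded automorphism permuting the $e_i$, it maps the graded socle to itself (the socle being defined intrinsically from the module structure), and hence $g(e_i\text{Soc}_{gr}(A)e_{\nu(i)})=e_{g(i)}\text{Soc}_{gr}(A)e_{g(\nu(i))}$. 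As $g$ is bijective this latter space is nonzero, so by uniqueness $g(\nu(i))=\nu(g(i))$, which is the claim.

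Next I would construct the $G$-invariant data $(\mathbf{h},(\mathcal{B}_i))$. The degrees are forced: for each orbit I pick one representative $i_0$ and an element $h_{i_0}\in\text{Supp}(e_{i_0}\text{Soc}_{gr}(A))$, then \emph{define} $h_{g(i_0)}:=h_{i_0}$ for every $g\in G$. Because $g$ carries $e_{i_0}\text{Soc}_{gr}(A)$ isomorphically onto $e_{g(i_0)}\text{Soc}_{gr}(A)$ as graded spaces of degree $0$, it preserves supports, so this $h_{g(i_0)}$ indeed lies in $\text{Supp}(e_{g(i_0)}\text{Soc}_{gr}(A))$; condition (1), $h_i=h_{g(i)}$, then holds by construction (here freeness on objects guarantees that distinct group elements give distinct indices, so the assignment is well defined). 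Similarly, on each orbit I choose a basis $\mathcal{B}_{i_0}$ of $e_{i_0}A_{h_{i_0}}e_{\nu(i_0)}$ containing an element $w_{i_0}\in e_{i_0}\text{Soc}_{gr}(A)_{h_{i_0}}$ — such a socle element exists and, by Proposition \ref{prop.graded Nak-form via basis}(2), spans a one-dimensional space — and then transport it by setting $\mathcal{B}_{g(i_0)}:=g(\mathcal{B}_{i_0})$. Using the equivariance $\nu(g(i_0))=g(\nu(i_0))$ from the first step, $g$ maps $e_{i_0}A_{h_{i_0}}e_{\nu(i_0)}$ onto $e_{g(i_0)}A_{h_{g(i_0)}}e_{\nu(g(i_0))}$, so $\mathcal{B}_{g(i_0)}$ is a genuine basis of the correct space, and $g(w_{i_0})$ is a socle element sitting inside it, giving condition (2).

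Finally I would check $G$-invariance of the associated Nakayama form. Let $(-,-)$ be the form associated to $(\mathcal{B},\mathbf{h})$ via Definition \ref{defi.graded Nakayama form associated to basis}. For homogeneous $a\in e_iA_he_j$ and $b\in e_jA_{h_i-h}e_{\nu(i)}$, condition b.iii says $(a,b)$ is the coefficient of $w_i$ in the expansion of $ab$ in $\mathcal{B}_i$. Applying $g$, one has $a^g\in e_{g(i)}A_he_{g(j)}$ and $b^g\in e_{g(j)}A_{h_{g(i)}-h}e_{\nu(g(i))}$, so $(a^g,b^g)$ is the coefficient of $w_{g(i)}$ in the $\mathcal{B}_{g(i)}$-expansion of $a^gb^g=(ab)^g$. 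Since $g$ is an algebra automorphism sending $\mathcal{B}_i$ to $\mathcal{B}_{g(i)}$ and $w_i$ to $w_{g(i)}$, it carries the expansion of $ab$ to that of $(ab)^g$ coefficient-by-coefficient; hence the two coefficients coincide and $(a^g,b^g)=(a,b)$. The cross terms vanish identically on both sides by b.ii, so the identity extends by bilinearity to all of $A\times A$. I expect the only real subtlety — the place to argue carefully rather than routinely — to be the first step, ensuring that $g$ genuinely stabilizes the graded socle and respects supports so that the characterization of $\nu$ and the validity of the transported data are preserved; once that equivariance is in hand, the construction and the invariance check are essentially bookkeeping with Proposition \ref{prop.graded Nak-form via basis}.
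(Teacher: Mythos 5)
Your proof is correct and takes essentially the same approach as the paper: the paper itself defers the full verification to \cite{AS} and only records the construction in Remark \ref{rem. G-invarinat basis}, which is precisely your orbit-representative scheme (fix $i_0\in I_0$, choose $h_{i_0}$ and $\mathcal{B}_{i_0}$ containing a socle element, transport by the $G$-action using $\nu\circ g=g\circ\nu$, and verify invariance of the associated form through Proposition \ref{prop.graded Nak-form via basis}). The details you single out for care---that each $g$ stabilizes $\text{Soc}_{gr}(A)$ and preserves supports so the characterization of $\nu$ transports, and that freeness of the action makes the assignments $h_{g(i_0)}:=h_{i_0}$, $\mathcal{B}_{g(i_0)}:=g(\mathcal{B}_{i_0})$ well defined---are exactly the points the paper leaves implicit, and you handle them correctly, including the identification $w_{g(i)}=g(w_i)$ forced by the one-dimensionality of $e_i\text{Soc}_{gr}(A)_{h_i}$.
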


\begin{rem}\label{rem. G-invarinat basis} The basis of the previous corollary is constructed as follows.
We fix a subset $I_0\subseteq I$ representing the $G$-orbits of objects and, for each $i\in I_0$,
we fix an $h_i\in
Supp(e_iSoc_{gr}(A))$ and a basis $\mathcal{B}_i$ of $e_iA_{h_i}e_{\nu (i)}$
containing an element $w_i\in e_iSoc_{gr}(A)$. Then, for each $j\in I$, we define
$\mathcal{B}_j=g(\mathcal{B}_i)$ where $i\in I_0$ and $g\in G$ are the unique elements such that $j=g(i)$.

When $A$ is split locally bounded pseudo-Frobenius,  we have that $\eta\circ g= g\circ \eta$, for all $g\in G$,
and hence, the Nakayama automorphism $\bar{\eta}$ of $\Lambda=B/G$ is induced from $\eta$, i.e., $\bar{\eta}([a])= [\eta(a)]$,
for each $a\in \bigcup_{i,j} e_iAe_j$.

\end{rem}

\section{The mesh algebra of a Dynkin quiver}

\subsection{Stable translation quivers}
\label{subsect.stable translation quivers}

Recall that a \emph{quiver} or \emph{oriented graph} is a quadruple
$Q=(Q_0,Q_1,i,t)$, where $Q_0$ and $Q_1$ are sets, whose elements
are called \emph{vertices} and \emph{arrows} respectively, and
$i,t:Q_1\longrightarrow Q_0$ are maps. If $a\in Q_1$ then $i(a)$ and
$t(a)$ are called the \emph{origin} (or $\emph{initial vertex}$) and
the \emph{terminus} of $a$.

Given a quiver $Q$, a \text{path} in $Q$ is a concatenation of
arrows $p=a_1a_2...a_r$ such that $t(a_k)=i(a_{k+1})$, for all
$k=1,...,r$. In such case, we put $i(p)=i(a_1)$ and $t(p)=t(a_r)$
and call them the origin and terminus of $p$. The number $r$ is the
\emph{length} of $p$ and we view the vertices of $Q$ as paths of
length $0$. The \emph{path algebra} of $Q$, denoted by $KQ$,  is the
$K$-vector space with basis the set of paths, where the
multiplication extends by $K$-linearity the multiplication of paths.
This multiplication is defined as $pq=0$, when $t(p)\neq i(q)$, and
$pq$ is the obvious concatenation path, when $t(p)=i(q)$. The
algebra $KQ$ is an algebra with enough idempotents, where $Q_0$ is a
a distinguished family of orthogonal idempotents. If $i\in Q_0$ is a
vertex, we will write it as $e_i$ when we view it as an element of
$KQ$.

A \emph{stable translation quiver} is  a pair $(\Gamma,
\tau)$,  where $\Gamma$ is a \emph{locally finite quiver} (i.e. given any
vertex,  there is a finite number of arrows having it as origin or
terminus) and $\tau: \Gamma_0\rightarrow \Gamma_0$ is a bijective
map such that for any $x,y\in \Gamma_0$, the number of arrows from
$x$ to $y$ is equal to the number of arrows from $\tau(y)$ to $x$.
The map $\tau$ will be called the \emph{Auslander-Reiten
translation}. Throughout the rest of the work, whenever we have a
stable translation quiver, we will also fix a bijection $\sigma:
\Gamma_1(x,y)\rightarrow \Gamma_1(\tau(y),x)$ called a
\emph{polarization} of $(\Gamma, \tau)$. Note that, from the
definition of $\sigma$, one gets that $\tau$ can be extended to a
graph automorphism of $\Gamma$ by setting
$\tau(\alpha)=\sigma^2(\alpha)$ $\forall \alpha\in \Gamma_1$. If
$K\Gamma$ denotes the path algebra of $\Gamma$, then the \emph{mesh
algebra} of $\Gamma$ is $K(\Gamma )=K\Gamma /I$, where $I$ is the
ideal of $K\Gamma$ generated by the so-called \emph{mesh relations}
$r_x$, where $r_x=\sum_{a\in\Gamma_1\text{, }t(a)=x}\sigma (a)a$,
for each $x\in\Gamma_0$.
Therefore $K(\Gamma )$ is canonically a positively
($\mathbb{Z}$-)graded algebra with enough idempotents, where the grading
is induced by the path length, and $\tau$
becomes a graded automorphism of $K(\Gamma )$.

The typical example of stable translation quiver  is the following.
Given a locally finite quiver $\Delta$, the stable translation quiver
$\mathbb{Z}\Delta$ will have as  set of vertices
$\mathbb{Z}\Delta_0:=\mathbb{Z}\times\Delta_0$. Moreover,   for
each arrow $\alpha: x\rightarrow y$ in $\Delta_1$, we have arrows
$(n,\alpha): (n,x)\rightarrow (n,y)$ and $(n,\alpha)':
(n,y)\rightarrow (n+1,x)$ in $\mathbb{Z}\Delta_1$. Finally, we
define $\tau(n,x)=(n-1,x)$, for each $(n,x)\in\mathbb{Z}\Delta_0$,
and $\sigma(n,\alpha)=(n-1,\alpha)'$ and
$\sigma[(n,\alpha)']=(n,\alpha)$.

In general, different quivers $\Delta$ and $\Delta '$ with the same
underlying  graph give non-isomorphic translation quivers
$\mathbb{Z}\Delta$ and $\mathbb{Z}\Delta '$. However, when $\Delta$
is a tree, e.g. when $\Delta$ is any of the Dynkin quivers
$\mathbb{A}_n, \mathbb{D}_{n+1},
\mathbb{E}_{6},\mathbb{E}_{7},\mathbb{E}_{8}$, the isoclass of the
translation quiver $\mathbb{Z}\Delta$ does not depend on the
orientation of the arrows.

A group of automorphism $G$ of a stable translation quiver $(\Gamma
,\tau )$ is a group of automorphism of $\Gamma$ which commute with
$\tau$ and $\sigma$. Such a group is called \emph{weakly admissible}
when $x^+\cap (gx)^+=\emptyset$, for each $x\in\Gamma_0$ and $g\in G\backslash \{1\}$, where
$x^+:=\{y\in\Gamma_0:$ $\Gamma_1(x,y)\neq\emptyset\}$. In such a
case, when $G$ acts freely on objects,  the orbit quiver $\Gamma/G$
inherits a structure of stable translation quiver, with the AR
translation $\bar{\tau}$ mapping $[x]\rightsquigarrow [\tau (x)]$,
for each $x\in\Gamma_0\cup\Gamma_1$. Moreover, the group $G$ can be
interpreted as a group of graded automorphisms of  the mesh algebra
$K(\Gamma )$ and $K(\Gamma)/G$ is canonically isomorphic to the mesh
algebra of $\Gamma /G$.

\subsection{The mesh algebra of a Dynkin quiver. Basic properties}
\label{subsect.definition mesh algebra}

Throughout this section $\Delta$ will be one of the Dynkin quivers
$\mathbb{A}_n$, $\mathbb{D}_{n+1}$ ($n\geq 3$) or $\mathbb{E}_n$
($n=6,7,8$), and  $\mathbb{Z}\Delta$ will be  the associated
translation quiver. Its path algebra will be denoted by
$K\mathbb{Z}\Delta$ and we will put $B=K(\mathbb{Z}\Delta )$ for the
mesh algebra.

When $\Delta =\mathbb{A}_{2n-1}$, $\mathbb{E}_{6}$ or
$\mathbb{D}_{n+1}$, with $n>3$,  the underlying unoriented graph
admits a canonical automorphism $\rho$ of order $2$. Similarly,
$\mathbb{D}_{4}$ admits an automorphism of order $3$. In each case,
the  automorphism $\rho$ extends to an automorphism of
$\mathbb{Z}\Delta$ with the same order. In the case of
$\mathbb{A}_{2n}$ the canonical automorphism of order $2$ of the
underlying graph extends to an automorphism of $\mathbb{Z}\Delta$,
but this automorphism has infinite order. It is still denoted by
$\rho$ and it plays, in some sense, a similar role to the other
cases. This automorphism of \textbf{$\mathbb{Z}\mathbb{A}_{2n}$} is obtained by
applying the symmetry with respect to the central horizontal line and moving
half a unit to the right. Note that we have $\rho^2=\tau^{-1}$.

Although the orientation in $\Delta$ does not change the isomorphism
type of $\mathbb{Z}\Delta$, in order to numbering the vertices of
$\mathbb{Z}\Delta$ we need to fix an orientation in $\Delta$. Below
we fix such an orientation, and then give the corresponding
definition of the automorphism $\rho$ of $\mathbb{Z}\Delta$
mentioned above.

\begin{enumerate}
\item If $\Delta= \mathbb{A}_{2n}:$
$$\xymatrix{1 \ar[r] & 2 \ar[r] & \cdots \ar[r] & 2n},$$ then $\rho (k,i)=(k+i-n,2n+1-i)$.
\item If $\Delta= \mathbb{A}_{2n-1}:$
$$\xymatrix{1 \ar[r] & 2 \ar[r] & \cdots \ar[r] & 2n-1},$$ then $\rho (k,i)=(k+i-n,2n-i)$.
\item $\Delta =\mathbb{D}_{n+1}$:

$$\xymatrix{ 0 & & & \\ & 2\ar[r] \ar[ul] \ar[dl] & \cdots \ar[r] & n
\\ 1 & & &} $$ with $n>3$,  then $\rho (k,0)=(k,1)$, $\rho (k,1)=(k,0)$ and
$\rho$ fixes all vertices $(k,i)$, with $i\neq 0,1$.

\item If $\Delta =\mathbb{D}_4$:

$$\xymatrix{ 0 & &  \\ & 2\ar[r] \ar[ul] \ar[dl] & 3
\\ 1 & & } $$

then $\rho$ fixes the vertices $(k,2)$ and, for $k$ fixed, it
applies the $3$-cycle $(013)$ to the second component of each vertex
$(k,i)$.

\item  If $\Delta=\mathbb{E}_6$:

$$\xymatrix{& & 0 & & \\ 5 & 4 \ar[l] & 3 \ar[l] \ar[u] & 2 \ar[l] & 1\ar[l]}
$$

then $\rho (k,i)=(k+i-3,6-i)$ for all $i\neq 0$ and $\rho(k,0)=(k,0)$.

\item  If $\Delta=\mathbb{E}_7$:

$$\xymatrix{& & & 0 & & \\ 6  & 5 \ar[l] & 4 \ar[l] & 3 \ar[l] \ar[u] & 2 \ar[l] & 1\ar[l]}
$$

\item  If $\Delta=\mathbb{E}_8$:

$$\xymatrix{& & & & 0 & & \\ 7 & 6 \ar[l] & 5 \ar[l] & 4 \ar[l] & 3 \ar[l] \ar[u] & 2 \ar[l] & 1\ar[l]}
$$

\end{enumerate}

The following facts are well-known (cf. \cite{BLR}[Section 1.1] and
\cite{G}[Section 6.5]).

\begin{prop}\label{prop:Coxeter-Nakayama} Let $\Delta$ be a Dynkin quiver, $\bar{\Delta}$ be its associated graph,
 $c_{\Delta}$ be its Coxeter number and  $B=K(\mathbb{Z}\Delta)$
be the mesh algebra of the stable translation quiver $\mathbb{Z}\Delta$.
The following assertions hold:

\begin{enumerate}
\item Each path of length $>c_{\Delta }-2$
in $\mathbb{Z}\Delta$ is zero in $B$. \item For each $(k,i)\in
\mathbb{Z}\Delta_0$, there is a unique vertex $\nu (k,i)\in
(\mathbb{Z}\Delta )_0$ for which there is a path $(k,i)\rightarrow
...\rightarrow\nu (k,i)$ in $\mathbb{Z}\Delta$ of length $c_\Delta
-2$  which is nonzero in $B$. This path is unique, up to sign in
$B$.
\item If $(k,i)\rightarrow ...\rightarrow (m,j)$ is a nonzero path
then there is a path $q:(m,j)\rightarrow ...\rightarrow\nu (k,i)$
such that $pq$ is a nonzero path (of length $c_\Delta -2$)
\item The assignment $(k,i)\rightsquigarrow\nu (k,i)$ gives a
bijection $\nu :(\mathbb{Z}\Delta
)_0\longrightarrow(\mathbb{Z}\Delta )_0$, called the \emph{Nakayama
permutation}.

\item The vertex $\nu (k,i)$ is given as follows:

\begin{enumerate}
\item If $\Delta=\mathbb{A}_{r}$, with $r=2n$ or $2n-1$, (hence $c_{\Delta}= r+1$), then
  $\nu
(k,i)=\rho\tau^{1-n}(k,i)=(k+i-1,r+1-i)$
 \item If
$\Delta =\mathbb{D}_{n+1}$ (hence $c_{\Delta}= 2n$), then

\begin{enumerate}
 \item $\nu
(k,i)=\tau^{1-n} (k,i)=(k+n-1,i)$, in case $n+1$ is even
\item $\nu (k,i)=\rho\tau^{1-n}(k,i)$, in case $n+1$ is odd.
\end{enumerate}
\item If $\Delta=\mathbb{E}_6$
 (hence $c_\Delta$=12),  then
$\nu (k,i)=\rho\tau^{-5}(k,i)$.
\item If $\bar{\Delta}=\mathbb{E}_7$ (hence $c_\Delta =18$), with any
orientation, then $\nu (k,i)=\tau^{-8}(k,i)=(k+8,i)$ \item If
$\bar{\Delta}=\mathbb{E}_8$ (hence $c_\Delta =30$), with any
orientation, then $\nu (k,i)=\tau^{-14}(k,i)=(k+14,i)$.
\end{enumerate}
\end{enumerate}
\end{prop}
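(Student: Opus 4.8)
The plan is to reduce parts (1)--(4) to a single structural statement — that $B$ is a graded self-injective algebra whose socle, as a right (equivalently left) module, is simple and concentrated in degree $c_\Delta-2$ — and then to treat the explicit formula (5) as a separate type-by-type identification. First I would set up the representation-theoretic dictionary. For $\Delta$ Dynkin, the mesh category of $\mathbb{Z}\Delta$ is identified with the category of indecomposable objects of $D^b(K\Delta)$ (the theory developed in \cite{BLR} and \cite{G}), so that, after forgetting the grading, $e_xBe_y$ is $\operatorname{Hom}_{D^b(K\Delta)}(X_x,X_y)$ for the indecomposables $X_x,X_y$ attached to the vertices, with the path-length grading equal to the radical filtration by compositions of irreducible maps. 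Serre duality, with Serre functor $S=\tau[1]$, then gives $\operatorname{Hom}(X_x,X_y)\cong D\operatorname{Hom}(X_y,SX_x)$. The Dynkin-specific input — where the Coxeter number enters — is that the longest nonzero composition of irreducible maps out of $X_x$ has length \emph{exactly} $c_\Delta-2$ and lands, up to scalar, in the one-dimensional space $\operatorname{Hom}(X_x,SX_x)$. Setting $X_{\nu(x)}:=SX_x$ yields (1), (2) and the simplicity of $\text{Soc}(e_xB)$ in degree $c_\Delta-2$; alternatively one obtains the same facts combinatorially via the additive-function and hammock analysis of $\mathbb{Z}\Delta$ in \cite{BLR}, \cite{G}. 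Granting this, Corollary \ref{cor.Frobenius=simple socle} shows that $B$ is graded Quasi-Frobenius with Nakayama permutation $\nu$, which is (4); and (3) is exactly the nondegeneracy of the multiplication pairing $e_xB_\ell e_y\times e_yB_{c_\Delta-2-\ell}e_{\nu(x)}\longrightarrow e_xB_{c_\Delta-2}e_{\nu(x)}\cong K$ coming from the graded Nakayama form, so any nonzero $p$ of length $\ell$ admits a $q$ with $pq$ hitting the socle.

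For (5) I would note that $\nu$ commutes with $\tau$ and is induced by a quiver automorphism, hence $\nu=\gamma\tau^{m}$ for a graph automorphism $\gamma$ and an integer $m$ determined by the socle degree. Concretely $\nu$ is the vertex action of $S=\tau[1]$, which equals $\tau^{1-c_\Delta/2}$ twisted by the automorphism realizing $-w_0$ (the longest Weyl element): $\gamma=\mathrm{id}$ when $w_0=-1$, i.e. in types $\mathbb{D}_{2k}$, $\mathbb{E}_7$, $\mathbb{E}_8$, and $\gamma=\rho$ otherwise, i.e. in types $\mathbb{A}_r$, $\mathbb{D}_{2k+1}$, $\mathbb{E}_6$. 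Tracking one vertex along its longest path then pins down the displayed formulas: $c_{\mathbb{E}_7}=18$ gives $\tau^{-8}$, $c_{\mathbb{E}_8}=30$ gives $\tau^{-14}$, and $c_{\mathbb{D}_{n+1}}=2n$ gives $\tau^{1-n}$, with $\rho$ inserted precisely when $n+1$ is odd. The only genuinely anomalous case is $\mathbb{A}_{2n}$, where $c_\Delta=2n+1$ is odd and no integral half-power of $\tau$ exists; here one uses the defining relation $\rho^2=\tau^{-1}$, so that $\rho\tau^{1-n}$ supplies the missing $\tau^{1-c_\Delta/2}$ and a direct check gives $\nu(k,i)=(k+i-1,2n+1-i)$.

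The main obstacle is the Dynkin-specific input behind (1) and (2): that the maximal length of a nonzero path is exactly $c_\Delta-2$ and that the top Hom space is one-dimensional. This is where the hypothesis that $\Delta$ is Dynkin is essential — it fails already for extended Dynkin diagrams, where the radical filtration is unbounded — and where the Coxeter number must be produced, either through the fractional Calabi--Yau identity $S^{c_\Delta}\cong[c_\Delta-2]$ for $D^b(K\Delta)$ or through the additive-function count. The remaining difficulty is purely bookkeeping: keeping the fixed orientations and the piecewise definition of $\rho$ mutually consistent across the types, in particular the parity split for $\mathbb{D}_{n+1}$ and the $\rho^2=\tau^{-1}$ subtlety for $\mathbb{A}_{2n}$.
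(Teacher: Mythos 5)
Your proposal is correct in outline, but it is worth saying plainly that the paper does not prove Proposition~\ref{prop:Coxeter-Nakayama} at all: it states these facts as well known, citing \cite{BLR}[Section 1.1] and \cite{G}[Section 6.5], where they are obtained by the combinatorics of translation quivers (additive functions and hammocks). So you have supplied an actual argument where the paper defers to the literature, and your route is genuinely different in organization: you pass through Happel's equivalence between the mesh category $K(\mathbb{Z}\Delta)$ and $\operatorname{ind}D^b(K\Delta)$, read off the one-dimensionality of the top Hom space from $\operatorname{Hom}(X_x,SX_x)\cong D\operatorname{End}(X_x)\cong K$, and derive the closed formulas in (5) uniformly from the action of $[1]$ on $\mathbb{Z}\Delta$, i.e.\ $\nu=\gamma\tau^{1-c_\Delta/2}$ with $\gamma$ the automorphism induced by $-w_0$, handling $\mathbb{A}_{2n}$ via $\rho^2=\tau^{-1}$. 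I checked your coordinate computations for (5) against the paper's list (including both $\mathbb{A}_{2n}$ and $\mathbb{A}_{2n-1}$, and the parity split in $\mathbb{D}_{n+1}$, where $w_0=-1$ exactly when $n+1$ is even) and they agree. What your route buys is a conceptual identification of $\nu$ with the Serre functor and a type-uniform derivation of (5); what it costs is the import of Happel's standardness theorem, and the honest admission -- which you make -- that the sharp bound ``maximal nonzero path length $=c_\Delta-2$'' still requires the same Dynkin-specific input (hammock count or the fractional Calabi--Yau identity $S^{c_\Delta}\cong[c_\Delta-2]$) that the paper's references provide, so neither route makes that part free.

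One logical wrinkle you should repair. You propose to obtain (3) from the nondegenerate pairing attached to the graded Nakayama form, after invoking Corollary~\ref{cor.Frobenius=simple socle} to get the Quasi-Frobenius property. But condition 1(a) of that corollary demands a simple \emph{essential} socle, and essentiality of $\operatorname{Soc}(e_xB)$ is precisely statement (3); as written, your deduction is circular. The fix is immediate and stays inside your framework: Serre duality provides, by definition, a nondegenerate composition pairing $\operatorname{Hom}(X_x,X_y)\times\operatorname{Hom}(X_y,SX_x)\longrightarrow\operatorname{Hom}(X_x,SX_x)\cong K$, so for any nonzero $p\in e_xBe_y$ there is $q'\in e_yBe_{\nu(x)}$ with $pq'\neq 0$, and since $q'$ is a linear combination of paths some path $q$ works. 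This gives (3), hence essentiality, directly from Serre duality, and only then should Corollary~\ref{cor.Frobenius=simple socle} (together with the opposite-algebra argument for the left-sided conditions, as in the paper's Corollary~\ref{cor.simple socle of a projective}) be applied to conclude (4). With that reordering your proof is sound.
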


\begin{cor} \label{cor.simple socle of a projective}
$B$ is a split basic graded Quasi-Frobenius algebra admitting a
graded Nakayama form of degree $\mathit{l}=c_\Delta -2$.
\end{cor}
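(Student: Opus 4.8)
The plan is to deduce the result from Corollary \ref{cor.Frobenius=simple socle}, whose hypotheses can be verified directly from Proposition \ref{prop:Coxeter-Nakayama}. First I would record the elementary structural facts about $B$. Since $\mathbb{Z}\Delta$ is built from the tree $\Delta$ and every arrow either preserves or strictly increases the $\mathbb{Z}$-coordinate while the grading is by path length, the quiver $\mathbb{Z}\Delta$ has no oriented cycles; consequently $e_x B_0 e_x = K e_x \cong K$ for every vertex $x$, whereas $e_x B e_y$ (for $x\neq y$) and $e_x B_h e_x$ (for $h\neq 0$) are spanned by paths of positive length and hence lie in $J^{gr}(B)$. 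This shows at once that $B$ is split basic with the vertices as distinguished family. Part (1) of Proposition \ref{prop:Coxeter-Nakayama} bounds all paths by length $c_\Delta - 2$, so each $e_x B_h e_y$ is finite dimensional and, the quiver being locally finite, $B$ is locally bounded and in particular locally Noetherian; this is exactly the setting of Corollary \ref{cor.Frobenius=simple socle}.

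Next I would compute the graded socles. For the right module $e_x B$, a maximal nonzero path $w$ starting at $x$ satisfies $wa = 0$ for every arrow $a$, since there is no longer nonzero path, so $w$ spans a simple right submodule supported at its terminus. By parts (2)--(3) of Proposition \ref{prop:Coxeter-Nakayama}, every nonzero path from $x$ extends to a nonzero path of length $c_\Delta - 2$ terminating at $\nu(x)$, and such a maximal path is unique up to a scalar. Hence $\mathrm{Soc}_{gr}(e_x B)$ is one-dimensional, concentrated in degree $c_\Delta - 2$ and supported at $\nu(x)$; explicitly $\mathrm{Soc}_{gr}(e_x B) \cong \frac{e_{\nu(x)} B}{e_{\nu(x)} J^{gr}(B)}[c_\Delta - 2]$. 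Essentiality follows again from part (3): any nonzero graded submodule contains a nonzero path, which extends to a maximal one lying in the socle, so the socle meets every nonzero submodule. The symmetric argument, using the maximal nonzero paths terminating at $x$, which start at $\nu^{-1}(x)$, yields the analogous statement for the left module $B e_x$, with permutation $\nu' = \nu^{-1}$.

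With conditions (a) and (b) of Corollary \ref{cor.Frobenius=simple socle} verified, $B$ is graded Quasi-Frobenius, hence also graded pseudo-Frobenius. Finally, since the support of $e_x \mathrm{Soc}_{gr}(B)$ is the single degree $c_\Delta - 2$ for every $x$, the element $\mathbf{h}=(h_x)$ of Proposition \ref{prop.graded Nak-form via basis} is forced to be constant with value $l = c_\Delta - 2$; choosing for each $x$ a basis of $e_x B_{l} e_{\nu(x)}$ containing the socle generator $w_x$ produces, via Definition \ref{defi.graded Nakayama form associated to basis}, a graded Nakayama form of degree $l = c_\Delta - 2$, as claimed.

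The main obstacle is that the whole argument hinges on reading off from Proposition \ref{prop:Coxeter-Nakayama} that the socle of each indecomposable projective is exactly one-dimensional and concentrated in a single degree; the delicate point is the bookkeeping needed to see this simultaneously on the left and on the right, that is, that the permutations $\nu$ and $\nu'$ are mutually inverse and that the common degree $c_\Delta - 2$ is genuinely independent of the vertex, since it is precisely this constancy that upgrades a mere pseudo-Frobenius structure to a graded Nakayama form of a single degree.
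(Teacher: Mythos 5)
Your proposal is correct and follows essentially the same route as the paper: establish that $B$ is split basic and locally Noetherian, read off from Proposition \ref{prop:Coxeter-Nakayama} that each $e_xB$ has a one-dimensional essential graded socle concentrated in degree $c_\Delta-2$ and supported at $\nu(x)$, apply Corollary \ref{cor.Frobenius=simple socle} with $\nu'=\nu^{-1}$, and obtain the constant-degree Nakayama form from Proposition \ref{prop.graded Nak-form via basis}. The only point the paper makes explicit that you leave implicit is the justification of your ``symmetric argument'' for the left modules $Be_x$: since Proposition \ref{prop:Coxeter-Nakayama} as stated is right-sided (its part (3) only extends paths on the right), the paper observes that $B^{op}$ is the mesh algebra of the opposite quiver $\Delta^{op}$, again Dynkin of the same type, which is the clean way to transfer the socle computation to the left side.
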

\begin{proof}
By last proposition, we know that $Be_{(k,i)}$ and $e_{(k,i)}B$ are
finite-dimensional graded $B$-modules. In particular, both are
Noetherian, so that $B$ is a locally Noetherian graded algebra. Note
that $e_{(k,i)}Be_{(k,i)}\cong K$, for each vertex
$(k,i)\in\Gamma_0$, and that $J^{gr}(B)=J(B)$ is the vector subspace
generated by the paths of length $>0$. Therefore $B$ is clearly
split basic. On the other hand, if $\nu$ is the Nakayama permutation
and we fix a nonzero path $w_{(k,i)}:(k,i)\rightarrow...\rightarrow
\nu (k,i)$ of length $\mathit{l}=c_\Delta -2$, then last proposition
says that $w_{(k,i)}$ is in the (graded and ungraded) socle of
$e_{(k,i)}B$.

By conditions 2 and 3 of Proposition \ref{prop:Coxeter-Nakayama}, we
have that  $\text{dim}(\text{Soc}(e_{(k,i)}B))=1$ and that
$\text{Soc}(e_{(k,i)}B)$ is an essential (graded and ungraded)
submodule of $e_{(k,i)}B)$.  Note that $B^{op}$ is the mesh algebra
of the opposite Dynkin quiver $\Delta^{op}$, which is again Dynkin
of the same type. Then also $Be_{(k,i)}$ has essential simple
(graded and ungraded) socle, which is isomorphic to
$S_{\nu^{-1}(k,i)}[\mathit{l}]$ as graded left $B$-module. Then all
conditions of Corollary \ref{cor.Frobenius=simple socle} are
satisfied, with $\nu ' =\nu^{-1}$.

Moreover, by Proposition \ref{prop.graded Nak-form via basis},  we know
that $B$ admits a graded Nakayama form with constant degree function $\mathbf{h}$ such that
$\mathbf{h}(k,i)=\mathit{l}$, for all $(k,i)\in\Gamma_0$.
\end{proof}

\subsection{m-fold mesh algebras}

 When $\Gamma
=\mathbb{Z}\Delta$, with $\Delta$ a Dynkin quiver, it is known that
each weakly admissible group $G$ of automorphisms is infinite cyclic (see
\cite{Ri}, \cite{A}) and below is the list of the resulting stable
translation quivers $\mathbb{Z}\Delta /G$ that appear, where a
generator of $G$ is given in each case (see \cite{D2}). We will denote by
$\langle - \rangle$ the `subgroup generated by' and, in each
case, the following automorphism $\rho$ is always that of the list
preceding Proposition \ref{prop:Coxeter-Nakayama}:

\vspace{1cm}

\hspace{0.5cm}$\bullet$ $\Delta^{(m)}= \mathbb{Z}\Delta/\langle
\tau^m \rangle$, for $\Delta= \mathbb{A}_n, \mathbb{D}_n,
\mathbb{E}_n$.

\hspace{0.5cm}$\bullet$ $\mathbb{B}_n^{(m)}=
\mathbb{Z}\mathbb{A}_{2n-1}/\langle \rho\tau^m \rangle$.

\hspace{0.5cm}$\bullet$ $\mathbb{C}_n^{(m)}=
\mathbb{Z}\mathbb{D}_{n+1}/\langle \rho\tau^m \rangle$.

\hspace{0.5cm}$\bullet$ $\mathbb{F}_4^{(m)}=
\mathbb{Z}\mathbb{E}_{6}/\langle \rho\tau^m \rangle$.

\hspace{0.5cm}$\bullet$ $\mathbb{G}_2^{(m)}=
\mathbb{Z}\mathbb{D}_{4}/\langle \rho\tau^m \rangle$.

\hspace{0.5cm}$\bullet$ $\mathbb{L}_n^{(m)}=
\mathbb{Z}\mathbb{A}_{2n}/\langle \rho\tau^m \rangle$.

 \vspace{1cm}

As shown by Dugas (see \cite{D2}[Section 3]), they are the only
stable translation quivers with finite-dimensional mesh algebras. These
mesh algebras are isomorphic to $\Lambda =B/G$ in each case, where
$B$ is the mesh algebra of $\mathbb{Z}\Delta$. Abusing of notation,
we will simply write $\Lambda =\mathbb{Z}\Delta /\langle\varphi
\rangle$. These algebras are called \emph{m-fold mesh algebras} and are
known to be self-injective, a fact that can be easily seen by
applying Proposition \ref{prop.gr-Frobenius via pushdown functor}
since the cyclic group $G$ acts freely on objects, i.e., on
$(\mathbb{Z}\Delta )_0$. They are also periodic (see \cite{BBK}).

Note that, except for $\mathbb{L}_n^{(m)}$,  each generator of the
group $G$ in the above list is of the form $\rho\tau^m$, where
$\rho$ is an automorphism of order $1$ (i.e. $\rho
=id_{\mathbb{Z}\Delta}$), $2$ or $3$. This leads us to introduce the
following concept, which will be used later on in the paper.

\begin{defi} \label{defi.extended type of a mesh algebra}
Let $\Lambda =\mathbb{Z}\Delta /\langle\rho\tau^m\rangle$ be an $m$-fold mesh
algebra of a Dynkin quiver, possibly with $\rho
=id_{\mathbb{Z}\Delta}$. The \emph{extended type} of $\Lambda$ will
be the triple  $(\Delta ,m,t)$, where $t$ is the order of $\rho$, in
case $\Lambda\neq\mathbb{L}_n^{(m)}$, and $t=2$ when $\Lambda
=\mathbb{L}_n^{(m)}$.
\end{defi}

We warn the reader that the commonly used type for
 stable Auslander algebras of representation-finite self-injective algebras (see \cite{Asa},
\cite{D2},\cite{IV}) does not coincide with the here defined
extended type.

\subsection{A change of presentation}
\label{subsec. change of relations}
For calculation purposes, it is convenient to modify the mesh
relations. We want that if $(k,i)\in(\mathbb{Z}\Delta )_0$ is a
vertex which is the end of exactly two arrows, then the
corresponding mesh relation changes from a sum to a difference. When
$\Delta =\mathbb{D}_{n+1}$ and we consider the three paths
$(k,2)\rightarrow (k,i)\rightarrow (k+1,2)$ ($i=0,1,3$), we want
that the path going through $(k,3)$ is the sum of the other two.
Finally, when $\Delta =\mathbb{E}_n$ ($n=6,7,8$) and we consider the
three paths $(k,3)\rightarrow (k,i)\rightarrow (k+1,3)$ ($i=0,4$) and $(k,3)\rightarrow (k+1,2)\rightarrow (k+1,3)$, we want
that the one going through $(k,0)$ is the sum of the other two. This
can be done by selecting an appropriate subset $X\subset
(\mathbb{Z}\Delta )_1$ and applying the automorphism of
$K\mathbb{Z}\Delta$ which fixes the vertices and all the arrows not
in $X$ and changes the sign of the arrows in $X$. But we want the
same phenomena to pass from $B$ to $\Lambda = B/G$, for any weakly
admissible group of automorphisms $G$ of $\mathbb{Z}\Delta$. This
forces us to impose the condition that $X$ be $G$-invariant, i.e.,
that $g(X)=X$ for each $g\in G$.

\begin{prop} \label{prop.admissible change of variables}
Let $\Delta$ be a Dynkin quiver, $K\mathbb{Z}\Delta$ be the path
algebra of $\mathbb{Z}\Delta$, let $I$ be the ideal of
$K\mathbb{Z}\Delta$ generated by the mesh relations  and let
$\hat{G}$ be the group of automorphisms of $\mathbb{Z}\Delta$
generated by $\rho$ and $\tau$, whenever $\rho$ exists, and just by
$\tau$ otherwise. Let $X\subset (\mathbb{Z}\Delta )_1$ be the set of
arrows constructed as follows:

\begin{enumerate}
\item If $\Delta\neq\mathbb{A}_{2n-1},\mathbb{D}_{4}$ and $X'$ is the
set of arrows given in the following list, then  $X$ is the union of
the $\hat{G}$-orbits of elements of $X'$:

\begin{enumerate}
\item When $\Delta =\mathbb{A}_{2n}$, $X'=\{(0,i)\rightarrow (0,i+1):$ $1\leq i\leq n-1\text{ and }i\not\equiv n\text{ (mod
2)}\}$. \item When $\Delta =\mathbb{D}_{n+1}$, with $n>3$,
$X'=\{(0,i)\rightarrow (0,i+1):$ $2\leq i\leq n-2\text{ and }i\equiv
0\text{ (mod 2)}\}$. \item When $\Delta =\mathbb{E}_{6}$,
$X'=\{(0,2)\rightarrow (0,3)\}$.

\item When $\Delta =\mathbb{E}_n$ ($n=7,8$),
 $X'=\{(0,2)\rightarrow (0,3),\text{ }(0,4)\rightarrow
(1,3),\text{ }(0,6)\rightarrow (1,5)\}$.
\end{enumerate}

\item If $\Delta =\mathbb{D}_4$ and $G=\langle\tau^m\rangle$, then $X$ is the
union of the $\langle\tau \rangle$-orbits of the arrows $(0,2)\rightarrow (0,3)$.

\item If $\Delta =\mathbb{A}_{2n-1}$, then:

\begin{enumerate}
\item When $G=\langle\tau^m\rangle$, $X$ is the union of the $\langle\tau \rangle$-orbits of
arrows in the set $X'=\{(0,i)\rightarrow (0,i+1):$ $1\leq i\leq
2n-3\text{ and }i\not\equiv 0\text{ (mod 2)}\}$. \item When
$G=\langle\rho\tau^m\rangle$, with $m$ odd, $X$ is the union of all
$\langle\rho\tau\rangle$-orbits of arrows in the set $X'=\{(0,i)\rightarrow
(0,i+1):$ $1\leq i\leq n-1\}$. \item When $G=\langle\rho\tau^m\rangle$, with $m$
even, $X$ is the union of the $\langle\rho ,\tau^2\rangle$-orbits of arrows in
the set $X'_1=\{(0,i)\rightarrow (0,i+1):$ $1\leq i\leq n-2\}$ and
the $G$-orbits of arrows in the set $X'_2=\{(2r,i)\rightarrow
(2r,i+1):$ $0\leq 2r<m\text{ and }i=n-1,n\}$.
\end{enumerate}
\end{enumerate}

When $\Delta\neq\mathbb{A}_{2n-1},\mathbb{D}_{4}$, the given set $X$
is $G$-invariant, for all choices of the weakly admissible group of
automorphisms $G$. When $\Delta=\mathbb{A}_{2n-1}$ or $\mathbb{D}_4$, $X$ is
$G$-invariant for the respective group $G$.

Moreover, let $s:X\longrightarrow\mathbb{Z}_2$ be the
\emph{signature map}, where $s(a)=1$ exactly when $a\in X$, and let
$\varphi :K\mathbb{Z}\Delta\longrightarrow K\mathbb{Z}\Delta$ be the
unique graded algebra automorphism which fixes the vertices and maps
$a\rightsquigarrow (-1)^{s(a)}a$, for each $a\in (\mathbb{Z}\Delta
)_1$. Then $\varphi (I)$ is the ideal of $K\mathbb{Z}\Delta$
generated by the relations mentioned in the paragraph preceding this
proposition.
\end{prop}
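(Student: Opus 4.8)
The plan is to treat the two assertions separately: first the $G$-invariance of $X$, then the identification of $\varphi(I)$.

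For $G$-invariance, I would observe that in every case $X$ is, by construction, a union of orbits under a group $G'$ containing $G$, so invariance is automatic once $G\subseteq G'$ is checked. When $\Delta\neq\mathbb{A}_{2n-1},\mathbb{D}_4$, the set $X$ is a union of $\hat{G}$-orbits with $\hat{G}=\langle\rho,\tau\rangle$; since every weakly admissible $G$ is generated by an element of the form $\tau^m$ or $\rho\tau^m$, we have $G\subseteq\hat{G}$, and because each element of $\hat{G}$ permutes the $\hat{G}$-orbits (fixing each one setwise), $g(X)=X$ for all $g\in G$. For $\mathbb{D}_4$ and for $\mathbb{A}_{2n-1}$ with $G=\langle\tau^m\rangle$, the set $X$ is a union of $\langle\tau\rangle$-orbits and $\langle\tau^m\rangle\subseteq\langle\tau\rangle$ gives the claim. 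For $\mathbb{A}_{2n-1}$ with $G=\langle\rho\tau^m\rangle$ and $m$ odd, the identity $(\rho\tau)^m=\rho^m\tau^m=\rho\tau^m$ (using that $\rho$ and $\tau$ commute and $\rho$ has order $2$) shows $G\subseteq\langle\rho\tau\rangle$, and $X$ is a union of $\langle\rho\tau\rangle$-orbits. The only case needing separate care is $m$ even, where $\rho\tau^m\notin\langle\rho\tau\rangle$: here $X$ is assembled from the $\langle\rho,\tau^2\rangle$-orbits of $X'_1$ together with the $G$-orbits of $X'_2$, and I would verify directly that $G=\langle\rho\tau^m\rangle\subseteq\langle\rho,\tau^2\rangle$ (immediate since $m$ is even, so $\tau^m\in\langle\tau^2\rangle$), making the first piece $G$-invariant, while the second is $G$-invariant by construction.

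For the identification of $\varphi(I)$, since $\varphi$ is a graded algebra automorphism fixing the idempotents, $\varphi(I)$ is the ideal generated by the elements $\varphi(r_x)$, $x\in(\mathbb{Z}\Delta)_0$. Each mesh relation is a sum of length-$2$ paths $\sigma(a)a$, and since $\varphi$ multiplies each arrow $b$ by $(-1)^{s(b)}$, one obtains $\varphi(r_x)=\sum_{t(a)=x}(-1)^{s(a)+s(\sigma(a))}\sigma(a)a$. The entire content is therefore to check that the signs $(-1)^{s(a)+s(\sigma(a))}$ realize exactly the pattern described in the paragraph preceding the proposition. I would carry this out orbit by orbit: at each vertex $x$ which is the terminus of exactly two arrows, I must show that precisely one of the two summands changes sign, so that the relation passes from a sum to a difference; at the trivalent vertices of $\mathbb{D}_{n+1}$ and $\mathbb{E}_n$ I must show the sign pattern makes the distinguished path (through $(k,3)$, respectively $(k,0)$) equal to the sum of the other two.

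The main obstacle is this last vertex-by-vertex sign bookkeeping, which has to be done for each Dynkin type. The delicate points are: (i) tracking, for a given arrow $a$, whether both $a$ and $\sigma(a)$ lie in $X$, since only the parity $s(a)+s(\sigma(a))$ matters; (ii) the trivalent branch vertices, where three incoming arrows are involved and the required outcome is not a mere alternation but the specific relation ``one path $=$ sum of the other two''; and (iii) confirming that the orbit closure used to define $X$ never accidentally places both members of a $\sigma$-pair, or neither, at a vertex where exactly one sign change is needed. Once the orbit structure of $X'$ under the relevant group is made explicit, each of these reduces to a finite verification within a fundamental domain, which then propagates to all of $\mathbb{Z}\Delta$ by the $\hat{G}$-equivariance of both $\varphi$ and the family of mesh relations.
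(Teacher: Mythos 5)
Your proposal is correct and takes essentially the same approach as the paper: the $G$-invariance is read off from $X$ being a union of orbits under a group containing $G$ (the paper simply declares it clear), and the identification of $\varphi(I)$ is reduced, exactly as in the paper's proof, to the vertex-local sign facts---that at each bivalent vertex the set $\{a,b,\sigma(a),\sigma(b)\}$ meets $X$ in exactly one element, and that at the trivalent vertices of $\mathbb{D}_{n+1}$ and $\mathbb{E}_n$ the parities $s(\sigma(a)a)$ single out the distinguished path---which the paper likewise states and leaves as routine finite verifications.
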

\begin{proof}
The $G$-invariance of $X$ is clear. In order to prove that  $\varphi
(I)$ is as indicated, note that the mesh relation
$\sum_{t(a)=(k,i)}\sigma (a)a$ is mapped onto
$\sum_{t(a)=(k,i)}(-1)^{s(\sigma (a)a)}\sigma (a)a$, with the
signature $s(p)$ of a path defined as the sum of the signature of
its arrows. The result will follow from the verification of the
following facts, which are routine:

\begin{enumerate}
\item[i)] If $(k,i)$  is the terminus of exactly two arrows $a$ and $b$,
then the set
 $X\cap\{a,b,\sigma (a),\sigma (b)\}$ has only one element.
\item[ii)] When $\Delta =\mathbb{D}_{n+1}$, with $n>3$, and $a:(k-1,3)\rightarrow (k,2)$, $b:(k-1,0)\rightarrow
(k,2)$  and $c:(k-1,1)\rightarrow (k,2)$ are the three arrows ending
at $(k,2)$, then $X\cap\{a,b,c,\sigma (a),\sigma (b),\sigma
(c)\}=\{\sigma (a)\}$ \item[iii)] When $\Delta =\mathbb{E}_n$
($n=6,7,8$) and $a:(k,2)\rightarrow (k,3)$, $b:(k-1,0)\rightarrow
(k,3)$  and $c:(k-1,4)\rightarrow (k,3)$ are the three arrows ending
at $(k,2)$, then $s(\sigma (b)b)\neq 1=s(\sigma (a)a)=s(\sigma
(c)c)$.
\end{enumerate}
\end{proof}

\begin{cor} \label{cor.new-presentation-mesh-algebra}
With the terminology of the previous proposition, the mesh algebra
is isomorphic as a graded algebra to $B':=K\mathbb{Z}\Delta /\varphi
(I)$ and, in each case, the ideal $\varphi (I)$ is $G$-invariant. In
particular, $G$ may be viewed as group of graded automorphisms of
$B'$ and $\varphi$ induces an isomorphism
$B/G\stackrel{\cong}{\longrightarrow}B'/G$.
\end{cor}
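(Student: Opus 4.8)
The argument is almost entirely formal; the only genuine input is the $G$-invariance of $X$ provided by Proposition~\ref{prop.admissible change of variables}, and I would isolate that as the one step requiring care.

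First I would record that $\varphi$ is a graded automorphism of $K\mathbb{Z}\Delta$. It fixes every vertex and preserves the length grading, since it merely rescales the arrows (homogeneous of degree $1$) by signs; it is bijective because $\varphi^2(a)=(-1)^{2s(a)}a=a$ for each arrow $a$, so $\varphi$ is an involution. Any algebra automorphism of $K\mathbb{Z}\Delta$ carries $I$ isomorphically onto $\varphi(I)$ and induces an isomorphism $\bar\varphi\colon K\mathbb{Z}\Delta/I\stackrel{\cong}{\longrightarrow}K\mathbb{Z}\Delta/\varphi(I)$ by $a+I\mapsto\varphi(a)+\varphi(I)$. As $\varphi$ has degree $0$, this $\bar\varphi$ is an isomorphism of graded algebras, which establishes $B\cong B'$.

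Next I would prove that $\varphi(I)$ is $G$-invariant. Since each $g\in G$ commutes with $\tau$ and $\sigma$, it permutes the mesh relations, sending $r_x$ to $r_{g(x)}$, so $g(I)=I$; this is just the fact, recalled in subsection~\ref{subsect.stable translation quivers}, that $G$ descends to graded automorphisms of $B$. The crucial observation is that $\varphi$ commutes with the $G$-action. Because $X$ is $G$-invariant we have $s(g(a))=s(a)$ for every arrow $a$ and every $g\in G$, whence
\[
g(\varphi(a))=(-1)^{s(a)}g(a)=(-1)^{s(g(a))}g(a)=\varphi(g(a)).
\]
As $g$ and $\varphi$ are graded algebra automorphisms fixing all vertices and agreeing on the arrows, which generate $K\mathbb{Z}\Delta$, we conclude $g\circ\varphi=\varphi\circ g$. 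Therefore $g(\varphi(I))=\varphi(g(I))=\varphi(I)$, and consequently $G$ acts on $B'=K\mathbb{Z}\Delta/\varphi(I)$ by graded automorphisms permuting the vertices.

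Finally, for the induced isomorphism $B/G\cong B'/G$, I would use that $\bar\varphi$ intertwines the two $G$-actions: since both actions are induced from the action of $G$ on $K\mathbb{Z}\Delta$ and $g\circ\varphi=\varphi\circ g$ there, the relation $\bar\varphi(x^g)=\bar\varphi(x)^g$ holds. Because $\varphi$ fixes all vertices and $G$ acts freely on the common object set $(\mathbb{Z}\Delta)_0$, the assignment $[a]\mapsto[\varphi(a)]$ on $G$-orbits of homogeneous elements is well defined by equivariance, and it respects the anticomposition: in the rule $[a]\cdot[b]=[ab^g]$ the connecting element $g$ is determined by the initial and terminal vertices, which $\varphi$ leaves fixed, so the image orbit is $[\varphi(ab^g)]=[\varphi(a)\varphi(b)^g]$. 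Being induced by an isomorphism, this map is an isomorphism of the orbit categories, that is $B/G\stackrel{\cong}{\longrightarrow}B'/G$. The sole nonformal ingredient in the whole proof is the identity $s\circ g=s$, i.e. the $G$-invariance of $X$, which is exactly the content of the preceding proposition.
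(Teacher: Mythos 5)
Your proof is correct and takes essentially the same route as the paper's own argument: the paper likewise observes that $\varphi$, being a graded automorphism of $K\mathbb{Z}\Delta$, induces the graded isomorphism $B\cong B'$, and that the $G$-invariance of $X$ forces $\varphi\circ g=g\circ\varphi$ for all $g\in G$, from which the $G$-invariance of $\varphi(I)$ and the induced isomorphism $B/G\stackrel{\cong}{\longrightarrow}B'/G$ follow. The paper declares the remaining steps ``clear''; your write-up merely makes explicit the routine verifications (the sign computation $g(\varphi(a))=\varphi(g(a))$, $g(r_x)=r_{g(x)}$, and the compatibility with the orbit-category anticomposition) that the paper leaves to the reader.
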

\begin{proof}
Since $\varphi$ is a graded automorphism of $K\mathbb{Z}\Delta$ it
induces an isomorphism
$B=K\mathbb{Z}\Delta/I\stackrel{\cong}{\longrightarrow}K\mathbb{Z}\Delta/\varphi
(I)=B'$. If we view $G$ as a group of graded automorphisms of
$K\mathbb{Z}\Delta$, then the fact that $X$ is $G$-invariant implies
that $\varphi\circ g=g\circ\varphi$, for each $g\in G$. From this
remark the rest of the corollary is clear.
\end{proof}

\begin{rem} \label{rem.exceptionality of D4}
When $\Delta =\mathbb{D}_4$ and $G=\langle\rho\tau^m\rangle$, one cannot find a
$G$-invariant set of arrows $X$ as in the above proposition
guaranteeing that, for each $k\in\mathbb{Z}$, the path
$(k-1,2)\rightarrow (k-1,3)\rightarrow (k,2)$ is the sum of the
other two paths from $(k-1,2)$ to $(k,2)$. This is the reason for
the following convention.
\end{rem}

\begin{convention} \label{convention}
From now on in this paper, the term `mesh algebra' will denote the
algebra $K\mathbb{Z}\Delta/\varphi (I)$ given by Corollary
\ref{cor.new-presentation-mesh-algebra}, or just
$K\mathbb{Z}\mathbb{D}_4/I$ in case $(\Delta
,G)=(\mathbb{D}_4,\langle\rho\tau^m\rangle)$. This `new' mesh algebra will be
still denoted by $B$.
\end{convention}

\section{The Nakayama automorphism. Symmetric $m$-fold mesh algebras}

\subsection{The Nakayama automorphism of the mesh algebra of a Dynkin quiver}

The quiver   $\mathbb{Z}\Delta$ does not have double arrows and,
hence, if $a:x\rightarrow y$ is an arrow, then there exists exactly
one arrow $\nu (x)\rightarrow\nu (y)$, where $\nu$ is the Nakayama
permutation. This allows us to extend $\nu$ to an automorphism of
the stable translation quiver $\mathbb{Z}\Delta$ and, hence, also to an
automorphism of the path algebra $K\mathbb{Z}\Delta$. Moreover, due
to the (new) mesh relations (see Proposition \ref{prop.admissible
change of variables} and the paragraph preceding it), we easily see
that if $I'$ is the ideal of $K\mathbb{Z}\Delta$ generated by those
mesh relations, then $\nu (I')=I'$. Note also from Proposition
\ref{prop:Coxeter-Nakayama} that, as an automorphism of the quiver
$\mathbb{Z}\Delta$, we have that $\nu =\tau^k$ or $\nu=\rho\tau^k$,
for a suitable natural number $k$. It follows that if $G$ is any
weakly admissible automorphism of $\mathbb{Z}\Delta$, then $\nu\circ
g=g\circ\nu$ for all $g\in G$. All these comments prove:

\begin{lema} \label{lem.nu compatible with mesh relations}
Let $\Delta$ be a Dykin quiver, $B=B(\Delta)$ be its associated mesh algebra
and $G$ be a weakly admissible group of automorphisms of $\mathbb{Z}\Delta$.
The Nakayama permutation $\nu$ extends to a graded automorphism $\nu
:B\longrightarrow B$ such that $\nu\circ g=g\circ\nu$, for all $g\in
G$.
\end{lema}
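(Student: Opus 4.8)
The plan is to verify each of the three assertions packaged into the statement, since the paragraph preceding the lemma already indicates the route.

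First I would establish that $\nu$ extends to a graph automorphism of $\mathbb{Z}\Delta$. The key observation is that $\mathbb{Z}\Delta$ has no double arrows: for any arrow $a:x\rightarrow y$ there is exactly one arrow from $\nu(x)$ to $\nu(y)$. Combined with the explicit descriptions of $\nu$ in Proposition \ref{prop:Coxeter-Nakayama}(5), where in every case $\nu$ equals $\tau^k$ or $\rho\tau^k$ for a suitable $k$, this lets me define $\nu$ on arrows and hence as an automorphism of the quiver $\mathbb{Z}\Delta$ and of the path algebra $K\mathbb{Z}\Delta$. Since $\nu$ is built from $\tau$ and $\rho$, and both of these are graded (they preserve path length), $\nu$ is a graded automorphism.

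Next I would check that $\nu$ descends to the mesh algebra $B=K\mathbb{Z}\Delta/\varphi(I)$. For this it suffices to show $\nu(\varphi(I))=\varphi(I)$, equivalently (writing $I'=\varphi(I)$ for the ideal generated by the modified mesh relations of Convention \ref{convention}) that $\nu$ permutes these relations up to sign. Because $\nu=\tau^k$ or $\rho\tau^k$ and these automorphisms are compatible with the polarization $\sigma$, the image of a mesh relation $r_x=\sum_{t(a)=x}\sigma(a)a$ under $\nu$ is again (a sign multiple of) the mesh relation $r_{\nu(x)}$; the sign bookkeeping introduced by the set $X$ in Proposition \ref{prop.admissible change of variables} has to be tracked, but this is exactly the routine verification that the modified relations are stable under $\nu$. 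Hence $\nu$ induces a graded automorphism $\nu:B\longrightarrow B$.

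Finally I would verify the commutation $\nu\circ g=g\circ\nu$ for all $g\in G$. Any $g\in G$ is of the form $\rho^\varepsilon\tau^m$ (with $\varepsilon\in\{0,1,2\}$), and $\nu$ is likewise a word in $\rho$ and $\tau$; since $\rho$ and $\tau$ commute as automorphisms of $\mathbb{Z}\Delta$, so do $\nu$ and $g$. The main obstacle, such as it is, lies not in the abstract argument but in the case analysis: one must confirm the explicit form $\nu\in\{\tau^k,\rho\tau^k\}$ and the sign-compatibility of $\nu$ with the modified mesh relations uniformly across the types $\mathbb{A},\mathbb{D},\mathbb{E}$ (and the exceptional treatment of $\mathbb{D}_4$ under Convention \ref{convention}). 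Once the explicit description of $\nu$ from Proposition \ref{prop:Coxeter-Nakayama} is in hand, each of these checks is mechanical, so I expect the proof to reduce to collecting the observations already made in the preceding paragraph.
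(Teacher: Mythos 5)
Your proposal is correct and follows essentially the same route as the paper: the paper's argument (given in the paragraph preceding the lemma) likewise extends $\nu$ to a quiver automorphism using the absence of double arrows, observes that the ideal of (modified) mesh relations is $\nu$-invariant, and deduces the commutation $\nu\circ g=g\circ\nu$ from the fact that $\nu\in\{\tau^k,\rho\tau^k\}$ while $\rho$ and $\tau$ commute. Your extra remarks on sign bookkeeping for the modified relations and the exceptional $\mathbb{D}_4$ convention only make explicit what the paper dismisses as ``easily seen,'' so there is nothing substantively different.
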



The following result is fundamental for us.

\begin{teor} \label{teor.G-invariant Nakayama automorphism}
Let $\Delta$ be a Dynkin quiver with the labeling of vertices
and the orientation of the arrows of subsection
\ref{subsect.definition mesh algebra},  and let $G=\langle\varphi
\rangle$ be a weakly admissible group of automorphisms of $\mathbb{Z}\Delta$. If
$\eta$ is the graded automorphism of $B$ which acts as the Nakayama
permutation on the vertices and acts on the arrows as indicated in
the following list, then $\eta$ is a Nakayama automorphism of $B$
such that $\eta\circ g=g\circ\eta$, for all $g\in G$.

\begin{enumerate}
\item When $\Delta =\mathbb{A}_n$ and $\varphi$ is arbitrary,  $\eta (\alpha )=\nu (\alpha )$ for all $\alpha\in (\mathbb{Z}\Delta )_1$

\item When $\Delta =\mathbb{D}_{n+1}$:

\begin{enumerate}
\item If $n+1\geq 4$ and $\varphi =\tau^m$ then:

\begin{enumerate}
\item $\eta (\alpha )=-\nu (\alpha )$, whenever $\alpha:(k,i)\longrightarrow (k,i+1)$ is an upward arrow  with
$i\in\{2,...,n-1\}$.

\item $\eta (\alpha )=\nu (\alpha )$, whenever $\alpha:(k,i)\longrightarrow (k+1,i-1)$ is downward arrow  with
$i\in\{3,...,n\}$.

\item $\eta (\varepsilon_i)=(-1)^i\nu (\varepsilon_i)$, for the arrow $\varepsilon_i: (k,2)\longrightarrow (k,i)$ ($i=0,1$),

\item $\eta (\varepsilon'_i)=(-1)^{i+1}\nu (\varepsilon'_i)$, for the arrow $\varepsilon'_i: (k,i)\longrightarrow (k+1,2)$ ($i=0,1$).
\end{enumerate}

\item If $n+1>4$ and $\varphi =\rho\tau^m$  then:

\begin{enumerate}
\item $\eta (\alpha )=-\nu (\alpha )$, whenever $\alpha$  is an upward arrow  as above or  $\alpha:(k,i)\longrightarrow (k+1,i-1)$ is downward arrow  as above such that $k\equiv -1\text{ (mod m)}$.

\item $\eta (\alpha )=\nu (\alpha )$, whenever $\alpha:(k,i)\longrightarrow (k+1,i-1)$ is downward arrow  such that  $k\not\equiv -1\text{(mod m)}$

\item For  the remaining arrows, if $q$ and $r$ are the quotient and remainder of dividing $k$ by $m$, then

$\eta(\varepsilon_i)=(-1)^{q+i} \nu(\varepsilon_i)$ ($i=0,1$).

$\eta(\varepsilon^{'}_i)=(-1)^{q+i+1} \nu(\varepsilon^{'}_i)$,  when
$r\neq m-1$, and $\eta(\varepsilon^{'}_i)=(-1)^{q+i}
\nu(\varepsilon^{'}_i)$ otherwise

\end{enumerate}

\item If $n+1=4$ and $\varphi =\rho\tau^m$ (see Convention \ref{convention}), then:

\begin{enumerate}

 \item $\eta (\varepsilon_i)=\nu (\varepsilon_i)$, whenever $\varepsilon_i:(k,2)\rightarrow
 (k,i)$ ($i=0,1,3$) \item $\eta (\varepsilon'_i)=-\nu
 (\varepsilon'_i)$, whenever $\varepsilon'_i:(k,i)\rightarrow
 (k+1,2)$  ($i=0,1,3$).
\end{enumerate}

\end{enumerate}

\item When $\Delta =\mathbb{E}_6$:

\begin{enumerate}

\item If $\varphi =\tau^m$ then:

\begin{enumerate}

\item $\eta(\alpha)= \nu(\alpha)$  and   $\eta(\alpha^{'})= -\nu(\alpha^{'})$, where  $\alpha:(k,1)\rightarrow
(k,2)$ and $\alpha^{'}:(k,2)\rightarrow (k+1,1)$.

\item $\eta(\beta)= \nu(\beta)$   and  $\eta(\beta^{'})= -\nu(\beta^{'})$, where  $\beta:(k,2)\rightarrow
(k,3)$ and  $\beta^{'}:(k,3)\rightarrow (k+1,2)$.

\item $\eta(\gamma)= \nu(\gamma)$  and  $\eta(\gamma^{'})= -\nu(\gamma^{'})$, where  $\gamma:(k,3)\rightarrow
(k,4)$ and  $\gamma^{'}:(k,4)\rightarrow (k+1,3)$.

\item $\eta(\delta)= -\nu(\delta)$  and  $\eta(\delta^{'})= \nu(\delta^{'})$, where $\delta:(k,4)\rightarrow
(k,5)$ and  $\delta^{'}:(k,5)\rightarrow (k+1,4)$.

\item $\eta(\varepsilon)= -\nu(\varepsilon)$ and   $\eta(\varepsilon^{'})= \nu(\varepsilon^{'})$,  where $\varepsilon:(k,3)\rightarrow
(k,0)$ and  $\varepsilon^{'}:(k,0)\rightarrow (k+1,3)$.

\end{enumerate}

\item If $\varphi =\rho\tau^m$,  $(k,i)$ is the origin of the given arrow,  $q$ and $r$ are the quotient and remainder of dividing $k$ by $m$,   then:

\begin{enumerate}
\item  $\eta(\alpha)= \nu(\alpha)$.

\item  $\eta(\alpha^{'})= -\nu(\alpha^{'})$.

\item $\eta(\beta)= (-1)^{q}\nu(\beta)$

 \item $\eta(\beta^{'})= (-1)^{q+1}\nu(\beta^{'})$

 \item $\eta(\gamma)= (-1)^q\nu(\gamma)$

 \item  $\eta(\gamma^{'})= \nu(\gamma^{'})$, when either $q$ is odd and $r\neq m-1$ or $q$ is even and $r=m-1$, and
  $\eta(\gamma^{'})= -\nu(\gamma^{'})$ otherwise.

 \item $\eta(\delta)= -\nu(\delta)$

 \item  $\eta(\delta^{'})= \nu(\delta^{'})$.

 \item  $\eta(\varepsilon)= -\nu(\varepsilon)$

\item  $\eta(\varepsilon^{'})=-\nu(\varepsilon^{'})$, when $r=m-1$,
and $\eta(\varepsilon^{'})= \nu(\varepsilon^{'})$  otherwise.

\end{enumerate}

\end{enumerate}

\item When $\Delta =\mathbb{E}_7$, $\varphi=\tau^m$, and then:

\begin{enumerate}[i]

\item $\eta(a)$ is given as in $3.(a)$ for any arrow $a$ contained in the copy of $\mathbb{E}_6$.

\item $\eta(\zeta)= \nu(\zeta)$  and  $\eta(\zeta^{'})= -\nu(\zeta^{'})$, where
$\zeta:(k,5)\rightarrow
(k,6)$ and  $\zeta^{'}:(k,6)\rightarrow (k+1,5)$.

\end{enumerate}

\item When $\Delta =\mathbb{E}_8$, $\varphi= \tau^m$, and then:

\begin{enumerate}[i]

\item $\eta(a)$ is given as in $4$ for any arrow $a$ contained in the copy of $\mathbb{E}_7$.

\item $\eta(\theta)= \nu(\theta)$  and  $\eta(\theta^{'})= -\nu(\theta^{'})$, where  $\theta:(k,6)\rightarrow
(k,7)$ and  $\theta^{'}:(k,7)\rightarrow (k+1,6)$.

\end{enumerate}

\end{enumerate}
\end{teor}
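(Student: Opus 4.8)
The plan is to identify $\eta$ as the Nakayama automorphism attached to an explicitly chosen $G$-invariant graded Nakayama form, and then to read off its action on arrows. By Corollary \ref{cor.simple socle of a projective}, $B$ is split basic graded Quasi-Frobenius with a graded Nakayama form of constant degree $l=c_\Delta-2$; moreover, for each vertex $x=(k,i)$ the space $e_xB_le_{\nu(x)}=\text{Soc}_{gr}(e_xB)$ is one-dimensional, spanned by a nonzero path $w_x:x\to\cdots\to\nu(x)$ of length $l$ (Proposition \ref{prop:Coxeter-Nakayama}). Using Corollary \ref{cor.G-invariant basis and Nakayama form} together with Remark \ref{rem. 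G-invarinat basis}, I would fix a set of representatives of the $G$-orbits of vertices, choose the paths $w_x$ on these representatives, and propagate them by the action of $\varphi$; this yields a $G$-invariant graded Nakayama form whose associated automorphism, by the last lines of Remark \ref{rem. G-invarinat basis}, automatically satisfies $\eta\circ g=g\circ\eta$ for all $g\in G$. Thus the commutativity with $G$ is free once the form is $G$-invariant, and the entire content of the theorem becomes the explicit computation of $\eta$ on arrows. Recall also that the general theory guarantees $\eta$ to be a genuine graded algebra automorphism, so no separate verification that $\eta$ preserves the (new) mesh relations is needed.

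Next I would extract the signs. Since $\mathbb{Z}\Delta$ has no double arrows, for an arrow $a:x\to y$ the unique arrow $\nu(x)\to\nu(y)$ is $\nu(a)$, and, $\eta$ being graded and equal to $\nu$ on vertices (Proposition \ref{prop.graded Nak-form via basis}(1)), we must have $\eta(a)=\epsilon_a\,\nu(a)$ with $\epsilon_a\in\{\pm1\}$. The defining rule $(a,-)=(-,\eta(a))$, evaluated on a path $b:y\to\nu(x)$ of length $l-1$, reads: the coefficient of $w_x$ in $ab$ equals $\epsilon_a$ times the coefficient of $w_y$ in $b\,\nu(a)$, where I use that the form sends $(u,v)$ to the coefficient of the socle generator $w_{i(u)}$ in $uv$ (Definition \ref{defi.graded Nakayama form associated to basis}). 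Choosing, for each arrow $a$, one convenient path $b$ completing $a$ to a socle path then determines $\epsilon_a$ as the ratio of these two coefficients; existence of $\eta$ guarantees that the ratio is independent of the chosen $b$.

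The bulk of the work is the case-by-case evaluation of these ratios, organized by the type $\Delta$ and by the generator $\varphi$. The change of presentation of Proposition \ref{prop.admissible change of variables}, which turns the length-two sums at the bivalent vertices into differences (and fixes the sign pattern at the trivalent vertex), is precisely what keeps the sign accounting manageable: each $\epsilon_a$ records how the signs coming from the polarization $\sigma$ and from the sign set $X$ accumulate along $w_x$ as compared with $w_y$. For $\varphi=\tau^m$ the chosen $w_x$ can be propagated purely by $\tau$, and the signs are periodic in $k$, giving the constant or alternating patterns of cases $1$, $2(a)$, $3(a)$, $4$ and $5$. For $\varphi=\rho\tau^m$ the $G$-invariant propagation of $w_x$ must cross the automorphism $\rho$, which permutes the branches of $\Delta$ and flips certain arrow-signs; carrying $w_x$ over $q=\lfloor k/m\rfloor$ fundamental domains therefore contributes a factor governed by $q$, while the position $r=k-qm$ inside a domain controls the residual sign at the boundary. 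This is the origin of the $(-1)^{q+i}$ and $r$-dependent signs in cases $2(b)$ and $3(b)$, and the case $\mathbb{D}_4$ with $\varphi=\rho\tau^m$ must be treated with the original relations (Remark \ref{rem.exceptionality of D4}, Convention \ref{convention}), yielding case $2(c)$.

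I expect the main obstacle to be exactly this sign bookkeeping in the $\rho\tau^m$ cases, most acutely for $\mathbb{E}_6$ and $\mathbb{D}_{n+1}$ with $n+1>4$, and for the trivalent $\mathbb{D}_4$: one must simultaneously confirm that the listed signs are the ones produced by the $G$-invariant socle paths and that $\epsilon_{\varphi(a)}=\epsilon_a$ for every arrow $a$, so that $\eta$ descends consistently and commutes with $\varphi$. The delicate point is the interaction, across a domain boundary, between the branch-swapping of $\rho$, the polarization signs, and the sign set $X$; verifying that these conspire to reproduce precisely the stated formulas, with no off-by-one in $q$ or $r$, is where the genuine effort lies, whereas the remaining assertions follow formally from the general pseudo-Frobenius machinery of Section 2.
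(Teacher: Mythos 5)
Your proposal reproduces the paper's own strategy essentially verbatim: the paper likewise builds a $G$-invariant graded Nakayama form from a $G$-invariant basis of $\text{Soc}_{gr}(B)$ via Corollary \ref{cor.G-invariant basis and Nakayama form} and Remark \ref{rem. G-invarinat basis} (so that $\eta\circ g=g\circ\eta$ is automatic), writes $\eta(\alpha)=\lambda(\alpha)\nu(\alpha)$ using the absence of double arrows, and determines $\lambda(\alpha)=(-1)^{u(\alpha)-v(\alpha)}$ by completing each arrow to a socle path and comparing coefficients, exactly your ratio $\epsilon_a$ --- with the case-by-case sign bookkeeping (including the reduction to $\varphi=\tau$ and $\varphi=\rho\tau^m$, the role of $q$ and $r$ across fundamental domains, and the $\mathbb{D}_4$ exception under Convention \ref{convention}) carried out in the paper's appendix just as you outline. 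The approach and all its structural ingredients are correct and identical to the paper's.
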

\begin{proof}
Let $\nu$ be the Nakayama permutation of $\mathbb{Z}\Delta$ (see
Proposition \ref{prop:Coxeter-Nakayama}). By the proof of Corollary
\ref{cor.simple socle of a projective}, we know that
$Soc_{gr}(e_{(k,i)}B)=Soc(e_{(k,i)}B)$ is one-dimensional and
concentrated in degree $\mathit{l}=c_\Delta -2$, for each $(k,i)\in
\mathbb{Z}\Delta_0$. By  applying  Proposition \ref{prop.graded Nak-form via basis},
after taking a nonzero element
$w_{(k,i)}\in e_{(k,i)}Soc_{gr}(B)$, for each $(k,i)\in
(\mathbb{Z}\Delta )_0$, we can take the graded Nakayama form
$(-,-):B\times B\longrightarrow K$ of degree $\mathit{l}$ associated
to $\mathcal{B}=(\mathcal{B}_{(k,i)})_{(k,i)\in\mathbb{Z}\Delta_0}$
(see definition \ref{defi.graded Nakayama form associated to
basis}), where $\mathcal{B}_{(k,i)}=\{w_{(k,i)}\}$ is a basis of
$e_{(k,i)}B_le_{\nu (k,i)}$, for each $(k,i)\in\mathbb{Z}\Delta_0$.
It is clear that the so obtained graded Nakayama form will be
$G$-invariant whenever
$\mathcal{B}=\bigcup_{(k,i)\in\mathbb{Z}\Delta_0}\mathcal{B}_{(k,i)}$
is $G$-invariant. The existence of a
$G$-invariant basis is guaranteed by Corollary \ref{cor.G-invariant basis and Nakayama form}.
Moreover, in such case, recall that the associated Nakayama
automorphism $\eta$ satisfies that $\eta\circ g=g\circ\eta$, for
all $g\in G$ (see Remark \ref{rem. G-invarinat basis}).
The canonical way of constructing such a $G$-invariant basis $\mathcal{B}$ is also given in that remark. Namely, we select a
set $I'$ of representatives of the $G$-orbits of vertices and a
element $0\neq w_{(k,i)}\in e_{(k,i)}Soc_{gr}(B)$, for each
$(k,i)\in I'$. Then $\mathcal{B}=\{g(w_{(k,i)}):$ $g\in G\text{,
}(k,i)\in I'\}$ is a $G$-invariant basis as desired. However, note
that if we choose $\mathcal{B}$ to be $\tau$-invariant, then it is
$G$-invariant for $G=\langle\tau^m\rangle$. So, in order to construct
$\mathcal{B}$,  we will only need to consider the cases $\varphi
=\tau$ and $\varphi =\rho\tau^m$

Once the $G$-invariant basis $\mathcal{B}$ of $Soc_{gr}(B)=Soc(B)$
has been described, the strategy to identify the action of the
associated Nakayama automorphism $\eta$ on the arrows is very
simple. Given an arrow $\alpha$, we take a path $q:t(\alpha
)\rightarrow ...\rightarrow\nu (i(\alpha ))$ of length
$\mathit{l}-1$ such that $\alpha q$ is a nonzero path. Then we have
$\alpha q=(-1)^{u(\alpha )}w_{i(\alpha )}$, so that, by definition
of the graded Nakayama form associated to $\mathcal{B}$, we have an
equality  $(\alpha ,q)=(-1)^{u(\alpha )}$. Since the quiver
$\mathbb{Z}\Delta$ does not have double arrows we know that $\eta
(\alpha )=\lambda (\alpha )\nu (\alpha )$, for some $\lambda (\alpha
)\in K^*$. In particular we know that $q\nu (\alpha )$ is a nonzero
path (of length $\mathit{l}$) because $(q,\eta (\alpha ))=(\alpha
,q)\neq 0$. If we have an equality $q\nu (\alpha )=(-1)^{v(\alpha
)}w_{t(\alpha )}$ in $B$, then it follows that $(-1)^{u(\alpha
)}=(\alpha ,q)=(q,\eta (\alpha ))=\lambda (\alpha )(q,\nu (\alpha
))=\lambda (\alpha )(-1)^{v(\alpha )}$. Then we get $\lambda (\alpha
)=(-1)^{u(\alpha )-v(\alpha)}$ and the task is reduced to calculate
the exponents $u(\alpha )$ and $v(\alpha )$ in each case. Taking
into account that we have $\eta\circ g=g\circ\eta$, for each $g\in
G$, it is enough to calculate $u(\alpha )$ and $v(\alpha )$ just for
the arrows starting at a vertex of $I'$.

 To construct $\mathcal{B}$ when $\Delta =\mathbb{A}_n$ has no
problem, for all paths of length $\mathit{l}=c_\Delta -2$ from
$(k,i)$ to $\nu (k,i)$ are equal in $B$. So in this case the choice
of $w_{(k,i)}$ will be the element of $B$ represented by a path from
$(k,i)$ to $\nu (k,i)$ and $\mathcal{B}=\{w_{(k,i)}:$ $(k,i)\in
(\mathbb{Z}\Delta )_0\}$ is $G$-invariant for any choice of
$\varphi$.

On what concerns the explicit calculations for the cases when $\Delta$ is either
$\mathbb{D}_{n+1}$ or $\mathbb{E}_r$ ($r=6,7,8$), they can be found in the appendix given at the end
of this paper.

\end{proof}

\subsection{Some important auxiliary results}

Recall that a \emph{walk} in a quiver $Q$ between the vertices $i$
 and $j$ is a finite sequence $i=i_0\leftrightarrow i_1\leftrightarrow ...i_{r-1}\leftrightarrow
 i_r=j$, where each edge $i_{k-1}\leftrightarrow i_k$ is either an
 arrow $i_{k-1}\rightarrow i_k$ or an arrow $i_{k}\rightarrow
 i_{k-1}$. We write such a walk as
 $\alpha_1^{\varepsilon_1}...\alpha_r^{\varepsilon_r}$, where the $\alpha_i$
 are arrows and $\varepsilon_i$ is $1$ or $-1$, depending on whether
 the  corresponding edge is an arrow pointing to the right or to the
 left.

 We will need the following concept from \cite{GA-S}:

\begin{defi} \label{defi.acyclic character}
Let $Q$ be a (not necessarily finite) quiver. An \emph{acyclic
character} on $Q$ (over the field $K$) is a map $\chi
:Q_1\longrightarrow K^*$ such that if
$p=\alpha_1^{\varepsilon_1}...\alpha_r^{\varepsilon_r}$ and
$q=\beta_1^{\varepsilon '_1}...\beta_s^{\varepsilon '_s}$ are two walks of
length $>0$ between any given vertices $i$ and $j$, then
$\prod_{1\leq i\leq r}\chi (\alpha_i)^{\varepsilon_i}=\prod_{1\leq
j\leq s}\chi (\beta_j)^{\varepsilon '_j}$.
\end{defi}

Notice that if $A$ is a graded algebra with enough idempotents, $G\subseteq Aut^{gr}(A)$ is a group acting freely on objets such that $A/G$ is finite dimensional and $f: A\longrightarrow A$ is a graded automorphism commuting with the elements in $G$, then the assignment $[a]\rightsquigarrow [f(a)]$, with $a\in\bigcup_{i,j\in
I}e_iAe_j$, determines a graded automorphism $\bar{f}$ of $A/G$.

The following general result will be very useful.

\begin{lema} \label{lem.criterion of inner automorphism}
Let $A=\oplus_{n\geq 0}A_n$ be a  basic positively
$\mathbb{Z}$-graded pseudo-Frobenius algebra with enough idempotents
such that $e_iA_0e_i\cong K$, for each $i\in I$,  let $G$ be a group
of graded automorphisms of $A$ acting freely on objects such that
$\Lambda =A/G$ is finite dimensional and let $f:A\longrightarrow
A$ be a graded automorphism that fixes all idempotents $e_i$ and satisfies that $f\circ g=g\circ f$, for all $g\in G$.

Then the following assertions are equivalent:

\begin{enumerate}
\item $\bar{f}$ is an inner automorphism of $\Lambda$.

\item There is a map $\lambda :I\longrightarrow K^*$ such that
$f(a)=\lambda (i(a))^{-1}\lambda (t(a))a$, for all
$a\in\bigcup_{i,j\in I}e_iAe_j$, and $\lambda\circ g_{|I}=\lambda$,
for all $g\in G$.
\end{enumerate}
\end{lema}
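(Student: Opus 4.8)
We have a basic positively $\mathbb{Z}$-graded pseudo-Frobenius algebra $A = \bigoplus_{n\geq 0} A_n$ with $e_i A_0 e_i \cong K$, a group $G$ of graded automorphisms acting freely on objects with $\Lambda = A/G$ finite-dimensional, and a graded automorphism $f$ fixing the idempotents $e_i$ and commuting with all $g \in G$. We must show that the induced automorphism $\bar{f}$ of $\Lambda$ is inner if and only if there is a scalar function $\lambda: I \to K^*$, constant on $G$-orbits, with $f(a) = \lambda(i(a))^{-1}\lambda(t(a))\, a$ for every arrow-like homogeneous element $a \in e_i A e_j$.

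Let me think about this carefully.

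The direction (2) ⟹ (1) should be relatively straightforward. If we have such a $\lambda$, we want to produce a unit $u$ in $\Lambda$ conjugation by which equals $\bar{f}$. The natural candidate is $u = \sum_{[i]} \lambda(i)\, e_{[i]}$ — but wait, this requires $\lambda$ to descend to orbits, which is exactly the condition $\lambda \circ g = \lambda$. Since $\Lambda$ is finite-dimensional, there are finitely many orbits, so $u$ is a well-defined invertible element (its inverse being $\sum \lambda(i)^{-1} e_{[i]}$). Then conjugation $u a u^{-1}$ on a homogeneous element $[a]$ with $a \in e_i A e_j$ gives $\lambda(i)\lambda(j)^{-1}[a] = [\lambda(i)\lambda(j)^{-1} a]$. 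Hmm, but the formula says $f(a) = \lambda(i(a))^{-1}\lambda(t(a)) a$, with terminus in numerator. Let me be careful about the anti-multiplication convention the paper uses. Actually the paper defines composition as $b \circ a = ab$, so I should track whether conjugation lands on $\lambda(i)^{-1}\lambda(t)$ or its inverse. This is just a matter of setting $u = \sum \lambda(i) e_{[i]}$ or its inverse and checking signs against the convention; it's bookkeeping, not a real obstacle.

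The harder direction is (1) ⟹ (2). Here we assume $\bar{f}$ is inner, so there is a unit $u \in \Lambda$ with $\bar{f}([a]) = u[a]u^{-1}$ for all $[a]$ (again modulo the anti-multiplication convention). The key structural facts I want to exploit: $A$ is positively graded and basic with $e_i A_0 e_i \cong K$, so $A_0 = \bigoplus_i K e_i$ (the degree-zero part is "semisimple diagonal"), and higher-degree homogeneous pieces lie in the radical. The unit $u$ has an invertible degree-zero component $u_0$, and since $\bar{f}$ is a graded automorphism (fixing idempotents), I should be able to reduce to the case where $u$ is homogeneous of degree zero. The point: $\bar{f}$ respects the grading, so comparing graded pieces of $u a u^{-1} = \bar{f}([a])$ degree by degree, only the degree-zero part $u_0$ of $u$ contributes to the graded-automorphism action on each homogeneous $[a]$; a standard argument shows one may replace $u$ by $u_0$ without changing the conjugation on homogeneous elements. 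Then $u_0 \in \Lambda_0 = \bigoplus_{[i]} K e_{[i]}$, so $u_0 = \sum_{[i]} \mu_{[i]} e_{[i]}$ with $\mu_{[i]} \in K^*$.

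**The plan.** First I would establish that for a basic positively graded algebra with $e_i A_0 e_i \cong K$, the degree-zero part is $A_0 = \bigoplus_i K e_i$, and correspondingly $\Lambda_0 = \bigoplus_{[i]} K e_{[i]}$. Next I would prove the reduction: any inner automorphism of the graded algebra $\Lambda$ that is itself graded (which $\bar{f}$ is) can be realized by conjugation by a degree-zero unit. The argument is to write $u = \sum_{n \geq 0} u_n$, note $u_0$ is invertible in $\Lambda_0$ (since $u$ is a unit and $\Lambda$ is positively graded with the positive part in the radical), and compare the degree-$(\deg a)$ homogeneous components of both sides of $\bar{f}([a]) u = u [a]$ to show $\bar{f}([a]) u_0 = u_0 [a]$, i.e. conjugation by $u_0$ already reproduces $\bar{f}$ on homogeneous elements. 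I then define $\mu_{[i]} = $ the scalar with $u_0 e_{[i]} = \mu_{[i]} e_{[i]}$, set $\lambda(i) := \mu_{[i]}$ (pulling back along the orbit projection, which automatically makes $\lambda$ $G$-invariant), and read off from $\bar{f}([a]) = u_0 [a] u_0^{-1}$ the scalar relation $[f(a)] = \mu_{[i]}^{\pm 1}\mu_{[j]}^{\mp 1}[a]$. Finally I lift this from $\Lambda$ back to $A$: because $\pi: A \to \Lambda$ is a covering functor (stated in the excerpt), it is injective on each $e_i A_n e_j$, so the scalar equation $[f(a)] = (\text{scalar})[a]$ in $\Lambda$ forces the same scalar equation $f(a) = (\text{scalar})\, a$ in $A$, giving exactly condition (2) with the correct $\lambda$.

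**Main obstacle.** The genuine subtlety is the lifting step and the compatibility with the orbit/covering structure — ensuring that the scalars $\mu_{[i]}$ defined on orbits pull back to a well-defined $G$-invariant $\lambda$ on $I$, and that the covering-functor injectivity lets me transfer the scalar identity from $\Lambda$ up to $A$ arrow-by-arrow. I must handle the anti-multiplication convention consistently throughout (so that "initial" and "terminal" vertices land in the correct positions of the formula $\lambda(i(a))^{-1}\lambda(t(a))$), and I must make sure that $f$ genuinely sends each homogeneous $a \in e_i A e_j$ to a scalar multiple of itself — this uses that $f$ fixes idempotents and that $\mathbb{Z}\Delta$-type quivers (more generally, the covering quiver of $\Lambda$) have no double arrows so that $e_i A_n e_j$ is at most one-dimensional in the relevant degrees, which is what turns the conjugation relation into a scalar relation rather than a matrix relation. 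The degree-zero reduction and this no-double-arrows/one-dimensionality point are where the real content sits; everything else is careful bookkeeping.
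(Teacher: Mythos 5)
Your proposal is correct, but for the key direction $1)\Rightarrow 2)$ it takes a genuinely different and more self-contained route than the paper. The paper does not manipulate the conjugating unit directly: it invokes \cite{GA-S}[Proposition 10 and Theorem 12] to factor the inner automorphism $\bar{f}$ as an automorphism $u$ given by vertex scalars $\bar{\lambda}:I/G\longrightarrow K^*$ composed with the conjugation $\iota_{1-x}$ by an element $1-x$ with $x\in J(\Lambda )=\oplus_{n>0}\Lambda_n$, then uses gradedness of $\iota =u^{-1}\circ\bar{f}$ to show that $\iota$ acts as the identity on each $\Lambda_n$, whence $\bar{f}=u$; finally it pulls $\bar{\lambda}$ back along $\pi :I\longrightarrow I/G$ exactly as you do, and recovers $f=\chi_\lambda$ from the covering functor. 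Your plan instead exploits the grading from the start: decompose the conjugating unit $u=\sum_{n\geq 0}u_n$, observe that $u_0$ is invertible because the positive part lies in the radical, compare homogeneous components of $\bar{f}([a])u=u[a]$ to replace $u$ by $u_0$, and read off the scalars from $u_0=\sum_{[i]}\mu_{[i]}e_{[i]}$. This buys independence from the external reference --- your degree-zero reduction does in one stroke what the paper obtains from the structure theorem of \cite{GA-S} plus the argument killing $\iota_{1-x}$ --- at the cost of verifying yourself that $\Lambda_0=\oplus_{[i]}Ke_{[i]}$, a fact the paper also uses implicitly when it asserts $J(\Lambda )=\oplus_{n>0}\Lambda_n$. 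One miscalibration in your sketch: you claim that the absence of double arrows, i.e.\ one-dimensionality of the spaces $e_iA_ne_j$, is needed to turn the conjugation relation into a scalar relation; but the lemma is stated for arbitrary basic positively graded pseudo-Frobenius algebras, where no such hypothesis is available --- and fortunately it is also unnecessary, since once $u_0$ is diagonal one has $u_0[a]u_0^{-1}=\mu_{[i(a)]}^{\pm 1}\mu_{[t(a)]}^{\mp 1}[a]$ on every component $e_{[i]}\Lambda e_{[j]}$ regardless of its dimension, and it is the injectivity of the covering functor on each $e_iA_he_j$ (not one-dimensionality) that transfers the scalar identity from $\Lambda$ up to $A$, just as in the paper's final step.
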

\begin{proof}

$1)\Longrightarrow 2)$ Let $\lambda :I\longrightarrow K^*$ be any
map and $\chi_\lambda:A\longrightarrow A$ be the (graded) automorphism which
is the identity on objects and maps $a\rightsquigarrow
\lambda(i(a))^{-1}\lambda (t(a))a$, for each
$a\in\bigcup_{i,j}e_iAe_j$.

If now $f$ is as in the statement and $\bar{f}$ is inner, the goal is to find a
map $\lambda$ as in the previous paragraph such that
$\chi_\lambda=f$ and
$\lambda\circ g_{|I}=\lambda$, for all $g\in G$.

We know from Proposition \ref{prop.gr-Frobenius via pushdown functor}
that $\Lambda$ is a split basic graded algebra. So it is given by a
finite quiver with relations whose set of vertices is (in
bijection with) the set $I/G=\{[i]:i\in I\}$ of $G$-orbits of
elements of $I$. From \cite{GA-S}[Proposition 10 and Theorem 12]  we
get a map $\bar{\lambda}:I/G\longrightarrow K^*$ such that the
assignment
$[a]\rightsquigarrow\bar{\lambda}([i(a)])^{-1}\bar{\lambda}([t(a)])[a]$,
where $a\in\bigcup_{i,j\in I}e_iAe_j$, is a (graded) inner
automorphism $u$ of $\Lambda$ such that $u^{-1}\circ\bar{f}$ is the
inner automorphism $\iota =\iota_{1-x}$ of $\Lambda$ defined by an
element of the form $1-x$, where $x\in J(\Lambda )$. In our
situation, the equality $J(\Lambda )=\oplus_{n>0}\Lambda_n$ holds,
so that $x$ is a sum of homogeneous elements of degree $>0$. But
 $\iota =u^{-1}\circ\bar{f}$ is also a graded automorphism, so that we
have that $\iota (\Lambda_n)=(1-x)\Lambda_n(1-x)^{-1}=\Lambda_n$. If $y\in
\Lambda_n$ then the $n$-th homogeneous component of
$(1-x)y(1-x)^{-1}$ is $y$. It follows that $\iota$ is the identity
on $\Lambda_n$, for each $n\geq 0$. Therefore we have $\iota =id_\Lambda$,
so that $\bar{f}=u$.

Let now $\pi :A\longrightarrow\Lambda =A/G$ be the $G$-covering
functor and let $\lambda$ be the composition map
$I\stackrel{\pi}{\longrightarrow}I/G\stackrel{\bar{\lambda
}}{\longrightarrow}K^*$. By definition, we have that $\lambda\circ
g=\lambda$, for all $g\in G$. As a consequence, the associated
automorphism $\chi_\lambda :A\longrightarrow A$ defined above has
the property that $[\chi_\lambda (a)]=u([a])=\bar{f}([a])=[f(a)]$,
for each $a\in\bigcup_{i,j}e_iAe_j$. Since $f$ is the identity on
objects we immediately get that $f=\chi_\lambda$ as desired.

$2)\Longrightarrow 1)$ The map $\lambda$ of the hypothesis satisfies
that $\chi_\lambda=f$. It then follows that
$\bar{\chi}_\lambda=\bar{f}$, where $\bar{\chi}_\lambda
:\Lambda\longrightarrow\Lambda$ maps $[a]\rightsquigarrow\lambda
(i(a))^{-1}\lambda (t(a))[a]$, for each $a\in\bigcup_{i,j}e_iAe_j$.
Note that $\bar{\chi}_\lambda$ is well defined because $\lambda\circ
g_{| I}=\lambda$, for all $g\in G$.  It turns out that
$\bar{\chi}_\lambda$ is the inner automorphism of $\Lambda$ defined
by the element $\sum_{[i]\in I/G}\lambda (i)^{-1}e_{[i]}$.

\end{proof}

In the rest of the paper, for any m-fold mesh algebra $\Lambda$, we shall denote by $\text{Inn}^{gr}(\Lambda )$ the subgroup of $\text{Aut}^{gr}(\Lambda )$ consisting of those graded automorphisms which are inner.

\begin{prop} \label{prop.tau nu eta in center}
Let $\Delta$ be a Dynkin quiver, let $G$ be a weakly admissible group of automorphisms of $\mathbb{Z}\Delta$, let $\Lambda =B(\Delta )/G$ be the associated m-fold mesh algebra and let $\eta$ be any $G$-invariant graded Nakayama automorphism of $B=B(\Delta )$.  The images  of $\bar{\tau}$, $\bar{\nu}$ and $\bar{\eta}$ by the canonical group homomorphism $\text{Aut}^{gr}(\Lambda )\longrightarrow \text{Aut}^{gr}(\Lambda )/ \text{Inn}^{gr}(\Lambda )$ are all in the center of $ \text{Aut}^{gr}(\Lambda )/ \text{Inn}^{gr}(\Lambda )$.
\end{prop}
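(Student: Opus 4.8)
The plan is to show that, for every $\psi\in\text{Aut}^{gr}(\Lambda)$ and every $\phi\in\{\bar\tau,\bar\nu,\bar\eta\}$, the commutator $[\psi,\phi]=\psi\phi\psi^{-1}\phi^{-1}$ lies in $\text{Inn}^{gr}(\Lambda)$. Since the statement only concerns the class of $\psi$ in $\text{Aut}^{gr}(\Lambda)/\text{Inn}^{gr}(\Lambda)$, I may, by \cite{GA-S}[Prop. 10, Thm. 12] (as already exploited in Lemma \ref{lem.criterion of inner automorphism}), replace $\psi$ by a representative that fixes all idempotents $e_{[i]}$ and sends each arrow of $\Gamma/G=\mathbb{Z}\Delta/G$ to a scalar multiple of an arrow. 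Such a $\psi$ is encoded by a pair $(\hat\psi,\chi_\psi)$, where $\hat\psi$ is a quiver automorphism of $\Gamma/G$ and $\psi([a])=\chi_\psi([a])\,[\hat\psi(a)]$ on arrows. By construction $\bar\tau$ and $\bar\nu$ send arrows to arrows (trivial character), whereas by Theorem \ref{teor.G-invariant Nakayama automorphism} the automorphism $\bar\eta$ scales each arrow by a sign $\pm1$.

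First I would check that each commutator $c:=[\psi,\phi]$ is \emph{diagonal}, i.e. fixes every $e_{[i]}$ and scales each arrow by a scalar; for this it is enough that $\hat\psi$ commute with the vertex permutation induced by $\phi$. For $\phi=\bar\nu$ (hence for $\bar\eta$, which acts as $\bar\nu$ on vertices) this is immediate, because $\bar\nu$ is the Nakayama permutation of $\Lambda$, determined by $\text{Soc}(e_{[i]}\Lambda)$ and so preserved by every algebra automorphism. For $\phi=\bar\tau$ one uses that $\psi$ preserves the space of degree-$2$ mesh relations, so that $\hat\psi$ is an automorphism of the \emph{stable translation quiver} $(\Gamma/G,\bar\tau)$ and in particular commutes with $\bar\tau$. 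Thus $c$ has trivial underlying quiver automorphism and is given by a character $\chi_c$ on the arrows.

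Next I would prove that such a diagonal $c$ is inner. By Lemma \ref{lem.criterion of inner automorphism} together with \cite{GA-S}, $c$ is inner exactly when $\chi_c$ is a coboundary; and since $\mathbb{Z}\Delta$ is the simply connected cover of $\Gamma/G$ with deck group $G\cong\mathbb{Z}$, this happens precisely when the product of $\chi_c$ along a loop $\gamma_0$ generating $\pi_1(\Gamma/G)\cong\mathbb{Z}$ is $1$. The crossed-product rule for the pairs $(\hat\psi,\chi_\psi)$ writes $\chi_c$ as a product of a ``$\chi_\psi$-factor'' and a ``$\chi_\phi$-factor''. The key inputs are: (i) since $\tau$, $\nu$ and $\eta$ commute with the generator $g$ of $G$ (Lemma \ref{lem.nu compatible with mesh relations}, Theorem \ref{teor.G-invariant Nakayama automorphism}, Proposition \ref{prop:Coxeter-Nakayama}), the permutations $\hat{\bar\tau},\hat{\bar\nu},\hat{\bar\eta}$ act trivially on $H_1(\Gamma/G)\cong\mathbb{Z}$; (ii) $\hat\psi$ normalizes $G\cong\mathbb{Z}$ and hence acts on $H_1$ by $\pm1$; and (iii) $\chi_\psi$ and $\chi_\eta$, coming from genuine automorphisms, vanish on mesh boundaries and so descend to homomorphisms $H_1\longrightarrow K^*$. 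Using (i)--(iii), the $\chi_\psi$-factor becomes the ratio of the values of $\chi_\psi$ on two \emph{homologous} loops, hence $1$; the $\chi_\phi$-factor is $1$ outright for $\bar\tau,\bar\nu$, while for $\bar\eta$ it equals $Y^{\pm1}/Y$, where $Y\in\{\pm1\}$ is the product of the signs of $\bar\eta$ along $\gamma_0$, so $Y^{\pm1}/Y=1$ because $Y^2=1$. Therefore the $\gamma_0$-product of $\chi_c$ is $1$ and $c$ is inner.

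The main obstacle is exactly this cancellation: one must verify that $\chi_\psi$ genuinely descends to $H_1$ (the mesh-compatibility forced by $\psi$ preserving each mesh relation) and that the loops in play are homologous, which rests on the precise action of $\tau,\nu,\eta$ and of $\hat\psi$ on $\pi_1(\Gamma/G)\cong G$. A cleaner, alternative treatment of $\bar\eta$ bypasses all sign bookkeeping: by Remark \ref{rem. G-invarinat basis}, $\bar\eta$ is the Nakayama automorphism of $\Lambda$, so twisting by $\bar\eta$ realizes the intrinsic graded Nakayama functor $D\Lambda\otimes_\Lambda-$, and since this functor commutes up to natural isomorphism with twisting by any $\psi$, the class $[\bar\eta]$ is automatically central. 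Finally, the reduction ``arrows go to scalar multiples of arrows'' can fail in the few low-rank cases where $\Gamma/G$ has multiple arrows (such as $\Delta=\mathbb{A}_1,\mathbb{A}_2$ and the smallest $\mathbb{L}_n^{(m)}$); these would be settled by direct inspection, in line with the separate treatment they receive elsewhere in the paper.
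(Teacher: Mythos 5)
Your overall architecture is the same as the paper's: decompose an arbitrary $\psi\in\text{Aut}^{gr}(\Lambda )$ as a quiver automorphism composed with a ``diagonal'' character automorphism (possible exactly, not just up to inner, since $Q=\mathbb{Z}\Delta /G$ has no double arrows), dispose of the quiver part by the intrinsic characterization of $\bar{\nu}$ via socles and of $\bar{\tau}$ via the translation structure, and then test innerness of the remaining diagonal commutator through Lemma \ref{lem.criterion of inner automorphism} and the acyclic-character machinery of \cite{GA-S}. Your ``cleaner alternative'' for $\bar{\eta}$ (centrality of the Serre/Nakayama functor) is also legitimate and is in substance the paper's own argument, which transports a graded Nakayama form along $\psi$ and invokes uniqueness of the Nakayama automorphism up to inner; that part of your proposal is fine.

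The genuine gap is your claim (iii), on which the whole cancellation rests: that the arrow character $\chi_\psi$ of an arbitrary graded automorphism ``vanishes on mesh boundaries and so descends to a homomorphism $H_1\longrightarrow K^*$.'' This is false in general. With the sign-modified relations of Proposition \ref{prop.admissible change of variables}, the relation at a vertex $x$ is $r_x=\sum_{t(a)=x}\epsilon_a\,\sigma (a)a$ with signs $\epsilon_a=\pm 1$, and $\psi (r_x)$ must be proportional to $r_{\hat{\psi}(x)}$; comparing coefficients gives $\chi_\psi (\sigma (a))\chi_\psi (a)=\lambda_x\,\epsilon_{\hat{\psi}(a)}\epsilon_a$, so the mesh holonomy of $\chi_\psi$ equals the sign discrepancy $\epsilon_{\hat{\psi}(a)}\epsilon_a/\epsilon_{\hat{\psi}(b)}\epsilon_b$, which need not be trivial --- Remark \ref{rem.exceptionality of D4} exhibits exactly such an obstruction (no $G$-invariant choice of signs exists for $(\mathbb{D}_4,\langle\rho\tau^m\rangle )$), and the same phenomenon forces the auxiliary automorphisms $\tau '$ and $\vartheta$ in Proposition \ref{prop:deformed projective resolution} and Remark \ref{rem. mu}. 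Once $\chi_\psi$ only descends up to $\pm 1$-valued mesh holonomy, you may no longer replace a loop by a homologous one inside a $\chi_\psi$-factor, and your ``ratio of values on two homologous loops, hence $1$'' step is unproven: the loops $\hat{\psi}^{-1}\hat{\phi}^{-1}\gamma_0$ and $\hat{\phi}\hat{\psi}^{-1}\hat{\phi}^{-1}\gamma_0$ are homologous but not equal, and their $\chi_\psi$-holonomies differ by a product of mesh sign discrepancies that must actually be computed. That computation is precisely the content the paper cannot avoid: it reduces to $\psi\in\{\rho ,\tau\}$ against a diagonal $\chi^\xi$ (where the character $\xi '(a)=\xi (a)^{-1}\xi (\psi^{-1}(a))$ \emph{is} genuinely acyclic, because a diagonal automorphism has exactly constant mesh products), and then verifies the $G$-invariance of the resulting potential $\lambda$ by explicit canonical-path evaluations in $\mathbb{Z}\Delta$, case by case, with a separate half-step argument for $(\mathbb{A}_{2n},\langle\rho\tau^m\rangle )$. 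Your proposal defers exactly this sign bookkeeping as ``verification,'' but it is the heart of the proof, not a routine check. (A minor point: your worry about double arrows in low rank is unfounded --- $\mathbb{Z}\Delta /G$ never has double arrows, loops included, so the scalar-multiple reduction needs no exceptional cases here.)
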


\begin{proof}
Due to the fact that the quiver of $\Lambda$ (i.e. $Q=\mathbb{Z}\Delta/G$) does not have double arrows,  each graded automorphism $\varphi$ of $\Lambda$ induces an automorphism $\tilde{\varphi}$ of $Q$. This automorphism extends to an automorphism of the path algebra $KQ$ which respects the mesh relations. Therefore we can  look at $\tilde{\varphi}$  as a graded automorphism of $\Lambda$ as well. Since $\tilde{\varphi}$ and $\bar{\tau}$ commute when viewed as automorphisms of the quiver $Q$, we get that they also commute as graded automorphisms of $\Lambda$. On the other hand, if $(-,-):\Lambda\times\Lambda\longrightarrow K$ is a  graded Nakayama form whose associated Nakayama automorphism is $\bar{\eta}$, then the map $[-,-]=\Lambda\times\Lambda\longrightarrow K$ given by $[a,b]=(\varphi (a),\varphi (b))$ is a graded Nakayama form whose associated Nakayama automorphism is $\varphi^{-1}\circ\bar{\eta}\circ\varphi$. But all Nakayama automorphisms of $\Lambda$ are equal, up to composition by an inner automorphism. We conclude that $\varphi^{-1}\circ\bar{\eta}\circ\varphi\circ\bar{\eta}^{-1}\in\text{Inn}^{gr}(\Lambda )$, so that the statement about $\bar{\eta}$ is proved. Moreover,  the Nakayama permutation of $Q_0$ associated to $\varphi^{-1}\circ\bar{\eta}\circ\varphi$ ``is''  $\tilde{\varphi}^{-1}\circ\bar{\nu}\circ\tilde{\varphi}$, which, by Proposition \ref{prop.graded Nak-form via basis}, implies that $\bar{\nu}\circ\tilde{\varphi}=\tilde{\varphi}\circ\bar{\nu}$, an equality that may be seen as an equality of (graded) automorphisms of $\Lambda$.

From the previous paragraph we also deduce that each $\varphi\in\text{Aut}^{gr}(\Lambda )$ decomposes as $\varphi =\tilde{\varphi}\circ\bar{\chi}$, where $\bar{\chi}$ is an automorphism of $\Lambda$ which is the identity on vertices of $Q$. This means that we also have a map $\bar{\xi}:Q_1=\frac{\mathbb{Z}\Delta_1}{G}\longrightarrow K^*$ such that $\bar{\chi} ([a])=\bar{\xi} ([a])[a]$, for each $a\in\mathbb{Z}\Delta_1$. Denoting by $\xi$ the composition $\mathbb{Z}\Delta_1\stackrel{\pi}{\longrightarrow}:Q_1=\frac{\mathbb{Z}\Delta_1}{G}\stackrel{\bar{\xi}}{\longrightarrow}K^*$, we have that $\xi\circ g_{| \mathbb{Z}\Delta_1}=\xi$, for all $g\in G$, and $\bar{\chi}([a])=\xi (a)[a]$, for all  $a\in\mathbb{Z}\Delta_1$. We then get a $G$-invariant graded  automorphism $\chi =\chi^\xi$ of $B=B(\Delta )$ which fixes the vertices,  maps $a\rightsquigarrow\xi (a)a$, for all $a\in\mathbb{Z}\Delta_1$, and has the property that $\bar{\chi}([a])=[\chi (a)]$, for all $a\in\mathbb{Z}\Delta_1$.

By the first paragraph of this proof, we know that $\bar{\tau}$ and $\bar{\nu}$ commute with the (graded) automorphisms of $\Lambda$ given by quiver automorphisms of $Q$. In order to end the proof, we just need to check that they commute, up to composition with an inner automorphism,  with all automorphisms of $\Lambda$ of the form $\bar{\chi}$, where $\chi =\chi^\xi$, for some map $\xi :\mathbb{Z}\Delta_1\longrightarrow K^*$ such that $\xi\circ g_{| \mathbb{Z}\Delta_1}=\xi$ for all $g\in G$.
Moreover, by Proposition \ref{prop:Coxeter-Nakayama}, we know that $\nu=\rho^k\circ\tau^{l}$, where $k\in\{0,1\}$ and $l>0$. Our task is hence reduced to check that $\bar{\psi}\circ\bar{\chi}\circ\bar{\psi}^{-1}\circ\bar{\chi}^{-1}$ is inner, when $\psi\in\{\rho ,\tau\}$ and $\chi =\chi^\xi$, for some map $\xi$ as above. Note that  $\psi\circ\chi\circ\psi^{-1}\circ\chi^{-1}$ is an automorphism of $B$ which fixes the vertices, commutes with all elements of $G$ and maps $a\rightsquigarrow\xi (a)^{-1}\xi (\psi^{-1}(a))a$, for each $a\in\mathbb{Z}\Delta_1$. Then Lemma \ref{lem.criterion of inner automorphism} can be used. Note that the compatibility of $\chi$ with the mesh relations implies that, for each $(k,i)\in\mathbb{Z}\Delta_0$, the product $\xi (\sigma (a))\xi (a)$ is constant on the set of arrows ending at $(k,i)$. It follows easily that the map  $\xi':\mathbb{Z}\Delta_1\longrightarrow K^*$, which takes $a\rightsquigarrow\xi (a)^{-1}\xi (\psi^{-1}(a))$,   is an acyclic character of $\mathbb{Z}\Delta$.  By  \cite{GA-S}[Proposition 10] and its proof, there is a map $\lambda =\mathbb{Z}\Delta_0\longrightarrow K^*$  such that $\lambda_{i(a)}^{-1}\lambda_{t(a)}=\xi (a)^{-1}\xi (\psi^{-1}(a))$, for all $a\in\mathbb{Z}\Delta_1$.

By Lemma \ref{lem.criterion of inner automorphism}, our task is reduced to prove that such a map $\lambda$ has the property that $\lambda\circ g_{| \mathbb{Z}\Delta_0}=\lambda$, for all $g\in G$. We claim that if $G=<\varphi>$ and we have that $(\lambda\circ\varphi)(r,j)=\lambda (r,j)$ for  one vertex $(r,j)$, then $\lambda\circ g_{\mathbb{Z}\Delta_0}=\lambda$, for all $g\in G$. Indeed, suppose that this is the case.  By definition of $\lambda$,  if $\lambda\circ\varphi$ and $\lambda$ act the same on the origin or the terminus of a given arrow $a$, then they act the same both on $i(a)$ and $t(a)$. Then, by the connectedness of $\mathbb{Z}\Delta$, we get that $\lambda\circ\varphi_{| \mathbb{Z}\Delta_0}=\lambda$ as desired. We then pass to prove, for all possibilities of the extended type $(\Delta ,m,t))$,  that there is a  $(r,j)\in\mathbb{Z}\Delta_0$ such that $(\lambda\circ\varphi )(r,j)=\lambda(r,j)$, where $\varphi$ is either $\tau^m$, when $t=1$, or $\rho\circ\tau^m$, when $t=2,3$.
Indeed, except when $(\Delta ,m,t) =(\mathbb{A}_{2n},m,2)$, we always have  a  $j\in\Delta_0$ such that $\rho(k,j)=(k,j)$, for all $k\in\mathbb{Z}$ (here we consider also the case when $t=1$, and hence $\rho =id_{\mathbb{Z}\Delta }$). In particular, we have $\varphi (0,j)=\tau^m(0,j)=(-m,j)$. We fix such a vertex and consider the canonical path $\varphi (0,j)=(-m,j)\stackrel{\sigma^{2m-1}(a)}{\longrightarrow}\bullet\stackrel{\sigma^{2m-2}(a)}{\longrightarrow}(-m+1,j)\longrightarrow ...(-1,j)\stackrel{\sigma (a)}{\longrightarrow}\bullet\stackrel{a}{\longrightarrow}(0,j)$. By definition of $\lambda$, we then have an equality

\begin{center}
$\lambda_{(0,j)}=\lambda_{\varphi (0,j)}\xi(\sigma^{2m-1}(a))^{-1}\xi (\psi (\sigma^{2m-1}(a)))\xi(\sigma^{2m-2}(a))^{-1}\xi (\psi (\sigma^{2m-2}(a)))\cdot ...\cdot\xi(\sigma(a))^{-1}\xi (\psi (\sigma (a)))\xi(a)^{-1}\xi (\psi (a))=\lambda_{\varphi (0,j)}\cdot (\prod_{1\leq k\leq 2m-1}\xi (\sigma^k(a)))^{-1}\cdot (\prod_{1\leq k\leq 2m-1}\xi (\psi (\sigma^k(a)))).$
\end{center}

We need to prove that the equality $(\prod_{0\leq k\leq 2m-1}\xi (\sigma^k(a)))^{-1}\cdot (\prod_{0\leq k\leq 2m-1}\xi (\psi (\sigma^k(a))))=1$ holds. Note that, acting on arrows, both $\rho$ and $\tau$ commute with $\sigma$. For $\psi =\tau$, we then get that

\begin{center}
$(\prod_{0\leq k\leq 2m-1}\xi (\sigma^k(a)))^{-1}\cdot (\prod_{0\leq k\leq 2m-1}\xi (\psi (\sigma^k(a))))=[\xi (\sigma (a))\xi (a)]^{-1}\cdot [\xi(\sigma^{2m+1}(a))\xi(\sigma^{2m}(a))]=[\xi (\sigma (a))\xi (a)]^{-1}\cdot [\xi(\sigma(\tau^m(a)))\xi(\tau^m(a))].$ \hspace*{1cm} (*)
\end{center}
When $G=<\tau^m>$, this expression is equal to $1$ since $\xi\circ g_{| \mathbb{Z}\Delta_1}=\xi$, for all $g\in G$. On the other hand, if $G=<\rho\tau^m>$ (and hence $\Delta\neq\mathbb{A}_{2n}$), then  $\rho (\sigma^{2r}(a))$ and $\sigma^{2r}(a)$ are arrows ending at $(-r,j )$, for each integer $r\geq 0$, due to the choice of $j$.
 It follows that $\xi (\sigma^{2r+1}(\rho (a)))\xi (\sigma^{2r}(\rho (a)))=\xi (\sigma^{2r+1}(a))\xi (\sigma^{2r}(a))$, for each $r\geq 0$ (see the third paragraph of this proof). We then get $(\prod_{0\leq k\leq 2m-1}\xi (\sigma^k(a)))^{-1}\cdot (\prod_{0\leq k\leq 2m-1}\xi (\rho (\sigma^k(a))))=1$ also in this case.

We finally consider the case when $(\Delta ,t)=(\mathbb{A}_{2n},2)$. Similarly to the other cases, we consider the canonical path

\begin{center}
$\varphi (0,n)=\rho\tau^m(0,n)=(-m,n+1)\stackrel{\sigma^{2m-2}(a)}{\longrightarrow}(-m+1,n)\stackrel{\sigma^{2m-3}(a)}{\longrightarrow}(-m+1,n+1)\longrightarrow ...\longrightarrow (-1,n)\stackrel{\sigma(a)}{\longrightarrow}(-1,n+1)\stackrel{a}{\longrightarrow}(0,n)$.
\end{center}
With the argument used above, we need to check that

$(\prod_{0\leq k\leq 2m-2}\xi (\sigma^k(a)))^{-1}\cdot (\prod_{0\leq k\leq 2m-2}\xi (\psi (\sigma^k(a))))=1$, for $\psi=\rho ,\tau$.

Note that we have $\rho (\sigma^k(a))=\sigma^{k-1}(a)$, for each $k\in\mathbb{Z}$. As a consequence, we get:

\begin{center}
$(\prod_{0\leq k\leq 2m-2}\xi (\sigma^k(a)))^{-1}\cdot (\prod_{0\leq k\leq 2m-2}\xi (\tau (\sigma^k(a))))=\xi (\sigma(a))^{-1}\cdot\xi (a)^{-1}\cdot\xi (\sigma^{2m}(a))\cdot\xi (\sigma^{2m-1}(a))=\xi (\sigma(a))^{-1}\cdot\xi (a)^{-1}\cdot\xi(\rho\tau^m)(\sigma (a))\cdot\xi (\rho\tau^m(a))$,
\end{center}
 and

\begin{center}
$(\prod_{0\leq k\leq 2m-2}\xi (\sigma^k(a)))^{-1}\cdot (\prod_{0\leq k\leq 2m-2}\xi (\rho (\sigma^k(a))))=\xi (\sigma^{2m-2}(a))^{-1}\cdot\xi (\sigma^{-1}(a))=\xi (\rho\tau^m(\sigma^{-1}(a)))^{-1}\cdot\xi (\sigma^{-1}(a))$.
\end{center}
Both expressions are equal to $1$ since $\xi\circ g_{\mathbb{Z}\Delta_1}=\xi$, for all $g\in G=<\rho\tau^m>$.
\end{proof}

 The following is the identification of a subgroup of the integers
which is crucial for our purposes.

\begin{prop} \label{prop.subgroup of Z associated}
Let $\Lambda$ be the $m$-fold mesh algebra of extended type $(\Delta
,m,t)$ and let  $H(\Delta ,m,t)$ be the set of integers $s$ such
that $\bar{\eta}^{s}\bar{\nu}^{-s}$ is an inner automorphism of
$\Lambda$. Then $H(\Delta ,m,t)$ is a subgroup of $\mathbb{Z}$ and
the following assertions hold:

\begin{enumerate}
\item If $\text{char}(K)=2$ or $\Delta =\mathbb{A}_r$, then $H(\Delta ,m,t)=\mathbb{Z}$.
\item If $\text{char}(K)\neq 2$ and $\Delta\neq\mathbb{A}_r$, then $H(\Delta
,m,t)=\mathbb{Z}$, when $m+t$ is odd, and $H(\Delta
,m,t)=2\mathbb{Z}$ otherwise.
\end{enumerate}
\end{prop}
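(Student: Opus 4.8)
The plan is to reduce everything to a single sign computation. Write $G=\langle\varphi\rangle$, so that $\varphi=\tau^m$ if $t=1$ and $\varphi=\rho\tau^m$ if $t\in\{2,3\}$; let $\nu$ be the graded automorphism of $B$ extending the Nakayama permutation (Lemma \ref{lem.nu compatible with mesh relations}) and let $\eta$ be the $G$-invariant Nakayama automorphism of Theorem \ref{teor.G-invariant Nakayama automorphism}. Since $\mathbb{Z}\Delta$ has no double arrows, $\eta=\nu\circ\chi_\epsilon$, where $\epsilon:(\mathbb{Z}\Delta)_1\longrightarrow\{1,-1\}$ is the signature read off from that theorem and $\chi_\theta$ denotes, for a map $\theta$ on arrows, the graded automorphism of $B$ fixing the vertices with $\chi_\theta(a)=\theta(a)a$. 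By Proposition \ref{prop.tau nu eta in center} the classes of $\bar\eta$ and $\bar\nu$ lie in the centre of $\text{Aut}^{gr}(\Lambda )/\text{Inn}^{gr}(\Lambda )$, hence commute modulo inner automorphisms, so $s\mapsto\bar\eta^s\bar\nu^{-s}$ induces a group homomorphism $\mathbb{Z}\longrightarrow\text{Aut}^{gr}(\Lambda )/\text{Inn}^{gr}(\Lambda )$ whose kernel is exactly $H(\Delta,m,t)$; thus $H(\Delta,m,t)$ is a subgroup of $\mathbb{Z}$. If $\text{char}(K)=2$ then $\epsilon\equiv1$ because $-1=1$, and if $\Delta=\mathbb{A}_r$ then $\epsilon\equiv1$ by part (1) of Theorem \ref{teor.G-invariant Nakayama automorphism}; in either case $\eta=\nu$, so $\bar\eta^s\bar\nu^{-s}=\mathrm{id}$ for all $s$ and $H(\Delta,m,t)=\mathbb{Z}$. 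This settles assertion (1) and the first case of (2).

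Assume now $\text{char}(K)\neq2$ and $\Delta\neq\mathbb{A}_r$. A short induction using $\nu\chi_\theta\nu^{-1}=\chi_{\theta\circ\nu^{-1}}$ gives $\eta^s\nu^{-s}=\chi_{\epsilon_s}$ with $\epsilon_s:=\prod_{k=1}^s(\epsilon\circ\nu^{-k})$, whence $\bar\eta^s\bar\nu^{-s}=\bar\chi_{\epsilon_s}$. Because $\eta$ and $\nu$ each commute with $G$, so does $\chi_\epsilon=\nu^{-1}\eta$, and $g\chi_\epsilon g^{-1}=\chi_{\epsilon\circ g^{-1}}$ forces $\epsilon$, hence every $\epsilon_s$, to be $G$-invariant. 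Moreover $\chi_{\epsilon_s}=\eta^s\nu^{-s}$ is an automorphism of $B$, so $\epsilon_s(\sigma(a))\epsilon_s(a)$ is constant on the arrows $a$ ending at each vertex; as in the proof of Proposition \ref{prop.tau nu eta in center} this makes $\epsilon_s$ an acyclic character (Definition \ref{defi.acyclic character}) and, by \cite{GA-S}[Proposition 10], a coboundary: there is $\lambda_s:(\mathbb{Z}\Delta)_0\longrightarrow K^*$ with $\epsilon_s(a)=\lambda_s(i(a))^{-1}\lambda_s(t(a))$. By Lemma \ref{lem.criterion of inner automorphism}, $s\in H(\Delta,m,t)$ if and only if $\lambda_s$ may be chosen $G$-invariant. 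Since $\mathbb{Z}\Delta$ is connected, $\lambda_s$ is unique up to a global scalar, and $\lambda_s\circ\varphi$ is again a potential for $\epsilon_s$ (using $\epsilon_s\circ\varphi=\epsilon_s$); therefore $\lambda_s\circ\varphi=M_s\lambda_s$ for a constant $M_s$, which an evaluation along a path shows lies in $\{1,-1\}$, and a $G$-invariant potential exists exactly when $M_s=1$.

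To exploit this, fix $\mu:(\mathbb{Z}\Delta)_0\longrightarrow K^*$ with $\epsilon(a)=\mu(i(a))^{-1}\mu(t(a))$ and set $R(y):=\mu(\varphi y)/\mu(y)=\prod_{a\in p}\epsilon(a)$ for any path $p$ from $y$ to $\varphi y$. The $G$-invariance of $\epsilon$ together with the connectedness of $\mathbb{Z}\Delta$ shows that $R(y)$ does not depend on $y$; call it $R\in\{1,-1\}$. As $\lambda_s=\prod_{k=1}^s(\mu\circ\nu^{-k})$ and $\nu$ commutes with $\varphi$, one obtains $M_s=\prod_{k=1}^s R(\nu^{-k}y)=R^s$. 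Hence $M_2=R^2=1$, so $2\in H(\Delta,m,t)$ always, whereas $1\in H(\Delta,m,t)$ iff $R=1$; being a subgroup of $\mathbb{Z}$ containing $2$, $H(\Delta,m,t)$ equals $\mathbb{Z}$ when $R=1$ and $2\mathbb{Z}$ when $R=-1$.

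It remains to compute $R$ and to check that $R=1$ precisely when $m+t$ is odd; this is the main obstacle, as it rests on a careful reading of the explicit signs of Theorem \ref{teor.G-invariant Nakayama automorphism}. I would take $y=(0,j)$ with $\rho(0,j)=(0,j)$, which exists for every $\Delta\neq\mathbb{A}_r$ (for $\mathbb{D}_{n+1}$ take $j\in\{2,\dots,n\}$, for $\mathbb{D}_4$ take $j=2$, for $\mathbb{E}_6$ take $j=0$, and any $j$ for $\mathbb{E}_7,\mathbb{E}_8$, where $\rho=\mathrm{id}$), so that $\varphi(0,j)=(-m,j)$, and evaluate $R$ along the length-$2m$ path from $(-m,j)$ to $(0,j)$ that bounces between level $j$ and a fixed neighbour. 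Reading off the signs, each of the $m$ mesh-steps contributes one factor $-1$ (from the relevant upward/outgoing arrow), and when $t\in\{2,3\}$ the $\rho$-twist contributes one further factor $-1$, since exactly one of the $m$ consecutive levels meets the congruence condition ($k\equiv-1\ (\mathrm{mod}\ m)$, resp.\ $r=m-1$) appearing in the theorem. This gives $R=(-1)^m$ for $t\in\{1,3\}$ and $R=(-1)^{m+1}$ for $t=2$, that is $R=(-1)^{m+t+1}$ in every case, so $R=1$ iff $m+t$ is odd. The bookkeeping must be done type by type (and for $\mathbb{D}_{n+1}$ with attention to the two forms of $\nu$ according to the parity of $n+1$), but in each case it collapses to the single mesh-step tally just described, completing assertion (2).
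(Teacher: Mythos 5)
Your proof is correct and takes essentially the same route as the paper: you decompose $\eta=\nu\circ\chi_\epsilon$ with the sign character read off from Theorem \ref{teor.G-invariant Nakayama automorphism}, reduce via Proposition \ref{prop.tau nu eta in center} and Lemma \ref{lem.criterion of inner automorphism} to the existence of a $G$-invariant potential $\lambda$ for the character (using \cite{GA-S} and the constancy, by connectedness, of the ratio $\lambda\circ\varphi/\lambda$), and settle invariance by the sign count along the canonical length-$2m$ bouncing path at a $\rho$-fixed vertex, obtaining exactly the paper's tallies ($m$ signs when $t\in\{1,3\}$, $m+1$ when $t=2$). Your multiplicativity $M_s=R^s$ via the exact identity $\eta^s\nu^{-s}=\chi_{\epsilon_s}$ is only a mild repackaging of the paper's observation that $\chi_\epsilon^2=\mathrm{id}_B$ forces $2\in H(\Delta,m,t)$.
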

\begin{proof}
Theorem \ref{teor.G-invariant Nakayama automorphism} gives a map $\xi :\mathbb{Z}\Delta_1\longrightarrow K^*$, where $\xi (a)=(-1)^{u(a)}$, for some $u(a)\in\mathbb{Z}_2$, such that $\xi\circ g_{\mathbb{Z}\Delta_1}=\xi$ and if $\chi =\chi^\xi$, with the terminology of the proof of Proposition \ref{prop.tau nu eta in center}, then $\eta:=\nu\circ\chi$ is a $G$-invariant graded Nakayama automorphism of $B$. By the last mentioned proposition, we conclude that $H(\Delta ,m,t)=\{s\in\mathbb{Z}:$ $\bar{\chi}^s\in\text{Inn}(\Lambda )\}$. It is clearly a subgroup of $\mathbb{Z}$, which contains $2$ since $\chi\circ\chi =id_B$. Moreover, when $\text{char}(K)=2$ or $\Delta =\mathbb{A}_r$, we have $\chi =id_B$ so that $H(\Delta ,m,t)=\mathbb{Z}$ in this case.
Our task reduces to determine, for the remaining cases,  when $\bar{\chi}\in\text{Inn}(\Lambda )$.

Using Lemma \ref{lem.criterion of inner automorphism} and arguing as in the proof of Proposition \ref{prop.tau nu eta in center}, we have a map $\lambda :\mathbb{Z}\Delta_0\longrightarrow K^*$ such that $\lambda_{i(a)}^{-1}\lambda_{t(a)}=\xi (a)=(-1)^{u(a)}$, for all $a\in\mathbb{Z}\Delta_1$, and we need to determine when $\lambda\circ\varphi_{| \mathbb{Z}\Delta_0}=\lambda$, where $\varphi$ is the canonical generator of $G$. We emulate the proof of the previous proposition
and fix a $j\in\Delta_0$ such that $\rho (0,j)=(0,j)$. By the proof of the mentioned proposition, it is enough to show that $\lambda (\varphi (0,j))=\lambda (0,j)$. We have a path

\begin{center}
$\varphi (0,j)=(-m,j)\stackrel{\sigma^{2m-1}(a)}{\longrightarrow}\bullet\stackrel{\sigma^{2m-2}(a)}{\longrightarrow}(-m+1,j)\longrightarrow ...\longrightarrow (-1,j)\stackrel{\sigma (a)}{\longrightarrow}\bullet\stackrel{a}{\longrightarrow}(0,j)$,
\end{center}
which, by the definition of $\lambda$, implies that

$$\lambda_{(0,j)}=\prod_{0\leq k\leq 2m-1}(-1)^{u(\sigma^k(a))}\lambda_{\varphi (0,j)}=(-1)^{\sum_{0\leq k\leq 2m-1}u(\sigma^k(a)}\lambda_{\varphi (0,j)}$$

When $t=1$, so that $\varphi =\tau^m$, Theorem \ref{teor.G-invariant Nakayama
automorphism} gives that, for all choices of $\Delta$, we have  $u(b)\neq u(\sigma (b))$, for all $b\in\mathbb{Z}\Delta_1$. It follows that $\sum_{0\leq k\leq 2m-1}u(\sigma^k(a)=m$. Similarly,  when $t=3$ (whence $\Delta =\mathbb{D}_4$), we have $j=2$ and, looking at Theorem \ref{teor.G-invariant Nakayama automorphism},   one  gets that, for any choice of the arrow $a$ ending at $(0,2)$, the equality  $\sum_{0\leq k\leq 2m-1}u(\sigma^k(a)=m$ also holds. In both cases, we conclude that the equality $\lambda_{\varphi (0,j)}=\lambda_{(0,j)}$ holds exactly when $m$ is even.

Finally, suppose that  $t=2$, so that $\varphi =\rho\tau^m$. We take as arrow $a$ ending at $(0,j)$ the following:

\begin{enumerate}
\item When $\Delta =\mathbb{D}_{n+1}$, with $n+1>4$, we have $j=2$ and take  $a:(-1,3)\rightarrow (0,2)$;
\item When $\Delta =\mathbb{E}_6$, we have $j=3$ and take $a=\epsilon':(-1,0)\rightarrow (0,3)$.
\end{enumerate}
Using Theorem \ref{teor.G-invariant Nakayama automorphism}, we see that in the family of exponents $\{u(\sigma^r(a))\text{: }r=0,1,...,2m-1\}$ there are exactly $m+1$ which are equal to $1$. It follows that  $\lambda_{\varphi (0,j)}=\lambda_{(0,j)}$ holds exactly when $m$ is odd.

\end{proof}

\subsection{Symmetric and weakly symmetric $m$-fold mesh algebras}

The only result of this subsection identifies all the weakly
symmetric and symmetric $m$-fold mesh algebras.

\begin{teor} \label{teor.weakly symmetric m-fold algebras}
Let $\Lambda$ be an $m$-fold mesh algebra of extended type $(\Delta
,m,t)$. If $\Lambda$ is weakly symmetric then $t=1$ or $t=2$ and,
when $\text{char}(K)=2$ or $\Delta =\mathbb{A}_r$, such an algebra
is also symmetric. Moreover, the following assertions hold:

\begin{enumerate}
\item When $t=1$, $\Lambda$ is weakly symmetric if, and only if, $\Delta$ is $\mathbb{D}_{2r}$, $\mathbb{E}_{7}$ or
$\mathbb{E}_{8}$ and $m$ is a divisor of $\frac{c_\Delta}{2}-1$.
When $\text{char}(K)\neq 2$, such an algebra is symmetric if, and
only if,  $m$ is even.
\item When $t=2$ and $\Delta\neq\mathbb{A}_{2n}$, $\Lambda$ is
weakly symmetric if, and only if, $m$ divides $\frac{c_\Delta}{2}-1$
and, moreover,  the quotient of the division is odd, in case $\Delta
=\mathbb{A}_{2n-1}$, and even, in case $\Delta =\mathbb{D}_{2r}$.
When $\text{char}(K)\neq 2$, such an algebra is symmetric if, and
only if, $\Delta =\mathbb{A}_{2n-1}$ or $m$ is odd.
\item When $(\Delta,m ,t)=(\mathbb{A}_{2n},m,2)$, i.e. $\Lambda
=\mathbb{L}_n^{(m)}$, the algebra is (weakly) symmetric if, and only
if, $2m-1$ divides $2n-1$.
\end{enumerate}
\end{teor}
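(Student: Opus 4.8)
The plan is to reduce both conditions to statements about two automorphisms of $\Lambda =B/G$: the Nakayama permutation $\bar\nu$ and the Nakayama automorphism $\bar\eta$, both induced from the corresponding maps on $B$ (Lemma \ref{lem.nu compatible with mesh relations} and Remark \ref{rem. G-invarinat basis}). Recall that $\Lambda$ is weakly symmetric exactly when its Nakayama functor fixes the iso-classes of simple modules, i.e. when $\bar\nu$ is the identity on $Q=\mathbb{Z}\Delta/G$; and $\Lambda$ is symmetric exactly when $D(\Lambda )\cong\Lambda$ as bimodules, i.e. when $\bar\eta$ is inner (for then ${}_1\Lambda_{\bar\eta}\cong {}_1\Lambda_1$). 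Since an inner automorphism fixes the vertices, symmetric forces weakly symmetric, so I would settle weak symmetry first and then, on top of it, decide the innerness of $\bar\eta$.

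First I would establish the combinatorial translation $\bar\nu=\mathrm{id}_Q\iff\nu\in G$. This uses that $\pi\colon\mathbb{Z}\Delta\longrightarrow Q$ is a Galois covering with group $G$ and that $\mathbb{Z}\Delta$ is connected, so an automorphism commuting with $G$ descends to $\mathrm{id}_Q$ precisely when it is a deck transformation, i.e. lies in $G$. Then I would read $\nu$ off Proposition \ref{prop:Coxeter-Nakayama} and write it uniformly (for $\Delta\neq\mathbb{A}_{2n}$) as $\nu=\rho^{\epsilon}\tau^{-d}$ with $d=\frac{c_\Delta}{2}-1$ an integer, where $\epsilon=0$ exactly for $\Delta\in\{\mathbb{D}_{2r},\mathbb{E}_7,\mathbb{E}_8\}$ and $\epsilon=1$ for $\mathbb{A}_{2n-1},\mathbb{D}_{2r+1},\mathbb{E}_6$. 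Writing $G=\langle\rho^{\delta}\tau^m\rangle$ with $\delta=0$ for $t=1$ and $\delta=1$ for $t=2$, and using that there $\rho$ is an order-$2$ involution commuting with $\tau$, every element of $G$ is $\rho^{\delta k}\tau^{mk}$; hence $\nu\in G$ becomes $mk=-d$ together with $\delta k\equiv\epsilon\pmod 2$. For $t=1$ this forces $\epsilon=0$ and $m\mid d$, giving exactly $\mathbb{D}_{2r},\mathbb{E}_7,\mathbb{E}_8$ with $m\mid\frac{c_\Delta}{2}-1$. For $t=2$ it forces $m\mid d$ with the quotient $q=d/m$ satisfying $q\equiv\epsilon\pmod 2$, which is the asserted parity (odd for $\mathbb{A}_{2n-1}$, even for $\mathbb{D}_{2r}$, and automatic for $\mathbb{D}_{2r+1},\mathbb{E}_6$ since there $d$ is odd).

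The two exceptional types require separate bookkeeping, and this is where I expect the main friction. For $t=3$ (type $\mathbb{G}_2^{(m)}$, $\Delta=\mathbb{D}_4$) one has $\nu=\tau^{-2}$, and $\rho^k=\mathrm{id}$ forces $3\mid k$, which is incompatible with $mk=-2$; thus $\nu\notin G$ and no such algebra is weakly symmetric, yielding the opening assertion that weak symmetry forces $t\in\{1,2\}$. For $\Delta=\mathbb{A}_{2n}$, where $\rho$ has infinite order with $\rho^2=\tau^{-1}$, I would instead collapse $\hat G=\langle\rho\rangle$: here $G=\langle\rho\tau^m\rangle=\langle\rho^{1-2m}\rangle$ and $\nu=\rho\tau^{1-n}=\rho^{2n-1}$, so $\nu\in G$ becomes $(2m-1)\mid(2n-1)$, proving Part 3.

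Finally, for symmetry I would layer Proposition \ref{prop.subgroup of Z associated} on top of weak symmetry. Since $\bar\nu=\mathrm{id}$ there, the group $H(\Delta ,m,t)=\{s:\bar\eta^{s}\bar\nu^{-s}\text{ inner}\}$ reduces to $\{s:\bar\eta^{s}\text{ inner}\}$, so symmetry is equivalent to $1\in H(\Delta ,m,t)$. When $\mathrm{char}(K)=2$ or $\Delta=\mathbb{A}_r$ that proposition gives $H=\mathbb{Z}$, whence weak symmetry already implies symmetry (this covers the general claim and the coincidence of the two notions in Part 3); otherwise $1\in H$ iff $m+t$ is odd, which reads as ``$m$ even'' for $t=1$ and ``$m$ odd'' for $t=2$, matching the stated refinements (with $\mathbb{A}_{2n-1}$ always symmetric since it is an $\mathbb{A}_r$). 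The main obstacle throughout is the careful matching of the $\rho$-exponent $\epsilon$ of $\nu$ against the parity of $d/m$, compounded by the two types where $\rho$ is not an honest order-$2$ involution; once Propositions \ref{prop:Coxeter-Nakayama} and \ref{prop.subgroup of Z associated} are in hand, the rest is divisibility bookkeeping.
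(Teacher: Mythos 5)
Your proposal is correct and follows essentially the paper's own proof: the same reduction of weak symmetry to $\nu\in G$ (the paper uses the free action of $\hat{G}=\langle\rho ,\tau\rangle$ on vertices not fixed by $\rho$ where you invoke the deck-transformation property of the Galois covering, which amounts to the same thing), the same case-by-case divisibility analysis --- your uniform bookkeeping $\nu=\rho^{\epsilon}\tau^{-d}$ with $d=\frac{c_\Delta}{2}-1$ and the collapse $G=\langle\rho^{1-2m}\rangle$, $\nu=\rho^{2n-1}$ for $\mathbb{A}_{2n}$ are just tidier packagings of the paper's congruence computations --- and the same layering of Lemma \ref{lem.criterion of inner automorphism} and Proposition \ref{prop.subgroup of Z associated} to settle symmetry via $H(\Delta ,m,t)$. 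The only slip is expository: for $t=1$ your uniform formula excludes $\Delta=\mathbb{A}_{2n}$ and your separate $\mathbb{A}_{2n}$ paragraph treats only $t=2$, but your own $\rho$-power calculus disposes of the missing case in one line, since $\nu=\rho^{2n-1}$ is an odd power of $\rho$ while $\langle\tau^m\rangle=\langle\rho^{-2m}\rangle$ contains only even powers.
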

\begin{proof}
The algebra $\Lambda$ is weakly symmetric if, and only if, the
automorphism $\bar{\nu}:\Lambda\longrightarrow\Lambda$ induced by
$\nu$ is the identity on vertices. We identify the vertices of the
quiver of $\Lambda$ as $G$-orbits of vertices of
$\mathbb{Z}\Delta_0$, where $G$ is the weakly admissible group of
automorphism considered in each case. If we take care to choose a
vertex $(k,i)$ which is not fixed by $\rho$, then the equality
$\bar{\nu}([(k,i)])=[(k,i)]$ holds exactly when there is a $g\in G$
such that $\nu (k,i)=g(k,i)$. But if $\hat{G}$ denotes the group of
automorphisms generated by $\rho$ and $\tau$, then $\hat{G}$  acts
freely on the vertices not fixed by $\rho$. Since $G\subset\hat{G}$
and $\nu\in\hat{G}$ (see Proposition \ref{prop:Coxeter-Nakayama})
the equality $\nu (k,i)=g(k,i)$ implies that $\nu =g$. Therefore the
algebra $\Lambda$ is weakly symmetric if, and only if, $\nu$ belongs
to $G$.

On the other hand, $\Lambda$ is symmetric if, and only if,
$\bar{\eta}:\Lambda\longrightarrow\Lambda$ is an inner automorphism.
By Lemma \ref{lem.criterion of inner automorphism}, this is
equivalent to saying that $\Lambda$ is weakly symmetric and
$\bar{\eta}\circ\bar{\nu}^{-1}$ is an inner automorphism of
$\Lambda$. That is,  $\Lambda$ is symmetric if, and only if,
$\Lambda$ is weakly symmetric and $H(\Delta ,m,t)=\mathbb{Z}$. As a
consequence, once the weakly symmetric $m$-fold mesh algebras have
been identified, the part of the theorem referring to symmetric
algebras follows directly from   Proposition \ref{prop.subgroup of Z
associated}.

 If $t=3$, then
$\Delta=\mathbb{D}_{4}$,   $G=\langle\rho\tau^m\rangle$, with $\rho$ acting on
vertices as the $3$-cycle $(013)$, and $\nu=\tau^{-2}$. It is
impossible to have $\tau^{-2}\in G$ and therefore $\Lambda$ is never
weakly symmetric in this case.

If $t=1$, then $G=\langle\tau^m\rangle$. If we assume that
$\Delta\neq\mathbb{D}_{2r},\mathbb{E}_{7},\mathbb{E}_{8}$ then $\nu
=\rho\tau^{1-n}$, for some integer $n$. Again it is impossible that
$\nu\in G$ and, hence, $\Lambda$ cannot be weakly symmetric. On the
contrary,  suppose that $\Delta$ is one of
the Dynkin quivers $\mathbb{D}_{2r},\mathbb{E}_{7},$ or $\mathbb{E}_{8}$. Then
$\nu=\tau^{1-n}$, with $n=\frac{c_\Delta}{2}$, and $\nu$ belongs to
$G$ if, and only if, there is an integer $r$  such that
$\tau^{1-n}=(\tau^m)^r$, which is equivalent to saying that
$n-1=-mr$ since $\tau$ has infinite order. Then $\Lambda$ is weakly
symmetric in this case if, and only if, $m$ divides $n-1$.

Suppose now that $t=2$ and $\Delta\neq\mathbb{A}_{2n}$. Then
$G=\langle\rho\tau^m\rangle$ and, except when $\Delta =\mathbb{D}_{2r}$, we have
that  $\nu=\rho\tau^{1-n}$, where $n=\frac{c_\Delta}{2}$. Assume
that $\Delta\neq\mathbb{D}_{2r}$. Then $\nu$ is in $G$ if, and only
if, there is an integer $r$ such that
$\rho\tau^{1-n}=(\rho\tau^m)^r$. Note that then $r$ is necessarily
odd.  If follows that $\Lambda$ is weakly symmetric if, and only if,
$m$ divides $n-1$ and the quotient $\frac{n-1}{m}$ is an odd number.
But the condition on $\frac{n-1}{m}$ to be odd is superfluous when
$\Delta =\mathbb{D}_{2r+1}$ or $\mathbb{E}_{6}$ because $n$ is even
in both cases.

Consider now the case in which $(\Delta ,t)=(\mathbb{D}_{2r},2)$.
Then $\nu=\tau^{1-n}$, where $n=\frac{c_\Delta}{2}=2r-1$. Then $\nu$
is in $G$ if, and only if, there is an integer $s$ such that
$\tau^{1-n}=(\rho\tau^m)^s$. This forces $s$ to be even. We then get
that $\Lambda$ is weakly symmetric if, and only if, $m$ divides
$n-1$ and the quotient $\frac{n-1}{m}$ is even.

Finally, let us consider the case when the extended type is
$(\mathbb{A}_{2n},m,2)$. In this case $\rho^2=\tau^{-1}$ and $\nu
=\rho\tau^{1-n}$. Then $\nu$ is in $G$ if, and only if, there is an
integer $r$ such that $\rho\tau^{1-n}=(\rho\tau^{m})^r$. This forces
$r=2s+1$ to be odd, and then
$\rho\tau^{-s+m(2s+1)}=(\rho\tau^m)^{2s+1}=\rho\tau^{1-n}$. Then
$\Lambda$ is weakly symmetric if, and only if, there is an integer
$s$ such that $(2m-1)s=1-m-n$. That is, if and only if $2m-1$
divides $m+n-1$, which is equivalent to saying that  $2m-1$ divides
$2(m+n-1)-(2m-1)=2n-1$.
\end{proof}

\section{The   period and the stable Calabi-Yau dimension of an $m$-fold mesh algebra}

\subsection{The minimal projective resolution of the regular bimodule}

\begin{lema} \label{lem.G-invarian basis}
Let $\Delta$ be a Dynkin quiver and $B=B(\mathbb{Z}\Delta)$ be its associated mesh
algebra. For any weakly admissible group of automorphisms $G$ of
$\mathbb{Z}\Delta$, there is a basis $\mathcal{B}$ of $B$ consisting
of paths which is $G$-invariant (i.e. $g(\mathcal{B})=\mathcal{B}$
for all $g\in G$).
\end{lema}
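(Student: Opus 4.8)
The plan is to build the basis one block $e_xBe_y$ at a time and then spread the choice over $G$-orbits, exploiting that the $G$-action is free. First I would record the structural facts I need. Since $B=K\mathbb{Z}\Delta/\varphi(I)$ is the quotient of a path algebra by a homogeneous ideal, the classes of the paths of $\mathbb{Z}\Delta$ span $B$, and for each ordered pair of vertices $(x,y)$ the classes of the paths from $x$ to $y$ span the block $e_xBe_y$. By Proposition \ref{prop:Coxeter-Nakayama} every path of length $>c_\Delta-2$ vanishes in $B$, and $\mathbb{Z}\Delta$ is locally finite, so there are only finitely many surviving paths between $x$ and $y$ and each block $e_xBe_y$ is finite dimensional. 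Finally, each $g\in G$ is a quiver automorphism compatible with the mesh relations, hence sends the class $[p]$ of a path to the class $[g(p)]$ of a path and restricts to a vector-space isomorphism $e_xBe_y\stackrel{\cong}{\to}e_{g(x)}Be_{g(y)}$ carrying paths to paths.

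The key point I would establish next is that $G$ acts \emph{freely} on the vertices. Every vertex of $\mathbb{Z}\Delta$ is the origin of at least one arrow: if $w\in\Delta_0$ is the source of $\alpha:w\to v$ then $(n,\alpha):(n,w)\to(n,v)$, and if $w$ is only a target of $\alpha:u\to w$ then $(n,\alpha)':(n,w)\to(n+1,u)$; in either case $(n,w)^+\neq\emptyset$. Thus if some $g\in G\setminus\{1\}$ fixed a vertex $x$, weak admissibility would give $\emptyset=x^+\cap(gx)^+=x^+\cap x^+=x^+$, a contradiction. Hence $G$ acts freely on vertices, and therefore freely on ordered pairs of vertices, since the stabilizer of $(x,y)$ is contained in that of $x$. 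In particular, for every pair $(x',y')$ in the $G$-orbit of $(x,y)$ there is a \emph{unique} $g\in G$ with $(x',y')=(g(x),g(y))$.

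With this in hand I would carry out the construction. Choose a set $R$ of representatives of the $G$-orbits of ordered pairs of vertices, and for each $(x,y)\in R$ choose a subset $\mathcal{B}_{x,y}$ of the paths from $x$ to $y$ whose classes form a $K$-basis of the finite-dimensional block $e_xBe_y$ (possible since those classes span it). Then put
$$\mathcal{B}=\bigcup_{(x,y)\in R}\ \bigcup_{g\in G}\, g(\mathcal{B}_{x,y}).$$
By construction $\mathcal{B}$ consists of paths and satisfies $g(\mathcal{B})=\mathcal{B}$ for all $g\in G$ (reindexing the inner union). To see it is a basis, use $B=\bigoplus_{(x,y)}e_xBe_y$: for each pair $(x',y')$ there are unique $(x,y)\in R$ and $g\in G$ with $(x',y')=(g(x),g(y))$, and since $g$ restricts to an isomorphism $e_xBe_y\to e_{x'}Be_{y'}$ it carries the chosen basis $\{[p]:p\in\mathcal{B}_{x,y}\}$ onto a basis of $e_{x'}Be_{y'}$ made of paths. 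Because each path belongs to exactly one block, these per-block families are disjoint and their union is a basis of $B$.

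The hard part here is almost nonexistent once freeness is in place: it is precisely the triviality of the stabilizers of blocks that lets me propagate an \emph{arbitrary} basis choice from a single orbit representative with no compatibility constraint, which is what would obstruct such an equivariant construction in the presence of nontrivial stabilizers. The only thing demanding care is the bookkeeping that the propagated families neither overlap improperly nor miss a block, and this is exactly what the uniqueness of the pair $((x,y),g)$ guarantees.
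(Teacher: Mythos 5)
Your proposal is correct and follows essentially the same route as the paper: the paper likewise fixes a set of representatives of the $G$-orbits of vertices, chooses a path basis there (of each $e_{(k,i)}B$, rather than of each block $e_xBe_y$ as you do), and propagates it along the free $G$-action, exactly as in the construction of the $G$-invariant basis of $\mathrm{Soc}(B)$ in the proof of Theorem \ref{teor.G-invariant Nakayama automorphism}. Your only additions — working with orbits of ordered pairs instead of single vertices, and deriving freeness of the action on vertices from weak admissibility via $x^+\neq\emptyset$ — are correct refinements of details the paper leaves implicit.
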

\begin{proof}
The way of constructing the basis $\mathcal{B}$ is
analogous to the way in which a $G$-invariant basis of
$\text{Soc}(B)$ was constructed (see the initial  paragraphs of the
proof of Theorem \ref{teor.G-invariant Nakayama automorphism}). The
task is reduced to find, for each vertex $(k,i)$ in a fixed subset $I'\subset\mathbb{Z}\Delta_0$ of representatives of the $G$-orbits,  a basis of $e_{(k,i)}B$ consisting of paths.
Since the existence of this basis is clear the result follows.
\end{proof}

Suppose that $(-,-):B\times B\longrightarrow K$ is a $G$-invariant
graded Nakayama form for $B$. Given a basis $\mathcal{B}$ as in last
lemma, its \emph{(right) dual basis} with respect to $(-,-)$ will be
the basis $\mathcal{B}^*=\bigcup_{(k,i)\in (\mathbb{Z}\Delta
)_0}\mathcal{B}^*e_{\nu (k,i)}$, where $\mathcal{B}^*e_{\nu (k,i)}$,
is the (right) dual basis of $e_{(k,i)}\mathcal{B}$ with respect to
the induced graded bilinear form $(-,-):e_{(k,i)}B\times Be_{\nu
(k,i)}\longrightarrow K$. By the graded condition of this bilinear
form, $\mathcal{B}^*$ consists of homogeneous elements. By the
$G$-invariance of $(-,-)$ and $\mathcal{B}$, we immediately get that
$\mathcal{B}^*$ is $G$-invariant. On what concerns the minimal
projective resolution of $B$ as a bimodule, we will need to fix a
basis $\mathcal{B}$ as given by last lemma and use it and its dual
basis to give the desired resolution.

By a classical argument for unital algebras, also valid here (see, e.g., \cite{BES} or \cite{D2}),
we know that if $B'$ is the original mesh algebra,
i.e., $K\mathbb{Z}\Delta/I$ where $I$ is the ideal generated by $r_{(k,i)}= \sum_{t(a)=(k,i)}
\sigma(a)a$ with $(k,i)\in \mathbb{Z}\Delta_0$, then the initial part of the
minimal projective resolution of $B'$ as a graded $B'$-bimodule has
the following shape

\begin{center}

$Q^{-2}\stackrel{R'}{\longrightarrow}
Q^{-1}\stackrel{\delta}{\longrightarrow} Q^0 \stackrel{u}{\longrightarrow}
B'\longrightarrow 0$

\end{center}

where $Q^0=(\oplus_{(k,i)\in (\mathbb{Z}\Delta )_0}B'e_{(k,i)}\otimes
e_{(k,i)}B')[0]$, $Q^{-1}=(\oplus_{a\in (\mathbb{Z}\Delta
)_1}B'e_{i(a)}\otimes e_{t(a)}B')[-1]$, $Q^{-2}=(\oplus_{(k,i)\in
(\mathbb{Z}\Delta )_0}B'e_{\tau (k,i)}\otimes e_{(k,i)}B')[-2]$, and

\begin{enumerate}

\item $u$ is the multiplication map,

\item $\delta$ is the only homomorphism of $B'$-bimodules such that, for all $a\in(\mathbb{Z}\Delta )_1$,

$$\delta (e_{i(a)}\otimes e_{t(a)})=a\otimes e_{t(a)}-e_{i(a)}\otimes
a;$$

\item $R'$ is the only homomorphism of $B'$-bimodules such that, for all $(k,i)\in
(\mathbb{Z}\Delta )_0$,

$$R'(e_{\tau (k,i)}\otimes e_{(k,i)})=
\sum_{t(a)=(k,i)} [\sigma (a)\otimes
e_{(k,i)}+e_{\tau (k,i)}\otimes a]$$

\end{enumerate}

We will slightly modify this resolution bearing in mind that,
for simplification purposes, we are working with the mesh algebra $B$ given
by the new relations as in Section \ref{subsec. change of relations}.
We point out that this change only accounts for the difference in
the description $R'$. Indeed, considering the canonical
algebra isomorphism $\varphi
=\varphi^{-1}:K\mathbb{Z}\Delta\stackrel{\cong}{\longrightarrow}K\mathbb{Z}\Delta$,
given in Proposition \ref{prop.admissible change of variables}, and
the induced isomorphism of graded algebras
$B\stackrel{\cong}{\longrightarrow}B'$, it is routine to check that, up to
isomorphism, the initial part of the minimal graded projective resolution of $B$ as a $B$-bimodule
is given by

\begin{center}
$Q^{-2}\stackrel{R}{\longrightarrow}Q^{-1}\stackrel{\delta}{\longrightarrow}Q^0\stackrel{u}{\longrightarrow}B\rightarrow
0$,
\end{center}

where $R$ is the only homomorphism of $B$-bimodules such that, for all $(k,i)\in
(\mathbb{Z}\Delta )_0$,

$$R(e_{\tau (k,i)}\otimes e_{(k,i)})=
\sum_{t(a)=(k,i)} (-1)^{s(\sigma (a)a)}[\sigma (a)\otimes
e_{(k,i)}+e_{\tau (k,i)}\otimes a]$$ where $s: \mathbb{Z}\Delta_1\longrightarrow \mathbb{Z}_2$ is the associated
 signature map given in Proposition \ref{prop.admissible
change of variables}, which we assume to be the empty set when
$(\Delta ,G)=(\mathbb{D}_4,\langle\rho\tau^m\rangle)$, and the signature of a path is the sum of
the signatures of its arrows.

Next we identify the elements generating $Ker(R)$.

\begin{prop} \label{prop:deformed projective resolution}
Let $\Delta$ be a Dynkin quiver and let $B$ be the associated mesh algebra.
Denote by $\tau'$ the graded automorphism
of $B$ which acts as $\tau$ on vertices and maps $a\rightsquigarrow
(-1)^{s(a)+s(\tau (a))}a$, for each $a\in (\mathbb{Z}\Delta )_1$. If for
each $(k,i)\in(\mathbb{Z}\Delta )_0$ we consider the homogeneous elements of $Q^{-2}$ given
by  $$\xi_{(k,i)}=\sum_{x\in
e_{(k,i)}\mathcal{B}}(-1)^{\text{deg}(x)}\tau' (x)\otimes x^*,$$  then
  $\oplus_{(k,i)\in\mathbb{Z}\Delta_0}B\xi
_{(k,i)}=\text{Ker}(R)=\oplus_{(k,i)\in\mathbb{Z}\Delta_0}\xi
_{(k,i)}B$.

\end{prop}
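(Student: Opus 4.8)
The plan is to establish three things in turn: that each $\xi_{(k,i)}$ lies in $\mathrm{Ker}(R)$, that the family $(\xi_{(k,i)})$ generates a free left (resp.\ right) submodule, and that this submodule exhausts $\mathrm{Ker}(R)$. For the first point I would start by locating the summands of $Q^{-2}$ in which $\xi_{(k,i)}$ lives: the term $\tau'(x)\otimes x^*$ with $t(x)=p$ sits in $Be_{\tau(p)}\otimes e_pB$, since $\tau'(x)\in Be_{\tau(p)}$ and $x^*\in e_pBe_{\nu(k,i)}\subseteq e_pB$, so that $\tau'(x)\otimes x^*=\tau'(x)(e_{\tau(p)}\otimes e_p)x^*$ and $R$ can be evaluated through its defining formula on the generators $e_{\tau(p)}\otimes e_p$. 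Expanding $R(\xi_{(k,i)})$ and sorting the terms by the arrow-indexed summands of $Q^{-1}$, each summand receives exactly two kinds of contributions: \emph{first-type} terms $\tau'(x)\sigma(b)\otimes x^*$ landing in the summand indexed by the arrow $b$, and \emph{second-type} terms $\tau'(y)\otimes ay^*$ landing in the summand indexed by $\sigma(a)$. Matching these against a fixed arrow $b$ forces $a=b$ in the first case and $a=\sigma^{-1}(b)$ in the second.

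The core computation is then to show these cancel. The decisive tool is the adjunction identity for dual bases: with $a=\sigma^{-1}(b)\colon q\to\tau^{-1}(r)$ and using the associativity $(ua,v)=(u,av)$ of the Nakayama form (Definition \ref{defi.graded Nakayama form}(1)), right multiplication by $a$ and left multiplication by $a$ are mutually adjoint for the Nakayama pairings, whence $\sum_x (xa)\otimes x^*=\sum_y y\otimes (ay^*)$ in $e_{(k,i)}Be_{\tau^{-1}(r)}\otimes e_qBe_{\nu(k,i)}$. Applying $\tau'\otimes\mathrm{id}$, rewriting $\tau'(x)\sigma(b)=(-1)^{s(a)+s(\sigma(b))}\tau'(xa)$ (valid since $\sigma(b)=\tau(a)$ as arrows and $\tau'$ differs from $\tau$ by the sign $(-1)^{s(\cdot)+s(\tau(\cdot))}$), and tracking $(-1)^{\deg x}=-(-1)^{\deg(xa)}$, the two contributions to the summand $b$ end up with opposite signs: both external factors $(-1)^{s(\sigma(b)b)}$ and $(-1)^{s(ba)}$ reduce to $(-1)^{s(a)+s(b)}$, and the extra minus sign from the degree shift produces the cancellation. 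This sign bookkeeping, reconciling the signature map $s$, the twist $\tau'$ and the factor $(-1)^{\deg x}$, is the delicate part and the main obstacle; everything else is structural.

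For the second point, since $\mathbb{Z}\Delta$ is acyclic the only path from $(k,i)$ to itself is trivial, so among the terms of $\xi_{(k,i)}$ exactly one, namely $e_{\tau(k,i)}\otimes w_{(k,i)}$ coming from $x=e_{(k,i)}$ (whose dual is the socle generator $w_{(k,i)}$), lands in the summand indexed by $(k,i)$. To prove that $\Phi\colon\bigoplus_{(k,i)}Be_{\tau(k,i)}\longrightarrow Q^{-2}$, $e_{\tau(k,i)}\mapsto\xi_{(k,i)}$, is injective I would examine, for each vertex $p$, the component of $\sum b_{(k,i)}\xi_{(k,i)}$ in $Be_{\tau(p)}\otimes e_pB$: the right tensor factors $x^*$ occurring there, as $x$ ranges over all basis paths ending at $p$, form a $K$-basis of $e_pB$, because $\nu$ is a bijection of vertices and hence $\bigoplus_{(k,i)}e_pBe_{\nu(k,i)}=e_pB$. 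Thus each coefficient $(-1)^{\deg x}b_{(k,i)}\tau'(x)\in Be_{\tau(p)}$ vanishes, and taking $x=e_{(k,i)}$ (so $p=(k,i)$) yields $b_{(k,i)}=0$. This simultaneously gives directness of the sum and freeness of each $B\xi_{(k,i)}\cong Be_{\tau(k,i)}$.

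Finally, to see that $\bigoplus B\xi_{(k,i)}$ exhausts $\mathrm{Ker}(R)$, I would compare graded dimensions: the Euler characteristic of the exact sequence $0\to\mathrm{Ker}(R)\to Q^{-2}\to Q^{-1}\to Q^0\to B\to 0$ computes the Hilbert series of $\mathrm{Ker}(R)$ from those of $Q^0,Q^{-1},Q^{-2}$ and $B$, and, using that $\tau$ permutes the vertices, this matches the Hilbert series of $\bigoplus_{(k,i)}Be_{\tau(k,i)}$ in the appropriate degree; an inclusion of graded modules with equal finite-dimensional components in each degree is an equality. Alternatively one may invoke the known fact $\Omega^3_{B^e}(B)\cong{}_\mu B_1$ recalled in the introduction, which forces $\mathrm{Ker}(R)$ to be generated by one degree-$\mathit{l}$ element per vertex. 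The right-hand equality $\mathrm{Ker}(R)=\bigoplus\xi_{(k,i)}B$ then follows by the symmetric argument, noting that $B^{op}$ is again a mesh algebra of a Dynkin quiver (of type $\Delta^{op}$) and that the right-leading term of $\xi_{(k,i)}$ is $(-1)^{\deg w_{(k,i)}}\tau'(w_{(k,i)})\otimes e_{\nu(k,i)}$, arising from $x=w_{(k,i)}$.
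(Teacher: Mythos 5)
Your first two steps are correct and genuinely diverge from the paper's treatment. For membership in $\mathrm{Ker}(R)$ the paper does not redo the computation at all: it transports the classical identity for the unsigned mesh relations through the sign-change isomorphism $f:B'\stackrel{\cong}{\longrightarrow}B$ of Corollary \ref{cor.new-presentation-mesh-algebra}, using $f(\tau (x))=\tau '(f(x))$ and $f(x^*)=f(x)^*$. Your direct expansion is a valid (more self-contained) substitute, and the sign bookkeeping does close: in the summand indexed by $b=\sigma (a)$ the first-type terms carry $(-1)^{s(\sigma (b))+s(b)}\cdot (-1)^{s(a)+s(\sigma (b))}\cdot (-1)^{\deg x}=(-1)^{s(a)+s(b)}(-1)^{\deg x}$ in front of $\tau '(xa)\otimes x^*$, while the second-type terms carry $(-1)^{s(a)+s(b)}(-1)^{\deg x+1}$, so they cancel. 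Your componentwise injectivity argument for $\Phi$ is also fine and replaces the paper's socle argument (the paper shows right multiplication $Be_{\tau\nu^{-1}(k,i)}[-c_\Delta ]\twoheadrightarrow B\xi_{\nu^{-1}(k,i)}$ is injective because it does not vanish on the simple essential socle); directness is even automatic since $\xi_{(k,i)}\in Q^{-2}e_{\nu (k,i)}$ and $\nu$ is bijective.

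The genuine gap is the exhaustion step. The Hilbert-series match you assert is precisely the nontrivial content of the proposition: the Euler characteristic of $0\to \mathrm{Ker}(R)\to Q^{-2}\to Q^{-1}\to Q^0\to B\to 0$ determines the graded dimensions of $\mathrm{Ker}(R)e_q$, but the claim that these agree with those of $Be_{\tau\nu^{-1}(q)}[-c_\Delta ]$ is equivalent to the statement that the minimal resolution of each simple continues with $\Omega_B^3(S_{(m,j)})\cong S_{\nu^{-1}\tau (m,j)}[-c_\Delta ]$ — essentially the almost-Koszul numerology of \cite{BBK} — and you offer no verification; ``using that $\tau$ permutes the vertices'' does not produce this identity. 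Your fallback is circular within the paper: the isomorphism $\Omega_{B^e}^3(B)\cong {}_\mu B_1$ is recalled in the introduction only for the finite-dimensional algebras $\Lambda$, and for the infinite-dimensional $B$ it is Proposition \ref{prop.automorphism mu general}, which is deduced \emph{from} the present proposition. The paper closes this step differently: it tensors the complex with each simple, pushes down along the covering functor $\pi_\lambda$ to $\Lambda =B/G$, invokes \cite{D2} for the simplicity of $\Omega_\Lambda^3$ of a simple, lifts back to get $\Omega_B^3(S_{(m,j)})\cong S_{\nu^{-1}\tau (m,j)}[-c_\Delta ]$, deduces ${}_BL\cong {}_BB[-c_\Delta ]$ and $L_B\cong B_B[-c_\Delta ]$ via projective covers, and then concludes $N:=\oplus_{(k,i)}B\xi_{(k,i)}=L$ because $N\cong {}_BB$ is injective in the Frobenius category $B$-$Gr$, hence a direct summand of $L$, and Azumaya's theorem with the multiplicity-one decomposition rules out a complement. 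Note in this connection that even granting $L\cong \oplus Be_{\tau (k,i)}[-c_\Delta ]$ abstractly, your phrase ``forces $\mathrm{Ker}(R)$ to be generated by one degree-$\mathit{l}$ element per vertex'' is not by itself conclusive: with infinitely many summands, an inclusion of isomorphic graded modules need not be an equality, which is exactly why the paper needs the injectivity-plus-Azumaya finish (or your componentwise dimension count, once those dimensions are actually established, e.g.\ by citing the $(c_\Delta -2,2)$-Koszul property of $B$).
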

\begin{proof}
Let us denote by $h$ the induced isomorphism of graded algebras
$B\stackrel{\cong}{\longrightarrow}B'$ and by $f$ its inverse. We
put $\mathcal{B}'=h(\mathcal{B})$, where $\mathcal{B}$ is the
$G$-invariant basis of $B$ given by the previous lemma. The
mentioned classical arguments also show that the elements
$\xi'_{(k,i)}=\sum_{x\in
e_{(k,i)}\mathcal{B}'}(-1)^{\text{deg}(x)}\tau (x)\otimes x^*$, with
$(k,i)\in(\mathbb{Z}\Delta )_0$, are in $\text{Ker}(R')$ (see [11]).


From the equalities $f(\tau (x))=\tau '(f(x))$ and $f(x^*)=f(x)^*$,
for all $x\in\mathcal{B}'$, and the fact that
$f(\mathcal{B}')=\mathcal{B}$ we immediately get that
$\xi_{(k,i)}=f(\xi'_{(k,i)})=\sum_{y\in
e_{(k,i)\mathcal{B}}}(-1)^{\text{deg}(y)}\tau' (y)\otimes y^*$.
Therefore the $\xi_{(k,i)}$ are elements of $L:=Ker(R)$.

If $S_{(m,j)}=Be_{(m,j)}/J(B)e_{(m,j)}$ is the simple graded left
module concentrated in degree zero associated to the vertex $(m,j)$,
then the induced sequence

\begin{center}
$Q^{-2}\otimes_BS_{(m,j)}\stackrel{R\otimes
1}{\longrightarrow}Q^{-1}\otimes_BS_{(m,j)}\stackrel{\delta\otimes
1}{\longrightarrow}Q^0\otimes_BS_{(m,j)}\longrightarrow
S_{(m,j)}\rightarrow 0$
\end{center}
is the initial part of the minimal projective resolution of
$S_{(m,j)}$. It is easy to see that the pushdown functor $\pi_\lambda
:B-Gr\longrightarrow \Lambda-Gr$ preserves and reflects simple
objects. When applied to the last resolution, we then get the minimal
projective resolution of the simple $\Lambda$-module $S_{[(m,j)]}$,
where $\Lambda$ is viewed as the orbit category $B/G$ and where $[(m,j)]$ denotes the $G$-orbit
of $(m,j)$. But we know that $\Omega_\Lambda^3(S_{[(m,j)]})$ is a
simple $\Lambda$-module (see, e.g., \cite{D2}). It follows that
$\Omega_B^3(S_{(m,j)})$ is a graded simple left $B$-module. Moreover we
have an isomorphism $Q^{-2}\otimes_BS_{(m,j)}\cong Be_{\tau
(m,j)}[-2]$ in $B-Gr$. By definition of the Nakayama permutation, we
have that $\text{Soc}_{gr}(Be_{\tau(m,j)})\cong S_{\nu^{-1} \tau
(m,j)}[-c_\Delta +2]$. Then we have an isomorphism
$\Omega_B^3(S_{(m,j)})\cong S_{\nu^{-1} \tau(m,j)}[-c_\Delta ]$, for all
$(m,j)\in\mathbb{Z}\Delta_0$. Considering the decomposition
$B/J(B)=\oplus_{(m,j)\in\mathbb{Z}\Delta_0}S_{(m,j)}$, we then get
that $L/LJ(B)\cong L\otimes_B\frac{B}{J(B)}$ is isomorphic to
$B/J(B)[-c_\Delta ]$ as a graded left $B$-module. Due to the fact
that $J(B)=J^{gr}(B)$ is nilpotent, we know that every left or right
graded $B$-module has a projective cover. By taking projective
covers in $B-Gr$ and bearing in mind that $L$ is projective on the
left and on the right, we then get that $L_B\cong B_B[-c_\Delta ]$.
With a symmetric argument, one also gets that ${}_BL\cong
{}_BB[-c_\Delta]$. In particular, ${}_BL={}_B\Omega_{B^e}^3(B)$
(resp. $L_B=\Omega_{B^e}^3(B)_B$) decomposes as a direct sum of
indecomposable projective graded $B$-modules, all of them with
multiplicity $1$.

Note now that we have equalities $e_{\tau\nu^{-1}(k,i)}\xi
_{\nu^{-1}(k,i)}=\xi_{\nu^{-1}(k,i)}=\xi
_{\nu^{-1}(k,i)}e_{(k,i)}$, for all $(k,i)\in\mathbb{Z}\Delta_0$.
This gives surjective homomorphisms
$Be_{\tau\nu^{-1}(k,i)}[-c_\Delta
]\stackrel{\rho}{\twoheadrightarrow}B\xi_{\nu^{-1}(k,i)}$ and
$e_{(k,i)}B[-c_\Delta ]\stackrel{\lambda}{\twoheadrightarrow}\xi
_{\nu^{-1}(k,i)}B$ of graded left and right $B$-modules given by
right and left multiplication by $\xi_{\nu^{-1}(k,i)}$. But $\rho$
and $\lambda$ do not vanish on
$\text{Soc}_{gr}(Be_{\tau\nu^{-1}(k,i)})$ and
$\text{Soc}_{gr}(e_{(k,i)}B)$, which are simple graded modules,
respectively. It follows that $\rho$ and $\lambda$ are injective
and, hence, they are isomorphisms. We then get that
$N:=\oplus_{(k,i)\in\mathbb{Z}\Delta_0}B\xi
_{\nu^{-1}(k,i)}=\oplus_{(k,i)\in\mathbb{Z}\Delta_0}B\xi_{(k,i)}$
is a graded submodule of $_BL$ isomorphic to ${}_BB\cong {}_BL$ and,
hence, it is injective in $B-Gr$ since this category is Frobenius.
We then get that $N$ is a direct summand of $_BL$ which is
isomorphic to ${}_BL$. Since $\text{End}_{B-Gr}(Be_{(k,i)})\cong K$
for each vertex $(k,i)$, Azumaya's theorem applies (see
\cite{AF}[Theorem 12.6]) and we can conclude that
$L=N=\oplus_{(k,i)\in\mathbb{Z}\Delta_0}B\xi_{(k,i)}$ for
otherwise the decomposition of ${}_BL\cong {}_BB$ as a direct sum of
indecomposables would contain summands with multiplicity $>1$. By a
symmetric argument, we get that
$L=\oplus_{(k,i)\in\mathbb{Z}\Delta_0}\xi_{(k,i)}B$.

\end{proof}

\begin{prop} \label{prop.automorphism mu general}
Let $\Delta$ be a Dynkin quiver, let $G$ be a weakly admissible
group of automorphisms of $B$ and fix a $G$-invariant graded Nakayama
form and its associated Nakayama automorphism $\eta$ (see Theorem
\ref{teor.G-invariant Nakayama automorphism}). Assume that  $X$ is
the $G$-invariant set of arrows given in Proposition
\ref{prop.admissible change of variables}, which we assume to be the
empty set when $(\Delta ,G)=(\mathbb{D}_4,\langle\rho\tau^m\rangle)$ and with
respect to which we calculate the signature of arrows. Finally, let
$\kappa$ and $\vartheta$ be the graded automorphisms of $B$ which
fix the vertices and act on arrows as:

\begin{enumerate}
\item $\kappa (a)=-a$
\item $\vartheta (a)=(-1)^{s(\tau^{-1}(a))+s(a)}a$,
\end{enumerate}
for all $a\in (\mathbb{Z}\Delta )_1$. If we put $\xi= \sum_{(k,i)\in \mathbb{Z}\Delta_0}\xi_{(k,i)}$ and $\mu'
=\kappa\circ\eta\circ\tau^{-1}\circ\vartheta$, then we have an equality

$$b\xi= \xi\mu'(b)$$

\noindent for each element $b\in \bigcup_{(k,i),(m,j)\in \mathbb{Z}\Delta_0}e_{(k,i)}Be_{(m,j)}$. Moreover,
$\mu'\circ g= g\circ \mu'$ for all $g\in G$, and there exists an isomorphism of
graded $B$-bimodules $\Omega_{B^e}^3(B)\cong {}_{\mu'} B_1[-c_\Delta
]$.
\end{prop}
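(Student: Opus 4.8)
The plan is to prove first the commutation relation $b\xi=\xi\mu'(b)$, then the commutation of $\mu'$ with the elements of $G$, and finally to deduce the bimodule isomorphism formally from these facts together with Proposition~\ref{prop:deformed projective resolution}. For the identity $b\xi=\xi\mu'(b)$, I would begin by noting that the set of $b\in B$ for which it holds is a subalgebra, because $\mu'$ is a graded algebra automorphism; hence it suffices to verify it when $b$ is one of the distinguished idempotents $e_{(k,i)}$ or an arrow $a$. The idempotent case is immediate from the relation $\xi_{(k,i)}=e_{\tau(k,i)}\xi_{(k,i)}e_{\nu(k,i)}$ recorded in the proof of Proposition~\ref{prop:deformed projective resolution}: it gives $e_{(m,j)}\xi=\xi_{\tau^{-1}(m,j)}$, while $\mu'$ acts on vertices as $\nu\circ\tau^{-1}$, so that $\xi\mu'(e_{(m,j)})=\xi e_{\nu\tau^{-1}(m,j)}=\xi_{\tau^{-1}(m,j)}$ as well.

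The arrow case is where the real work lies, and I expect it to be the main obstacle. For an arrow $a\colon (k,i)\to (m,j)$ both $a\xi$ and $\xi\mu'(a)$ are supported on the summands $Be_{\tau(v)}\otimes e_vB$ of $Q^{-2}$ indexed by vertices $v$; expanding $a\xi=a\xi_{\tau^{-1}(m,j)}$ and $\xi\mu'(a)=\xi_{\tau^{-1}(k,i)}\mu'(a)$ and collecting the first tensor factor along the $\tau'$-image of the chosen basis reduces the equality to the scalar identity
\[
\sum_{x}(-1)^{\deg x}\,[\tau'(z):a\tau'(x)]\,x^{*}\;=\;(-1)^{\deg z}\,z^{*}\mu'(a),
\]
for each basis path $z$ starting at $\tau^{-1}(k,i)$, where $[\tau'(z):a\tau'(x)]$ denotes the coefficient of $\tau'(z)$ in $a\tau'(x)$. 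Since $\tau'$ is an automorphism this coefficient equals the coefficient of $z$ in $\tau'^{-1}(a)x$, and the key observation is that $\tau'^{-1}(a)=(-1)^{s(\tau^{-1}(a))+s(a)}\tau^{-1}(a)$ carries \emph{exactly} the sign that defines $\vartheta$. Using the associativity $(uv,w)=(u,vw)$ of the graded Nakayama form together with the dual-basis relation $y=\sum_x (y,x^{*})x$, one rewrites the left-hand side and checks that the signs accumulated from $\kappa$, $\eta$, $\tau^{-1}$ and $\vartheta$ in $\mu'=\kappa\circ\eta\circ\tau^{-1}\circ\vartheta$ are precisely the required ones. This is a bookkeeping of signs resting on the explicit action of $\eta$ from Theorem~\ref{teor.G-invariant Nakayama automorphism} and the signature map of Proposition~\ref{prop.admissible change of variables}, and it is the only genuinely laborious step.

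The commutation $\mu'\circ g=g\circ\mu'$ for $g\in G$ is then routine: $\kappa$ commutes with every automorphism permuting the arrows, $\eta\circ g=g\circ\eta$ by Theorem~\ref{teor.G-invariant Nakayama automorphism}, $\tau$ commutes with $G$ by the definition of a weakly admissible group, and $\vartheta\circ g=g\circ\vartheta$ because $g$ commutes with $\tau$ and the signature map is $G$-invariant (the set $X$ being $G$-invariant by Proposition~\ref{prop.admissible change of variables}). As each factor of $\mu'$ commutes with $g$, so does $\mu'$.

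Finally, to obtain $\Omega_{B^e}^3(B)\cong {}_{\mu'}B_1[-c_\Delta]$ I would define $\Phi\colon {}_{\mu'}B_1[-c_\Delta]\longrightarrow \text{Ker}(R)=\Omega_{B^e}^3(B)$ by $\Phi(b)=\xi b$. It is right $B$-linear by construction, and left $B$-linear for the twisted left action since $c\,\Phi(b)=c\xi b=\xi\mu'(c)b=\Phi(\mu'(c)b)$ by the identity just proved; it is homogeneous of degree $0$ because $\xi$ is homogeneous of degree $c_\Delta$ in $Q^{-2}$ (its internal degree is $\deg x+\deg x^{*}=c_\Delta-2$ and $Q^{-2}$ carries the shift $[-2]$). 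Surjectivity holds since $\xi B=\bigoplus_{(k,i)}\xi_{(k,i)}B=\text{Ker}(R)$ by Proposition~\ref{prop:deformed projective resolution}, and injectivity follows because $\text{Ker}(R)_B\cong B_B[-c_\Delta]$ there, so $\Phi$ is a surjective degree-preserving map between graded right modules whose homogeneous components are finite dimensional of equal dimension, hence an isomorphism.
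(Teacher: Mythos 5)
Your proposal follows essentially the same route as the paper's proof: the reduction of $b\xi=\xi\mu'(b)$ to idempotents and arrows (the paper phrases the multiplicative extension as an induction on $\text{deg}(b)$, which is the same thing), the identical expansion of $a\xi_{\tau^{-1}(t(a))}$ and $\xi_{\tau^{-1}(i(a))}\mu'(a)$ via $\tau'$ and the dual basis, the factor-by-factor verification of $\mu'\circ g=g\circ\mu'$, and the bimodule map built from $\xi$ (the paper uses $b\rightsquigarrow b\xi_{\tau^{-1}(t(b))}$, giving ${}_1B_{\mu'^{-1}}\cong\text{Ker}(R)$, which is your $\Phi$ read from the other side). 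Your scalar identity is exactly the equality of the expressions (!) and (!!) in the paper, and one remark: contrary to what you say, the sign bookkeeping does \emph{not} rest on the explicit action of $\eta$ from Theorem \ref{teor.G-invariant Nakayama automorphism}. It closes using only the tools you yourself name --- the adjunction $(uv,w)=(u,vw)$, the defining property $(c,-)=(-,\eta(c))$ of the Nakayama automorphism (giving $y^*\eta(\tau^{-1}(a))=\sum_x(\tau^{-1}(a)x,y^*)x^*$), and the degree relation $\deg y=\deg x+1$ whenever $(\tau^{-1}(a)x,y^*)\neq 0$ --- after which the factor $(-1)^{1+s(\tau^{-1}(a))+s(a)}$ produced by $\mu'=\kappa\circ\eta\circ\tau^{-1}\circ\vartheta$ cancels uniformly, with no case analysis over the Dynkin types. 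This uniformity is precisely what makes the paper's computation short.

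Two points in your write-up need repair. First, your injectivity argument for $\Phi$ is wrong as stated: since $\mathbb{Z}\Delta$ has infinitely many vertices, the homogeneous components of $B$ and of $\text{Ker}(R)$ are \emph{not} finite dimensional (already $B_0=\oplus_{(k,i)}Ke_{(k,i)}$ is infinite dimensional), so equal dimensions in each degree cannot be invoked globally. The fix is to argue vertex-locally: $\Phi$ carries the summand $e_{(k,i)}B[-c_\Delta]$ onto $\xi_{\nu^{-1}(k,i)}B$ by left multiplication by $\xi_{\nu^{-1}(k,i)}$, and the proof of Proposition \ref{prop:deformed projective resolution} already shows this map is injective (it does not vanish on the simple graded socle of $e_{(k,i)}B$), hence an isomorphism on each summand; alternatively each $e_{(k,i)}B_h$ \emph{is} finite dimensional and your dimension count works summand by summand. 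Second, you take for granted that $\vartheta$ is an automorphism of $B$; this requires checking that it is compatible with the modified mesh relations, which holds because $s(\sigma^{-1}(a))+s(\sigma(a))+s(\tau^{-1}(a))+s(a)$ is constant in $\mathbb{Z}_2$ on the set of arrows ending at a fixed vertex, so $\vartheta$ preserves each relation up to sign --- the paper devotes the first paragraph of its proof to exactly this. With these repairs your plan is a complete proof, essentially identical to the paper's.
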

\begin{proof}

First note that, for any of the choices
of the set $X$, the sum $s(\sigma^{-1}(a))+s(\sigma
(a))+s(\tau^{-1}(a))+s(a)$ in $\mathbb{Z}_2$ is constant when $a$
varies on the set of arrows ending at a given vertex $(k,i)\in
(\mathbb{Z}\Delta )_0$. This implies that $\vartheta$ either
preserves the relation $\sum_{t(a)=(k,i)}(-1)^{s(\sigma (a)a)}\sigma
(a)a$ or multiplies it by $-1$. Then $\vartheta$ is a well-defined
automorphism of $B$. Moreover,  the $G$-invariant condition of the
set of arrows $X$ implies that the sum $s(\tau^{-1}(a))+s(a)$ in
$\mathbb{Z}_2$ is $G$-invariant. This shows that  $\vartheta\circ
g=g\circ\vartheta$, for all $g\in G$. This implies that $\mu'\circ
g=g\circ\mu'$ since we have $\kappa\circ g=g\circ\kappa$, for all
$g\in G$.

 All throughout the rest of the proof, a $G$-invariant basis $\mathcal{B}$ of
$B$ consisting of paths in $\mathbb{Z}\Delta$ is fixed, with respect
to which the $\xi_{(k,i)}$ are calculated. We  shall prove that
$a\xi_{\tau^{-1}(t(a))}=\xi_{\tau^{-1}(i(a))}\mu'(a)$, for all
$a\in\mathbb{Z}\Delta_1$. Once this is proved, one easily shows by
induction on $\text{deg}(b)$ that if
$b\in\bigcup_{(k,i),(m,j)\in\mathbb{Z}\Delta_0}e_{(k,i)}Be_{(m,j)}$
is a homogeneous element with respect to the length grading, then
the equality $b\xi_{\tau^{-1}(t(b))}=\xi_{\tau^{-1}(i(b))}\mu' (b)$
holds. It follows from this that the assignment $b\rightsquigarrow
b\xi_{\tau^{-1}(t(b))}$ extends to an isomorphism of $B$-bimodules
${}_1B_{\mu'^{-1}}\stackrel{\cong}{\longrightarrow} L$, which
actually induces an isomorphism of graded $B$-bimodules ${}_{\mu'}
B_1[-c_\Delta ]\cong\Omega_{B^e}^3(B)$, when we view
$\Omega_{B^e}^3(B)$ as a graded sub-bimodule of
$Q^{-2}=(\otimes_{(k,i)\in(\mathbb{Z}\Delta )_0}Be_{\tau
(k,i)}\otimes e_{(k,i)}B)[-2]$.

We have an equality:

\begin{center}
$a\xi_{\tau^{-1}(t(a))}=\sum_{x\in
e_{\tau^{-1}(t(a))}\mathcal{B}}(-1)^{\text{deg}(x)}a\tau'(x)\otimes
x^*.$
\end{center}
But we have $\tau '(\tau^{-1}(a))=(-1)^{s(\tau^{-1}(a))+s(a)}a$, so
that

\begin{center}
$a\tau '(x)=(-1)^{s(\tau^{-1}(a))+s(a)}\tau '(\tau^{-1}(a))\tau
'(x)=(-1)^{s(\tau^{-1}(a))+s(a)}\tau '(\tau^{-1}(a)x)$.
\end{center}
Note that we have $\tau^{-1}(a)x=\sum_{y\in
e_{\tau^{-1}(i(a))}\mathcal{B}}(\tau^{-1}(a)x,y^*)y$ from which we
get the equality

\begin{center}
$a\xi_{\tau^{-1}(t(a))}=\sum_{x\in
e_{\tau^{-1}(t(a))}\mathcal{B}}\sum_{y\in
e_{\tau^{-1}(i(a))}\mathcal{B}}(-1)^{\text{deg}(x)}(-1)^{s(\tau^{-1}(a))+s(a)}(\tau^{-1}(a)x,y^*)\tau
'(y)\otimes x^*$ \hspace*{1cm} (!)
\end{center}

On the other hand, a direct calculation shows that $\mu'
(a)=(-1)^{1+s(\tau^{-1}(a))+s(a)}(\eta\circ\tau^{-1})(a)$, for each
$a\in (\mathbb{Z}\Delta )_1$. Then we have another equality

\begin{center}
$\xi_{\tau^{-1}(i(a))}\mu' (a)=\sum_{y\in
e_{\tau^{-1}(i(a))}\mathcal{B}}(-1)^{\text{deg}(y)}(-1)^{s(\tau^{-1}(a))+s(a)+1}\tau
'(y)\otimes y^* (\eta\circ\tau^{-1})(a)$.
\end{center}
 But we have an equality

\begin{center}
$y^*(\eta\circ\tau^{-1})(a)=\sum_{x\in
e_{\tau^{-1}(t(a))}\mathcal{B}}(x,y^*(\eta\circ\tau^{-1})(a))x^*=\sum_{x\in
e_{\tau^{-1}(t(a))}\mathcal{B}}(xy^*,\eta
(\tau^{-1}(a)))x^*=\sum_{x\in
e_{\tau^{-1}(t(a))}\mathcal{B}}(\tau^{-1}(a),xy^*)x^*=\sum_{x\in
e_{\tau^{-1}(t(a))}\mathcal{B}}(\tau^{-1}(a)x,y^*)x^*$,
\end{center}
using that $(-,-)$ is a graded Nakayama form and that $\eta$ is its
associated Nakayama automorphism. We then get

\begin{center}
$\xi_{\tau^{-1}(i(a))}\mu' (a)=\sum_{y\in
e_{\tau^{-1}(i(a))}\mathcal{B}}\sum_{x\in
e_{\tau^{-1}(t(a))}\mathcal{B}}(-1)^{\text{deg}(y)}(-1)^{s(\tau^{-1}(a))+s(a)+1}(\tau^{-1}(a)x,y^*)\tau
'(y)\otimes x^*$ \hspace*{1cm} (!!)
\end{center}
Bearing in mind that $deg(y)=deg(\tau^{-1}(a)x)=deg(x)+1$
whenever $(\tau^{-1}(a)x,y^*)\neq 0$ we readily see that the second
members of the equalities (!) and (!!) are equal. We then get
$a\xi_{\tau^{-1}(t(a))}=\xi_{\tau^{-1}(i(a))}\mu' (a)$, as desired.

\end{proof}

\begin{rem}\label{rem. mu}
Note that, except when $(\Delta ,G)
=(\mathbb{A}_{2n-1},\langle\rho\tau^m\rangle)$, the automorphism $\vartheta$ of
last proposition is the identity since $X=\tau (X)$.

Also, notice that whenever $(\Delta, G)\neq (\mathbb{A}_{2n},\langle\rho\tau^m\rangle)$, it is always po\-ssible to choose a map $\lambda
:(\mathbb{Z}\Delta )_0\longrightarrow K^*$, taking values in
$\{-1,1\}$, such that $\lambda_{i(a)}=-\lambda_{t(a)}$, for all
$a\in (\mathbb{Z}\Delta )_1$ and $\lambda\circ
g_{\mathbb{Z}\Delta_0}=\lambda$, for all $g\in G$. Indeed, if
$\Delta\neq\mathbb{D}_{n+1}$, we define $\lambda (k,i)=(-1)^i$ for
each $(k,i)\in (\mathbb{Z}\Delta )_0$, and if
$\Delta=\mathbb{D}_{n+1}$, we put $\lambda (k,i)=(-1)^i$, when
$i\neq 0$, and $\lambda (k,0)=-1$. Then, the map
$\psi :B\longrightarrow B$ taking
$b\rightsquigarrow\lambda_{i(b)}b$, for any homogeneous element
$b\in\bigcup_{(k,i),(m,j)\in (\mathbb{Z}\Delta
)_0}e_{(k,i)}Be_{(m,j)}$, clearly defines an isomorphism of $B$-bimodules
$\psi :{}_{\kappa}B_1\longrightarrow {}_{1}B_1$.
This means that, in this case, the automorphism $\eta\tau^{-1}\vartheta$ is another possibility for the twist in $\Omega^3_{B^e}(B)$.

\end{rem}

Crucial for our goals is that what has been done in the last two
propositions is '$G$-invariant', which gives the following
consequence.

\begin{cor} \label{cor. mu for Lambda}
Let $\Delta$ be a Dynkin quiver, $B$ the corresponding  mesh
algebra, $G$ a weakly admissible group of
automorphisms of $\mathbb{Z}\Delta$ and let $\Lambda =B/G$ be the associated $m$-fold mesh
algebra. We denote by $\mu$ the graded automorphism of $B$  given by $\kappa \eta\tau^{-1}$ when $(\Delta,G)= (\mathbb{A}_{2n},\langle \rho\tau^m\rangle)$, $\eta\tau^{-1}\vartheta$ when $(\Delta,G)= (\mathbb{A}_{2n-1},\langle \rho\tau^m\rangle)$ and $\eta\tau^{-1}$ otherwise. Then $\mu$ induces a graded automorphism $\bar{\mu}:\Lambda\longrightarrow\Lambda$ of $\Lambda$ and there is an
isomorphism of graded $\Lambda$-bimodules
$\Omega_{\Lambda^e}^3(\Lambda )\cong
{}_{\bar{\mu}}\Lambda_1[-c_\Delta]$, where $c_\Delta$ is the Coxeter
number.
\end{cor}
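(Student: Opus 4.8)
The plan is to deduce the statement from Proposition~\ref{prop.automorphism mu general} by transporting the $B$-bimodule computation along the covering $\pi\colon B\longrightarrow\Lambda=B/G$, exploiting that every ingredient of the previous results is $G$-equivariant. The first step is pure bookkeeping: I reconcile the automorphism $\mu'=\kappa\circ\eta\circ\tau^{-1}\circ\vartheta$ of Proposition~\ref{prop.automorphism mu general} with the automorphism $\mu$ of the statement. When $(\Delta,G)=(\mathbb{A}_{2n},\langle\rho\tau^m\rangle)$ one has $\vartheta=\mathrm{id}$ by Remark~\ref{rem. mu}, so $\mu'=\kappa\eta\tau^{-1}=\mu$ and nothing is to be done. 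In every other case $\mu'=\kappa\circ\mu$, and the map $\Psi\colon x\rightsquigarrow\lambda_{i(x)}x$ built from the sign function $\lambda$ of Remark~\ref{rem. mu} is a $G$-equivariant isomorphism of graded bimodules ${}_{\kappa\mu}B_1\stackrel{\cong}{\longrightarrow}{}_{\mu}B_1$ (this uses only $\lambda_{i(a)}=-\lambda_{t(a)}$ and $\lambda\circ g_{|\mathbb{Z}\Delta_0}=\lambda$). Hence in all cases $\Omega_{B^e}^3(B)\cong{}_{\mu}B_1[-c_\Delta]$ by a $G$-equivariant isomorphism, and since $\kappa$, $\vartheta$ and $\mu'$ all commute with $G$, so does $\mu$.

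Because $\mu$ commutes with every $g\in G$ and permutes the idempotents $e_{(k,i)}$ via the vertex permutation $\nu\circ\tau^{-1}$, the rule $[a]\rightsquigarrow[\mu(a)]$ yields a well-defined graded automorphism $\bar\mu$ of $\Lambda$ (the criterion recalled just before Lemma~\ref{lem.criterion of inner automorphism}). It then remains to descend the isomorphism $\Omega_{B^e}^3(B)\cong{}_{\mu}B_1[-c_\Delta]$ to $\Lambda$. For this I would first observe that the whole initial segment
\[
Q^{-2}\stackrel{R}{\longrightarrow}Q^{-1}\stackrel{\delta}{\longrightarrow}Q^{0}\stackrel{u}{\longrightarrow}B\longrightarrow 0
\]
of Proposition~\ref{prop:deformed projective resolution} is $G$-equivariant: its terms are indexed by the $G$-sets $\mathbb{Z}\Delta_0$ and $\mathbb{Z}\Delta_1$, the maps $u,\delta$ are visibly $G$-equivariant, and $R$ is so because the signature map $s$ is $G$-invariant; moreover $\mathrm{Ker}(R)=\oplus_{(k,i)}B\xi_{(k,i)}$ is a $G$-stable sub-bimodule, the family $(\xi_{(k,i)})$ being $G$-invariant.

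Next I would apply the bimodule analogue of the pushdown functor attached to $\pi$. Grouping the summands of each $Q^{-j}$ by $G$-orbits and using that $G$ acts freely on objects, each free $G$-orbit of projective summands $Be_{v'}\otimes e_{w'}B$ is carried to the single projective $\Lambda$-bimodule $\Lambda e_{[v]}\otimes e_{[w]}\Lambda$; the regular bimodule $B$ is carried to $\Lambda$; and a twisted bimodule ${}_{\mu}B_1$ with $\mu$ commuting with $G$ is carried to ${}_{\bar\mu}\Lambda_1$ (the $G$-action $x\rightsquigarrow g(x)$ on ${}_{\mu}B_1$ is compatible with the twisted structure precisely because $\mu g=g\mu$). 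Since this functor is exact and commutes with the shift $[-c_\Delta]$, applying it to the $G$-equivariant resolution produces a projective resolution of $\Lambda$ as a graded $\Lambda$-bimodule whose third syzygy is the image of $\mathrm{Ker}(R)$, namely ${}_{\bar\mu}\Lambda_1[-c_\Delta]$.

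Finally I would argue that this third syzygy is genuinely $\Omega_{\Lambda^e}^3(\Lambda)$ and not a version of it altered by projective summands: as $\Lambda$ is connected and self-injective, ${}_{\bar\mu}\Lambda_1$ is an indecomposable bimodule with no projective direct summand, so it coincides with the minimal third syzygy. I expect the main obstacle to be making the third paragraph fully rigorous, that is, verifying that the bimodule pushdown is exact and preserves minimality (equivalently, that $\pi$ induces the expected identification $\Lambda^e\cong B^e/(G\times G)$ and that syzygies of the regular bimodule transfer along it). This is standard covering theory, but it genuinely uses the free action of $G$ on objects to prevent any collapsing of orbits; once it is in place, the chain of isomorphisms $\Omega_{\Lambda^e}^3(\Lambda)\cong{}_{\bar\mu}\Lambda_1[-c_\Delta]$ is complete.
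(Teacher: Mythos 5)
Your proposal is correct in substance and follows the same underlying strategy as the paper: everything in Proposition \ref{prop.automorphism mu general} is $G$-invariant, so the twisted-bimodule description of $\Omega_{B^e}^3(B)$ descends along the covering $\pi :B\longrightarrow\Lambda$, and the reconciliation of $\mu$ with $\mu'=\kappa\circ\eta\circ\tau^{-1}\circ\vartheta$ via the sign map $\lambda$ of Remark \ref{rem. mu} is handled identically in both arguments. Where you differ is in the mechanism of the descent. The paper does not build an abstract bimodule pushdown: it quotes \cite{D2}[Section 4] for the initial segment $P^{-2}\stackrel{\bar R}{\longrightarrow}P^{-1}\longrightarrow P^0\longrightarrow\Lambda\rightarrow 0$ of the \emph{minimal} graded bimodule resolution of $\Lambda$, observes that $\mathrm{Ker}(\bar R)$ is generated on either side by the orbit elements $\bar\xi_{[(k,i)]}=\sum (-1)^{\deg (x)}[\tau'(x)]\otimes [x^*]$ (a $G$-invariant basis and its dual basis descend to a basis of $\Lambda$ and its dual for the induced Nakayama form, by Proposition \ref{prop.gr-Frobenius via pushdown functor}), and then produces the isomorphism concretely from the orbit-level identity $[b]\bar\xi_{[\tau^{-1}(i(b))]}=\bar\xi_{[\tau^{-1}(i(b))]}\bar\mu([b])$, obtained from $b\xi=\xi\mu'(b)$ by replacing elements with their $G$-orbits. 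Your route instead pushes the whole $G$-equivariant resolution of $B$ down and then repairs minimality by Krull--Schmidt, using that ${}_{\bar\mu}\Lambda_1$ is indecomposable and non-projective as a $\Lambda$-bimodule; that final step is a valid alternative to the paper's reliance on already knowing the minimal resolution of $\Lambda$.

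One point in your third paragraph needs correcting, though it does not sink the argument: the functor you want is \emph{not} the plain pushdown along the $(G\times G)$-covering $B^e\longrightarrow\Lambda^e$. That pushdown sends the regular bimodule $B$ to an infinite direct sum (each $e_{[i]}\Lambda e_{[j]}$ would occur once for every element of a $G$-orbit), not to $\Lambda$. What works, and what your orbit-grouping of the summands of $Q^{-j}$ in fact describes, is the orbit/descent functor for $B$-bimodules equipped with a compatible action of the \emph{diagonal} copy of $G$: this sends $B$ (with $x\rightsquigarrow g(x)$) to $\Lambda$, sends each free $G$-orbit of summands $Be_{v'}\otimes e_{w'}B$ to the single projective $\Lambda e_{[v]}\otimes e_{[w]}\Lambda$, and sends ${}_{\mu}B_1$ to ${}_{\bar\mu}\Lambda_1$ precisely because $\mu\circ g=g\circ\mu$. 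Exactness and preservation of projectives for this equivariant descent are exactly the covering-theoretic facts established in \cite{D2}, which is why the paper cites that source rather than re-proving them; once you invoke the same, your chain of isomorphisms closes.
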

\begin{proof}
We fix a $G$-invariant basis of $B$ as in Lemma \ref{lem.G-invarian
basis} and a $G$-invariant graded Nakayama form $(-,-):B\times
B\longrightarrow K$. If we interpret $\Lambda =B/G$ as the orbit
category and $[x]$ denotes the $G$-orbit of $x$, for each
$x\in\bigcup_{(k,i),(m,j)}e_{(k,i)}Be_{(m,j)}$, note that the
$G$-orbits of elements of $\mathcal{B}$ form a basis
$\bar{\mathcal{B}}$ of $\Lambda$ consisting of homogeneous elements
in
$\bigcup_{[(k,i)],[(m,j)]\in\mathbb{Z}\Delta_0/G}e_{[(k,i)]}\Lambda
e_{[(m,j)]}$. Moreover, if $\mathcal{B}^*$ is the right dual basis
of $\mathcal{B}$ with respect $(-,-)$, then
$\bar{\mathcal{B}}^*=\{[x^*]:$ $[x]\in\bar{\mathcal{B}}\}$ is the
right dual basis of $\bar{\mathcal{B}}$ with respect to the graded
Nakayama form $<-,->:\Lambda\times\Lambda\longrightarrow K$ induced
from $(-,-)$ (see Proposition \ref{prop.gr-Frobenius via pushdown functor}).

By taking into account the change of presentation of $\Lambda$ and
\cite{D2}[Section 4], we see that the initial part of the minimal
projective resolution of $\Lambda$ as a  graded $\Lambda$-bimodule
is of the form

\begin{center}
$P^{-2}\stackrel{\bar{R}}{\longrightarrow}P^{-1}\longrightarrow
P^0\longrightarrow\Lambda\rightarrow 0$,
\end{center}
where $P^{-2}=\oplus_{[(k,i)]\in\mathbb{Z}\Delta_0/G}e_{[\tau
(k,i)]}\Lambda e_{[(k,i)]}$ and we have equalities
$\oplus_{[(k,i)]\in\mathbb{Z}\Delta_0/G}\Lambda\bar{\xi}
_{[(k,i)]}=\text{Ker}(\bar{R})=\oplus_{[(k,i)]\in\mathbb{Z}\Delta_0/G}\bar{\xi}_{[(k,i)]}\Lambda$,
where $\bar{\xi}_{[(k,i)]}=\sum_{[x]\in
e_{[(k,i)]}\bar{\mathcal{B}}}(-1)^{deg(x)}[\tau '(x)]\otimes [x^*]$,
for each $[(k,i)]\in\mathbb{Z}\Delta_0/G$.

On the other hand, by Proposition \ref{prop.automorphism mu general},
we get that the automorphism $\mu$ defined as above satisfies that
$\mu\circ g=g\circ\mu$, for all $g\in G$, and hence, induces a graded automorphism
$\bar{\mu}:\Lambda\longrightarrow\Lambda$ which maps
$[x]\rightsquigarrow [\mu (x)]$. In case $\mu
=k\circ\eta\circ\tau^{-1}\circ\vartheta$, we  get the equality
$[b]\bar{\xi}_{[\tau'(i(b))]}=\bar{\xi}_{[\tau^{-1}(i(b))]}\bar{\mu}([b])$,
for each homogeneous element
$[b]\in\bigcup_{[(k,i)],[(m,j)]\in\mathbb{Z}\Delta_0/G}e_{[(k,i)]}\Lambda
e_{[(m,j)]}$, from the corresponding equality in the proof of the
previous proposition, just by replacing the homogeneous elements of
$B$ by their orbits. We leave to the reader the routine
verification. It then follows that the assignment
$[b]\rightsquigarrow [b]\bar{\xi}_{[\tau'(i(b))]}$ gives an
isomorphism of graded $\Lambda$-bimodules
$\Omega_{\Lambda^e}^3(\Lambda )\cong
{}_1\Lambda_{\bar{\mu}^{-1}}[-c_\Delta ]\cong
{}_{\bar{\mu}}\Lambda_1[-c_\Delta ]$.

The different descriptions for the automorphism $\mu$ of $\Lambda$ given in the
statement are valid using Remark \ref{rem. mu}.


\end{proof}

\subsection{The period of an $m$-fold mesh algebra}

This section is devoted to compute the $\Omega$-\emph{period} of an
$m$-fold mesh algebra $\Lambda$. That is, the smallest of the
positive integers $r$ such that $\Omega_{\Lambda^e}^r(\Lambda )$ is
isomorphic to $\Lambda$ as a $\Lambda$-bimodule. We need to separate
the case of Loewy length $2$, for which the period has
already been computed (see, e.g., \cite{EH}), from the rest. We  point out that the
only connected self-injective algebras of Loewy length $2$ are the $m$-fold mesh algebras $\mathbb{A}_2^{(m)}$ and
$\mathbb{L}_1^{(m)}$ and that  each of these is precisely the path algebra of
a cyclic quiver modulo paths of length $2$.

\begin{prop} \label{prop.periods of Nakayama algebras}
Let $\Lambda$ be a connected selfinjective algebra of Loewy length
$2$. The following assertions hold:

\begin{enumerate}
\item If $\text{char}(K)=2$ or
$\Lambda =\mathbb{A}_2^{(m)}$, i.e.  $|Q_0|$ is even,  then the
period of $\Lambda$ is $|Q_0|$. \item If $\text{char}(K)\neq 2$ and
 $\Lambda =\mathbb{L}_1^{(m)}$, i.e. $|Q_0|$ is odd,  then the period
of $\Lambda$ is $2|Q_0|$.
\end{enumerate}
\end{prop}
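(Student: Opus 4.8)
The plan is to reduce everything to the explicit algebra $\Lambda=KC_N/J^2$, where $C_N$ is the cyclic quiver with vertex set $\mathbb{Z}/N\mathbb{Z}$ and arrows $a_i\colon i\to i+1$, and $J^2$ is generated by the paths of length two; here $N=|Q_0|$, which is even when $\Lambda=\mathbb{A}_2^{(m)}$ and odd when $\Lambda=\mathbb{L}_1^{(m)}$. Since $\dim_K\Lambda=2N$ with $K$-basis $\{e_i\}\cup\{a_i\}$, every bimodule involved can be made completely explicit. The key idea, which is exactly what distinguishes the Loewy length $2$ case from the higher ones, is that the very first syzygy $\Omega^1_{\Lambda^e}(\Lambda)$ is already a twisted bimodule ${}_1\Lambda_\sigma$; once this is established, the period is simply the order of $\sigma$ modulo inner automorphisms, and the whole dichotomy of the statement will be produced by a single sign.

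First I would write down the beginning of the minimal projective bimodule resolution $P^{-1}\xrightarrow{\delta}P^0\xrightarrow{u}\Lambda\to 0$, with $P^0=\bigoplus_i\Lambda e_i\otimes e_i\Lambda$, $u$ the multiplication, $P^{-1}=\bigoplus_i\Lambda e_i\otimes e_{i+1}\Lambda$ and $\delta(e_i\otimes e_{i+1})=a_i\otimes e_{i+1}-e_i\otimes a_i$, exactly as in the general formula recalled just before Proposition \ref{prop:deformed projective resolution}. Then $\Omega^1_{\Lambda^e}(\Lambda)=\ker u=\operatorname{im}\delta$ is generated as a bimodule by the elements $g_i:=a_i\otimes e_{i+1}-e_i\otimes a_i\in e_i\,\Omega^1\,e_{i+1}$, and a direct computation using $a_ja_k=0$ yields the only nonzero products $a_{i-1}g_i=-a_{i-1}\otimes a_i$ and $g_{i-1}a_i=a_{i-1}\otimes a_i$, hence the structural relation $a_{i-1}g_i=-g_{i-1}a_i$. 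Since $\operatorname{Hom}_\Lambda(\Lambda e_i,\Omega^1)\cong e_i\Omega^1$, the assignment $e_i\mapsto g_i$ extends to a well-defined left $\Lambda$-linear map $\phi\colon\Lambda\to\Omega^1$, which is an isomorphism by a dimension count. I would then check, using the products above, that $\phi$ is right $\sigma$-semilinear precisely for the automorphism $\sigma(e_i)=e_{i-1}$, $\sigma(a_i)=-a_{i-1}$, so that $\phi\colon{}_1\Lambda_\sigma\xrightarrow{\ \cong\ }\Omega^1_{\Lambda^e}(\Lambda)$; the forced value $-1$ on arrows is exactly the relation $a_{i-1}g_i=-g_{i-1}a_i$ read off through $\phi$.

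With $\Omega^1_{\Lambda^e}(\Lambda)\cong{}_1\Lambda_\sigma$ in hand, I would use that ${}_1\Lambda_\sigma$ is an invertible bimodule, so that $F_\sigma={}_1\Lambda_\sigma\otimes_\Lambda-$ is an exact autoequivalence sending projective bimodules to projective bimodules, hence commuting with $\Omega$. Iterating gives $\Omega^k_{\Lambda^e}(\Lambda)\cong{}_1\Lambda_{\sigma^k}$ for all $k\geq 0$, and since ${}_1\Lambda_\psi\cong\Lambda$ as bimodules if and only if $\psi$ is inner, the period of $\Lambda$ is the smallest $k>0$ for which $\sigma^k$ is inner. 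Now $\sigma^k(e_i)=e_{i-k}$ and $\sigma^k(a_i)=(-1)^k a_{i-k}$. An inner automorphism acts trivially on $\Lambda/\mathrm{rad}\,\Lambda\cong K^N$ and therefore fixes every vertex, so $\sigma^k$ inner forces $N\mid k$; thus the period is a positive multiple of $N$.

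Finally I would analyze $\sigma^N$, which fixes all vertices and sends $a_i\mapsto(-1)^N a_i$, through the criterion of Lemma \ref{lem.criterion of inner automorphism}: it is inner if and only if there is a map $\lambda\colon\mathbb{Z}/N\mathbb{Z}\to K^*$ with $(-1)^N=\lambda_i^{-1}\lambda_{i+1}$ for all $i$, that is $\lambda_{i+1}=(-1)^N\lambda_i$. If $N$ is even or $\mathrm{char}\,K=2$, then $(-1)^N=1$, so $\sigma^N=\mathrm{id}$ is trivially inner and the period is $N$. If $N$ is odd and $\mathrm{char}\,K\neq 2$, going once around the cycle forces $\lambda_i=(-1)^N\lambda_i=-\lambda_i$, which is impossible, so $\sigma^N$ is not inner; since $\sigma^{2N}=\mathrm{id}$, the period is $2N$. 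These are exactly the two cases of the statement. The only delicate point of the whole argument is the sign bookkeeping of the second paragraph establishing $\sigma(a_i)=-a_{i-1}$: it is this single minus sign that, after being raised to the $N$-th power, separates the even/characteristic-two case from the odd case.
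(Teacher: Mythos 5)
Your proposal is correct. Note first that the paper contains no proof of Proposition \ref{prop.periods of Nakayama algebras}: the statement is given without argument, the computation being delegated to the literature (the sentence introducing it points to \cite{EH}). Your blind proof is therefore a self-contained substitute, and it is in fact the argument the paper implicitly has in mind, since both of its pillars appear verbatim nearby: assertion 5 of Proposition \ref{prop.2-nilpotent selfinjective algebras} is exactly your isomorphism $\Omega_{\Lambda^e}(\Lambda)\cong {}_{\bar{\mu}}\Lambda_1$, with $\bar{\mu}$ the cyclic shift carrying the single sign $-1$ (your $\sigma$ is $\bar{\mu}^{-1}$, and ${}_{\bar{\mu}}\Lambda_1\cong {}_1\Lambda_{\bar{\mu}^{-1}}$, so the sign bookkeeping agrees), while the parity analysis of the accumulated sign $(-1)^{kN}$ via vertex scalars is precisely what the paper carries out, for the automorphisms $\bar{\mu}^{k+1}\bar{\eta}^{-1}$, in the proof of Proposition \ref{prop.CY-dim and CYF-dim of 2-nilpotent algebras}. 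Your explicit verification that $e_i\mapsto g_i$ is a right $\sigma$-semilinear isomorphism, together with the reduction of the period to the order of $\sigma$ modulo inner automorphisms, is sound; what your proof buys over the paper's citation is a short, completely explicit derivation of the dichotomy from the one relation $a_{i-1}g_i=-g_{i-1}a_i$.

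Two small repairs, neither of which is a gap. First, Lemma \ref{lem.criterion of inner automorphism} is stated for graded automorphisms of the covering algebra commuting with $G$, not for automorphisms of $\Lambda$ itself; either lift $\sigma^N$ to the automorphism of $B=K(\mathbb{Z}\mathbb{A}_2)$ that fixes vertices and multiplies every arrow by $(-1)^N$ (a $G$-invariant $\lambda$ on $(\mathbb{Z}\mathbb{A}_2)_0$ is then the same thing as your $\lambda$ on $\mathbb{Z}/N\mathbb{Z}$), or quote assertion 4 of Proposition \ref{prop.2-nilpotent selfinjective algebras} directly, where your telescoping obstruction becomes the condition $\left((-1)^N\right)^N=1$ for the constant character to be acyclic. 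Second, an inner automorphism need not fix the idempotents $e_i$ as elements; what your divisibility argument needs, and what holds because conjugation induces the identity on the commutative quotient $\Lambda/\operatorname{rad}\Lambda\cong K^N$, is that an inner automorphism induces the identity permutation on the isoclasses of simple modules, which already forces $N\mid k$.
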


For the remaining cases we will need the following:

\begin{lema} \label{lem.commutativity of eta y tau}
Let $\Lambda=B/G$ be an $m$ fold mesh algebra, with
$\Delta\neq\mathbb{A}_{1},\mathbb{A}_{2}$, and let $r\geq 0$ be an
integer. The equality  $\text{dim}(\Omega_{\Lambda^e}^r(\Lambda
))=\text{dim}(\Lambda)$ holds if, and only if, $r\in 3\mathbb{Z}$.
\end{lema}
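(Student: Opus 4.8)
The plan is to reduce everything modulo $3$ using Corollary \ref{cor. mu for Lambda} and then to compute the two relevant dimensions by an Euler characteristic argument over the bimodule resolution.

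First I would establish the \emph{if} direction together with a periodicity-mod-$3$ statement for dimensions. By Corollary \ref{cor. mu for Lambda} we have $\Omega_{\Lambda^e}^3(\Lambda)\cong {}_{\bar{\mu}}\Lambda_1[-c_\Delta]$. The twisting functor $T={}_{\bar{\mu}}(-)_1$ on the category of graded $\Lambda$-bimodules is an exact autoequivalence which preserves $K$-dimension and sends projective bimodules to projective bimodules (the map $\bar{\mu}\otimes 1$ is a bimodule isomorphism $\Lambda\otimes_K\Lambda\cong {}_{\bar{\mu}}(\Lambda\otimes_K\Lambda)_1$). Hence $T$ commutes with $\Omega_{\Lambda^e}$ up to isomorphism, and a trivial induction gives $\Omega_{\Lambda^e}^{r+3}(\Lambda)\cong T(\Omega_{\Lambda^e}^r(\Lambda))[-c_\Delta]$ for all $r\geq 0$. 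Since $T$ and the shift preserve dimension, $\dim\Omega_{\Lambda^e}^r(\Lambda)$ depends only on the residue of $r$ modulo $3$. As $\Omega_{\Lambda^e}^0(\Lambda)=\Lambda$, this already proves that $r\in 3\mathbb{Z}$ forces $\dim\Omega_{\Lambda^e}^r(\Lambda)=\dim\Lambda$, and reduces the converse to showing $\dim\Omega_{\Lambda^e}^1(\Lambda)\neq\dim\Lambda$ and $\dim\Omega_{\Lambda^e}^2(\Lambda)\neq\dim\Lambda$.

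For these two dimensions I would use the initial segment $P^{-2}\stackrel{\bar{R}}{\longrightarrow}P^{-1}\longrightarrow P^0\longrightarrow\Lambda\longrightarrow 0$ of the minimal bimodule resolution described just before Corollary \ref{cor. mu for Lambda}. Writing $a_{[v]}=\dim(e_{[v]}\Lambda)$ and $b_{[v]}=\dim(\Lambda e_{[v]})$, where $[v]$ ranges over the $G$-orbits of vertices, we have $\dim P^0=\sum_{[v]}b_{[v]}a_{[v]}$, $\dim P^{-2}=\sum_{[v]}b_{[\tau v]}a_{[v]}$ and $\dim\Lambda=\sum_{[v]}a_{[v]}$. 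Because $\tau$ induces a $K$-linear automorphism of $\Lambda$ carrying $\Lambda e_{[v]}$ bijectively onto $\Lambda e_{[\tau v]}$, one has $b_{[\tau v]}=b_{[v]}$, so $\dim P^0=\dim P^{-2}=:p$. Exactness yields $\dim\Omega_{\Lambda^e}^1(\Lambda)=\dim P^0-\dim\Lambda$ and $\dim\Omega_{\Lambda^e}^2(\Lambda)=\dim P^{-1}-\dim P^0+\dim\Lambda$; combining these with the already known $\dim\Omega_{\Lambda^e}^3(\Lambda)=\dim\Lambda$ gives the Euler relation $\dim P^0-\dim P^{-1}+\dim P^{-2}=2\dim\Lambda$, whence $\dim P^{-1}=2p-2\dim\Lambda$ and therefore $\dim\Omega_{\Lambda^e}^1(\Lambda)=\dim\Omega_{\Lambda^e}^2(\Lambda)=p-\dim\Lambda$.

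It then remains to show $p\neq 2\dim\Lambda$, that is, that $p-2\dim\Lambda=\sum_{[v]}a_{[v]}(b_{[v]}-2)$ is nonzero; this is where the hypothesis $\Delta\neq\mathbb{A}_1,\mathbb{A}_2$ enters and is the heart of the argument. Using the covering functor $\pi:B\longrightarrow\Lambda$, which induces isomorphisms $e_vB\cong e_{[v]}\Lambda$ and $Be_v\cong\Lambda e_{[v]}$, it suffices to bound the number of nonzero paths out of (resp. into) a fixed representative $v$ in $B=B(\Delta)$. By Proposition \ref{prop:Coxeter-Nakayama} there is a nonzero path from $v$ to $\nu(v)$ of length $c_\Delta-2$; its initial subpaths have pairwise distinct lengths $0,1,\dots,c_\Delta-2$, hence lie in distinct nonzero graded components of $e_vB$, so $a_{[v]}\geq c_\Delta-1$, and dually $b_{[v]}\geq c_\Delta-1$. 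Since $\Delta\neq\mathbb{A}_1,\mathbb{A}_2$ forces $c_\Delta\geq 4$, every summand satisfies $a_{[v]}\geq 3$ and $b_{[v]}-2\geq 1$, so each term $a_{[v]}(b_{[v]}-2)$ is strictly positive and the sum is positive. Hence $p>2\dim\Lambda$, giving $\dim\Omega_{\Lambda^e}^1(\Lambda)=\dim\Omega_{\Lambda^e}^2(\Lambda)=p-\dim\Lambda>\dim\Lambda$, which completes the converse. I expect the main obstacle to be the careful bookkeeping establishing $\dim P^0=\dim P^{-2}$ and the strict positivity, together with the observation that $\mathbb{A}_2$, where $a_{[v]}=b_{[v]}=2$ makes every term vanish, is precisely the excluded borderline case.
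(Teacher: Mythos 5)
Your proposal is correct and follows essentially the same route as the paper: mod-$3$ periodicity of syzygy dimensions via the twist $\Omega_{\Lambda^e}^3(\Lambda)\cong{}_{\bar{\mu}}\Lambda_1$, a dimension count over the standard resolution $P^{-2}\rightarrow P^{-1}\rightarrow P^0\rightarrow\Lambda\rightarrow 0$ showing $\dim\Omega_{\Lambda^e}^1(\Lambda)=\dim\Omega_{\Lambda^e}^2(\Lambda)=\sum_{[v]}\dim(\Lambda e_{[v]})(\dim(e_{[v]}\Lambda)-1)$, and the observation that equality with $\dim\Lambda$ forces all $\dim(e_{[v]}\Lambda)=2$, i.e.\ Loewy length $2$, which is excluded. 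The only cosmetic differences are that you recover $\dim P^{-1}$ by an Euler-characteristic relation where the paper reads off $\dim\Omega^2_{\Lambda^e}(\Lambda)=\dim P^{-2}-\dim\Lambda$ directly, and that you justify the exclusion by the explicit bound $\dim(e_{[v]}\Lambda)\geq c_\Delta-1\geq 3$ rather than the paper's appeal to Loewy length.
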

\begin{proof}
 The `if' part is well-known. For the `only if' part, note that we
have the following formulas for the dimensions of the syzygies:

\begin{enumerate}
\item $\text{dim}(\Omega_{\Lambda^e}^{r}(\Lambda
))=\text{dim}(\oplus_{i\in Q_0}\Lambda e_i\otimes
e_i\Lambda)-\text{dim}(\Lambda)=\sum_{i\in Q_0}\text{dim}(\Lambda
e_i) (\text{dim}(e_i\Lambda )-1)$, whenever $r\equiv 1\text{ (mod
3)}$

\item $\text{dim}(\Omega_{\Lambda^e}^{r}(\Lambda
))=\text{dim}(\oplus_{i\in Q_0}\Lambda e_{\tau (i)}\otimes
e_i\Lambda)-\text{dim}({}_{\bar{\mu}}\Lambda_1)=\sum_{i\in
Q_0}\text{dim}(\Lambda e_{\tau (i)}) (\text{dim}(e_i\Lambda
)-1)=\sum_{i\in Q_0}\text{dim}(\Lambda e_i) (\text{dim}(e_i\Lambda
)-1)$, whenever $r\equiv 2\text{ (mod 3)}$
\end{enumerate}
For $r\equiv 1,2\text{ (mod 3)}$ the equality
$\text{dim}(\Omega_{\Lambda^e}^{r}(\Lambda ))=\text{dim}(\Lambda )$
can occur if, and only if, $\text{dim}(e_i\Lambda )=2$, for each
$i\in Q_0$. But this can only happen when the Loewy length is $2$,
which is discarded.

\end{proof}

By the previous lemma,  we know that
$\text{dim}(\Omega_{\Lambda^e}^r(\Lambda ))\neq\text{dim}(\Lambda )$
whenever $r\not\in 3\mathbb{Z}$.  Due to the existence of an
automorphism $\bar{\mu}$ of $\Lambda$ satisfying that
$\Omega_{\Lambda^e}^3(\Lambda)\cong _{\bar{\mu}}\Lambda _1$ as
$\Lambda$-bimodules (see Proposition \ref{prop.automorphism mu
general}), in order to calculate the ($\Omega$-)period of $\Lambda$,
we just need to control the positive integers $r$ such that
$\bar{\mu}^r$ is inner. For the sake of simplicity, we shall divide
the problem into two steps. We begin by identifying the smallest
$u\in \mathbb{N}$ such that $(\bar{\nu}\circ
\bar{\tau}^{-1})^u=Id_{\Lambda}$ , that is, the smallest $u$ such
that $\bar{\mu}^u$ acts as the identity on vertices. This is the
content of the next result.

\begin{lema} \label{lem.candidatos periodo de Lambda}
Let $\Lambda=K(\mathbb{Z}\Delta)/\langle\varphi\rangle$ be an $m$-fold mesh algebra
of extended type $(\Delta ,m,t)$ and let us put $u:=\text{min}\{r\in
\mathbb{Z}^+ \mid (\bar{\nu}\circ \bar{\tau}^{-1})^r= Id_{\Lambda}
\}$. The following assertions hold:

\begin{enumerate}
\item If $t=1$ then:

\begin{enumerate}
\item $u=\frac{2m}{\text{gcd}(m,c_\Delta )}$,  whenever $\Delta$ is
$\mathbb{A}_r$, $\mathbb{D}_{2r-1}$ or $\mathbb{E}_{6}$;
\item $u=\frac{m}{\text{gcd}(m,\frac{c_\Delta}{2})}$, whenever
$\Delta$ is $\mathbb{D}_{2r}$, $\mathbb{E}_{7}$ or $\mathbb{E}_{8}$.
\end{enumerate}
\item If $t=2$ then:

\begin{enumerate}
\item $u=\frac{2m}{\text{gcd}(2m,m+\frac{c_\Delta}{2})}$,  whenever $\Delta$ is
$\mathbb{A}_{2n-1}$, $\mathbb{D}_{2r-1}$ or $\mathbb{E}_{6}$;
\item $u=\frac{2m}{\text{gcd}(2m,\frac{c_\Delta}{2})}$, whenever
$\Delta$ is $\mathbb{D}_{2r}$;
\item $u=\frac{2m-1}{\text{gcd}(2m-1,2n+1)}$, when $\Delta =\mathbb{A}_{2n}$.
\end{enumerate}
\item If $t=3$ (hence $\Lambda =K(\mathbb{Z}\mathbb{D}_4)/\langle\rho\tau^m\rangle$), then $u=m$.
\end{enumerate}

\end{lema}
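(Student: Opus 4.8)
The plan is to compute $u$ as the order of the automorphism $\bar\nu\circ\bar\tau^{-1}$ acting on the vertices of the quiver $Q=\mathbb Z\Delta/G$. Since the quiver of $\Lambda$ has no double arrows, this order is entirely determined by the induced permutation of $Q_0=\mathbb Z\Delta_0/G$, so I work at the level of vertices. By Proposition \ref{prop:Coxeter-Nakayama} the Nakayama permutation $\nu$ is of the form $\tau^{-l}$ or $\rho\tau^{-l}$ on $\mathbb Z\Delta_0$, with $l$ explicit in each Dynkin type; hence $\nu\circ\tau^{-1}$ lifts to an explicit element of the group $\hat G=\langle\rho,\tau\rangle$ of automorphisms of $\mathbb Z\Delta$. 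The key point is that $u$ is the \emph{smallest} positive integer $r$ such that $(\nu\circ\tau^{-1})^r$ lands in $G=\langle\varphi\rangle$, where $\varphi=\tau^m$ (if $t=1$) or $\varphi=\rho\tau^m$ (if $t=2,3$); this is because $\hat G$ acts freely on the vertices not fixed by $\rho$, exactly as in the proof of Theorem \ref{teor.weakly symmetric m-fold algebras}.

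First I would translate, in each case, the condition $(\nu\circ\tau^{-1})^r\in\langle\varphi\rangle$ into an arithmetic condition. I start from the explicit shape of $\nu$ in Proposition \ref{prop:Coxeter-Nakayama}. For the types with $\nu=\tau^{1-n}$ (namely $\mathbb D_{2r}$, $\mathbb E_7$, $\mathbb E_8$ when $t=1$), one has $\nu\circ\tau^{-1}=\tau^{-n}=\tau^{-c_\Delta/2}$, so the condition becomes $\tau^{-rc_\Delta/2}\in\langle\tau^m\rangle$, i.e. $m\mid r\cdot\frac{c_\Delta}{2}$; the smallest such $r$ is $\frac{m}{\gcd(m,c_\Delta/2)}$, giving case 1(b). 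For the types with $\nu=\rho\tau^{1-n}$ and $t=1$ (namely $\mathbb A_r$, $\mathbb D_{2r-1}$, $\mathbb E_6$), one has $\nu\circ\tau^{-1}=\rho\tau^{-n}$; since $\rho$ has order $2$, the even powers give $\tau^{-2kn}$ (using $\rho^2=\mathrm{id}$, or $\rho^2=\tau^{-1}$ only in the $\mathbb A_{2n}$ subcase, which does not occur here as $t=1$) and the odd powers retain a factor $\rho\notin\langle\tau^m\rangle$. Hence $r$ must be even, say $r=2r'$, and $\tau^{-2r'n}=\tau^{-r'c_\Delta}\in\langle\tau^m\rangle$, giving $m\mid r'c_\Delta$ and $u=2\cdot\frac{m}{\gcd(m,c_\Delta)}=\frac{2m}{\gcd(m,c_\Delta)}$, which is case 1(a).

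For $t=2$, the ambient subgroup is $G=\langle\rho\tau^m\rangle$, so both even and odd powers of $\rho$ are now available inside $G$; I would compute $(\nu\circ\tau^{-1})^r$ and ask when it equals $(\rho\tau^m)^s$ for some integer $s$, matching the exponent of $\rho$ (forcing a parity condition on $s$) and then the exponent of $\tau$. In the types where $\nu\circ\tau^{-1}=\rho\tau^{-c_\Delta/2}$, the $\rho$-exponents match when $r\equiv s\pmod 2$, and the $\tau$-exponents give a congruence $-r\frac{c_\Delta}{2}\equiv sm$ whose least solution is governed by $\gcd(2m,m+\frac{c_\Delta}{2})$, yielding case 2(a); in the $\mathbb D_{2r}$ subcase $\nu\circ\tau^{-1}=\tau^{-c_\Delta/2}$ carries no $\rho$, forcing $s$ even and producing $\gcd(2m,\frac{c_\Delta}{2})$, i.e. case 2(b). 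The genuinely separate subcase is $\mathbb A_{2n}$, i.e. $\Lambda=\mathbb L_n^{(m)}$, where $\rho^2=\tau^{-1}$: here I must use this relation to rewrite powers of $\rho\tau^m$, converting the problem into a congruence modulo $2m-1$ (the total $\tau$-shift of $\rho\tau^m$ after accounting for $\rho^2=\tau^{-1}$ is $2m-1$), and the least $r$ becomes $\frac{2m-1}{\gcd(2m-1,2n+1)}$, case 2(c). Finally, for $t=3$ (only $\mathbb D_4$, with $\rho$ of order $3$ and $\nu=\tau^{-2}$), the automorphism $\bar\nu\circ\bar\tau^{-1}$ lifts to $\tau^{-1}$ and I ask for the least $r$ with $\tau^{-r}\in\langle\rho\tau^m\rangle$; since $\rho$ has order $3$ acting as the $3$-cycle $(013)$ disjoint from any power of $\tau$, one needs $r\equiv 0\pmod 3$ in the $\rho$-component and $\tau$-matching $(\rho\tau^m)^{3k}=\tau^{3km}$, giving $u=m$ after the minimal analysis, case 3.

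\textbf{The main obstacle} I expect is the careful bookkeeping in the $t=2$ cases, where both $\rho$ and $\tau$ contribute to each power and one must simultaneously solve the parity condition on the $\rho$-exponent and the divisibility condition on the $\tau$-exponent; the $\mathbb A_{2n}$ subcase is the most delicate because the relation $\rho^2=\tau^{-1}$ intertwines the two, so that a naive separation of $\rho$- and $\tau$-exponents fails and one must instead track the net $\tau$-shift of $(\rho\tau^m)^s$, which is $s\cdot m-\lfloor s/2\rfloor$ and simplifies to an expression controlled by $2m-1$. Once these exponent computations are organized type-by-type using the explicit $\nu$ from Proposition \ref{prop:Coxeter-Nakayama}, each formula for $u$ follows by elementary number theory (reducing a linear congruence to a gcd), so the remainder is routine verification.
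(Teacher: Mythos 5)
Your overall strategy is exactly the paper's: reduce $(\bar{\nu}\circ\bar{\tau}^{-1})^r=\mathrm{Id}_\Lambda$ to the group-theoretic condition $(\nu\circ\tau^{-1})^r\in G$ inside $\hat{G}=\langle\rho,\tau\rangle$, using the free action of $\hat{G}$ on vertices not fixed by $\rho$ (as in the weakly symmetric theorem), and then solve the resulting exponent congruences type by type. Your treatments of 1(b), 2(a), 2(b), 2(c) and (modulo the slips noted below) $t=3$ match the paper's computations. However, there is one genuine error: in case 1(a) you assert that the subcase $\Delta=\mathbb{A}_{2n}$ ``does not occur here as $t=1$.'' It does occur: the algebra $\mathbb{Z}\mathbb{A}_{2n}/\langle\tau^m\rangle$ has extended type $(\mathbb{A}_{2n},m,1)$ and is covered by the formula in 1(a), since $\mathbb{L}_n^{(m)}$ (the $t=2$ case) is only the quotient by $\langle\rho\tau^m\rangle$. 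For $\mathbb{A}_{2n}$ the Nakayama permutation is still $\nu=\rho\tau^{1-n}$, but $\rho$ has infinite order with $\rho^2=\tau^{-1}$, so the even powers compute as $(\rho\tau^{-n})^{2r'}=\tau^{-r'}\tau^{-2nr'}=\tau^{-(2n+1)r'}$, and the condition $(\nu\tau^{-1})^{2r'}\in\langle\tau^m\rangle$ reads $m\mid (2n+1)r'$. Since $c_{\mathbb{A}_{2n}}=2n+1$, this yields $u=\frac{2m}{\gcd(m,2n+1)}=\frac{2m}{\gcd(m,c_\Delta)}$, in agreement with the statement. Your computation with $\rho^2=\mathrm{id}$ would instead produce $\frac{2m}{\gcd(m,2n)}$, which is wrong in general because $c_\Delta\neq 2n$ here. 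It is precisely the relation $\rho^2=\tau^{-1}$ that makes the uniform formula $2m/\gcd(m,c_\Delta)$ valid across all of $\mathbb{A}_r$; the paper devotes a separate paragraph to $(\mathbb{A}_{2n},m,1)$ for exactly this reason, and your plan as written would fail on that subcase.

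Two further small slips in the $t=3$ case, fixable but worth flagging: since $\nu=\tau^{-2}$, the lift of $(\bar{\nu}\bar{\tau}^{-1})^r$ is $\tau^{-3r}$, not $\tau^{-r}$ (you say the automorphism ``lifts to $\tau^{-1}$''), and the divisibility-by-$3$ constraint falls on the exponent $s$ in $(\rho\tau^m)^s$ (one needs $s\in 3\mathbb{Z}$ so that the $\rho$-component vanishes), not on $r$. With these corrections the condition becomes $-3r=3mv'$, i.e. $r\in m\mathbb{Z}$, giving $u=m$ as you state. Apart from the $\mathbb{A}_{2n}$, $t=1$ gap, the proposal is the same argument as the paper's proof.
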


\begin{proof}
The argument that we used for $\nu$ in the first paragraph of the
proof of Theorem \ref{teor.weakly symmetric m-fold algebras} is also
valid for $(\nu\circ\tau^{-1})^r$. Then
$(\bar{\nu}\circ\bar{\tau}^{-1})^r=id_\Lambda$ if, and only if,
$(\nu\circ\tau^{-1})^r\in G$.

  When $\Delta$ is $\mathbb{A}_{2n-1}$, $\mathbb{D}_{n+1}$,
with $n+1$ odd, or $\mathbb{E}_{6}$, the Nakayama permutation is
$\nu =\rho\tau^{1-n}$, where $n=\frac{c_\Delta}{2}$. Then
$(\nu\tau^{-1})^r=\rho^r\tau^{-nr}$. If $t=1$ this automorphism is
in $G$ if, and only if, $r=2r'$ is even and
$\tau^{-nr}=\tau^{-2nr'}$ is equal to $(\tau^m)^v=\tau^{mv}$, for
some $v\in\mathbb{Z}$. This happens exactly when $2nr'\in
m\mathbb{Z}$ and the smallest $r'$ satisfying this is
$u'=\frac{m}{\text{gcd}(m,2n)}$. We then
get that
$u=2u'=\frac{2m}{\text{gcd}(m,2n)}=\frac{2m}{\text{gcd}(m,c_\Delta
)}$. Suppose that $t=2$. Then $(\nu\tau^{-1})^r=\rho^r\tau^{-nr}$ is
in $G=\langle\rho\tau^m\rangle$ if, and only if, there is $v\in\mathbb{Z}$ such
that $v\equiv r\text{ (mod 2)}$ and $-nr=mv$. This is equivalent to
saying that there is $k\in\mathbb{Z}$ such that $-nr=m(r+2k)$ or,
equivalently, that $(m+n)r\in 2m\mathbb{Z}$. The smallest $r$
satisfying this property is
$u=\frac{2m}{\text{gcd}(2m,m+n)}=\frac{2m}{\text{gcd}(2m,m+\frac{c_\Delta}{2})}$.
This proves 1.a, except for $\Delta =\mathbb{A}_{2n}$,  and 2.a.

Suppose next that $\Delta$ is $\mathbb{D}_{n+1}$, with $n+1$ even,
$\mathbb{E}_{7}$ or $\mathbb{E}_{8}$. Then $\nu =\tau^{1-n}$, where
$n=\frac{c_\Delta}{2}$,  so that $(\nu\tau^{-1})^r=\tau^{-nr}$. When
$t=1$, this automorphism is in $G=\langle\tau^m\rangle$ if, and only if, $nr\in
m\mathbb{Z}$. The smallest $r$ satisfying this property is
$u=\frac{m}{\text{gcd}(m,n)}=\frac{m}{\text{gcd}(m,\frac{c_\Delta}{2})}$.
On the other hand, if $t=2$ then $\tau^{-nr}$ is in $G=\langle\rho\tau^m\rangle$
if, and only if, there is $v=2v'\in 2\mathbb{Z}$ such that
$-nr=mv=2mv'$. The smallest $r$ satisfying this property is
$u=\frac{2m}{\text{gcd}(2m,n)}=\frac{2m}{\text{gcd}(2m,\frac{c_\Delta}{2})}$.
This proves 1.b and 2.b.

Let us now take $\Delta =\mathbb{A}_{2n}$. Then $\nu =\rho\tau^{1-n}$,
so that $(\nu\tau^{-1})^r=\rho^r\tau^{-nr}$. If $t=1$, this
automorphism is in $G=\langle\tau^m\rangle$ if, and only if, $r=2r'$ is even and
there exists $v\in\mathbb{Z}$ such that
$\rho^{2r'}\tau^{-2nr'}=\tau^{-(2n+1)r'}$ is equal to $\tau^{mv}$.
This is equivalent to saying that $(2n+1)r'\in m\mathbb{Z}$. The
smallest $r'$ satisfying this property is
$u'=\frac{m}{\text{gcd}(m,2n+1)}$. We then get
$u=\frac{2m}{\text{gcd}(m,2n+1)}=\frac{2m}{\text{gcd}(m,c_\Delta)}$,
which completes 1.a. When $t=2$, the automorphism $\rho^r\tau^{-nr}$
is in $G=\langle\rho\tau^m\rangle$ if, and only if, there exists
$v\in\mathbb{Z}$ such that $v\equiv r\text{ (mod 2)}$ and
$\rho^r\tau^{-nr}=\rho^v\tau^{mv}$. This is in turn equivalent to
the existence of an integer $k$ such that
$\rho^r\tau^{-nr}=\rho^{r+2k}\tau^{m(r+2k)}=\rho^r\tau^{-k}\tau^{mr+2mk}$.
That is, if and only if $-nr=(2m-1)k+mr$. This happens exactly when
$(m+n)r\in (2m-1)\mathbb{Z}$. The smallest $r$ satisfying this
property is $u=\frac{2m-1}{\text{gcd}(m+n,2m-1)}$. But we have that
$\text{gcd}(m+n,2m-1)=\text{gcd}(2m-1,2n+1)$, so that 2.c holds.

Finally, if $t=3$, and hence $\Delta =\mathbb{D}_4$, then
$\nu=\tau^{-2}$, so that $(\nu\tau^{-1})^r=\tau^{-3r}$ is in
$G=\langle\rho\tau^m\rangle$ if, and only if, there is $v=3v'\in 3\mathbb{Z}$
such that $-3r=3mv'$. This happens exactly when $r\in m\mathbb{Z}$,
which implies that  $u=m$ in this case.
\end{proof}

\begin{lema} \label{lem.crucial for the periods in Bn and Ln}
Let $\Lambda$ be an $m$-fold algebra of extended type
$(\mathbb{A}_r,m,2)$ and let $T$ be the subgroup of $\mathbb{Z}$
consisting of the integers $s$ such that $\bar{\mu}^s$ and
$(\bar{\nu}\circ\bar{\tau}^{-1})^s$ are equal, up to composition by
an inner automorphism of $\Lambda$. Then  $T=2\mathbb{Z}$, when
$\text{char}(K)\neq 2$, and $T=\mathbb{Z}$, when $\text{char}(K)=
2$.
\end{lema}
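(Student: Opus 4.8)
Throughout write $\omega=\bar\nu\circ\bar\tau^{-1}$ and recall from Theorem \ref{teor.G-invariant Nakayama automorphism}(1) that for type $\mathbb{A}_r$ one has $\eta=\nu$ on arrows, so that the twisting automorphism of Corollary \ref{cor. mu for Lambda} is $\mu=\kappa\circ\omega$ when $r=2n$ and $\mu=\omega\circ\vartheta$ when $r=2n-1$, where $\kappa(a)=-a$ and $\vartheta(a)=(-1)^{s(\tau^{-1}a)+s(a)}a$ are the diagonal automorphisms of Proposition \ref{prop.automorphism mu general}. In both cases $\mu=\omega\circ\zeta$ with $\zeta$ diagonal, say $\zeta(a)=c(a)a$ and $c(a)\in\{1,-1\}$ ($\kappa$ is central, so the order is irrelevant). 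The plan is to compute, for each $s$, the automorphism $\bar\delta_s:=\bar\mu^{\,s}\circ(\bar\nu\circ\bar\tau^{-1})^{-s}$ and to decide, via Lemma \ref{lem.criterion of inner automorphism}, when it is inner; by definition $s\in T$ exactly when $\bar\delta_s\in\text{Inn}^{gr}(\Lambda)$, and since $T$ is visibly a subgroup of $\mathbb{Z}$ it suffices to prove that $2\in T$ and $1\notin T$ when $\text{char}(K)\neq2$, and that $T=\mathbb{Z}$ when $\text{char}(K)=2$.

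First I would dispose of $\text{char}(K)=2$: there $\kappa=\vartheta=\text{id}_B$ because every sign $(-1)^{(\cdot)}$ equals $1$, hence $\mu=\omega$, $\bar\delta_s=\text{id}$ and $T=\mathbb{Z}$. Assume now $\text{char}(K)\neq2$. Expanding $(\omega\zeta)^s=\omega^sD_s$ and using that the conjugates $\omega^{-k}\zeta\omega^{k}$ are diagonal and commute, one gets $D_s(a)=\big(\prod_{k=0}^{s-1}c(\omega^{k}a)\big)\,a$, and since conjugation by $\bar\omega^{s}$ preserves innerness, $s\in T$ if and only if $\overline{D_s}$ is inner. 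For $r=2n$ we have $c\equiv-1$, so $D_2=\text{id}$ (giving $2\in T$ at once) while $D_1=\kappa$; thus $1\in T$ would force $\bar\kappa$ to be inner. By Lemma \ref{lem.criterion of inner automorphism} this needs a $G$-invariant $\lambda:(\mathbb{Z}\Delta)_0\to K^*$ with $\lambda_{i(a)}^{-1}\lambda_{t(a)}=-1$ for all arrows; such $\lambda$ is unique up to a global scalar by connectedness of $\mathbb{Z}\Delta$, and the representative $\lambda(k,i)=(-1)^i$ satisfies $\lambda\circ(\rho\tau^m)=-\lambda$ (because $\rho(k,i)=(k+i-n,2n+1-i)$ and $(-1)^{2n+1-i}=-(-1)^i$), so no candidate is $G$-invariant. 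Hence $\bar\kappa$ is not inner, $1\notin T$ and $T=2\mathbb{Z}$.

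For $r=2n-1$ the diagonal $D_s(a)=(-1)^{\phi_s(a)}a$ has $\phi_s(a)=\sum_{l=0}^{s-1}\varepsilon(\omega^{l}a)$ with $\varepsilon(a)=s(\tau^{-1}a)+s(a)$; as $D_s$ is an algebra automorphism, the map $a\mapsto(-1)^{\phi_s(a)}$ is, by the argument in the proof of Proposition \ref{prop.tau nu eta in center}, an acyclic character, so the required $\lambda$ always exists on $(\mathbb{Z}\Delta)_0$ and only its $G$-invariance is in question. I would then copy the single-vertex reduction used in Proposition \ref{prop.subgroup of Z associated}: fixing the $\rho$-fixed vertex $(0,n)$ (here $\rho(k,n)=(k,n)$) and an arrow $a_0$ ending at it, $\overline{D_s}$ is inner if and only if the accumulated parity $w_s:=\sum_{k=0}^{2m-1}\phi_s(\sigma^{k}a_0)$ is even. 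Using $\tau=\sigma^2$ on arrows, so $\varepsilon(\sigma^{k}a_0)=s(\sigma^{k-2}a_0)+s(\sigma^{k}a_0)$, each inner sum telescopes modulo $2$ to the four boundary values $s(\sigma^{-2}a_0)+s(\sigma^{-1}a_0)+s(\sigma^{2m-2}a_0)+s(\sigma^{2m-1}a_0)$, and because $\omega$ commutes with $\tau$ and with $G$ it permutes these column-paths so that $w_s\equiv s\,w_1\ (\text{mod } 2)$.

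The main obstacle is the final evaluation $w_1\equiv1\pmod2$, which is what pins down $T=2\mathbb{Z}$ rather than $\mathbb{Z}$. This is a genuinely combinatorial step: it requires the explicit signature set $X$ of Proposition \ref{prop.admissible change of variables} for $\mathbb{A}_{2n-1}$ with $G=\langle\rho\tau^m\rangle$, whose description splits according to the parity of $m$ (cases 3(b) and 3(c)), together with a careful tracking of how $\rho$ acts on the four boundary arrows $\sigma^{-2}a_0,\sigma^{-1}a_0,\sigma^{2m-2}a_0,\sigma^{2m-1}a_0$. Once $w_1$ is shown to be odd, the relation $w_s\equiv s\pmod2$ gives $2\in T$ and $1\notin T$, hence $T=2\mathbb{Z}$, completing the characteristic $\neq2$ case and with it the lemma.
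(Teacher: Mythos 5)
Your overall scaffolding is sound and close to the paper's: you correctly reduce membership in $T$ to the innerness of a diagonal sign automorphism (your $\overline{D_s}$, the paper's $\bar h^s$ with $h\in\{\kappa,\vartheta\}$), you dispose of $\text{char}(K)=2$ exactly as the paper does, and your treatment of $\mathbb{A}_{2n}$ is complete and correct: there $D_s=\kappa^s$, so $2\in T$ is immediate, and your uniqueness-up-to-scalar argument with the representative $\lambda(k,i)=(-1)^i$ failing $G$-invariance under $\rho\tau^m$ is essentially the paper's computation $\lambda_{\rho\tau^m(k,n)}=-\lambda_{(k,n)}$, so $\bar\kappa\notin\text{Inn}(\Lambda)$ and $1\notin T$.

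But for $\mathbb{A}_{2n-1}$ your argument has a genuine gap, in fact two. First, the reduction $w_s\equiv s\,w_1\ (\text{mod }2)$ rests on the unproved assertion that $W(\omega^{l}a_0)\equiv W(a_0)$ for all $l$, justified only by ``$\omega$ permutes these column-paths''; since $\omega=\rho\tau^{-n}$ moves the base arrow away from the fixed column and the signature $s(-)$ is invariant under $G=\langle\rho\tau^m\rangle$ but not under $\tau$ or $\rho$ separately, this needs an actual argument. (The paper sidesteps it: by Proposition \ref{prop.tau nu eta in center} the classes of $\bar\nu$ and $\bar\tau$ are central modulo $\text{Inn}^{gr}(\Lambda)$, so $T=\{s:\bar\vartheta^{\,s}\in\text{Inn}(\Lambda)\}$, and involutivity of $\vartheta$ gives $2\in T$ at once, leaving only $1\notin T$ to check — you could repair your step the same way.) Second, and decisively, you explicitly defer the evaluation $w_1\equiv 1\ (\text{mod }2)$, i.e.\ the proof that $\bar\vartheta$ is \emph{not} inner, calling it ``the main obstacle.'' That evaluation is precisely where the content of the lemma lies for type $\mathbb{B}_n$, and it is not uniform: the paper must split according to the parity of $m$, because the signature set $X$ of Proposition \ref{prop.admissible change of variables} is different in cases 3(b) ($m$ odd, where one shows $\lambda_{\rho(k,i)}=\lambda_{(k,i)}$ and $\lambda_{\tau(k,i)}=-\lambda_{(k,i)}$, whence $\lambda_{\rho\tau^m(k,i)}=(-1)^m\lambda_{(k,i)}=-\lambda_{(k,i)}$) and 3(c) ($m$ even, where one tracks the boundary signatures $s(\sigma^{-2k}(a))+s(\tau^{-1}\sigma^{-2k}(a))$ along the canonical path from $(0,n)$ to $\rho\tau^{-m}(0,n)$ and gets $\lambda_{(0,n)}=(-1)^{m+1}\lambda_{\rho\tau^{-m}(0,n)}=-\lambda_{(0,n)}$). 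Without carrying out this case analysis, your proposal establishes $T=2\mathbb{Z}$ only for $(\mathbb{A}_{2n},m,2)$, not for $(\mathbb{A}_{2n-1},m,2)$, so the lemma is not proved.
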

\begin{proof}
We can take $\eta =\nu$ in this case (see Theorem \ref{teor.G-invariant Nakayama automorphism}). By Corollary \ref{cor. mu for Lambda}, we can assume that there is  a $G$-invariant involutive automorphism  $h$ of $B$ which fixes the vertices and acts on arrows as $h(a)=(-1)^{u(a)}a$, for each $a\in\mathbb{Z}\Delta_1$, where $u(a)\in\mathbb{Z}_2$, such that either $\mu =h\circ\nu\circ\tau^{-1}$ or $\mu =\nu\circ\tau^{-1}\circ h$. Indeed the first situation, with $h=\kappa$, appears when $\Delta=\mathbb{A}_{2n}$, and the second situation appears, with $h=\vartheta$, when $\Delta=\mathbb{A}_{2n-1}$.
Then, using Proposition \ref{prop.tau nu eta in center}, we get that $T$ consists of the $s\in\mathbb{Z}$ such that $\bar{h}^s\in\text{Inn}(\Lambda )$, where $\bar{h}$ is the automorphism of $\Lambda =B/G$ induced by $h$. The involutive condition of $h$ implies that $2\in T$ and, when $\text{char}(K)=2$, also $1\in T$. The proof hence reduces to check that $\bar{h}\not\in\text{Inn}(\Lambda )$, when $\text{char}(K)\neq 2$.
Proceeding as in the proof of Proposition \ref{prop.tau nu eta in center}, we have a map $\lambda :\mathbb{Z}\Delta_0\longrightarrow K^*$ such that $\lambda_{i(a)}^{-1}\lambda_{t(a)}a=h(a)$ (equivalently, $\lambda_{i(a)}^{-1}\lambda_{t(a)}=(-1)^{u(a)}$), for all $a\in\mathbb{Z}\Delta_1$. Due to Lemma \ref{lem.criterion of inner automorphism},  we just need to prove $\lambda\circ \rho\tau^m_{| \mathbb{Z}\Delta_0}\neq\lambda$, where $G=<\rho\tau^m>$. We study the possible  situations:

\vspace*{0.3cm}

a) \emph{When $\Delta =\mathbb{A}_{2n}$}: Here we have $h=\kappa$, so  that $\lambda_{t(a)}=-\lambda_{i(a)}$, for all $a\in\mathbb{Z}\Delta_1$. It follows that $\lambda_{\tau(k,i)}=\lambda_{(k,i)}=-\lambda_{(k,i+1)}$,
for all $(k,i)\in\mathbb{Z}\Delta_0$ with $i<2n$. But then $\lambda_{\rho\tau^m(k,n)}=\lambda_{\rho (k-m,n)}=\lambda_{(k-m,n+1)}=\lambda_{(k,n+1)}=-\lambda_{(k,n)}$.

\vspace*{0.3cm}

b) \emph{When $\Delta =\mathbb{A}_{2n-1}$ and $m$ is odd}: (see Proposition \ref{prop.admissible change of variables}): Here $h=\vartheta$, so that $\lambda_{t(a)}=(-1)^{s(\tau^{-1}(a))+s(a)}\lambda_{i(a)}$, for all $a\in\mathbb{Z}\Delta_1$. But $(-1)^{s(\tau^{-1}(a))+s(a)}$ is equal to $-1$, when $a$ is an upward arrow in the 'south' hemisphere or a downward arrow in the 'north' hemisphere, and it is equal to $1$ otherwise. We then get that $\lambda_{(k,n)}=\lambda_{(k,n+j)}$ and $\lambda_{(k,n)}=\lambda_{(k+j,n-j)}$, for all $j=0,1,...,n-1$, so that $\lambda_{\rho (k,i)}=\lambda_{(k,i)}$, for all  $(k,i)\in\mathbb{Z}\Delta_0$. It also follows that $\lambda_{\tau (k,i)}=-\lambda_{(k,i)}$, for all $(k,i)\in\mathbb{Z}\Delta_0$. We then get that $\lambda_{\rho\tau^m(k,i)}=\lambda_{\tau^m(k,i)}=(-1)^m\lambda_{(k,i)}=-\lambda_{(k,i)}$.

\vspace*{0.3cm}

c) \emph{When $\Delta =\mathbb{A}_{2n-1}$ and $m$ is even}: We again have $\lambda_{t(a)}=(-1)^{s(\tau^{-1}(a))+s(a)}\lambda_{i(a)}$, for all $a\in\mathbb{Z}\Delta_1$. We now consider the arrow $a:(0,n)\rightarrow (0,n+1)$. Using Proposition \ref{prop.admissible change of variables}, we see that we have the following formulas for the integers $k\in\{0,1,...,m-1\}$:

\begin{enumerate}
\item $s(\sigma^{-2k}(a))+s(\tau^{-1}(\sigma^{-2k}(a)))$ is equal to $1$ exactly when $k\neq m-1$;
\item $s(\sigma^{-2k-1}(a))+s(\tau^{-1}(\sigma^{-2k-1}(a)))=1$ exactly when $k=m-2,m-1$.
\end{enumerate}
Considering the path
$$(0,n)\stackrel{a}{\longrightarrow} (0,n+1)\stackrel{\sigma^{-1}(a)}{\longrightarrow} (1,n)\longrightarrow ...\longrightarrow (m-1,n)\stackrel{\sigma^{-2(m-1)}(a)}{\longrightarrow} (m-1,n+1)$$

$$\stackrel{\sigma^{-2(m-1)-1}(a)}{\longrightarrow}(m,n)=\rho\tau^{-m}(0,n)$$ and arguing as in the proof of Proposition \ref{prop.tau nu eta in center}, we conclude that $\lambda_{(0,n)}=(-1)^{m+1}\lambda_{\rho\tau^{-m}(0,n)}=-\lambda_{(0,n)}$. The proof is finished since $\rho\tau^{-m}=(\rho\tau^m)^{-1}\in G$.

\end{proof}

We are now ready to describe explicitly the period of any $m$-fold
mesh algebra.

\begin{teor} \label{teor.period of Lambda}
Let $\Lambda$ be an $m$-fold mesh algebra of extended type $(\Delta
,m,t)$, where $\Delta\neq\mathbb{A}_{1},\mathbb{A}_{2}$,  let $\pi
=\pi (\Lambda)$ denote the period of $\Lambda$ and, for each
positive integer $k$, denote by $O_2(k)$ the biggest of the natural
numbers $r$ such that $2^r$ divides $k$. If $\text{char}(K)=2$ then
$\pi =3u$, where $u$ is the positive integer of Lemma
\ref{lem.candidatos periodo de Lambda}. When $\text{char}(K)\neq 2$,
the period of $\Lambda$ is given as follows:

\begin{enumerate}
\item If $t=1$ then:

\begin{enumerate}
\item When $\Delta$ is $\mathbb{A}_{r}$, $\mathbb{D}_{2r-1}$ or
$\mathbb{E}_{6}$, the period is $\pi
=\frac{6m}{\text{gcd}(m,c_\Delta )}$.
\item When $\Delta$ is  $\mathbb{D}_{2r}$,
$\mathbb{E}_{7}$ or $\mathbb{E}_{8}$,  the period is  $\pi
=\frac{3m}{\text{gcd}(m,\frac{c_\Delta}{2} )}$, when $m$ is even,
and $\pi =\frac{6m}{\text{gcd}(m,\frac{c_\Delta}{2} )}$, when $m$ is
odd.
\end{enumerate}
\item If $t=2$ then:

\begin{enumerate}
\item When $\Delta$ is $\mathbb{A}_{2n-1}$, $\mathbb{D}_{2r-1}$ or
$\mathbb{E}_{6}$, the period is
$\frac{6m}{\text{gcd}(2m,m+\frac{c_\Delta}{2} )}$, when $O_2(m)\neq
O_2(\frac{c_\Delta}{2})$, and $\pi
=\frac{12m}{\text{gcd}(2m,m+\frac{c_\Delta}{2} )}$ otherwise.
\item When $\Delta =\mathbb{D}_{2r}$, the period is $\frac{6m}{\text{gcd}(2m,\frac{c_\Delta}{2}
)}=\frac{6m}{\text{gcd}(2m,2r-1)}$.
\item When $\Delta =\mathbb{A}_{2n}$, i.e. $\Lambda =\mathbb{L}_n^{(m)}$,  the period
is $\pi =\frac{6(2m-1)}{\text{gcd}(2m-1,2n+1)}$
\end{enumerate}
\item If $t=3$ then $\pi =3m$, when $m$ is even, and $6m$, when $m$
is odd.
\end{enumerate}
\end{teor}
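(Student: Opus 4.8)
The plan is to reduce the computation of the period to a question about when a power of $\bar{\mu}$ is inner, and then to resolve that question using the auxiliary results already in place, carrying the fine parity analysis to the very end.

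First I would record the basic reduction. By Corollary \ref{cor. mu for Lambda} we have $\Omega_{\Lambda^e}^3(\Lambda)\cong {}_{\bar{\mu}}\Lambda_1[-c_\Delta]$, and since the twist functor ${}_{\bar{\mu}}(-)$ is an exact autoequivalence of the category of $\Lambda$-bimodules it commutes with syzygies; hence $\Omega_{\Lambda^e}^{3v}(\Lambda)\cong {}_{\bar{\mu}^v}\Lambda_1[-vc_\Delta]$ for every $v\geq 1$, and as ungraded bimodules ${}_{\bar{\mu}^v}\Lambda_1\cong\Lambda$ if and only if $\bar{\mu}^v$ is inner. On the other hand Lemma \ref{lem.commutativity of eta y tau} shows that $\dim\Omega_{\Lambda^e}^r(\Lambda)=\dim\Lambda$ forces $3\mid r$. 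Consequently the period is $\pi=3v_0$, where $v_0=\min\{v>0:\bar{\mu}^v\in\text{Inn}^{gr}(\Lambda)\}$. Because every inner automorphism fixes the vertices (Lemma \ref{lem.criterion of inner automorphism}) and $\bar{\mu}$ acts on vertices as $\bar{\nu}\circ\bar{\tau}^{-1}$, a necessary condition for $\bar{\mu}^v$ to be inner is $(\bar{\nu}\bar{\tau}^{-1})^v=\text{Id}_\Lambda$, i.e. $u\mid v$, with $u$ the integer of Lemma \ref{lem.candidatos periodo de Lambda}. Thus $v_0=u\cdot k_0$ for the least $k_0>0$ with $\bar{\mu}^{uk_0}$ inner, and $\pi=3uk_0$.

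Next I would compute $k_0$ inside the abelian quotient $Q:=\text{Aut}^{gr}(\Lambda)/\text{Inn}^{gr}(\Lambda)$, which is legitimate because the images of $\bar{\tau},\bar{\nu},\bar{\eta}$ are central in $Q$ by Proposition \ref{prop.tau nu eta in center}. Writing $\eta=\nu\circ\chi$, so that the sign automorphism $\bar{\chi}=\bar{\nu}^{-1}\bar{\eta}$ also has central image, one obtains in $Q$ the identity $[\bar{\mu}]^{v}=([\bar{\nu}]\,[\bar{\tau}]^{-1})^{v}\,[\mathrm{tw}]^{v}$, where $\mathrm{tw}$ is $\bar{\chi}$ generically, $\bar{\vartheta}$ when $\Delta=\mathbb{A}_{2n-1}$, and $\bar{\kappa}$ when $(\Delta,G)=(\mathbb{A}_{2n},\langle\rho\tau^m\rangle)$ (the three shapes of $\mu$ in Corollary \ref{cor. mu for Lambda}). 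Since $(\bar{\nu}\bar{\tau}^{-1})^u=\text{Id}_\Lambda$, this collapses to $[\bar{\mu}]^{uk}=[\mathrm{tw}]^{uk}$, so $\bar{\mu}^{uk}$ is inner exactly when the $(uk)$-th power of the twist is inner. For $\Delta\neq\mathbb{A}_r$ this is governed by $H(\Delta,m,t)=\{s:\bar{\chi}^s\in\text{Inn}^{gr}(\Lambda)\}$ of Proposition \ref{prop.subgroup of Z associated}; for $\Delta=\mathbb{A}_r$ with $t=2$ the relevant twist is $\bar{\vartheta}$ or $\bar{\kappa}$, whose set of inner powers is $2\mathbb{Z}$ (when $\text{char}(K)\neq 2$) or $\mathbb{Z}$ (when $\text{char}(K)=2$) by Lemma \ref{lem.crucial for the periods in Bn and Ln}. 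In each situation this set is $\mathbb{Z}$ or $2\mathbb{Z}$ (the twist squares to the identity), so $k_0=1$ when it is $\mathbb{Z}$ or when $u$ is already even, and $k_0=2$ exactly when it is $2\mathbb{Z}$ and $u$ is odd. In characteristic $2$ every sign twist is the identity, whence $\bar{\mu}^u=\text{Id}_\Lambda$, $k_0=1$ and $\pi=3u$ uniformly, disposing of that case.

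Finally, for $\text{char}(K)\neq 2$ I would assemble the formulas by feeding the explicit values of $u$ from Lemma \ref{lem.candidatos periodo de Lambda} into the rule above. The part I expect to be the main obstacle is the purely arithmetic bookkeeping: type by type one must compare the parity of $u$ with the value of $H(\Delta,m,t)$ and reconcile the two formulations. Concretely, for $\mathbb{A}_r,\mathbb{D}_{2r-1},\mathbb{E}_6$ with $t=1$ and for $\mathbb{D}_{2r}$ with $t=2$ the integer $u$ is automatically even, forcing $\pi=3u$; for $\mathbb{D}_{2r},\mathbb{E}_7,\mathbb{E}_8$ with $t=1$ and for $\mathbb{D}_4$ with $t=3$ one checks $u$ is odd precisely when $m$ is odd, giving the stated even/odd dichotomy between $3u$ and $6u$; in the delicate cases $\mathbb{A}_{2n-1},\mathbb{D}_{2r-1},\mathbb{E}_6$ with $t=2$ a short $2$-adic argument (using $\min(O_2(2m),O_2(m+\frac{c_\Delta}{2}))$) shows $u$ is odd if and only if $O_2(m)=O_2(\frac{c_\Delta}{2})$, and since $\frac{c_\Delta}{2}$ is even for $\mathbb{D}_{2r-1},\mathbb{E}_6$ the potential conflict with the case $H=\mathbb{Z}$ (i.e. $m$ odd) never arises, yielding the $O_2$-criterion separating $3u$ from $6u$; and for $\mathbb{A}_{2n}$ with $t=2$ one has $u=\frac{2m-1}{\gcd(2m-1,2n+1)}$ odd, so $k_0=2$ and $\pi=6u$. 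Substituting the values of $u$ then produces all the displayed formulas.
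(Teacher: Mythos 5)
Your proposal is correct and follows essentially the same route as the paper's proof: the reduction $\pi=3v_0$ via Lemma \ref{lem.commutativity of eta y tau} and Corollary \ref{cor. mu for Lambda}, the factorization $v_0=u k_0$ with $u$ from Lemma \ref{lem.candidatos periodo de Lambda}, the determination of $k_0$ from the inner powers of the sign twist (Proposition \ref{prop.subgroup of Z associated} and Lemma \ref{lem.crucial for the periods in Bn and Ln}, with centrality from Proposition \ref{prop.tau nu eta in center}), and the same type-by-type $2$-adic bookkeeping. Your only deviations are presentational — phrasing the argument in $\text{Aut}^{gr}(\Lambda)/\text{Inn}^{gr}(\Lambda)$ and observing directly that $u=2m/\gcd(m,c_\Delta)$ is automatically even in cases 1.a and 2.b, where the paper instead splits according to whether $H(\Delta,m,t)=\mathbb{Z}$ — and both versions are sound.
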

\begin{proof}
Let $u>0$ be the integer of Lemma \ref{lem.candidatos periodo de
Lambda}. Then $u\mathbb{Z}$ consists of the integers $r$ such that
$\bar{\nu}^r=\bar{\tau}^r$, or equivalently
$(\bar{\nu}\circ\bar{\tau}^{-1})^r=id_\Lambda$, as automorphisms of
$\Lambda$. If $\pi$ is the period of $\Lambda$, then, by Lemma
\ref{lem.commutativity of eta y tau},   we know that $\pi =3v$,
where $v$ is the smallest of the positive integers $s$ such that
$\bar{\mu}^s\in\text{Inn}(\Lambda )$. These integers $s$ obviously
form a subgroup $S=S(\Delta ,m,t)$ of $\mathbb{Z}$, and then
$v\mathbb{Z}=S$. This subgroup is the intersection of $u\mathbb{Z}$
with the subgroup $T$ consisting of the integers $r$ such that
$\bar{\mu}^r$ and $(\bar{\nu}\circ\bar{\tau}^{-1})^r$ are equal, up
to composition by an inner automorphism of $\Lambda$. When $(\Delta
,m,t)=(\mathbb{A}_{r},m,2)$, by Lemma \ref{lem.crucial for the
periods in Bn and Ln}, we get that $v\mathbb{Z}=u\mathbb{Z}\cap
2\mathbb{Z}$, when $\text{char}(K)\neq 2$, and
$v\mathbb{Z}=u\mathbb{Z}\cap\mathbb{Z}=u\mathbb{Z}$, when
$\text{char}(K)=2$. This automatically gives 2.c and the part of
characteristic $2$ in this case. We claim that it also gives the
formula in 2.a for $\Delta =\mathbb{A}_{2n-1}$. Indeed, by Lemma
\ref{lem.candidatos periodo de Lambda}, we have
$u=\frac{2m}{\text{gcd}(2m,m+n)}$ in this case. But the biggest
power of $2$ which divides $2m$ is a divisor of $\text{gcd}(2m,m+n)$
if, and only if, $O_2(m)=O_2(n)$. Then the equality 2.a for
$\mathbb{A}_{2n-1}$ follows automatically.

When  $(\Delta ,m,t)\neq(\mathbb{A}_r,m,2)$, by Proposition
\ref{prop.automorphism mu general} and the subsequent remark, we can
take $\bar{\mu}=\bar{\eta}\circ\bar{\tau}^{-1}$. Then Proposition \ref{prop.tau nu eta in center} implies that $S$ consists of
the integers $s$ such that $\bar{\eta}^s$ and $\bar{\tau}^s$ are
equal, up to composition by an inner automorphism of $\Lambda$. We
then get that $S=u\mathbb{Z}\cap H(\Delta ,m,t)$ (see Proposition
\ref{prop.subgroup of Z associated}). Therefore, Proposition
\ref{prop.subgroup of Z associated} tells us that $v=u$, when either
$H(\Delta ,m,t)=\mathbb{Z}$ or $u$ is even, and $v=2u$ otherwise. This completes the assertion for
characteristic $2$.  We
next check that, together with Proposition
\ref{prop.subgroup of Z associated}, it also gives all the remaining formulas
of the theorem.

For the quivers $\Delta$ in 1.a we always have that $H(\Delta
,m,t)=\mathbb{Z}$ when $\Delta =\mathbb{A}_r$, and also in the other
two cases when $m$ is even. But if $m$ is odd, then automatically
$u=\frac{2m}{\text{gcd}(m,c_\Delta )}$ is even.

For the quivers in 1.b, we always have that $n=\frac{c_\Delta}{2}$
is odd. Therefore $u$ is even exactly when $m$ is even. The reader should note
that the case when $t=1$ is also covered in Section 5 of \cite{D}.

For the quivers in 2.a which are not $\mathbb{A}_{2n-1}$, we have
that $H(\Delta ,m,t)=\mathbb{Z}$ exactly when $m$ is odd. But
$\frac{c_\Delta}{2}$ is even, so that $O_2(m)\neq
O_2(\frac{c_\Delta}{2})$ in that case. As we did above in the case
$(\Delta ,m,t)=(\mathbb{A}_{2n-1},m,2)$, in case $m$ even, we have
that $u=\frac{2m}{\text{gcd}(2m,m+\frac{c_\Delta}{2})}$ is odd if,
and only if, $O_2(m)=O_2(\frac{c_\Delta}{2})$. Then the formula in
2.a is true also for the cases different from $\mathbb{A}_{2n-1}$.

For 2.b, we have that $\frac{c_\Delta}{2}$ is odd, which implies
that $u$ is always even, and then the formula in 2.b is true.

Finally, when $t=3$, we have that $H(\Delta ,m,t)=\mathbb{Z}$,
exactly when $m$ is even, and then the formula in 3) is automatic.

\end{proof}

\subsection{Inner and stably inner automorphisms}

From now on, in this paper, we will denote by ${\Lambda}-mod$ the category of finite
 dimensional $\Lambda$-modules and, given an automorphism $\phi$ of an algebra $\Lambda$,
the functor $:{}_{\phi}\Lambda_{1}\otimes_{\Lambda}-: \Lambda-mod\longrightarrow
\Lambda-mod$ will be denoted by ${}_{\phi}(-)$.
Recall from \cite{IV} that an automorphism $\sigma$ of $\Lambda$ is
\emph{stably inner} if the functor ${}_\sigma (-):\Lambda-\underline{\text{mod}}\longrightarrow\Lambda-\underline{\text{mod}}$ is
naturally isomorphic to the identity functor. In particular, each
inner automorphism is stably inner.

\begin{lema} \label{lem.inner-versus-stably inner}
Let $\Lambda =KQ/I$ be a finite dimensional selfinjective algebra,
where $I$ is an admissible ideal of $KQ$ which is homogeneous with respect to the grading
by path length, and consider the induced grading on $\Lambda$.
Suppose that the Loewy length of $\Lambda$ is greater or equal than
$4$. A graded automorphism of $\Lambda$ is inner if, and only if, it
is stably inner.
\end{lema}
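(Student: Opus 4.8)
The plan is to prove the nontrivial implication, that a graded \emph{stably inner} automorphism $\sigma$ of $\Lambda$ is inner; the converse is immediate, since if $\sigma=\iota_u$ is conjugation by a unit $u$ then $m\mapsto um$ is a natural isomorphism $\mathrm{id}\xrightarrow{\sim}{}_\sigma(-)$ on $\Lambda-mod$ which descends to $\Lambda-\underline{\mathrm{mod}}$. So fix $\sigma$ together with a natural isomorphism $\theta\colon\mathrm{id}\xrightarrow{\sim}{}_\sigma(-)$ on $\Lambda-\underline{\mathrm{mod}}$. First I would show that $\sigma$ fixes every vertex. Since $I$ is admissible and homogeneous, $\Lambda_0=\prod_i Ke_i$ is the degree-zero part, so the graded automorphism $\sigma$ permutes the primitive idempotents, say $\sigma(e_i)=e_{\pi(i)}$, whence ${}_\sigma S_i\cong S_{\pi(i)}$. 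As $\Lambda$ has Loewy length $\geq 4$ it is not semisimple, so each $S_i$ is non-projective with $\underline{\mathrm{End}}(S_i)=K$; the stable isomorphism $\theta_{S_i}\colon S_i\xrightarrow{\sim}S_{\pi(i)}$ then forces $S_i\cong S_{\pi(i)}$ as modules, i.e. $\pi=\mathrm{id}$.

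Next I would reduce the whole statement to a coboundary condition on the arrows. Viewing $\Lambda$ as a finite dimensional, hence positively graded, basic, pseudo-Frobenius algebra with $e_i\Lambda_0 e_i\cong K$, Lemma~\ref{lem.criterion of inner automorphism} applied to $A=\Lambda$ with the trivial group $G=\{1\}$ tells us that the vertex-fixing automorphism $\sigma$ is inner if and only if there is a map $\lambda\colon Q_0\to K^{*}$ with $\sigma(a)=\lambda(i(a))^{-1}\lambda(t(a))\,a$ for every arrow $a$. Thus it suffices to prove that $\sigma$ multiplies each arrow by a scalar and that the resulting scalars are of coboundary type.

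The core step, and the place where the hypothesis Loewy length $\geq 4$ is indispensable, is to read these scalars off from $\theta$ through its naturality against \emph{honest} module homomorphisms. Right multiplication by an arrow $a\colon i\to j$ induces, for a suitable exponent $s$, a homomorphism $\bar\rho_a\colon \Lambda e_j/\mathrm{rad}^{s}\longrightarrow \Lambda e_i/\mathrm{rad}^{s+1}$ between non-projective truncations, and ${}_\sigma(\bar\rho_a)$ is right multiplication by $\sigma(a)$. Choosing these truncations (of radical length $2$ or $3$, available precisely because $\mathrm{rad}^{3}\neq 0$) to be bricks in the stable category makes the corresponding components of $\theta$ into scalars $\mu_{(\cdot)}$, and each naturality square ${}_\sigma(\bar\rho_a)\circ\theta=\theta\circ\bar\rho_a$ collapses to a relation of the shape $\sigma(a)=(\mu_V/\mu_U)\,a$. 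Running this over all arrows and composing along nonzero paths of length up to $3$ should then show simultaneously that $\sigma(a)=c_a a$ for scalars $c_a\in K^{*}$ and that $a\mapsto c_a$ is the coboundary of a vertex function $i\mapsto\mu_{U_i}^{-1}$; feeding this $\lambda$ into Lemma~\ref{lem.criterion of inner automorphism} finishes the proof.

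The main obstacle is exactly this extraction step. The scalars by which $\sigma$ twists the arrows are naturally stored in the comparison isomorphism ${}_\sigma\Omega\cong\Omega\,{}_\sigma$ between syzygies, which a general natural transformation $\theta$ need not respect: if one works instead with the syzygies $\Omega S_i$ and the identification $\mathrm{Ext}^1(S_i,S_j)\cong\underline{\mathrm{Hom}}(\Omega S_i,S_j)$, the syzygy shift absorbs the scalars and no information survives. This is not a defect of the method but a genuine feature, for the statement is false in low Loewy length: the automorphism $x\mapsto cx$ of $K[x]/(x^2)$ (and of $K[x]/(x^3)$) is stably inner but not inner when $c\neq 1$. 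The function of the hypothesis $\geq 4$ is precisely to guarantee enough non-projective truncated modules, of radical length up to $3$ and with trivial stable endomorphism ring, so that the arrows are realised by honest homomorphisms that remain nonzero in the stable category and the naturality of $\theta$ can detect the individual scalars.
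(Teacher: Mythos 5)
Your skeleton is largely the paper's: the easy implication, the observation that a graded stably inner automorphism must fix the vertices, and the reduction via Lemma \ref{lem.criterion of inner automorphism} (with $G=\{1\}$) to exhibiting a coboundary $\lambda:Q_0\to K^*$ all match. But the paper does not re-derive the scalar-extraction step that you attempt: it quotes \cite{IV}[Corollary 2.11] — whose hypothesis $\mathrm{Soc}^2(\Lambda)\subseteq J^2$ is exactly what Loewy length $\geq 4$ buys — to obtain $\lambda$ with $\varphi(a)-\lambda_{i(a)}^{-1}\lambda_{t(a)}a\in J^2$ for every arrow $a$, and then finishes by gradedness: the discrepancy lies in $J^2=\Lambda_{\geq 2}$ while both terms are homogeneous of degree $1$, so $\varphi=\chi_\lambda$ exactly. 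Your substitute for this citation has a genuine gap. The pivotal claim that the truncations $\Lambda e_i/\mathrm{rad}^s$ of radical length $2$ or $3$ can be chosen to be bricks in the stable category is false: over $\Lambda=K[x]/(x^4)$ (Loewy length $4$, so within the scope of the lemma) one has $\underline{\mathrm{End}}(\Lambda/J^2)=\mathrm{End}(\Lambda/J^2)\cong K[x]/(x^2)$, because every homomorphism $\Lambda/J^2\to\Lambda$ has image in $(x^2)$, which acts as zero on $\Lambda/J^2$, so no nonzero endomorphism factors through a projective. Hence $\theta_U$ need not be a scalar and your naturality square does not collapse as stated. The stable bricks actually available are the socle quotients $\Lambda e_i/\mathrm{soc}(\Lambda e_i)\cong\Omega^{-1}(\mathrm{soc}\,\Lambda e_i)$, bricks because $\Omega^{-1}$ is a stable equivalence and simples are stable bricks; this is the kind of test object the cited argument of \cite{IV} exploits. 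Two further steps are elided even granting bricks: an equality of stable maps determines $\sigma(a)$ only modulo elements whose multiplication map factors through a projective, and right multiplication by elements of $J^2$ is in general stably nonzero on your truncations, so you get at best $\sigma(a)\equiv(\mu_V/\mu_U)\,a\pmod{J^2}$, after which the graded bootstrap (which you omit) is still owed; and since your source and target test objects at a fixed vertex are different truncations, the scalars $\mu_{U_i}$ and $\mu_{V_i}$ at the same vertex are a priori unrelated — tying them together needs additional naturality squares, e.g.\ against the canonical projections $\Lambda e_i/\mathrm{rad}^3\to\Lambda e_i/\mathrm{rad}^2$ after checking these are stably nonzero. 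Your ``should then show'' sits exactly on this missing argument.

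Separately, your sharpness discussion is half wrong, in a way that shows the intuition about where the hypothesis bites is off. For $K[x]/(x^2)$ the twist $x\mapsto cx$ is indeed stably inner and not inner. But for $\Lambda=K[x]/(x^3)$ it is \emph{not} stably inner when $c\neq 1$: writing $S=K$ and $M=K[x]/(x^2)$, the spaces $\underline{\mathrm{Hom}}(S,{}_\sigma M)$ and $\underline{\mathrm{Hom}}(M,{}_\sigma S)$ are one-dimensional with no projectively-factoring corrections, any module map $\theta_M:M\to{}_\sigma M$ has the form $1\mapsto\alpha+\beta x$, $x\mapsto c\alpha x$, and naturality of a putative $\theta$ against the inclusion $S\to M$ ($t\mapsto x$) gives $c\alpha=\mu_S$ while naturality against the projection $M\to S$ gives $\alpha=\mu_S$, forcing $c=1$. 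So the role of the bound $\geq 4$ is not ``enough truncations of radical length up to $3$''; the mechanism, as in \cite{IV}, is the inclusion $\mathrm{Soc}^2(\Lambda)\subseteq J^2$, which fails in Loewy length $3$ but whose failure is not witnessed by $K[x]/(x^3)$.
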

\begin{proof}
Let $\varphi$ be a stably inner graded automorphism of $\Lambda$.
Let $\mathit{l}$ be the Loewy length of $\Lambda$. If $J=J(\Lambda
)=J^{gr}(\Lambda )$ is the Jacobson radical and
$\text{Soc}^n(\Lambda )=\text{Soc}^n_{gr}(\Lambda )$ is the
$n$-socle of $\Lambda$ (i.e. $\text{Soc}^0(\Lambda )=0$  and
$\text{Soc}^{n+1}(\Lambda )/\text{Soc}^n(\Lambda )$ is the socle of
$\Lambda /\text{Soc}^n(\Lambda )$, for all $n\geq 0$), then we have
$J^n=\text{Soc}^{l-n}(\Lambda )=\oplus_{k\geq n}\Lambda_k$, for all
$n\geq 0$.

We then have $\text{Soc}^2(\Lambda )\subseteq J^2$ since
$\mathit{l}\geq 4$. By Corollary 2.11 of \cite{IV}, we have a map
$\lambda :Q_0\longrightarrow K^*$ such that $\varphi
(a)-\lambda_{i(a)}^{-1}\lambda_{t(a)}a\in J(A)^2$, for all $a\in
Q_1$. If we define $\chi_\lambda :\Lambda\longrightarrow\Lambda$ as
in the proof of Lemma \ref{lem.criterion of inner automorphism}, we
get that $\chi_\lambda$ is an inner automorphism of $\Lambda$ such
that $(\varphi\circ\chi_\lambda ^{-1})(a)-a\in J(A)^2$, for all
$a\in Q_1$. But $\varphi\circ\chi_\lambda ^{-1}$ is a graded
automorphism since so are $\varphi$ and $\chi_\lambda$. It then
follows that $(\varphi\circ\chi_\lambda )(a)=a$, for all $a\in Q_1$,
which implies that $\varphi\circ\chi_\lambda =id_\Lambda$, and so
$\varphi =\chi_\lambda$ is inner.
\end{proof}

 Recall that $\Lambda$ is a \emph{Nakayama algebra}
if each left or right indecomposable projective $\Lambda$-module is
uniserial. We will need the following properties of self-injective
algebras of Loewy length $2$.

\begin{prop} \label{prop.2-nilpotent selfinjective algebras}
Let $\Lambda=KQ/KQ_{\geq 2}$ be  selfinjective algebra such  that
$J(\Lambda )^2=0$ and suppose that $\Lambda$ does not have any
semisimple summand as an algebra. The following assertions hold:

\begin{enumerate}
\item $\Lambda$ is a Nakayama algebra and $Q$ is a
disjoint union of oriented cycles, with relations all the paths of
length $2$.

\item $\Lambda$ is a finite direct product of $m$-fold mesh algebras of
Dynkin graph $\Delta=\mathbb{A}_2$.

\item A graded automorphism $\varphi$ of $\Lambda$  is stably inner if, and only if, it fixes the
vertices.

\item $\varphi$ is inner if, and only if, it  fixes the
vertices and there exists an acyclic character $\chi
:Q_1\longrightarrow K^*$ of $Q$ such that $\varphi (a)=\chi (a)a$, for each arrow $a\in Q_1$.

\item If the quiver $Q$ is connected with $n$ vertices (whence an oriented cycle with $Q_0=\mathbb{Z}_n$), then
$\Omega_{\Lambda^e}(\Lambda )$ is isomorphic to the
$\Lambda$-bimodule ${}_{\bar{\mu}}\Lambda_1$, where $\bar{\mu}$ is
the
 automorphism acting on vertices as the $n$-cycle $(12...n)$ and
 on arrows as $\bar{\mu}(a_i)=-a_{i+1}$, where $a_i:i\rightarrow
 i+1$ for each $i\in\mathbb{Z}_n$.
\end{enumerate}
\end{prop}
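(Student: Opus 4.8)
For assertions (1) and (2) the plan is to read off the shape of $Q$ from self-injectivity together with $J(\Lambda)^2=0$. Since the Loewy length is $2$, for each vertex $i$ one has $\operatorname{rad}(e_i\Lambda)=\operatorname{soc}(e_i\Lambda)$, and self-injectivity forces this socle to be simple; hence at most one arrow leaves $i$, and working with $\Lambda^{op}$ (i.e. with $\Lambda e_i$) at most one arrow enters $i$. The hypothesis that $\Lambda$ has no semisimple summand is exactly what removes the degenerate cases: if some $i$ had out-degree $0$ then $e_i\Lambda=S_i$ would be simple projective, hence also injective, and a short Nakayama-permutation argument shows the only arrow into $i$ could be a loop, which $e_iJ=0$ forbids, so $i$ would be isolated and $\{i\}$ a semisimple block, a contradiction. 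Thus every vertex has in-degree and out-degree $1$, $Q$ is a disjoint union of oriented cycles, $\Lambda=KQ/KQ_{\geq 2}$, and each block is uniserial, giving the Nakayama property. For (2), each connected block is then $KC_n/\operatorname{rad}^2$, a connected self-injective algebra of Loewy length $2$, which by the observation preceding Proposition \ref{prop.periods of Nakayama algebras} is an $m$-fold mesh algebra of Dynkin graph $\mathbb{A}_2$, and $\Lambda$ is the product of these blocks.

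For (3) I would reduce to a single cycle with arrows $a_i\colon i\rightarrow i+1$ ($i\in\mathbb{Z}_n$). A graded automorphism $\varphi$ induces a vertex permutation $\tilde\varphi$, and ${}_\varphi(S_i)\cong S_{\tilde\varphi^{-1}(i)}$. The only non-projective indecomposables are the simples $S_i$, and since the projectives have length $2$ no simple is a summand of a projective, so distinct simples remain non-isomorphic in $\Lambda-\underline{\mathrm{mod}}$; hence if $\varphi$ is stably inner then $\tilde\varphi=\mathrm{id}$, i.e. $\varphi$ fixes vertices. Conversely, if $\varphi$ fixes vertices then ${}_\varphi(S_i)\cong S_i$, and because $\underline{\operatorname{Hom}}(S_i,S_j)=\delta_{ij}K$ the autoequivalence ${}_\varphi(-)$ fixes every object up to isomorphism and acts as the identity on each one-dimensional stable endomorphism algebra; such a functor is naturally isomorphic to the identity, so $\varphi$ is stably inner.

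For (4), again on one cycle, an inner automorphism induces the identity on the commutative algebra $\Lambda/J\cong K^n$, so it fixes the vertices and $\varphi(a_i)=c_ia_i$ for scalars $c_i\in K^*$. Writing a unit as $u=\sum_i\lambda_ie_i+(\text{radical})$ and using $J^2=0$, a one-line computation gives $\iota_u(a_i)=\lambda_i\lambda_{i+1}^{-1}a_i$, so $c_i=\lambda_i\lambda_{i+1}^{-1}$ and $\prod_i c_i=1$; conversely any family with $\prod_i c_i=1$ arises by solving $c_i=\lambda_i\lambda_{i+1}^{-1}$ around the cycle (Lemma \ref{lem.criterion of inner automorphism} underlies this). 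I would then note that a scalar family $\chi=(c_i)$ is an acyclic character on $C_n$ exactly when $\prod_i c_i=1$ (compare the forward loop around the cycle with a backtracking walk), which is the same condition, yielding (4) block by block.

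For (5) I would use the start of the minimal projective bimodule resolution $Q^{-1}\xrightarrow{\delta}Q^0\xrightarrow{u}\Lambda\to 0$, with $u$ the multiplication and $\delta(e_i\otimes e_{i+1})=a_i\otimes e_{i+1}-e_i\otimes a_i$, so that $\Omega_{\Lambda^e}(\Lambda)=\ker u=\operatorname{im}\delta$. The dimension count $\dim\ker u=4n-2n=2n=\dim\Lambda$ suggests defining a bimodule map $\Phi\colon{}_{\bar\mu}\Lambda_1\to\ker u$ by $\Phi(e_j)=z_j:=a_{j-1}\otimes e_j-e_{j-1}\otimes a_{j-1}=\delta(e_{j-1}\otimes e_j)$, where $z_j\in e_{j-1}(\ker u)e_j$ matches $\bar\mu(i)=i+1$; right multiplication then forces $\Phi(a_j)=z_ja_j=a_{j-1}\otimes a_j$, and since the $z_j$ and $a_{j-1}\otimes a_j$ involve pairwise distinct basis tensors of $Q^0$ the map $\Phi$ is bijective. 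The real obstacle is checking $\Phi$ is a bimodule homomorphism for the twist $\bar\mu$ with the correct sign: the element $a_{k-1}\otimes a_k=\Phi(a_k)$ is reachable both as $z_k\cdot a_k$ (right action) and, via the twisted left action, as $a_{k-1}\cdot(-e_{k+1})$ applied through $z_{k+1}$, and consistency of these two expressions forces $\bar\mu(a_{k-1})=-a_k$, i.e. $\bar\mu(a_i)=-a_{i+1}$. Once this sign is pinned down, the remaining left/right-compatibility checks are routine (almost all vanish by $J^2=0$), giving the claimed isomorphism $\Omega_{\Lambda^e}(\Lambda)\cong{}_{\bar\mu}\Lambda_1$.
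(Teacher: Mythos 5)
Your proposal is correct and follows essentially the same route as the paper: assertion (3) via the observation that the stable category consists of simples with one-dimensional endomorphism algebras (so an equivalence fixing the simples is naturally isomorphic to the identity), and assertion (5) via the socle elements $x_i=a_i\otimes e_{i+1}-e_i\otimes a_i$ together with the sign relation $a_{i-1}x_i=-x_{i-1}a_i$, which is precisely the paper's identity $yx=x\bar{\mu}(y)$ for $x=\sum_i x_i$. The only differences are ones of self-containedness: where the paper dismisses (1) as folklore, derives (4) from \cite{GA-S}[Theorem 12], and asserts $\oplus_i\Lambda x_i=\Omega_{\Lambda^e}(\Lambda)=\oplus_i x_i\Lambda$ from the known bimodule resolution, you verify these directly (the unit computation $\iota_u(a_i)=\lambda_i\lambda_{i+1}^{-1}a_i$ using $J^2=0$ for (4), and the dimension count $\dim\ker u=4n-2n=2n$ plus distinctness of basis tensors for (5)) -- all of which is valid.
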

\begin{proof}
Assertion 1  is folklore. But
$\mathbb{A}_2^{(m)}=\mathbb{Z}\mathbb{A}_2/\langle\tau^m\rangle$ is the
connected Nakayama algebra of Loewy length $2$ with $2m$ vertices
while $\mathbb{L}_1^{(m)}=\mathbb{Z}\mathbb{A}_2/\langle\rho\tau^m\rangle$ is
the one with $2m-1$ vertices. Then assertion 2 is clear.

The only indecomposable objects in the stable category $\Lambda
-\underline{\text{mod}}$ are the simple modules, all of which have
endomorphism algebra isomorphic to $K$. It follows that each
additive self-equivalence $F:\Lambda
-\underline{\text{mod}}\stackrel{\cong}{\longrightarrow}\Lambda
-\underline{\text{mod}}$ such that $F(S)\cong S$, for each simple module
$S$, is naturally isomorphic to the identity. Since each
automorphism $\varphi$ of $\Lambda$ induces the self-equivalence
$F={}_\varphi (-)$, assertion 3 is clear.

Assertion 4 follows directly from \cite{GA-S}[Theorem 12], taking
into account that the only inner graded automorphism induced by an
element $1-x$, with $x\in J$, is the identity (see the proof of
Lemma \ref{lem.criterion of inner automorphism}).

Suppose now that $Q$ is connected and has $n$ vertices, so that
$\Lambda$ is either an $m$-fold mesh algebra of type $\mathbb{A}_2^{(m)}$,
and then $n=2m$, or $\mathbb{L}_1^{(m)}$, and then $n=2m-1$. By the
explicit definition of the minimal projective resolution of
$\Lambda$ as a bimodule (see \cite{D2}), we get that
$\Omega_{\Lambda^e}(\Lambda )$ is generated as a $\Lambda$-bimodule
by the elements $x_i=a_i\otimes e_{i+1}-e_i\otimes a_i$
($i\in\mathbb{Z}_n$). But we have $\oplus_{i\in\mathbb{Z}_n}\Lambda
x_i=\Omega_{\Lambda^e}(\Lambda
)=\oplus_{i\in\mathbb{Z}_n}x_i\Lambda$. Moreover, if $\bar{\mu}$ is
the automorphism mentioned in assertion 5 and
$x=\sum_{i\in\mathbb{Z}_n}x_i$, then we have $yx=x\bar{\mu}(y)$,
whenever $y$ is either a vertex or an arrow. It then follows that
the assignment $y\rightsquigarrow yx$ gives an isomorphism of
$\Lambda$-bimodules
$_1\Lambda_{\bar{\mu}^{-1}}\stackrel{\cong}{\longrightarrow}\Omega_{\Lambda^e}(\Lambda
)$.

\end{proof}

\subsection{The stable Calabi-Yau dimension of an $m$-fold mesh algebra}

In case $\Lambda$ is a self-injective algebra,  Auslander formula
(see \cite{ARS}, Chapter IV, Section 4) says that one has a natural
isomorphism $D\underline{Hom}_\Lambda (X,-)\cong Ext_\Lambda
^1(-,\tau X)$, where $\tau: \Lambda-\underline{\text{mod}}\longrightarrow
\Lambda-\underline{\text{mod}}$ is the Auslander-Reiten (AR) translation.
Moreover, $\tau= \Omega^2\mathcal{N}$, where $\mathcal{N}= D
Hom_{\Lambda}(-,\Lambda)\cong D(\Lambda)\otimes _{\Lambda}-:
\Lambda-\text{mod} \longrightarrow \Lambda-\text{mod}$ is the Nakayama
functor (see \cite{ARS}). As a consequence, as shown in \cite{ESk},
the stable category $A-\underline{\text{mod}}$ has CY-dimension $m$ if and
only if $m$ is the smallest natural number such that
$\Omega_\Lambda^{-m-1}\cong \mathcal{N}\cong
{}_{\bar{\eta}^{-1}}(-)$ (equivalently, $\Omega_\Lambda^{m+1}\cong
{}_{\bar{\eta}}(-)$) as triangulated functors
$\Lambda-\underline{\text{mod}}\longrightarrow
\Lambda-\underline{\text{mod}}$, where $\bar{\eta}$ is the Nakayama
automorphism of $\Lambda$. We shall say that $\Lambda$ is
\emph{stably Calabi-Yau} when $\Lambda -\underline{\text{mod}}$ is a
Calabi-Yau triangulated category. The minimal number $m$ mentioned
above will be then called the \emph{stable Calabi-Yau dimension} of
$\Lambda$ and denoted $CY-\text{dim}(\Lambda )$.

Due to the fact the functor $\Omega_\Lambda^d:\Lambda
-\underline{\text{mod}}\longrightarrow \Lambda -\underline{\text{mod}}$ is
naturally isomorphic to the functor
$\Omega_{\Lambda^e}^d(\Lambda)\otimes_\Lambda -$, for all integers
$d$,  a sufficient condition for $\Lambda$ to be stably Calabi-Yau is that $\Omega_{\Lambda^e}^{d+1}(\Lambda )\cong
{}_{\bar{\eta}}\Lambda_1$ as $\Lambda$-bimodules. An algebra
satisfying this last condition is called \emph{Calabi-Yau Frobenius}
in \cite{ES} and the minimal $d$ satisfying this property is called
the \emph{Calabi-Yau Frobenius dimension} of $\Lambda$. We will
denote it here by $CYF-\text{dim}(\Lambda )$. We always have
$CY-\text{dim}(\Lambda )\leq CYF-\text{dim}(\Lambda )$, but, in
general, it is not known if the equality holds. We  discuss now this
problem for $m$-fold mesh algebras.

Note that, by \cite{IV}[Theorem 1.8], the functor
$\Omega_{\Lambda}^{k+1}:\Lambda-\underline{\text{mod}}\longrightarrow\Lambda-\underline{\text{mod}}$
is naturally isomorphic to
${}_{\bar{\eta}}(-):\Lambda-\underline{\text{mod}}\longrightarrow\Lambda-\underline{\text{mod}}$
if, and only if, $\Omega_{\Lambda^e}^{k+1}(\Lambda )$ and
${}_{\varphi\bar{\eta}}\Lambda_1$ are isomorphic
$\Lambda$-bimodules, for some stably inner automorphism $\varphi$ of
$\Lambda$.

We are now able to calculate the stable and Frobenius Calabi-Yau
dimension of self-injective algebras of Loewy length $2$.

\begin{prop} \label{prop.CY-dim and CYF-dim of 2-nilpotent algebras}
Let $\Lambda$ be a connected  self-injective algebra of Loewy length
$2$. Then $\Lambda$ is always a stably Calabi-Yau algebra and the
following equalities hold:

\begin{enumerate}
\item If $\text{char}(K)=2$ or $\Lambda =\mathbb{A}_2^{(m)}$, i.e. $|Q_0|$ is even,  then $CY-\text{dim}(\Lambda )=CYF-\text{dim}(\Lambda
)=0$. \item If $\text{char}(K)\neq 2$ and  $\Lambda
=\mathbb{L}_1^{(m)}$, i.e., $|Q_0|$ odd, then $CY-\text{dim}(\Lambda
)=0$ and $CYF-\text{dim}(\Lambda )=2m-1=|Q_0|$.
\end{enumerate}
\end{prop}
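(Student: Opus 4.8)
The plan is to analyze the two claimed equalities separately, treating the stable Calabi-Yau dimension and the Calabi-Yau Frobenius dimension in parallel but recognizing that they require different tools. The key structural input is Proposition \ref{prop.2-nilpotent selfinjective algebras}, which tells us that a connected self-injective algebra $\Lambda$ of Loewy length $2$ is either $\mathbb{A}_2^{(m)}$ (with $|Q_0|=2m$ even) or $\mathbb{L}_1^{(m)}$ (with $|Q_0|=2m-1$ odd), that $\Omega_{\Lambda^e}(\Lambda )\cong {}_{\bar{\mu}}\Lambda_1$ with $\bar{\mu}$ acting as the $n$-cycle on vertices and by $a_i\rightsquigarrow -a_{i+1}$ on arrows, and that a graded automorphism is stably inner if and only if it fixes the vertices. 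I would first record the Nakayama automorphism $\bar{\eta}$ explicitly in this setting: by Theorem \ref{teor.G-invariant Nakayama automorphism} (case $\Delta=\mathbb{A}_2$) we have $\eta=\nu$, so $\bar{\eta}=\bar{\nu}$ acts on vertices as the Nakayama permutation and by $\bar{\eta}(a_i)=\bar{\nu}(a_i)$ on arrows.

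For the stable dimension, the criterion recalled before the proposition is that $CY\text{-}\dim(\Lambda )=0$ exactly when $\Omega_\Lambda^{1}\cong {}_{\bar{\eta}}(-)$ as triangulated functors on $\Lambda-\underline{\text{mod}}$, and by \cite{IV}[Theorem 1.8] this holds precisely when $\Omega_{\Lambda^e}^{1}(\Lambda )\cong {}_{\varphi\bar{\eta}}\Lambda_1$ for some stably inner $\varphi$. So the plan is to compare $\Omega_{\Lambda^e}(\Lambda )\cong {}_{\bar{\mu}}\Lambda_1$ with ${}_{\varphi\bar{\eta}}\Lambda_1$ and exhibit a stably inner $\varphi$ making $\varphi\bar{\eta}=\bar{\mu}$ up to inner automorphism; equivalently, I would check that $\bar{\mu}\circ\bar{\eta}^{-1}$ is stably inner, i.e. (by part 3 of Proposition \ref{prop.2-nilpotent selfinjective algebras}) that it fixes the vertices. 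Since $\bar{\mu}$ and $\bar{\eta}=\bar{\nu}$ both act on vertices as the same $n$-cycle (both equal $\bar{\tau}^{-1}$ composed with the identity vertex-permutation in the $\mathbb{A}_2$ case — one should verify via Proposition \ref{prop:Coxeter-Nakayama} that $\nu$ and the cycle $\bar{\mu}$ induce the same vertex map for $\mathbb{A}_2$), the composite $\bar{\mu}\bar{\eta}^{-1}$ fixes all vertices, hence is stably inner. This yields $CY\text{-}\dim(\Lambda )=0$ in all cases, establishing the first half of each equality.

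For the Frobenius dimension, the plan is to determine the smallest $d\geq 0$ with $\Omega_{\Lambda^e}^{d+1}(\Lambda )\cong {}_{\bar{\eta}}\Lambda_1$ as honest bimodules. Using $\Omega_{\Lambda^e}^{d+1}(\Lambda )\cong {}_{\bar{\mu}^{d+1}}\Lambda_1$, this is the smallest $d$ for which $\bar{\mu}^{d+1}$ and $\bar{\eta}$ differ by an inner automorphism. By part 4 of Proposition \ref{prop.2-nilpotent selfinjective algebras}, an automorphism fixing the vertices is inner exactly when it is given by an acyclic character; so I would compute $\bar{\mu}^{d+1}\bar{\eta}^{-1}$, require it to fix vertices (forcing $d+1\equiv 0 \pmod{n}$ since $\bar{\mu}$ acts as an $n$-cycle and $\bar{\eta}$ fixes vertices only after the same cyclic shift), and then analyze whether the resulting scalar automorphism on arrows is an inner (acyclic-character) one. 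When $\mathrm{char}(K)=2$ or $n=2m$ is even, the sign factors $(-1)^{d+1}$ accumulated by $\bar{\mu}^{d+1}$ on arrows can be absorbed, giving $d=0$; when $\mathrm{char}(K)\neq 2$ and $n=2m-1$ is odd, the obstruction is precisely a sign around the cycle that cannot be trivialized by an acyclic character, forcing the exponent up to $2n=2(2m-1)$, i.e. $d=2m-1$. The main obstacle I anticipate is this last step: carefully tracking the product of signs $\prod_i(\pm 1)$ around the odd cycle and showing it equals $-1$ (hence not realizable by an acyclic character) when $d+1=n$ but becomes $+1$ when $d+1=2n$. This sign computation, combined with the acyclic-character criterion of part 4, is what distinguishes $CYF\text{-}\dim=0$ from $CYF\text{-}\dim=2m-1$ and is the crux of the proof.
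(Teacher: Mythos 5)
Your overall strategy is exactly the paper's: reduce everything to Proposition \ref{prop.2-nilpotent selfinjective algebras} (namely $\Omega_{\Lambda^e}^{k+1}(\Lambda)\cong{}_{\bar{\mu}^{k+1}}\Lambda_1$, ``stably inner $=$ vertex-fixing'' from part 3, and the acyclic-character criterion for innerness from part 4), identify $\bar{\eta}=\bar{\nu}=\bar{\rho}$ as the one-step shift $i\rightsquigarrow i+1$, $a_i\rightsquigarrow a_{i+1}$ on $Q_0=\mathbb{Z}_n$, observe that $\bar{\mu}\bar{\eta}^{-1}$ fixes the vertices to get $CY\text{-dim}(\Lambda)=0$, and then decide innerness of $\bar{\mu}^{k+1}\bar{\eta}^{-1}$ by a sign count around the cycle. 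The $CY$-dimension half of your argument is correct and coincides with the paper's.

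However, the $CYF$ half contains a concrete arithmetic error that, followed literally, yields the wrong answer. Since $\bar{\mu}^{d+1}$ shifts vertices by $d+1$ and $\bar{\eta}$ shifts by $1$, the composite $\bar{\mu}^{d+1}\bar{\eta}^{-1}$ shifts vertices by $d$, so vertex-fixing forces $d\in n\mathbb{Z}$ --- not $d+1\equiv 0\pmod{n}$ as you wrote (indeed your own $CY$-dim step, where $\bar{\mu}\bar{\eta}^{-1}$ fixes vertices at $d=0$, already contradicts that congruence). On the vertex-fixing locus, $\bar{\mu}^{d+1}\bar{\eta}^{-1}$ multiplies every arrow by the constant $(-1)^{d+1}$, and by part 4 this is inner precisely when $\prod_{i}(-1)^{d+1}=(-1)^{(d+1)n}=1$. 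Hence for $\text{char}(K)\neq 2$ and $n=2m-1$ odd the obstruction bites at $d=0$ (sign $(-1)^n=-1$) and is first removed at $d=n=2m-1$, where $d+1=2m$ is even; the crux sign comparison is therefore between $d+1=1$ and $d+1=n+1$, not between $d+1=n$ and $d+1=2n$ as you describe. Your stated plan --- candidates $d+1\in n\mathbb{Z}$ with the sign failing at $d+1=n$ and succeeding at $d+1=2n$ --- would output $d=2n-1=4m-3$, inconsistent with the value $2m-1$ you then assert. The final answers you state are the right ones, but the derivation as written does not produce them; correcting the shift bookkeeping ($d\in n\mathbb{Z}$ plus the parity condition on $d+1$) turns your argument into precisely the proof given in the paper.
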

\begin{proof}
By  Proposition \ref{prop.2-nilpotent selfinjective algebras}, we
know that $\Omega_{\Lambda^e}^{k+1}(\Lambda)$ is isomorphic to
${}_{\bar{\mu}^{k+1}}\Lambda_1$, for each $k\geq 0$. Then
$CY-\text{dim}(\Lambda )$ is the smallest of the natural numbers $k$
such that $\bar{\mu}^{k+1}\bar{\eta}^{-1}$ is stably inner, which is
equivalent to saying that $\bar{\mu}^{k+1}\bar{\eta}^{-1}$ fixes the
vertices. Similarly, $CYF-\text{dim}(\Lambda )$ is the smallest of
the $k$ such that $\bar{\mu}^{k+1}\bar{\eta}^{-1}$ is inner. Due to
the fact that $\Lambda$ is an m-fold mesh algebra of type
$\mathbb{A}_2$, a (graded) Nakayama automorphism of $\Lambda$ is
$\bar{\nu} =\bar{\rho}\bar{\tau}^{1-1}=\bar{\rho}$ (see Theorem \ref{teor.G-invariant
Nakayama automorphism} and Proposition \ref{prop:Coxeter-Nakayama}).
It follows that the graded Nakayama automorphism   $\bar{\eta}$ of
$\Lambda$ maps $i\rightsquigarrow i+1$ and $a_i\rightsquigarrow
a_{i+1}$, when we identify $Q_0=\mathbb{Z}_n$. It follows that
$\bar{\mu}\bar{\eta}^{-1}$ fixes the vertices and, hence, it is
stably inner. This shows that $CY-\text{dim}(\Lambda )=0$.

More generally, $\bar{\mu}^{k+1}\bar{\eta}^{-1}$ fixes the vertices
if, and only if, $i+k+1\equiv i+1\text{ (mod n)}$, for each
$i\in\mathbb{Z}_n$. That is, if and only if  $k\in n\mathbb{Z}$.
Suppose that this property holds and consider the map $\chi
:Q_1\longrightarrow K^*$ taking constant value $(-1)^{k+1}$. We
clearly have $\bar{\mu}^{k+1}\bar{\eta}^{-1}(a_i)=(-1)^{k+1}a_i=\chi
(a_i)a_i$, for each $i\in\mathbb{Z}_n$. But $\chi$ is an acyclic
character if, and only if, either $\text{char}(K)=2$ or
$\prod_{1\leq i\leq n}\chi (a_i)=(-1)^{(k+1)n}$ is equal to $1$. So,
when $\text{char}(K)=2$, the automorphism
$\bar{\mu}^{k+1}\bar{\eta}^{-1}$ is inner for any value of $k$. In
particular, $CYF-\text{dim}(\Lambda )=0$ in such case.

Suppose that $\text{char}(K)\neq 2$. By Proposition
\ref{prop.2-nilpotent selfinjective algebras}, we get that
$\bar{\mu}^{k+1}\bar{\eta}^{-1}$ is an inner automorphism if, and
only if,  $(k+1)n$ is even. This is always the case when $n$ is
even, and in such case $CYF-\text{dim}(\Lambda )=0$. If $n=2m-1$ is
odd then $k+1$ should be even and the smallest $k\in n\mathbb{Z}$
satisfying this property is $k=n$. Then $CYF-\text{dim}(\Lambda
)=n=2m-1$ in this case.

\end{proof}

We also have:

\begin{prop} \label{prop.CY-dimension-versus-Eu-Schedler}
Let $\Lambda$ be an $m$-fold mesh algebra of Dynkin type $\Delta$
different from $\mathbb{A}_r$, for $r=1,2,3$. Then $\Lambda$ is
stably Calabi-Yau if, and only if, it is Calabi-Yau Frobenius. In
such case the equality $CY-\text{dim}(\Lambda
)=CYF-\text{dim}(\Lambda )$ holds.
\end{prop}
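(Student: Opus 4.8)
For an $m$-fold mesh algebra $\Lambda$ of Dynkin type $\Delta \neq \mathbb{A}_r$ ($r=1,2,3$), $\Lambda$ is stably Calabi-Yau iff it is Calabi-Yau Frobenius, and then $CY\text{-dim}(\Lambda) = CYF\text{-dim}(\Lambda)$.

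Let me think about how to prove this.

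**Setup.** The key distinction between the two conditions is:
- $\Lambda$ is Calabi-Yau Frobenius with $CYF\text{-dim} = d$ iff $d$ is the smallest natural number with $\Omega_{\Lambda^e}^{d+1}(\Lambda) \cong {}_{\bar\eta}\Lambda_1$ as **bimodules**.
- $\Lambda$ is stably Calabi-Yau with $CY\text{-dim} = m$ iff $m$ is the smallest with $\Omega_\Lambda^{m+1} \cong {}_{\bar\eta}(-)$ as **triangulated functors** on $\Lambda\text{-}\underline{\text{mod}}$.

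We always have $CY\text{-dim} \leq CYF\text{-dim}$. So the content is to show that when $\Lambda$ is stably CY, the weaker functorial condition already forces the bimodule condition.

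**The crucial bridge.** The excerpt gives us exactly the tool we need:
- By [IV, Theorem 1.8] (quoted in the excerpt): $\Omega_\Lambda^{k+1} \cong {}_{\bar\eta}(-)$ as functors iff $\Omega_{\Lambda^e}^{k+1}(\Lambda) \cong {}_{\varphi\bar\eta}\Lambda_1$ as bimodules for **some stably inner automorphism $\varphi$**.
- Lemma \ref{lem.inner-versus-stably inner}: for a graded self-injective $KQ/I$ of Loewy length $\geq 4$, a **graded** automorphism is inner iff stably inner.

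**Why $\Delta \neq \mathbb{A}_r$, $r=1,2,3$ matters.** The Loewy length of $\Lambda$ equals $c_\Delta - 1$ (length of longest nonzero path + 1), using Prop \ref{prop:Coxeter-Nakayama}(1). We need Loewy length $\geq 4$, i.e. $c_\Delta \geq 5$. The Coxeter numbers: $\mathbb{A}_1$ gives $c=2$, $\mathbb{A}_2$ gives $c=3$, $\mathbb{A}_3$ gives $c=4$; all larger Dynkin types give $c_\Delta \geq 5$. So excluding $\mathbb{A}_1, \mathbb{A}_2, \mathbb{A}_3$ is exactly what guarantees Loewy length $\geq 4$. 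Good — the hypothesis is tailored precisely to make Lemma \ref{lem.inner-versus-stably inner} applicable.

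**Main argument.** Suppose $\Lambda$ is stably CY with $CY\text{-dim} = d$. By [IV, Thm 1.8], $\Omega_{\Lambda^e}^{d+1}(\Lambda) \cong {}_{\varphi\bar\eta}\Lambda_1$ for some stably inner $\varphi$. The issue: is $\varphi$ **graded**? By Cor \ref{cor. mu for Lambda}, $\Omega_{\Lambda^e}^{3}(\Lambda) \cong {}_{\bar\mu}\Lambda_1[-c_\Delta]$, and iterating, $\Omega_{\Lambda^e}^{3k}(\Lambda)$ is a graded twisted bimodule by $\bar\mu^k$ (with a degree shift). More generally, since the minimal bimodule resolution is periodic of period dividing some multiple of 3 and is a resolution by **graded** bimodules, each $\Omega_{\Lambda^e}^{r}(\Lambda)$ is isomorphic as a **graded** bimodule to $\Lambda$ twisted by a graded automorphism (whenever it is invertible/a twist). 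In particular ${}_{\varphi\bar\eta}\Lambda_1$ must be gradable, and since $\bar\eta$ is graded (Remark \ref{rem. G-invarinat basis}), the composite $\varphi\bar\eta$ — hence $\varphi$ — can be taken graded. Then $\varphi$ is a graded stably inner automorphism, so by Lemma \ref{lem.inner-versus-stably inner} it is inner. Absorbing the inner automorphism, $\Omega_{\Lambda^e}^{d+1}(\Lambda) \cong {}_{\bar\eta}\Lambda_1$, which says $\Lambda$ is Calabi-Yau Frobenius with $CYF\text{-dim} \leq d$. Combined with $CY\text{-dim} \leq CYF\text{-dim}$, equality follows.

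**The main obstacle** is the gradability step: showing $\varphi$ (or $\varphi\bar\eta$) may be chosen as a *graded* automorphism, so that Lemma \ref{lem.inner-versus-stably inner} applies. The functorial isomorphism from [IV] only gives an abstract (ungraded) stably inner $\varphi$. I would resolve this by noting that $\Omega_{\Lambda^e}^{d+1}(\Lambda)$ is obtained from the explicitly graded minimal bimodule resolution (Prop \ref{prop.automorphism mu general}, Cor \ref{cor. mu for Lambda}), so it carries a canonical grading and is isomorphic, **as a graded bimodule**, to ${}_{\psi}\Lambda_1[s]$ for an explicit graded automorphism $\psi = \bar\mu^{(d+1)/3}$ (when $3 \mid d+1$; the dimension/periodicity Lemma \ref{lem.commutativity of eta y tau} forces $3 \mid d+1$ since only these syzygies are bimodule-twists). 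Comparing the two bimodule descriptions ${}_\psi\Lambda_1[s] \cong {}_{\varphi\bar\eta}\Lambda_1$ forces $\varphi\bar\eta$ and $\psi$ to agree up to a graded inner automorphism, making $\varphi = \psi\bar\eta^{-1}\cdot(\text{inner})$ graded. The remaining routine verification is that $\varphi$ so produced is genuinely stably inner and graded, after which Lemma \ref{lem.inner-versus-stably inner} closes the argument.
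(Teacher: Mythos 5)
Your proposal is correct and takes essentially the same route as the paper's proof: both use Lemma \ref{lem.commutativity of eta y tau} to force $k+1\in 3\mathbb{Z}$, compare the \cite{IV}[Theorem 1.8] bimodule description ${}_{\varphi\bar{\eta}}\Lambda_1$ with $\Omega_{\Lambda^e}^{3s}(\Lambda)\cong {}_{\bar{\mu}^s}\Lambda_1$ to conclude that the stably inner (resp.\ inner) condition on $\varphi$ is equivalent to $\bar{\mu}^s\bar{\eta}^{-1}$ being stably inner (resp.\ inner), and close with Lemma \ref{lem.inner-versus-stably inner}, noting as you do that excluding $\mathbb{A}_1,\mathbb{A}_2,\mathbb{A}_3$ is exactly the condition Loewy length $\geq 4$. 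Your ``gradability of $\varphi$'' discussion is precisely the paper's step that $\varphi$ agrees with the graded automorphism $\bar{\mu}^s\bar{\eta}^{-1}$ up to an inner automorphism, to which the lemma is then applied.
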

\begin{proof}
By Corollary \ref{cor.simple socle of a projective}, we know that
the Loewy length of $\Lambda$ is $c_\Delta -1$, where $c_\Delta$ is
the Coxeter number.  The Dynkin graphs $\Delta =\mathbb{A}_r$, with
$r=1,2,3$, are the only ones for which $c_\Delta -1\leq 3$. So
$\Lambda$ has Loewy length $\geq 4$ in our case. Note that if
$\Omega_{\Lambda^e}^{k+1}(\Lambda )$ is isomorphic to a twisted
bimodule $_\varphi\Lambda _1$, then we have
$\text{dim}(\Omega_{\Lambda^e}^{k+1}(\Lambda ))=\text{dim}(\Lambda
)$. By Lemma \ref{lem.commutativity of eta y tau}, we know that then
$k+1\in 3\mathbb{Z}$.

If there is a $k$ such that $\Omega_{\Lambda^e}^{k+1}(\Lambda )\cong
{}_{\varphi\bar{\eta}}\Lambda_1$, for some inner or stably inner
automorphism $\varphi$, then $k=3s-1$, for some integer $s>0$. But
we know that $\Omega_{\Lambda^e}^{3}(\Lambda )\cong
{}_{\bar{\mu}}\Lambda_1$, where $\bar{\mu}$ is a graded automorphism
of $\Lambda$. We then have that $\Omega_{\Lambda^e}^{3s}(\Lambda
)\cong{}_{\varphi\bar{\eta}}\Lambda_1$, for some stably inner (resp.
inner) automorphism $\varphi$ if, and only if,
$\bar{\mu}^s\bar{\eta}^{-1}$ is a stably inner (resp. inner)
automorphism of $\Lambda$.
 The proof is finished using Lemma \ref{lem.inner-versus-stably
inner} since $\bar{\mu}^s\bar{\eta}^{-1}$ is a graded automorphism.
\end{proof}

The proof of last proposition shows that if $\Lambda$ is not of type
$\mathbb{A}_r$ ($r=1,2$), then  the algebra $\Lambda$ will be stably
Calabi-Yau (resp. Calabi-Yau Frobenius) if, and only if, there
exists an integer $s>0$ such that $\bar{\mu}^{s}\bar{\eta}^{-1}$ is
stably inner (resp. inner). A necessary condition for this is that
$\bar{\mu}^{s}\bar{\eta}^{-1}$ fixes the vertices. So, as a first
step to characterize the stably Calabi-Yau (resp. Calabi-Yau
Frobenius) condition of $\Lambda$, we shall identify the positive
integers $s$ such that $\bar{\mu}^s$ and $\bar{\eta}$ have the same
action on vertices.

\begin{defi} \label{defi.Calabi-Yau number}
Let $\Lambda$ be an $m$-fold mesh algebra of type
$\Delta\neq\mathbb{A}_1,\mathbb{A}_2$,  with quiver $Q$. We will
define the following sets of positive integers:

\begin{enumerate}
\item $\mathbb{N}_{CY}(\Lambda )$ consists of the integers $s>0$
such that $\bar{\mu}^s$ and $\bar{\eta}$ have the same action on
vertices. \item  $\hat{\mathbb{N}}_{CY}(\Lambda )$ consists of the
integers $s>0$ such that $\bar{\mu}^s\bar{\eta}^{-1}$ is an inner
automorphism. Equivalently, it is the set of integers $s>0$ such
that $\Omega_{\Lambda^e}^{3s}(\Lambda)$ is isomorphic to
${}_{\bar{\eta}}\Lambda_1$ as a $\Lambda$-bimodule.
\end{enumerate}
\end{defi}

\begin{rem} \label{rem.CY-dim versus CYF-dim}
Under the hypotheses of last definition, we clearly have
$\hat{\mathbb{N}}_{CY}(\Lambda )\subseteq \mathbb{N}_{CY}(\Lambda
)$. Moreover $\Lambda$ is Calabi-Yau Frobenius if, and only if,
$\hat{\mathbb{N}}_{CY}(\Lambda )\neq\emptyset$. In this latter case
we have $CYF-\text{dim}(\Lambda )=3r-1$, where
$r=\text{min}(\hat{\mathbb{N}}_{CY}(\Lambda ))$, and this number is
equal to $CY-\text{dim}(\Lambda)$ when $\Delta\neq\mathbb{A}_3$.
Note also that if $\hat{\mathbb{N}}_{CY}(\Lambda )=
\mathbb{N}_{CY}(\Lambda )\neq\emptyset$ then $CY-\text{dim}(\Lambda
)=CYF-\text{dim}(\Lambda )$ since the fact that
$\bar{\mu}^{s}\bar{\eta}^{-1}$ be stably inner implies that $s\in
\mathbb{N}_{CY}(\Lambda )$.
\end{rem}

We first identify $\mathbb{N}_{CY}(\Lambda )$ for any $m$-fold mesh
algebra of Loewy length $>2$.

\begin{prop} \label{prop.Calabi-Yau set of numbers}
Let $\Lambda$ be an $m$-fold mesh algebra of extended type $(\Delta
,m,t)$, where $\Delta\neq\mathbb{A}_1,\mathbb{A}_2$. The following
assertions hold:

\begin{enumerate}
\item When $t=1$,  the set $\mathbb{N}_{CY}(\Lambda )$ is nonempty
if, and only if, the following condition is true in each case:

\begin{enumerate}
\item $\text{gcd}(m,c_\Delta )=1$, when $\Delta$ is $\mathbb{A}_{r}$, $\mathbb{D}_{2r-1}$ or
$\mathbb{E}_{6}$. Then $\mathbb{N}_{CY}(\Lambda )=\{s=2s'+1: s'\geq 0 \text{ and }$
$c_\Delta s'\equiv -1\text{ (mod $m$)}\}$

\item $\text{gcd}(m,\frac{c_\Delta}{2})=1$, when $\Delta$ is
$\mathbb{D}_{2r}$, $\mathbb{E}_{7}$ or $\mathbb{E}_{8}$. Then
$\mathbb{N}_{CY}(\Lambda )=\{s>0:$ $\frac{c_\Delta}{2}(s-1)\equiv
-1\text{ (mod $m$)}\}$.
\end{enumerate}

\item When $t=2$, the set $\mathbb{N}_{CY}(\Lambda )$ is nonempty
if, and only if, the following condition is true in each case:

\begin{enumerate}
\item $\text{gcd}(2m,m+\frac{c_\Delta}{2})=1$, when $\Delta$ is
$\mathbb{A}_{2n-1}$, $\mathbb{D}_{2r-1}$ or $\mathbb{E}_{6}$. Then
$\mathbb{N}_{CY}(\Lambda )=\{s>0:$
$(m+\frac{c_\Delta}{2})(s-1)\equiv -1\text{ (mod 2m)}\}$, and this
set consists of even numbers.

\item
$\text{gcd}(m,\frac{c_\Delta}{2})=\text{gcd}(m,,2r-1)=1$, when
$\Delta =\mathbb{D}_{2r}$. Then $\mathbb{N}_{CY}(\Lambda )=\{s>0:$
$(2r-1)(s-1)\equiv -1\text{ (mod $2m$)}\}$ and this set consists of
even numbers.

\item $\text{gcd}(2m-1,2n+1)=1$, when $\Delta=\mathbb{A}_{2n}$. Then
$\mathbb{N}_{CY}(\Lambda )=\{s>0:$ $(m+n)(s-1)\equiv -1\text{ (mod
2m-1)}\}$.

\end{enumerate}

\item If $t=3$ (and hence $\Delta =\mathbb{D}_4$), then
$\mathbb{N}_{CY}(\Lambda )=\emptyset$.
\end{enumerate}
\end{prop}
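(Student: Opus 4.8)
The plan is to reduce the vertex-action condition defining $\mathbb{N}_{CY}(\Lambda)$ to a membership question in the group $G$, exactly in the spirit of Lemma \ref{lem.candidatos periodo de Lambda} but shifted by one factor of $\bar{\nu}$. By Corollary \ref{cor. mu for Lambda}, the automorphism $\bar{\mu}$ is induced by $\mu\in\{\kappa\eta\tau^{-1},\,\eta\tau^{-1}\vartheta,\,\eta\tau^{-1}\}$; since $\kappa$ and $\vartheta$ fix all vertices and $\eta$ acts on vertices as the Nakayama permutation $\nu$, in every case $\bar{\mu}$ acts on vertices as $\bar{\nu}\bar{\tau}^{-1}$, while $\bar{\eta}$ acts as $\bar{\nu}$. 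Hence $s\in\mathbb{N}_{CY}(\Lambda)$ if and only if $(\bar{\nu}\bar{\tau}^{-1})^s=\bar{\nu}$ on vertices, that is, $\bar{\nu}^{\,s-1}\bar{\tau}^{-s}=\mathrm{id}$ on vertices. First I would invoke the argument in the opening paragraph of the proof of Theorem \ref{teor.weakly symmetric m-fold algebras}: since $\nu,\tau\in\hat{G}=\langle\rho,\tau\rangle$ and $\hat{G}$ acts freely on the vertices not fixed by $\rho$, an element of $\hat{G}$ induces the identity on vertices of $\Lambda=\mathbb{Z}\Delta/G$ precisely when it lies in $G$. Thus $s\in\mathbb{N}_{CY}(\Lambda)$ if and only if $\nu^{s-1}\tau^{-s}\in G$.

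Next I would carry out the case analysis using the explicit value $\nu=\rho^{k}\tau^{1-n}$ from Proposition \ref{prop:Coxeter-Nakayama}, where $k\in\{0,1\}$, with $n=\frac{c_\Delta}{2}$ when $c_\Delta$ is even and $n=\frac{c_\Delta-1}{2}$ when $\Delta=\mathbb{A}_{2n}$. Writing $\nu^{s-1}\tau^{-s}=\rho^{k(s-1)}\tau^{-(n-1)(s-1)-s}$ and simplifying $(n-1)(s-1)+s=n(s-1)+1$, the membership $\nu^{s-1}\tau^{-s}\in G$ becomes a congruence. For $t=1$ (so $G=\langle\tau^m\rangle$), when $k=0$ (types $\mathbb{D}_{2r},\mathbb{E}_7,\mathbb{E}_8$) this reads $\frac{c_\Delta}{2}(s-1)\equiv-1\pmod m$, giving 1(b); when $k=1$ with $\rho$ of order $2$ (types $\mathbb{A}_{2n-1},\mathbb{D}_{2r-1},\mathbb{E}_6$) the factor $\rho^{s-1}$ forces $s$ odd and yields $c_\Delta s'\equiv-1\pmod m$ with $s=2s'+1$, giving 1(a). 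For $t=2$ (so $G=\langle\rho\tau^m\rangle$), matching both the $\tau$-exponent and the parity of the $\rho$-exponent collapses the two constraints into a single congruence modulo $2m$, namely $(m+\frac{c_\Delta}{2})(s-1)\equiv-1\pmod{2m}$ in case 2(a) and $(2r-1)(s-1)\equiv-1\pmod{2m}$ in case 2(b); in both, reduction modulo $2$ forces $s$ even. For $t=3$ ($\Delta=\mathbb{D}_4$, $\nu=\tau^{-2}$) the relevant exponent is $2-3s$, which is never divisible by $3$, so no power of $\rho\tau^m$ lying in $\langle\tau\rangle$ can match it, giving $\mathbb{N}_{CY}(\Lambda)=\emptyset$.

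The delicate case, which I expect to be the main obstacle, is $\Delta=\mathbb{A}_{2n}$ with $t=2$ (the algebra $\mathbb{L}_n^{(m)}$), where $\rho^2=\tau^{-1}$, so $\rho$ has infinite order and the elements of $G=\langle\rho\tau^m\rangle$ split into the two families $\tau^{(2m-1)j'}$ and $\rho\tau^{-j'+m(2j'+1)}$ according to the parity of the exponent. Here I would split according to the parity of $s-1$: the even case places $\nu^{s-1}\tau^{-s}$ in the first family and the odd case in the second, each subcase producing a divisibility by $2m-1$. The key simplification is the observation that $2(m+n)\equiv 2n+1=c_\Delta\pmod{2m-1}$, which shows that both subcases are subsumed by the single congruence $(m+n)(s-1)\equiv-1\pmod{2m-1}$ of 2(c). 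Finally, the nonemptiness criteria follow because $a(s-1)\equiv-1\pmod M$ is solvable exactly when $\gcd(a,M)=1$; translating each congruence back gives $\gcd(m,c_\Delta)=1$, $\gcd(m,\frac{c_\Delta}{2})=1$, $\gcd(2m,m+\frac{c_\Delta}{2})=1$, and $\gcd(2m-1,2n+1)=1$ respectively, where for 2(c) one uses $\gcd(m+n,2m-1)=\gcd(2n+1,2m-1)$. The evenness assertions in 2(a) and 2(b) then reduce to a short check that the relevant $\gcd$ condition forces the correct parity of $m$, hence of $s$.
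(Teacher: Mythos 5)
Your proposal follows the paper's proof essentially step for step: the paper likewise reduces membership in $\mathbb{N}_{CY}(\Lambda)$ to the condition $\nu^{s-1}\tau^{-s}\in G$ via the free-action argument from the first paragraph of the proof of Theorem \ref{teor.weakly symmetric m-fold algebras}, and then runs the same case analysis using the explicit $\nu$ from Proposition \ref{prop:Coxeter-Nakayama}, with the relation $\rho^2=\tau^{-1}$ doing the work for $\mathbb{A}_{2n}$ and the identity $\gcd(m+n,2m-1)=\gcd(2m-1,2n+1)$ closing case 2(c). Your parity split for $\mathbb{L}_n^{(m)}$ is only cosmetically different from the paper's substitution $q=s-1+2k$; both land on $(m+n)(s-1)\equiv -1\ (\mathrm{mod}\ 2m-1)$, and your verification of the two families of elements of $\langle\rho\tau^m\rangle$ is correct.

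The one omission is the extended type $(\mathbb{A}_{2n},m,1)$, which belongs to assertion 1(a) but is covered by neither of your two $t=1$ subcases: you treat $k=1$ only ``with $\rho$ of order $2$'', whereas for $\mathbb{A}_{2n}$ the automorphism $\rho$ has infinite order, and the order-two computation applied naively would produce the wrong congruence $2ns'\equiv -1$ instead of $(2n+1)s'\equiv -1$. The repair is immediate with the tool you already deploy in 2(c): for $s-1=2s'$ one has $\rho^{2s'}\tau^{-[n(s-1)+1]}=\tau^{-s'}\tau^{-2ns'-1}=\tau^{-(2n+1)s'-1}$, so membership in $\langle\tau^m\rangle$ amounts to $(2n+1)s'\equiv -1\ (\mathrm{mod}\ m)$, i.e.\ $c_\Delta s'\equiv -1\ (\mathrm{mod}\ m)$ since $c_\Delta=2n+1$ here --- exactly the unified formula of 1(a), and exactly how the paper disposes of this case. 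A smaller imprecision: in 2(b) the evenness of $s$ requires no parity condition on $m$; it follows, as your earlier sentence correctly states, from reducing $(2r-1)(s-1)\equiv -1$ modulo $2$, because $2r-1$ is odd.
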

\begin{proof}
 Note
that $\bar{\mu}$ acts on vertices as $\bar{\nu}\bar{\tau}^{-1}$,
where $\nu$ is the Nakayama permutation and $\tau$ the
Auslander-Reiten translation of $B$. Viewing the vertices of the
quiver of $\Lambda$ as $G$-orbits of vertices in $\mathbb{Z}\Delta$,
we get that $s$ is in $\mathbb{N}_{CY}(\Lambda )$ if, and only if,
$(\bar{\nu}\bar{\tau}^{-1})^s([(k,i)])=\bar{\nu}([(k,i)])$,
equivalently $\bar{\nu}^{s-1}\bar{\tau}^{-s}([(k,i)])=[(k,i)]$, for
each $G$-orbit $[(k,i)]$.  Now the argument  in the first paragraph
of the proof of Theorem \ref{teor.weakly symmetric m-fold algebras}
can be applied to the automorphism $\nu^{s-1}\tau^{-s}$. We then get
that $s\in\mathbb{N}_{CY}(\Lambda )$ if, and only if,
$\nu^{s-1}\tau^{-s}\in G$.
 We use this to identify the set
$\mathbb{N}_{CY}(\Lambda )$ for all possible extended types, and the
result will be derived from that.

 If $t=3$ and so $\Delta =\mathbb{D}_4$, then we know that $\nu
=\tau^{-2}$. It follows that $s\in \mathbb{N}_{CY}(\Lambda )$ if,
and only if, $\tau^{-2(s-1)}\tau^{-s}=(\rho\tau^m)^q$, for some
$q\in\mathbb{Z}$, where $\rho$ is the automorphism of order $3$ of
$\mathbb{D}_4$. By the free action of the group $\langle\rho ,\tau \rangle$ on
vertices not fixed by $\rho$,  necessarily $q\in 3\mathbb{Z}$ and
$2-3s=mq$, which is absurd. Then assertion 3 follows.

Suppose first that $\Delta\neq\mathbb{A}_{2n}$. If $\Delta$ is
$\mathbb{A}_{2n-1}$, $\mathbb{D}_{2r-1}$ or $\mathbb{E}_{6}$, then
$\nu =\rho\tau^{1-n}$, where $n=\frac{c_\Delta}{2}$. Then
$\nu^{s-1}\tau^{-s}=\rho^{s-1}\tau^{(1-n)(s-1)}\tau^{-s}=\rho^{s-1}\tau^{-[n(s-1)+1]}$.
When $t=1$, we have that $G=\langle\tau^m\rangle$ and, hence, the automorphism
$\nu^{s-1}\tau^{-s}$ is in $G$ if, and only if, there is
$q\in\mathbb{Z}$ such that
$\rho^{s-1}\tau^{-[n(s-1)+1]}=(\tau^m)^q$. This happens if, and only
if, $s-1=2s'$ is even and there is  $q\in\mathbb{Z}$ such that
$-2ns'-1=-n(s-1)-1$ is equal to $mq$. Therefore $s$ exists if, and
only if, $\text{gcd}(m,c_\Delta )=\text{gcd}(m,2n)=1$. In this case
$\mathbb{N}_{CY}(\Lambda )=\{s=2s'+1>0:$ $2ns'\equiv -1\text{ (mod
m)}\}=\{s=2s'+1:$ $c_\Delta s'\equiv -1\text{ (mod m)}\}$, which
gives 1.a, except for the case $\Delta =\mathbb{A}_{2n}$. On the
other hand, if $t=2$, and hence $G=\langle\rho\tau^m\rangle$, then the
automorphism $\nu^{s-1}\tau^{-s}$ is in $G$ if, and only if, there
is an integer $q$ such that $q\equiv s-1\text{ (mod 2)}$ and
$\rho^{s-1}\tau^{-[n(s-1)+1]}=\rho^q\tau^{mq}$ or, equivalently,
$-n(s-1)-1=mq$. But this happens if, and only if, there is
$k\in\mathbb{Z}$ such that $-n(s-1)-1=m(s-1+2k)$, which is
equivalent to saying that $(m+n)(s-1)+2mk+1=0$. Therefore $s$ exists
if, and only if,
$\text{gcd}(2m,m+\frac{c_\Delta}{2})=\text{gcd}(2m,m+n)=1$. In this
case $\mathbb{N}_{CY}(\Lambda )=\{s>0:$
$(m+\frac{c_\Delta}{2})(s-1)\equiv -1\text{ (mod 2m)}\}$ which
proves 2.a.

Suppose next that $\Delta$ is $\mathbb{D}_{2r}$, $\mathbb{E}_{7}$ or
$\mathbb{E}_{8}$, so that $\nu =\tau^{1-n}$, where
$n=\frac{c_\Delta}{2}$. Then
$\nu^{s-1}\tau^{-s}=\tau^{(1-n)(s-1)}\tau^{-s}=\tau^{-[n(s-1)+1]}$.
When $t=1$, this automorphism is in $G=\langle\tau^m\rangle$ if, and only if,
there is $q\in\mathbb{Z}$ such that $-n(s-1)-1=mq$. Then $s$ exists
if, and only if,
$\text{gcd}(m,\frac{c_\Delta}{2})=\text{gcd}(m,n)=1$. In this case
$\mathbb{N}_{CY}(\Lambda )=\{s>0:$ $\frac{c_\Delta}{2}(s-1)\equiv
-1\text{ (mod m)}$, which proves 1.b. When $t=2$, whence $\Delta
=\mathbb{D}_{2r}$, the automorphism $\nu^{s-1}\tau^{-s}$ is in
$G=\langle\rho\tau^m\rangle$ if, and only if, there is an even integer $q=2q'$
such that $-n(s-1)-1=2mq'$. Then $s$ exists if, and only if,
$\text{gcd}(2m,n)=1$. But $n=2r-1$ is odd in this case. Then
$\text{gcd}(2m,n)=1$ if, and only if,
$\text{gcd}(m,2r-1)=\text{gcd}(m,n)=1$. On the other hand, note that
$s-1$ is necessarily odd, which implies that
$\mathbb{N}_{CY}(\Lambda )\subset 2\mathbb{Z}$. This completes the
proof of 2.b.

Suppose now that $\Delta =\mathbb{A}_{2n}$, so that
$\rho^2=\tau^{-1}$. Here $\nu =\rho\tau^{1-n}$ and
$\nu^{s-1}\tau^{-s}=\rho^{s-1}\tau^{(1-n)(s-1)-s}$ $=\rho^{s-1}\tau^{-[n(s-1)+1]}$.
When $t=1$, this automorphism is in $G=\langle\tau^m\rangle$ if, and only if,
$s-1=2s'$ is even and $\tau^{-s'}\tau^{-(2ns'+1)}=(\tau^m)^q$, for
some integer $q$. That is, $s$ exists if, and only if, there are
$s'\geq 0$ and $q\in\mathbb{Z}$ such that $mq+(2n+1)s'+1=0$.
Therefore $s$ exists if, and only if,
$\text{gcd}(m,c_\Delta)=\text{gcd}(m,2n+1)=1$. In this case
$s=2s'+1$, where $s'\geq 0$ and $c_\Delta s'=(2n+1)s'\equiv -1\text{
(mod m)}$. This completes 1.a. When $t=2$ the automorphism
$\nu^{s-1}\tau^{-s}$ is in $G=\langle\rho\tau^m\rangle$ if, and only if, there
is $q\in\mathbb{Z}$ such that $q\equiv s-1\text{ (mod 2)}$ and
$\rho^{s-1}\tau^{-[n(s-1)+1]}=\rho^q\tau^{mq}$. This is equivalent
to the existence of an integer $k$ such that
$\rho^{s-1}\tau^{-[n(s-1)+1]}=\rho^{s-1+2k}\tau^{m(s-1+2k)}$.
Canceling $\rho^{s-1}$, we see that the condition is equivalent to
the existence of an integer $k$ such that $-n(s-1)-1=m(s-1)+(2m-1)k$
or, equivalently, such that $(m+n)(s-1)+(2m-1)k+1=0$. Then $s$
exists if, and only if, $\text{gcd}(m+n,2m-1)=1$, which is turn
equivalent to saying that $\text{gcd}(2m-1,2n+1)=1$ since
$(2m-1)+(2n+1)=2(m+n)$. This proves 2.c and the proof is complete.
\end{proof}

We now want to identify $\hat{\mathbb{N}}_{CY}(\Lambda )$.  The
following is our crucial tool.

\begin{lema} \label{lem.CYcriterion}
Let $\Delta$ be a Dynkin quiver different from
$\mathbb{A}_1,\mathbb{A}_2$, $B$ be its associated mesh algebra,
$\Lambda=B/G$ be an $m$-fold mesh algebra of extended type
$(\Delta ,m,t)$ and let $\eta$ be a $G$-invariant graded Nakayama
automorphism of $B$. If $s$ is an integer in
$\mathbb{N}_{CY}(\Lambda )$, then the following assertions are
equivalent:

\begin{enumerate}
\item $s$ is in $\hat{\mathbb{N}}_{CY}(\Lambda )$ (see definition \ref{defi.Calabi-Yau
number}).
\item There is a map $\lambda :\mathbb{Z}\Delta_0\longrightarrow
K^*$ such that:

\begin{enumerate}
\item $\mu^{s}(a)=\lambda_{i(a)}^{-1}\lambda_{t(a)}\eta (\nu^{s-1}\tau^{-s}(a))$,
for all $a\in (\mathbb{Z}\Delta )_1$, where $\mu$ is the graded
automorphism of Proposition \ref{prop.automorphism mu general}.
\item $\lambda\circ g_{|\mathbb{Z}\Delta_0}=\lambda$, for all $g\in
G$.
\end{enumerate}
If $(\Delta ,m,t)\neq (\mathbb{A}_{2n-1},m,2)$, then these
conditions are also equivalent to:

\item There is a map  $\lambda :\mathbb{Z}\Delta_0\longrightarrow
K^*$ satisfying condition 2.b and such that
$(-1)^s\eta^{s-1}(a)=\lambda_{i(a)}^{-1}\lambda_{t(a)}\nu^{s-1}(a)$,
for all $a\in (\mathbb{Z}\Delta )_1$.

If $(\Delta ,t)\neq (\mathbb{A}_r,2)$ then the conditions are also
equivalent to

\item $s-1$ is in $H(\Delta ,m,t)$ (see Proposition \ref{prop.subgroup of Z
associated}).

\end{enumerate}
\end{lema}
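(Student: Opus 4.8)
The plan is to prove $1\Leftrightarrow 2$, then $2\Leftrightarrow 3$ under the stated restriction, and finally $3\Leftrightarrow 4$, obtaining each new condition from the previous one by a reversible algebraic manipulation together with one application of Lemma \ref{lem.criterion of inner automorphism}. Throughout I use that, by Remark \ref{rem.CY-dim versus CYF-dim}, $s\in\hat{\mathbb{N}}_{CY}(\Lambda)$ means exactly that $\bar\mu^s\bar\eta^{-1}$ is inner, and that $\eta,\nu,\mu$ are graded automorphisms of $B$ commuting with every element of $G$ (Lemma \ref{lem.nu compatible with mesh relations}, Proposition \ref{prop.automorphism mu general}). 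For $1\Leftrightarrow 2$ I would apply Lemma \ref{lem.criterion of inner automorphism} to the graded automorphism $f:=(\eta\circ\nu^{s-1}\circ\tau^{-s})^{-1}\circ\mu^{s}$ of $B$. It fixes every idempotent, since $\mu$ acts on vertices as $\nu\tau^{-1}$, so $\mu^s$ acts as $\nu^s\tau^{-s}$, while $\eta\nu^{s-1}\tau^{-s}$ acts as $\nu\cdot\nu^{s-1}\tau^{-s}=\nu^s\tau^{-s}$; and $f$ commutes with $G$ because each factor does. The equation of $2(a)$ is then literally $f(a)=\lambda_{i(a)}^{-1}\lambda_{t(a)}a$, and $2(b)$ is the $G$-invariance of $\lambda$ required by the Lemma. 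Here the hypothesis $s\in\mathbb{N}_{CY}(\Lambda)$ enters: by the analysis in the proof of Proposition \ref{prop.Calabi-Yau set of numbers} it gives $\nu^{s-1}\tau^{-s}\in G$, hence $\overline{\nu^{s-1}\tau^{-s}}=\mathrm{id}_\Lambda$, so that $\bar f=\bar\eta^{-1}\bar\mu^s$; this is inner if and only if its conjugate $\bar\mu^s\bar\eta^{-1}$ is, i.e. if and only if $s\in\hat{\mathbb{N}}_{CY}(\Lambda)$.

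For $2\Leftrightarrow 3$, valid once $(\Delta,m,t)\neq(\mathbb{A}_{2n-1},m,2)$, I would use that then $\vartheta=\mathrm{id}_B$ (Remark \ref{rem. mu}), so $\mu=\kappa\circ\eta\circ\tau^{-1}$. Since $\kappa$ is central among graded automorphisms and $\eta$ commutes with $\tau$ up to a $G$-invariant sign coboundary $c(a)=\lambda_{0,i(a)}^{-1}\lambda_{0,t(a)}$ (a consequence of $\nu\in\hat G$ with $\hat G$ abelian, together with the explicit signs of Theorem \ref{teor.G-invariant Nakayama automorphism}), one gets $\mu^s(a)=(-1)^s c(a)\,\eta^s\tau^{-s}(a)$. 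Substituting this into $2(a)$, absorbing $c$ into $\lambda$, writing $a=\tau^s(b)$, applying $\eta^{-1}$, and replacing $\lambda$ by the still $G$-invariant map $\lambda\circ\tau^{s}$ (legitimate because $\tau$ commutes with $G$), the equation becomes exactly $(-1)^s\eta^{s-1}(b)=\lambda_{i(b)}^{-1}\lambda_{t(b)}\nu^{s-1}(b)$ of condition $3$; all steps are reversible, and the surviving factor $(-1)^s$ is precisely the trace of $\kappa^s$.

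For $3\Leftrightarrow 4$, valid once $(\Delta,t)\neq(\mathbb{A}_r,2)$, I would substitute $b=\nu^{s-1}(a)$ and set $\tilde\lambda=\lambda\circ\nu^{-(s-1)}$ (again $G$-invariant, as $\nu$ commutes with $G$) to rewrite condition $3$ as $\eta^{s-1}\nu^{-(s-1)}(b)=(-1)^s\tilde\lambda_{i(b)}^{-1}\tilde\lambda_{t(b)}b$. The global sign $(-1)^s$ is absorbed into $\tilde\lambda$ using the $G$-invariant map $\lambda_0$ with $\lambda_{0,i(a)}=-\lambda_{0,t(a)}$ provided by Remark \ref{rem. mu}, which exists exactly when $(\Delta,G)\neq(\mathbb{A}_{2n},\langle\rho\tau^m\rangle)$ --- this is the reason $\mathbb{A}_{2n}$ with $t=2$ (and, through condition $3$, also $\mathbb{A}_{2n-1}$ with $t=2$) is excluded at this stage. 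What remains is $\eta^{s-1}\nu^{-(s-1)}(b)=(\lambda'')_{i(b)}^{-1}(\lambda'')_{t(b)}b$ with $\lambda''$ $G$-invariant, and since $f':=\eta^{s-1}\nu^{-(s-1)}$ fixes vertices and commutes with $G$, Lemma \ref{lem.criterion of inner automorphism} turns this into the statement that $\bar\eta^{s-1}\bar\nu^{-(s-1)}$ is inner, i.e. $s-1\in H(\Delta,m,t)$.

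The hard part will be the sign bookkeeping in $2\Leftrightarrow 3$: one must verify, case by case from the explicit formulas of Theorem \ref{teor.G-invariant Nakayama automorphism}, that the three sources of signs --- the central $\kappa^s$, the discrepancy between $\tau$ and its sign-corrected automorphism $\tau'$ of the new presentation, and the failure of $\eta$ and $\tau$ to commute on the nose --- combine into a single global $(-1)^s$ times a $G$-invariant coboundary that can be gauged into $\lambda$. The same computation explains the exclusion: for $(\mathbb{A}_{2n-1},m,2)$ one has $\vartheta\neq\mathrm{id}_B$, and its defining sign $s(\tau^{-1}a)+s(a)$ is \emph{not} a $G$-invariant coboundary, so it cannot be folded into $\lambda$ and conditions $2$ and $3$ genuinely diverge.
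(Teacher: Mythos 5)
Your proposal is correct and follows essentially the same route as the paper: all three equivalences are reductions to Lemma \ref{lem.criterion of inner automorphism}, with the hypothesis $s\in\mathbb{N}_{CY}(\Lambda)$ entering exactly as you say, through $\nu^{s-1}\tau^{-s}\in G$ and hence $\overline{\eta\nu^{s-1}\tau^{-s}}=\bar{\eta}$; your argument for $1\Leftrightarrow 2$ is the paper's argument verbatim. The one place where you leave real work undone is the commutation input in $2\Leftrightarrow 3$: the claim that $\eta$ and $\tau$ commute up to a $G$-invariant sign coboundary does not follow ``immediately'' from $\nu\in\hat{G}$ abelian plus the sign tables of Theorem \ref{teor.G-invariant Nakayama automorphism}, and the case-by-case sign bookkeeping you flag as the hard part is genuinely nontrivial --- but it is already done in the paper: by Lemma \ref{lem.criterion of inner automorphism}, your coboundary statement is precisely the assertion that $\bar{\eta}$ and $\bar{\tau}^{-1}$ commute up to an inner automorphism of $\Lambda$, which is the content of Proposition \ref{prop.tau nu eta in center}, cited by the paper at exactly this point; replacing your parenthetical justification by that citation closes the gap with no further computation. (A harmless imprecision there: the coboundary in $\mu^s(a)=(-1)^s c(a)\,\eta^s\tau^{-s}(a)$ actually depends on $s$, being a product of conjugates of the basic commutator coboundary, but since it is absorbed into $\lambda$ anyway this costs nothing.) For $3\Leftrightarrow 4$ you diverge mildly from the paper: you gauge the global $(-1)^s$ away using the $G$-invariant alternating map of Remark \ref{rem. mu}, whereas the paper re-runs the $2\Leftrightarrow 3$ computation with $\kappa=\mathrm{id}_B$ after invoking Corollary \ref{cor. mu for Lambda} to replace $\mu$ by $\eta\circ\tau^{-1}$. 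These are two packagings of the same mechanism --- the existence of your $\lambda_0$ is exactly what makes $\bar{\kappa}$ inner, i.e.\ what permits dropping $\kappa$ from $\mu$ --- and both correctly isolate $(\mathbb{A}_{2n},\langle\rho\tau^m\rangle)$ as the obstruction, so your version is equally valid and arguably makes the role of the sign more transparent.
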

\begin{proof}
The first paragraph of the proof of Proposition \ref{prop.Calabi-Yau
set of numbers} says that $s\in\mathbb{N}_{CY}(\Lambda )$ if, and
only if, $\nu^{s-1}\tau^{-s}\in G$. The goal is to give necessary
and sufficient conditions on such an integer $s$  so that
$\bar{\mu}^s$ and $\bar{\eta}=\overline{\eta\nu^{s-1}\tau^{-s}}$ are
equal, up to composition by an inner automorphism of $\Lambda$. But
the actions of $\mu^s =(k\circ\eta\circ\tau^{-1}\circ\vartheta )^s$
and $\eta\circ\nu^{s-1}\circ\tau^{-s}$ on $\mathbb{Z}\Delta_0$ are
equal. By Lemma \ref{lem.criterion of inner automorphism}, we then
get that assertions 1 and 2 are equivalent.

When $(\Delta ,m,t)\neq (\mathbb{A}_{2n-1},m,2)$, what we
know is that $\vartheta =id_B$ and, by Proposition \ref{prop.tau nu eta in center}, we know that $\bar{\eta}$ and $\bar{\tau}^{-1}$
commute, up to composition by an inner automorphism of $\Lambda$.
Then $s$ is in $\hat{\mathbb{N}}_{CY}(\Lambda )$ if, and only if,
$\bar{k}^s\bar{\eta}^s\bar{\tau}^{-s}$ and
$\bar{\eta}\bar{\nu}^{s-1}\bar{\tau}^{-s}$  are equal up to
composition by an inner automorphism of $\Lambda$. By Lemma
\ref{lem.criterion of inner automorphism}, this last condition is
equivalent to saying that there is a map $\lambda
:\mathbb{Z}\Delta_0\longrightarrow K^*$ satisfying 2.b such that
$(-1)^s\eta^s(\tau^{-s}(a))=\lambda_{i(a)}^{-1}\lambda_{t(a)}\eta
(\nu^{s-1}\tau^{-s}(a))$, for each $a\in (\mathbb{Z}\Delta )_1$.
Putting $b=\tau^{-s}(a)$ and defining
$\tilde{\lambda}:(\mathbb{Z}\Delta )_0\longrightarrow K^*$ by the
rule $\tilde{\lambda}(i)=\lambda (\tau^s(i))$, we get that
$(-1)^s\eta^{s-1}(b)=\tilde{\lambda}_{i(b)}^{-1}\tilde{\lambda}_{t(b)}\nu^{s-1}(b)$,
for all $b\in(\mathbb{Z}\Delta )_1$. Then assertions 2 and 3 are
equivalent.

Finally, when $(\Delta ,t)\neq (\mathbb{A}_r,2)$, Corollary \ref{cor. mu for Lambda} says that we can choose $\mu
=\eta\circ\tau^{-1}$. Then the
proof of the equivalence of assertions 2 and 3, taken for
$\kappa=id_B$, shows that assertion 2 holds if, and only if, there
is a map  $\lambda :\mathbb{Z}\Delta_0\longrightarrow K^*$
satisfying condition 2.b and such that
$\eta^{s-1}(b)=\lambda_{i(b)}^{-1}\lambda_{t(b)}\nu^{s-1}(b)$, for
all $b\in (\mathbb{Z}\Delta )_1$. This is equivalent to saying that
$s-1\in H(\Delta ,m,t)$.
\end{proof}

The following is now a consequence of Proposition
\ref{prop.Calabi-Yau set of numbers} and the foregoing lemma.

\begin{cor} \label{cor.CY-criterion in characteristic 2}
Let $\Lambda$ be an $m$-fold mesh algebra over a field of
characteristic $2$,  with $\Delta\neq\mathbb{A}_1$. The algebra is
stably Calabi-Yau if, and only if, it is Calabi-Yau Frobenius. When
in addition $\Delta\neq \mathbb{A}_2$, this is in turn equivalent to
saying that $\mathbb{N}_{CY}(\Lambda )\neq\emptyset$. Moreover, the
following assertions hold:
\begin{enumerate}
\item When the Loewy length of $\Lambda$ is $\leq 2$, i.e. $\Delta
=\mathbb{A}_2$,  the algebra is always Calabi-Yau Frobenius and
 $CY-\text{dim}(\Lambda )=CYF-\text{dim}(\Lambda )=0$.

\item When $\Delta\neq\mathbb{A}_2$, we have $CY-\text{dim}(\Lambda )=CYF-\text{dim}(\Lambda
)=3m-1$, where $m=\text{min}(\mathbb{N}_{CY}(\Lambda ))$ (see
Proposition \ref{prop.Calabi-Yau set of numbers}).
\end{enumerate}
\end{cor}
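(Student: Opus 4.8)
The plan is to reduce the whole statement to a comparison of the two sets $\mathbb{N}_{CY}(\Lambda)$ and $\hat{\mathbb{N}}_{CY}(\Lambda)$ of Definition \ref{defi.Calabi-Yau number}, exploiting the fact that in characteristic $2$ all the sign data attached to the Nakayama automorphism collapses. First I would dispose of the degenerate case $\Delta=\mathbb{A}_2$ (Loewy length $2$): here assertion (1) of Proposition \ref{prop.CY-dim and CYF-dim of 2-nilpotent algebras} gives at once $CY-\text{dim}(\Lambda)=CYF-\text{dim}(\Lambda)=0$, so $\Lambda$ is simultaneously stably Calabi-Yau and Calabi-Yau Frobenius, which establishes the first claim (with $\Delta\neq\mathbb{A}_1$) and item (1) of the corollary. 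For all the remaining quivers $\Delta\neq\mathbb{A}_1,\mathbb{A}_2$ the central point will be the equality $\hat{\mathbb{N}}_{CY}(\Lambda)=\mathbb{N}_{CY}(\Lambda)$; since the inclusion $\hat{\mathbb{N}}_{CY}(\Lambda)\subseteq\mathbb{N}_{CY}(\Lambda)$ always holds by Remark \ref{rem.CY-dim versus CYF-dim}, only the reverse inclusion requires an argument.

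To prove that reverse inclusion I would use that over a field of characteristic $2$ the correction automorphism $\chi$ of Proposition \ref{prop.subgroup of Z associated} is the identity, so that $\eta=\nu$ as graded automorphisms of $B$, while $\kappa$ and $\vartheta$ are likewise trivial; hence by Corollary \ref{cor. mu for Lambda} we may take $\mu=\eta\tau^{-1}=\nu\tau^{-1}$ uniformly in every extended type. Now fix $s\in\mathbb{N}_{CY}(\Lambda)$. By the first paragraph of the proof of Proposition \ref{prop.Calabi-Yau set of numbers} this means exactly $\nu^{s-1}\tau^{-s}\in G$. Since $\nu$ and $\tau$ commute (Proposition \ref{prop:Coxeter-Nakayama}), one computes $\mu^s\eta^{-1}=\nu^s\tau^{-s}\nu^{-1}=\nu^{s-1}\tau^{-s}\in G$, and passing to the orbit category $\Lambda=B/G$ every element of $G$ induces the identity, so $\bar{\mu}^s\bar{\eta}^{-1}=\overline{\nu^{s-1}\tau^{-s}}=id_\Lambda$, which is inner; thus $s\in\hat{\mathbb{N}}_{CY}(\Lambda)$. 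Equivalently, I could invoke condition 2 of Lemma \ref{lem.CYcriterion} with the constant map $\lambda=1$: the identities $\mu^s(a)=\nu^s\tau^{-s}(a)=\eta(\nu^{s-1}\tau^{-s}(a))$ hold on every arrow $a$, and $\lambda=1$ is trivially $G$-invariant. This yields $\hat{\mathbb{N}}_{CY}(\Lambda)=\mathbb{N}_{CY}(\Lambda)$.

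The three equivalences and the dimension formula then follow formally. If $\Lambda$ is stably Calabi-Yau, some $\bar{\mu}^s\bar{\eta}^{-1}$ is stably inner, which by Remark \ref{rem.CY-dim versus CYF-dim} forces $s\in\mathbb{N}_{CY}(\Lambda)$, so $\hat{\mathbb{N}}_{CY}(\Lambda)=\mathbb{N}_{CY}(\Lambda)\neq\emptyset$ and $\Lambda$ is Calabi-Yau Frobenius; the converse is the automatic inequality $CY-\text{dim}(\Lambda)\leq CYF-\text{dim}(\Lambda)$, and the equivalence with $\mathbb{N}_{CY}(\Lambda)\neq\emptyset$ is immediate from the equality of the two sets. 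Finally, since $\hat{\mathbb{N}}_{CY}(\Lambda)=\mathbb{N}_{CY}(\Lambda)\neq\emptyset$, the last sentence of Remark \ref{rem.CY-dim versus CYF-dim} gives $CY-\text{dim}(\Lambda)=CYF-\text{dim}(\Lambda)=3m-1$ with $m=\text{min}(\mathbb{N}_{CY}(\Lambda))$. The one point I expect to need genuine care is precisely the \emph{uniform} reduction $\mu=\nu\tau^{-1}$, $\eta=\nu$ across all types, including the two families $(\mathbb{A}_{2n-1},m,2)$ and $(\mathbb{A}_{2n},m,2)$ for which assertion 4 of Lemma \ref{lem.CYcriterion} is unavailable; routing the argument through the explicit computation $\mu^s\eta^{-1}=\nu^{s-1}\tau^{-s}$ (or condition 2 with $\lambda=1$) rather than through $H(\Delta,m,t)$ is what lets one treat $\mathbb{A}_3$ and these boundary cases simultaneously, and in particular secures the equality $CY-\text{dim}=CYF-\text{dim}$ even for $\Delta=\mathbb{A}_3$.
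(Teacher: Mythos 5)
Your proposal is correct and follows essentially the same route as the paper: dispose of $\Delta=\mathbb{A}_2$ via Proposition \ref{prop.CY-dim and CYF-dim of 2-nilpotent algebras}, observe that in characteristic $2$ the signs collapse so that $\eta=\nu$ and $\vartheta=\kappa=\mathrm{id}$, and deduce $\hat{\mathbb{N}}_{CY}(\Lambda)=\mathbb{N}_{CY}(\Lambda)$ by verifying condition 2 of Lemma \ref{lem.CYcriterion} with a constant map $\lambda$, concluding through Remark \ref{rem.CY-dim versus CYF-dim}. Your direct computation $\mu^s\eta^{-1}=\nu^{s-1}\tau^{-s}\in G$, which induces the identity on $\Lambda=B/G$, is just an explicit unwinding of that same verification (and correctly covers $\mathbb{A}_3$ and the types where assertion 4 of the lemma is unavailable), so it is a cosmetic rather than substantive variation.
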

\begin{proof}
The case of Loewy length $2$ is covered by Proposition
\ref{prop.CY-dim and CYF-dim of 2-nilpotent algebras}. So we assume
$\Delta\neq\mathbb{A}_2$ in the sequel. If $\Lambda$ is stably
Calabi-Yau, then $\mathbb{N}_{CY}(\Lambda )\neq\emptyset$. But, when
$\text{char}(K)=2$, the $G$-invariant graded Nakayama automorphism
of Theorem \ref{teor.G-invariant Nakayama automorphism} is $\eta
=\nu$. In addition, the automorphisms $\vartheta$ and $\kappa$ of
Proposition \ref{prop.automorphism mu general} are the identity.
Then, in order to prove the equality $\hat{\mathbb{N}}_{CY}(\Lambda
)=\mathbb{N}_{CY}(\Lambda)$, one only need to prove that if
$s\in\mathbb{N}_{CY}(\Lambda)$ then condition 2 of last lemma holds.
But this is clear, by taking as $\lambda$ any constant map.
\end{proof}

We are now ready to give, for $\text{char}(K)\neq 2$, the precise criterion for an $m$-fold mesh
algebra to be stably Calabi-Yau, and to calculate
$CY-\text{dim}(\Lambda )$ in that case.

\begin{teor} \label{teor.CY-criterion and CY-dimension}
Let us assume that $\text{char}(K)\neq 2$ and let $\Lambda$ be the
$m$-fold mesh algebra of extended type $(\Delta ,m,t)$, where
$\Delta\neq\mathbb{A}_1,\mathbb{A}_2$. We adopt the convention that
if $a,b,k$ are fixed integers,
 then  $av\equiv b\text{ (mod k)}$  means that $v$ is the smallest non-negative integer satisfying the congruence.
The algebra is Calabi-Yau Frobenius if, and only if, it is stably
Calabi-Yau. Moreover,  we have
$CYF-\text{dim}(\Lambda)=CY-\text{dim}(\Lambda )$ and the following
assertions hold:

\begin{enumerate}
\item If $t=1$ then

\begin{enumerate}
\item When $\Delta$ is $\mathbb{A}_{r}$, $\mathbb{D}_{2r-1}$ or
$\mathbb{E}_{6}$, the algebra is stably Calabi-Yau if, and only if,
$\text{gcd}(m,c_\Delta )=1$. Then $CY-\text{dim}(\Lambda )=6u+2$,
where $c_\Delta u\equiv -1\text{ (mod m)}$.
\item  When $\Delta$ is
$\mathbb{D}_{2r}$, $\mathbb{E}_{7}$ or $\mathbb{E}_{8}$, the algebra
is stably Calabi-Yau if, and only if,
$\text{gcd}(m,\frac{c_\Delta}{2})=1$. Then:

\begin{enumerate}
\item $CY-\text{dim}(\Lambda )=3u+2$, where $\frac{c_\Delta}{2}u\equiv -1\text{ (mod m)}$,  whenever $m$ is even;
\item $CY-\text{dim}(\Lambda )=6u+2$, where  $c_\Delta u\equiv -1\text{ (mod
m)}$, whenever $m$ is odd;
\end{enumerate}

\end{enumerate}
\item If $t=2$ then

\begin{enumerate}
\item When $\Delta$ is $\mathbb{A}_{2n-1}$, $\mathbb{D}_{2r-1}$ or
$\mathbb{E}_{6}$, the algebra is stably Calabi-Yau if, and only if,
$\text{gcd}(2m,m+\frac{c_\Delta}{2})=1$. Then $CY-\text{dim}(\Lambda
)=3u+2$, where $(m+\frac{c_\Delta}{2}) u\equiv -1\text{ (mod 2m)}$.

\item When $\Delta=\mathbb{D}_{2r}$, the algebra is stably
Calabi-Yau if, and only if, $\text{gcd}(m,2r-1)=1$ and $m$ is odd.
Then $CY-\text{dim}(\Lambda )=3u+2$, where $(2r-1)u\equiv -1\text{
(mod 2m)}$.

\item When $\Delta =\mathbb{A}_{2n}$, the algebra is stably
Calabi-Yau if, and only if, $\text{gcd}(2m-1,2n+1)=1$. Then
$CY-\text{dim}(\Lambda )=6u-1$, where $(m+n)(2u-1)\equiv -1\text{
(mod 2m-1)}$
\end{enumerate}
\item If $t=3$ then the algebra is not stably Calabi-Yau.
\end{enumerate}
\end{teor}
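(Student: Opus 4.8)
The plan is to translate the whole statement into the arithmetic of the sets $\mathbb{N}_{CY}(\Lambda)$ and $\hat{\mathbb{N}}_{CY}(\Lambda)$ of Definition \ref{defi.Calabi-Yau number}. By Remark \ref{rem.CY-dim versus CYF-dim}, $\Lambda$ is Calabi-Yau Frobenius exactly when $\hat{\mathbb{N}}_{CY}(\Lambda)\neq\emptyset$, and then $CYF-\text{dim}(\Lambda)=3r-1$ with $r=\min\hat{\mathbb{N}}_{CY}(\Lambda)$. Writing $\mathbb{N}_{sCY}(\Lambda)$ for the set of $s>0$ such that $\bar\mu^s\bar\eta^{-1}$ is stably inner, $\Lambda$ is stably Calabi-Yau iff $\mathbb{N}_{sCY}(\Lambda)\neq\emptyset$, and then $CY-\text{dim}(\Lambda)=3\min\mathbb{N}_{sCY}(\Lambda)-1$ (by Lemma \ref{lem.commutativity of eta y tau} and the discussion following Proposition \ref{prop.CY-dimension-versus-Eu-Schedler}). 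For every $\Delta\neq\mathbb{A}_3$ the coincidence $\mathbb{N}_{sCY}(\Lambda)=\hat{\mathbb{N}}_{CY}(\Lambda)$, hence the equivalence ``stably Calabi-Yau $\Leftrightarrow$ Calabi-Yau Frobenius'' together with the equality of dimensions, is Proposition \ref{prop.CY-dimension-versus-Eu-Schedler}; the remaining case $\Delta=\mathbb{A}_3$ will be settled at the very end. The whole theorem thus reduces to computing $\hat{\mathbb{N}}_{CY}(\Lambda)$ and its minimum in each extended type. When $t=3$ this is immediate: Proposition \ref{prop.Calabi-Yau set of numbers} gives $\mathbb{N}_{CY}(\Lambda)=\emptyset$, so $\hat{\mathbb{N}}_{CY}(\Lambda)=\emptyset$ and $\Lambda$ is not stably Calabi-Yau, which is assertion 3.

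For all types with $(\Delta,t)\neq(\mathbb{A}_r,2)$ I would use the equivalence of conditions 1 and 4 in Lemma \ref{lem.CYcriterion}: an $s\in\mathbb{N}_{CY}(\Lambda)$ belongs to $\hat{\mathbb{N}}_{CY}(\Lambda)$ iff $s-1\in H(\Delta,m,t)$. Feeding in the explicit $\mathbb{N}_{CY}(\Lambda)$ from Proposition \ref{prop.Calabi-Yau set of numbers} and the value of $H(\Delta,m,t)$ from Proposition \ref{prop.subgroup of Z associated}, the rest is elementary congruence arithmetic. In the types of 1.a every $s\in\mathbb{N}_{CY}(\Lambda)$ is odd, so $s-1$ is even and automatically lies in $H(\Delta,m,t)\in\{\mathbb{Z},2\mathbb{Z}\}$; thus $\hat{\mathbb{N}}_{CY}(\Lambda)=\mathbb{N}_{CY}(\Lambda)$ and its least element $2u+1$ gives $CY-\text{dim}=6u+2$. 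In 1.b one separates the parity of $m$: for $m$ even $H=\mathbb{Z}$ and $\hat{\mathbb{N}}_{CY}=\mathbb{N}_{CY}$, giving $3u+2$; for $m$ odd $H=2\mathbb{Z}$ selects the odd $s$, and writing $s-1=2w$ and using that $\tfrac{c_\Delta}{2}$ is odd turns the congruence into $c_\Delta w\equiv-1\pmod m$, giving $6u+2$. The same bookkeeping disposes of 2.a for $\mathbb{D}_{2r-1},\mathbb{E}_6$ (where $\tfrac{c_\Delta}{2}$ is even, so nonemptiness of $\mathbb{N}_{CY}$ forces $m$ odd, whence $H=\mathbb{Z}$) and of 2.b (where $\tfrac{c_\Delta}{2}=2r-1$ is odd, so the presence of the odd number $s-1$ in $H$ additionally requires $m$ odd, i.e. the algebra fails to be stably Calabi-Yau when $m$ is even).

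The genuinely separate case, and the expected main obstacle, is $(\Delta,t)=(\mathbb{A}_r,2)$, where condition 4 of Lemma \ref{lem.CYcriterion} is unavailable. Here I would argue directly. Since $\eta=\nu$ (Theorem \ref{teor.G-invariant Nakayama automorphism}) and $\mu$ differs from $\nu\tau^{-1}$ only by the involution $\kappa$ or $\vartheta$ appearing in Lemma \ref{lem.crucial for the periods in Bn and Ln}, and since any $s\in\mathbb{N}_{CY}(\Lambda)$ satisfies $\nu^{s-1}\tau^{-s}\in G$ and hence $(\bar\nu\bar\tau^{-1})^s=\bar\nu=\bar\eta$ as automorphisms of $\Lambda$, the condition $\bar\mu^s\bar\eta^{-1}\in\text{Inn}^{gr}(\Lambda)$ becomes exactly the condition that $\bar\mu^s$ and $(\bar\nu\bar\tau^{-1})^s$ agree up to an inner automorphism. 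Therefore $\hat{\mathbb{N}}_{CY}(\Lambda)=\mathbb{N}_{CY}(\Lambda)\cap T$, with $T$ the subgroup of Lemma \ref{lem.crucial for the periods in Bn and Ln}, which equals $2\mathbb{Z}$ since $\text{char}(K)\neq2$. This picks out the even elements of $\mathbb{N}_{CY}(\Lambda)$: for $\mathbb{A}_{2n-1}$ the set $\mathbb{N}_{CY}(\Lambda)$ already consists of even numbers, so it is unchanged and its minimum gives $3u+2$ (case 2.a); for $\mathbb{A}_{2n}$ the set $\mathbb{N}_{CY}(\Lambda)$ is an arithmetic progression of odd step $2m-1$, so its smallest even element is $s=2u$, whence $(m+n)(2u-1)\equiv-1\pmod{2m-1}$ and $CY-\text{dim}=6u-1$ (case 2.c).

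Finally, for $\Delta=\mathbb{A}_3$---which occurs only inside 1.a ($t=1$) and 2.a ($t=2$)---the computations above yield $\hat{\mathbb{N}}_{CY}(\Lambda)=\mathbb{N}_{CY}(\Lambda)$. Since any stably inner automorphism fixes the vertices, one has the inclusions $\hat{\mathbb{N}}_{CY}(\Lambda)\subseteq\mathbb{N}_{sCY}(\Lambda)\subseteq\mathbb{N}_{CY}(\Lambda)$, and the coincidence of the outer two forces $\mathbb{N}_{sCY}(\Lambda)=\hat{\mathbb{N}}_{CY}(\Lambda)$. This gives the equivalence ``stably Calabi-Yau $\Leftrightarrow$ Calabi-Yau Frobenius'' and the equality $CY-\text{dim}(\Lambda)=CYF-\text{dim}(\Lambda)$ also in Loewy length $3$, completing the proof.
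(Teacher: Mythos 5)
Your proposal is correct, and its skeleton coincides with the paper's own proof: the reduction of both Calabi-Yau conditions to the sets $\mathbb{N}_{CY}(\Lambda)$ and $\hat{\mathbb{N}}_{CY}(\Lambda)$ via Remark \ref{rem.CY-dim versus CYF-dim} and the discussion after Proposition \ref{prop.CY-dimension-versus-Eu-Schedler}, the use of the equivalence of conditions 1 and 4 in Lemma \ref{lem.CYcriterion} combined with Propositions \ref{prop.Calabi-Yau set of numbers} and \ref{prop.subgroup of Z associated} for all types other than $(\mathbb{A}_r,2)$, the parity bookkeeping in cases 1.a, 1.b, 2.a, 2.b, the emptiness argument for $t=3$, and the sandwich $\hat{\mathbb{N}}_{CY}(\Lambda)\subseteq\mathbb{N}_{sCY}(\Lambda)\subseteq\mathbb{N}_{CY}(\Lambda)$ settling $\Delta=\mathbb{A}_3$ are all exactly the paper's moves. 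The one place where you genuinely diverge is case 2.c, $(\Delta,t)=(\mathbb{A}_{2n},2)$: the paper there invokes condition 3 of Lemma \ref{lem.CYcriterion} and runs an explicit computation with a map $\lambda:\mathbb{Z}\Delta_0\longrightarrow K^*$, showing $\lambda_{\rho\tau^m(k,i)}=(-1)^s\lambda_{(k,i)}$ and hence $G$-invariance of $\lambda$ exactly for even $s$, to get $\hat{\mathbb{N}}_{CY}(\Lambda)=\mathbb{N}_{CY}(\Lambda)\cap 2\mathbb{Z}$; you instead obtain this equality uniformly for both $\mathbb{A}_{2n-1}$ and $\mathbb{A}_{2n}$ from Lemma \ref{lem.crucial for the periods in Bn and Ln} ($T=2\mathbb{Z}$ when $\text{char}(K)\neq 2$), using that $\eta=\nu$ and that $\nu^{s-1}\tau^{-s}\in G$ forces $(\bar{\nu}\circ\bar{\tau}^{-1})^s=\bar{\eta}$ on $\Lambda$, so that $\bar{\mu}^s\bar{\eta}^{-1}$ inner is precisely the condition defining $T$. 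This is legitimate, since that lemma is stated for every extended type $(\mathbb{A}_r,m,2)$, and it buys a cleaner treatment: the paper's direct $\lambda$-computation in 2.c essentially re-proves the $\mathbb{A}_{2n}$ instance of a lemma it had already established, and the paper itself exploits the lemma this way only in the $\mathbb{A}_{2n-1}$ part of 2.a. Your replacement of the paper's diophantine solvability argument in 2.c by the observation that an arithmetic progression of odd step $2m-1$ contains even terms, and of the nonemptiness check in 1.b (for $m$ odd) by the substitution $s-1=2w$ turning the congruence into $c_\Delta w\equiv -1\ (\text{mod }m)$, are equivalent elementary reformulations of what the paper does.
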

\begin{proof}
By Proposition \ref{prop.CY-dimension-versus-Eu-Schedler}, we know
that, when $\Delta\neq\mathbb{A}_3$, the algebra $\Lambda$ is stably
Calabi-Yau if, and only if, it is Calabi-Yau Frobenius and the
corresponding dimensions are equal. From our arguments below it will
follow that, when $\Delta =\mathbb{A}_3$, we always have
$\hat{\mathbb{N}}_{CY}(\Lambda )=\mathbb{N}_{CY}(\Lambda )$, and
then $CY-\text{dim}(\Lambda )=CYF-\text{dim}(\Lambda )$ also in this
case (see Remark \ref{rem.CY-dim versus CYF-dim}).

Our arguments will give an explicit identification of
$\hat{\mathbb{N}}_{CY}(\Lambda )$ in terms of
$\mathbb{N}_{CY}(\Lambda )$. Then $CY-\text{dim}(\Lambda )$ will be
$3v-1$, where $v=\text{min}(\hat{\mathbb{N}}_{CY}(\Lambda ))$.

From Propositions \ref{prop.Calabi-Yau set of numbers} and
\ref{prop.CY-dimension-versus-Eu-Schedler},  we know that, when
$t=3$, the algebra is never stably Calabi-Yau. So we assume in the
sequel that $t\neq 3$.

Suppose first that $(\Delta ,m,t)\neq (\mathbb{A}_r,m,2)$. Then
Lemma \ref{lem.CYcriterion} tells us that
$\hat{\mathbb{N}}_{CY}(\Lambda )=\mathbb{N}_{CY}(\Lambda )\cap
(H(\Delta ,m,t)+1)$, where $H(\Delta ,m,t)+1=\{s\in\mathbb{Z}:$
$s-1\in H:=H(\Delta ,m,t)\}$. By Proposition \ref{prop.subgroup of Z
associated}, we get in these cases that the equality
$\hat{\mathbb{N}}_{CY}(\Lambda )=\mathbb{N}_{CY}(\Lambda )$ holds
whenever $m+t$ is odd. We now examine the different cases:

1.a) If $\Delta =\mathbb{A}_r$ then $H=\mathbb{Z}$. When $\Delta$ is
$\mathbb{D}_{2r-1}$ or $\mathbb{E}_{6}$, the Coxeter number
$c_\Delta$ is even. If $\mathbb{N}_{CY}(\Lambda )\neq\emptyset$ then
$\text{gcd}(m,c_\Delta )=1$, so that $m$ is odd and $H=2\mathbb{Z}$.
But then $\hat{\mathbb{N}}_{CY}(\Lambda )=\mathbb{N}_{CY}(\Lambda
)\cap (2\mathbb{Z}+1)$, which is equal to $\mathbb{N}_{CY}(\Lambda
)$ due to Proposition \ref{prop.Calabi-Yau set of numbers}. So
$\Lambda$ is stably Calabi-Yau if, and only if,
$\text{gcd}(m,c_\Delta )=1$. Then $CY-\text{dim}(\Lambda
)=3(2u+1)-1=6u+2$, where $2u+1=\text{min}(\mathbb{N}_{CY}(\Lambda
))$.

1.b) We need to consider the case when  $m$ is odd. In this case
$\hat{\mathbb{N}}_{CY}(\Lambda )=\mathbb{N}_{CY}(\Lambda )\cap
(2\mathbb{Z}+1)$ is properly contained in $\mathbb{N}_{CY}(\Lambda
)$. However, we claim that if $\mathbb{N}_{CY}(\Lambda
)\neq\emptyset$ then $\hat{\mathbb{N}}_{CY}(\Lambda )\neq\emptyset$,
which will prove that $\Lambda$ is stably Calabi-Yau if, and only
if, $\text{gcd}(m,\frac{c_\Delta}{2})=1$ using Proposition
\ref{prop.Calabi-Yau set of numbers}. Indeed, we need to prove that
 if $\text{gcd}(m,\frac{c_\Delta}{2})=1$, then there is an integer $u'\geq
 0$ such that $2u'+1\in\mathbb{N}_{CY}(\Lambda
)$ or, equivalently, that $\frac{c_\Delta}{2}(2u'+1-1)\equiv
-1\text{ (mod m)}$. But this is clear for if $m$ is odd then also
$\text{gcd}(m,c_\Delta)=1$. Now the formulas in 1.b.i) an 1.b.ii)
come directly from putting $s=u+1$ and $s=2u+1$ and using the fact
that $\frac{c_\Delta}{2}(s-1)\equiv -1\text{ (mod m)}$.

2.a) Suppose first that $\Delta$ is $\mathbb{D}_{2r-1}$ or
$\mathbb{E}_{6}$. In this case $\frac{c_\Delta}{2}$ is even. Then
$\text{gcd}(2m,m+\frac{c_\Delta}{2})=1$ implies that $m$ is odd and,
hence, that $H=\mathbb{Z}$. So in this case
$\hat{\mathbb{N}}_{CY}(\Lambda )=\mathbb{N}_{CY}(\Lambda )$ and the
formula for $CY-\text{dim}(\Lambda )$ comes from putting $s=1+u$,
where $(m+\frac{c_\Delta}{2})u\equiv -1\text{(mod 2m)}$.

Suppose next that $(\Delta ,m,t)=(\mathbb{A}_{2n-1},m,2)$, i.e.
$\Lambda=\mathbb{B}_n^{(m)}$. Here $\eta =\nu$. Then condition 2 of
Lemma \ref{lem.CYcriterion} can be rephrased by saying that
$\bar{\mu}^s$ and $(\bar{\nu}\circ\bar{\tau}^{-1})^s$ are equal, up
to composition by an inner automorphism of $\Lambda$. This proves
that $\hat{\mathbb{N}}_{CY}(\Lambda )=\mathbb{N}_{CY}(\Lambda )\cap
2\mathbb{Z}$ due to Lemma \ref{lem.crucial for the periods in Bn and
Ln}. But Proposition \ref{prop.Calabi-Yau set of numbers} tells us
that then  $\hat{\mathbb{N}}_{CY}(\Lambda )=\mathbb{N}_{CY}(\Lambda
)$. The formula for $CY-\text{dim}(\Lambda )$ is calculated as in
the other two quivers of 2.a.

2.b)  If $\mathbb{N}_{CY}(\Lambda )\neq\emptyset$ then
$\text{gcd}(m,2r-1)=1$. If $m$ is odd then $H=\mathbb{Z}$. If $m$ is
even, then $H=2\mathbb{Z}$ which implies that
$\hat{\mathbb{N}}_{CY}(\Lambda )=\mathbb{N}_{CY}(\Lambda )\cap
(2\mathbb{Z}+1)$. But this is  the empty set due to Proposition
\ref{prop.Calabi-Yau set of numbers}. The formula for
$CY-\text{dim}(\Lambda )$ in the case when $m$ is odd follows again
from putting $s-1=u$ and $(2r-1)u\equiv -1\text{ (mod 2m)}$.

2.c) It remains to consider the case $(\Delta
,m,t)=(\mathbb{A}_{2n},m,2)$, i.e. $\Lambda=\mathbb{L}_n^{(m)}$. We
use condition 3 of Lemma \ref{lem.CYcriterion}. If $\lambda
:\mathbb{Z}\Delta_0\longrightarrow K^*$ is any map such that
$(-1)^s\eta^{s-1}(a)=\lambda_{i(a)}^{-1}\lambda_{t(a)}\nu^{s-1}(a)$,
then $\lambda_{i(a)}^{-1}\lambda_{t(a)}=(-1)^s$ since
$\eta^{s-1}(a)=\nu^{s-1}(a)$, for all $a\in (\mathbb{Z}\Delta )_1$.
It follows that $\lambda_{(k,i)}=(-1)^s\lambda_{(k,j)}$, whenever
$i\not\equiv j\text{ (mod 2)}$, and that $\lambda_{\tau
(k,i)}=\lambda_{(k+1,i)}=(-1)^{2s}\lambda_{(k,i)}=\lambda_{(k,i)}$,
for all $(k,i)\in\mathbb{Z}\Delta_0$. We then get that
$\lambda_{\rho\tau^m(k,i)}=\lambda_{\rho
(k+m,i)}=\lambda_{(k+m+i-n,2n+1-i)}=(-1)^s\lambda_{(k,i)}$. As a
consequence, the equality $\lambda\circ
g_{|\mathbb{Z}\Delta_0}=\lambda$ holds, for all $g\in
G=\langle\rho\tau^m\rangle$, if, and only if, $s\in 2\mathbb{Z}$. It follows that
$\hat{\mathbb{N}}_{CY}(\Lambda )=\mathbb{N}_{CY}(\Lambda )\cap
2\mathbb{Z}$. We claim that if $\mathbb{N}_{CY}(\Lambda
)\neq\emptyset$ then $\hat{\mathbb{N}}_{CY}(\Lambda )\neq\emptyset$,
which implies that $\Lambda$ is stably Calabi-Yau exactly when
$\text{gcd}(2m-1,2n+1)=1$, using Proposition \ref{prop.Calabi-Yau
set of numbers}. Indeed, using the description of this last
proposition, we need to see that the diophantic equation
$(m+n)(2x-1)+(2m-1)y+1$ has a solution. But this is clear since
$\text{gcd}(2(m+n),2m-1)=1$. The formula for $CY-\text{dim}(\Lambda
)$ is now clear.
\end{proof}

\vspace*{0.5cm}

\section*{Appendix}
EXPLICIT CALCULATIONS FOR THE PROOF OF THEOREM \ref{teor.G-invariant Nakayama automorphism}.

Following the recipe to construct the $G$-invariant basis for $B=B(\Delta )$ of $\text{Soc}_{gr}(B)$ given in the  proof of Theorem \ref{teor.G-invariant Nakayama automorphism},  for each choice of a Dynkin graph $\Delta\in\{\mathbb{D}_{n+1},\mathbb{E}_r\}$ ($n>3$, $r=6,7,8$) and of $\varphi\in\{\tau ,\rho\tau^m\}$, we shall give a convenient subset $I'\subset\mathbb{Z}\Delta_0$ which is a set  of representatives of the $\varphi$-orbits, and we  will then give a nonzero element $w_{(k,i)}\in e_{(k,i)}\text{Soc}_{gr}(B)$, for each $(k,i)\in I'$. Finally, we will use these elements to find, for each $a\in\mathbb{Z}\Delta_1$,  the exponents $u(a)$ and $v(a)$ needed for the explicit formula of $\eta (a)$, as indicated in the mentioned proof.

\begin{enumerate}

\item When $\Delta= \mathbb{D}_{n+1}$ with $n>3$:

To simplify the notation, we shall denote by $u$, $v$ and $w$,
respectively,  each of the paths of length $2$

\begin{center}
$(r,2)\rightarrow (r,0)\rightarrow (r+1,2)$

$(r,2)\rightarrow (r,1)\rightarrow (r+1,2)$

$(r,2)\rightarrow (r,3)\rightarrow (r+1,2)$,
\end{center}
with no mention to $r$. Then a composition of those paths
$(r,2)\rightarrow (r+1,2)\rightarrow ...\rightarrow (r+i,2)$ will be
denoted as a (noncommutative) monomial in the $u,v,w$.

We will need also to name the paths that we will use. Concretely:

\begin{enumerate}[i)]

\item $\gamma_{(k,i)}$ is the downward path $(k,i)\rightarrow
...\rightarrow (k+i-2,2)$, with the convention that
$\gamma_{(k,2)}=e_{(k,2)}$.

\item $\delta_{(m,j)}$ is the upward path $(m,2)\rightarrow
...\rightarrow (m,j)$, with the convention that
$\delta_{(m,2)}=e_{(m,2)}$.

\item $\varepsilon_{(k,j)}$ is the arrow  $(k,2)\longrightarrow (k,j)$ and $\varepsilon^{'}_{(k,j)}$ is the arrow  $(k,j)\longrightarrow
(k+1,2)$, for $j=0,1$.
\end{enumerate}

\begin{enumerate}

\item If $\varphi= \tau$, we will take the canonical slice
$I':=\{(0,i):$ $i\in\Delta_0\}$.

Our choice of the $w_{(0,i)}$ is then the following:

\begin{enumerate}
\item[(a)] $w_{(0,i)}=\gamma_{(0,i)}uvuv...\delta_{(n-1,i)}$ whenever $i=2,...,n$.
\item[(b)] $w_{(0,0)}=\varepsilon '_{(0,0)}vuvu...\varepsilon_{\nu (0,0)}$
\item[(c)] $w_{(0,1)}=\varepsilon '_{(0,1)}uvuv...\varepsilon_{\nu (0,1)}$
\end{enumerate}
(note that, for $j=0,1$, the vertex  $\nu (0,j)$  depends on whether
$n+1$ is even or odd).


\vspace{0.2cm}
Now, we will use the letter $\alpha$ to denote un
upward arrow $(k,i)\rightarrow (k,i+1)$, with $i=2,...,n-1$, and the
letter $\beta$ to denote a downward arrow $(k,i)\rightarrow
(k+1,i-1)$ with $i=3,...,n$. We will also consider the arrows
$\varepsilon_j:=\varepsilon_{(k,j)}:(k,2)\rightarrow (k,j)$ and
$\varepsilon_j':=\varepsilon'_{(k,j)}:(k,j)\rightarrow (k+1,2)$, for
$j=0,1$. In all cases we consider that the origin of each arrow is a
vertex of $I'$. We will now create a table, where, for each of these
arrows $a$, the path $p_a$ will be a path of length $\mathit{l}-1$
from $t(a)$ to $\nu (i(a))$ such that $ap_a\neq 0$ in $B$. Then
$u(a),v(a)$ will be elements of $\mathbb{Z}_2$ such that
$ap_a=(-1)^{u(a)}w_{i(a)}$ and $p_a\nu (a)=(-1)^{v(a)}w_{t(a)}$. The
routine verification of these equalities is left to the reader.

\begin{center}

\begin{tabular}{|c|c|c|c|} \hline
 $a$ & $p_a$  & $u(a)$ & $v(a)$\\ \hline
$\alpha :(0,i)\rightarrow (0,i+1)$ &
$\gamma_{(0,i+1)}vuvu...\delta_{(n-1,i)}$ & 0 & 1\\ \hline $\beta
:(0,i)\rightarrow (1,i-1)$ &
$\gamma_{(1,i-1)}uvuv...\delta_{(n-1,i)}$ & 0 & 0\\ \hline
$\varepsilon'_0:(0,0)\rightarrow (1,2)$ & $vuvu...\varepsilon_{\nu (0,0)}$ & 0 & 1\\
\hline $\varepsilon'_1:(0,1)\rightarrow (1,2)$ &
$uvuv...\varepsilon_{\nu (0,1)})$ & 0 & 0\\ \hline
$\varepsilon_0:(0,2)\rightarrow (1,0)$ & $\varepsilon'_0vuv...$ & 0
& 0\\ \hline $\varepsilon_1:(0,2)\rightarrow (0,1)$ &
$\varepsilon'_1uvu...$ & 1 & 0\\ \hline
\end{tabular}

\end{center}

and assertion 2.a follows.

\item If $\varphi =\rho\tau^m$,  we will take $I'=\{(k,i):$
$i\in\Delta_0\text{ and }0\leq k<m\}$ and we will put
$w_{(k,i)}=\tau^{-k}(w_{(0,i)})$, for each $(k,i)\in I'$, where $w_{(0,i)}$
is defined as in the previous case.

For the arrows $a$ starting
and ending at a vertex of $I'$, we take $p_a$ as in the table above
and $u(a)$ and $v(a)$ take the same values as in that table. In the
corresponding  table for this case, it is enough to give only the
data for the arrows which start at a vertex of $I'$ but end at one
not in $I'$:

\begin{center}

\begin{tabular}{|c|c|c|c|} \hline
 $a$ & $p_a$  & $u(a)$ & $v(a)$\\  \hline $\beta
:(m-1,i)\rightarrow (m,i-1)$ &
$\gamma_{(m,i-1)}uvuv...\delta_{(m+n-2,i)}$ & 0 & 1\\ \hline
$\varepsilon'_0:(m-1,0)\rightarrow (m,2)$ & $vuvu...\varepsilon_{\nu(m-1,0)}$ & 0 & 0\\
\hline $\varepsilon'_1:(m-1,1)\rightarrow (m,2)$ &
$uvuv...\varepsilon_{\nu (m-1,1)})$ & 0 & 1\\ \hline
\end{tabular}

\end{center}

These values come from the fact that
$w_{(m,i)}=\rho\tau^{-m}(w_{(0,i)})=\gamma_{(m,i)}vuvu...\delta_{(m+n-1,i)}$,
for each $i=2,...,n$. It is now clear that assertions 2.b.i and
2.b.ii hold. As for 2.b.iii, put $I'(q)=\{(k,i):$ $qm\leq
k<(q+1)m\text{ and }i\in\Delta_0\}$, i.e., the set of vertices
$(k,i)$ such that the quotient of dividing $k$ by $m$ is $q$. If
$\varepsilon_0:(k,2)\longrightarrow (k,0)$ has origin (and end) in
$I'(q)$, then
$\rho\tau^{-m}(\varepsilon_0)=\varepsilon_1:(k+m,1)\rightarrow
(k+m,2)$. The symmetric equality is true when exchanging the roles
of $0$ and $1$. It follows that $\eta (\varepsilon_0)=\nu
(\varepsilon_0)$ (resp. $\eta (\varepsilon_1)=-\nu (\varepsilon_0)$)
when the origin of $\varepsilon_0$ (resp. $\varepsilon_1$) is in
$I'(q)$, with $q$ even, and $\eta (\varepsilon_0)=-\nu (\varepsilon_0)$
(resp. $\eta (\varepsilon_1)=\nu (\varepsilon_1)$) otherwise. That is,
we have $\eta (\varepsilon_i)=(-1)^{q+i}\nu (\varepsilon_i)$.

A similar argument shows that if $k\not\equiv\text{ -1 (mod
}m\text{)}$ and $\varepsilon'_j:(k,j)\rightarrow (k+1,2)$, then we
have $\eta (\varepsilon'_j)=(-1)^{q+j+1}\nu (\varepsilon'_j)$.
Finally, if $\varepsilon'_j:((q+1)m-1,j)\rightarrow ((q+1)m,2)$ we get
that $\eta (\varepsilon'_j)=(-1)^{q+j}\nu (\varepsilon'_j)$, which
shows that the equalities in 2.b.iii also hold.

\end{enumerate}

\item When $\Delta= \mathbb{D}_4$:

\begin{enumerate}

\item If $\varphi= \tau^m$, the formulas for $\Delta= \mathbb{D}_{n+1}$ with $n>3$ and $\varphi= \tau^m$ are still valid in this case.

\item If $\varphi= \rho\tau^m$

We slightly divert from the previous case (see Convention \ref{convention}). We take
$w_{(0,0)}=\varepsilon'_0\varepsilon_1\varepsilon'_1\varepsilon_0$ and
$w_{(0,2)}=\varepsilon_0\varepsilon'_0\varepsilon_1\varepsilon'_1$.
Due to the fact that all nonzero paths from $(0,2)$ to $\nu
(0,2)=(2,2)$ are equal, up to sign, in $B$ we know that the action of
 $\langle\rho\rangle$ on those paths  is trivial. The base
$\mathcal{B}$ will be the union of the orbits of $w_{(0,0)}$ and
$w_{(0,2)}$ under the action of the group of automorphisms generated
by $\rho$ and $\tau$.

Recall that, in this case, the mesh arrows are the original ones:
$r_{(k,i)}=\sum_{t(a)=(k,i)}\sigma (a)a$. Note that if
$\varepsilon_i:(k,2)\rightarrow (k,i)$ and
$\varepsilon'_i:(k,i)\rightarrow (k+1,2)$, for $i=0,1,3$,  then we
have $w_{(k,i)}=\varepsilon'_i\varepsilon_{\rho
(i)}\varepsilon'_{\rho (i)}\varepsilon_i$ and
$w_{(k,2)}=\varepsilon_i\varepsilon'_i\varepsilon_{\rho
(i)}\varepsilon'_{\rho (i)}=-\varepsilon_{\rho
(i)}\varepsilon'_{\rho (i)}\varepsilon_i\varepsilon'_i$, for all
$i=0,1,3$. The corresponding table is then given as

\begin{center}

\begin{tabular}{|c|c|c|c|} \hline
 $a$ & $p_a$  & $u(a)$ & $v(a)$\\  \hline $\varepsilon'_i$ & $\varepsilon_{\rho
(i)}\varepsilon'_{\rho (i)}\varepsilon_i$ & 0 & 1\\
\hline $\varepsilon_i$ & $\varepsilon'_i\varepsilon_{\rho
(i)}\varepsilon'_{\rho (i)}$ & 0 & 0\\
 \hline
\end{tabular}
\end{center}

\vspace*{0.3cm}

\end{enumerate}

\item When $\Delta= \mathbb{E}_n$ with $n=6,7,8$:

For the sake of simplicity, we will write any path as a composition
of arrows in $\{\alpha, \alpha^{'}, \beta, \beta^{'}, \gamma,
\gamma^{'},$ $\delta, \delta^{'}, \zeta, \zeta', \theta, \theta' \varepsilon, \varepsilon^{'}\}$
whenever they exist and
assuming that each arrow is considered in the appropriate slice so
that the composition makes sense.

Also, we denote by $u$, $w$ and $v$, respectively, each of the paths
of length 2

\begin{center}

$(k,3)\longrightarrow (k,0)\longrightarrow (k+1,3)$

$(k,3)\longrightarrow (k+1,2)\longrightarrow (k+1,3)$

$(k,3)\longrightarrow (k,4)\longrightarrow (k+1,3)$

\end{center}

\noindent with no mention to k. Then  any path $(k,3)\rightarrow ...\rightarrow (k+r,3)$ is equal in
$B$ to a  monomial in $u,v,w$, with the obvious sense of 'monomial'.
Also, $\beta^{'}\beta= w$, $\gamma
\gamma^{'}=v$ and $\varepsilon \varepsilon^{'}=u$. It is important
to keep in mind that $u=v+w$ .

\begin{enumerate}[A.]

\item When $\Delta= \mathbb{E}_6$:

\begin{enumerate}[(a)]

\item If $\varphi= \tau$, we will take the canonical slice
$I':=\{(0,i):$ $i\in\Delta_0\}$.

With the abuse of notation of omi\-tting  $k$ when showing a vertex
$(k,i)$ in the diagrams below, we then take:

\begin{enumerate} [i.]

\item  $w_{(0,0)}$ is  the path

        $$\xymatrix{ 0 \ar[r] & 3 \ar[rrr]^{vwvw}& & & 3 \ar[r] & 0}$$

\item  $w_{(0,1)}$ is the path

        $$\xymatrix{& & & & & & 5\\
        & & & & & 4\ar[ru] & \\
        & & 3 \ar[rr]^{v^2w} & & 3 \ar[ru] & & \\
        & 2 \ar[ru] & & & & & \\
        1 \ar[ru] & & & & & &}$$

\item  $w_{(0,2)}$ is the path

        $$\xymatrix{& & & & & 4\\
        & 3 \ar[rrr]^{vwvw}& & & 3 \ar[ru] & \\
        2 \ar[ru] & & & & &},$$

\item  $w_{(0,3)}$ is the path

        $$\xymatrix{3 \ar[rrrr]^{vwvwv} & & & & 3}$$

\item  $w_{(0,4)}$ is the path

        $$\xymatrix{4 \ar[rd]& & & & & \\
        & 3 \ar[rrr]^{wvwv}& & & 3 \ar[rd] & \\
        & & & & & 2}$$

and

\item  $w_{(0,5)}$ is the path

        $$\xymatrix{5 \ar[rd]& & & & & & \\
        & 4 \ar[rd] & & & & & \\
        & & 3 \ar[rr]^{w^2v} & & 3 \ar[rd] & & \\
        & & & & & 2 \ar[rd] & \\
        & & & & & & 1 },$$

\end{enumerate}

Using the mesh relations, one
gets, among others, the equalities $u^2=w^3=v^3=0$, $vwv=wvw$,
$vw^2v=-vwv^2-v^2wv$ and $vwvwv=-wvwvw$.

Then, the table is the following:

\begin{center}

\begin{tabular}{|c|c|c|c|} \hline
 $a$ & $p_a$  & $u(a)$ & $v(a)$\\  \hline
$\alpha :(0,1)\rightarrow (0,2)$ & $\beta v^2w\gamma\delta$ & 0 & 0\\
\hline  $\beta :(0,2)\rightarrow (0,3)$ & $vwvw\gamma$ & 0 & 0\\
\hline  $\gamma :(0,3)\rightarrow (0,4)$ & $\gamma 'wvwv$ & 0 & 0\\
\hline $\delta :(0,4)\rightarrow (0,5)$ & $\delta'\gamma'w^2v\beta
'$ & 1 & 0\\ \hline $\varepsilon :(0,3)\rightarrow (0,0)$ &
$\varepsilon'vwvw$ & 1 & 0\\ \hline $\alpha ':(0,2)\rightarrow
(1,1)$ & $\alpha\beta v^2w\gamma$ & 1 & 0\\ \hline $\beta
':(0,3)\rightarrow (1,2)$ & $\beta vwvw$ & 1 & 0\\ \hline $\gamma
':(0,4)\rightarrow (1,3)$ & $wvwv\beta '$ & 0 & 1\\ \hline $\delta
':(0,5)\rightarrow (1,4)$ & $\gamma 'w^2v\beta '\alpha '$ & 0 & 0\\
\hline $\varepsilon ':(0,0)\rightarrow (1,3)$ & $vwvw\varepsilon$ &
0 & 0\\ \hline
 \end{tabular}

\end{center}

From this table the equalities in 3.a follow.

\item If $\varphi= \rho\tau^m$, we
will consider the slice $T=\{(0,i):$
$i=0,3,4,5\}\cup\{(1,2),(2,1)\}$, which is $\rho$-invariant, and
then take $I'=\{\tau^{-k}(r,i):$ $(r,i)\in T\text{ and }0\leq
k<m\}$.

The paths $w_{(0,i)}$ ($i=0,3,4,5$) will be as in the case
$\varphi =\tau$,  and we will define below the paths $w_{(1,2)}$ and
$w_{(2,1)}$. Then we will take
$w_{\tau^{-k}(r,j)}=\tau^{-k}(w_{(r,j)})$, for all $(r,j)\in T$ and
$0\leq k<m$.

\begin{enumerate}[i.]

 \item $w_{(1,2)}$ is the path

        $$\xymatrix{& & & & & 4\\
        & 3 \ar[rrr]^{vwvw}& & & 3 \ar[ru] & \\
        2 \ar[ru] & & & & &},$$

  and

  \item $w_{(2,1)}$ is the path

  $$\xymatrix{& & & & & & 5\\
        & & & & & 4\ar[ru] & \\
        & & 3 \ar[rr]^{v^2w} & & 3 \ar[ru] & & \\
        & 2 \ar[ru] & & & & & \\
        1 \ar[ru] & & & & & &}$$

\end{enumerate}

Arguing as in the case of $\mathbb{D}_{n+1}$, we see that the values $u(a)$
and $v(a)$ are the ones in the last table, when $i(a),t(a)\in I'$.
We then need only to give those values for the arrows $a$ with
origin in $I'$ and terminus not in $I'$. We have the table:

\begin{center}

\begin{tabular}{|c|c|c|c|} \hline
 $a$ & $p_a$  & $u(a)$ & $v(a)$\\  \hline
$\alpha :(m+1,1)\rightarrow (m+1,2)$ & $\beta v^2w\gamma\delta$ & 0 & 0\\
\hline  $\beta :(m,2)\rightarrow (m,3)$ & $vwvw\gamma$ & 0 & 1\\
\hline  $\gamma ':(m-1,4)\rightarrow (m,3)$ & $wvwv\beta '$ & 0 & 0\\
\hline $\delta
':(m-1,5)\rightarrow (m,4)$ & $\gamma 'w^2v\beta '\alpha '$ & 0 & 0\\
\hline $\varepsilon ':(m-1,0)\rightarrow (m,3)$ & $vwvw\varepsilon$
& 0 & 1\\ \hline
 \end{tabular}

\end{center}

We have used in the construction of this table the fact that
$w_{(k,2)}=\beta vwvw\gamma$ and $w_{(k,4)}=\gamma 'wvwv\beta '$,
for all $k\in\mathbb{Z}$, while $w_{(2r,3)}=vwvwv$ and
$w_{(2r+1,3)}=wvwvw$.

Note that, with the labeling of vertices that we are using, we have
that $\rho (k,i)=(k+i-3,6-i)$ for each $i\neq 0$ and $\rho(k,0)=(k,0)$. For each $q\in\mathbb{Z}$, we put
$I'(q):=(\rho\tau^{-m})^q(I')$. When passing from a piece $I'(q)$ to
$I'(q+1)$ by applying $\rho\tau^{-m}$, an arrow $\alpha$ is
transformed in an arrow $\delta '$ and an arrow $\delta '$ in an
arrow $\alpha$. From the last two tables we then get that $\eta
(\alpha )=\nu (\alpha )$ and $\eta (\delta ')=\nu (\delta ')$, for
all arrows of the type $\alpha$ or $\delta '$ in $\mathbb{Z}\Delta$.

The argument of the previous paragraph can be applied to the pair of
arrows $(\gamma ,\beta ')$ instead of $(\alpha ,\delta ')$ and we
get from the last two tables that $\eta (\gamma )=\nu (\gamma )$
(resp. $\eta (\beta ')=-\nu (\beta ')$) when $\gamma$ (resp. $\beta
'$) has its origin in $I'(q)$, with $q$ even, and $\eta (\gamma
)=-\nu (\gamma )$ (resp. $\eta (\beta ')=\nu (\beta ')$) otherwise.
From this the formulas in 3.b concerning $\gamma$ and $\beta '$ are
clear.

Next we apply the argument to the pair of arrows $(\delta ,\alpha
')$ and get that $\eta (\delta )=-\nu (\delta )$ (resp. $\eta
(\alpha ' )=-\nu (\alpha ' )$), for all arrows of type $\delta$ or
$\alpha '$ in $\mathbb{Z}\Delta$.

An arrow of type $\varepsilon$ (resp. $\varepsilon '$) is
transformed on one of the same type when applying $\rho\tau^{-m}$.
It then follows that $\eta (\varepsilon )=-\nu (\varepsilon )$, for
any arrow of type $\varepsilon$. It also follows that $\eta
(\varepsilon ')=-\nu (\varepsilon ')$, when the origin of
$\varepsilon '$ is $(k,0)$ with $k\equiv -1\text{ (mod m)}$, and
$\eta (\varepsilon ')=\nu (\varepsilon ')$ otherwise.

We finally apply the argument to the pair of arrows $(\beta ,\gamma
')$. If we look at the  two pieces $I'(0)$ and $I'(1)$, then from
the last two tables we see that if $\beta :(k,2)\rightarrow (k,3)$,
with $(k,3)\in I'(0)\cup I'(1)$, then $\eta (\beta )=\nu (\beta )$,
when $k\in\{1,2,...,m-1,2m\}$, and $\eta (\beta )=-\nu (\beta )$,
when $k\in\{m,m+1,...,2m-1\}$. We then get that $\eta (\beta
)=(-1)^q\nu (\beta )$, where $q$ is the quotient of dividing $k$ by
$m$. By doing the same with $\gamma ':(k,4)\rightarrow (k+1,3)$, we
see that $\eta (\gamma ')=-\nu (\gamma ')$, when
$k\in\{0,1,...,m-2,2m-1\}$, and $\eta (\gamma ')=\nu (\gamma ')$,
when $k\in\{m-1,m,...,2m-2\}$. If now $k\in\mathbb{Z}$ is arbitrary,
then we obtain that $\eta (\gamma ')=\nu (\gamma ')$ if, and only if,
$k\not\in\bigcup_{t\in\mathbb{Z}}(2tm-2,(2t+1)m-1)$. Equivalently,
when $q$ is odd and $r\neq m-1$ or $q$ is even and $r=m-1$.

\end{enumerate}

\item When $\Delta= \mathbb{E}_7$:

We consider the canonical slice $I'= \{(0,i): i\in \Delta_0\}$.
Using the same notation as when $\Delta= \mathbb{E}_6$, we have in this case, among others, the equalities $u^2=w^3=v^4=0$, $vwv=wvw-v^3$,
and $vwvwv=-wvwvw$. Then, we consider

\begin{enumerate}[i.]

\item  $w_{(0,0)}$ is  the path

        $$\xymatrix{ 0 \ar[r] & 3 \ar[rrr]^{vwvwvwv}& & & 3 \ar[r] & 0}$$

\item  $w_{(0,1)}$ is the path

        $$\xymatrix{& & 3 \ar[rr]^{v^2wvwv} & & 3 \ar[rd] & & \\
        & 2 \ar[ru] & & & &2 \ar[rd] & \\
        1 \ar[ru] & & & & & &1}$$

\item  $w_{(0,2)}$ is the path

        $$\xymatrix{& 3 \ar[rrr]^{vwvwvwv}& & & 3 \ar[rd] & \\
        2 \ar[ru] & & & & & 2}$$

\item  $w_{(0,3)}$ is the path

        $$\xymatrix{3 \ar[rrrr]^{vwvwvwvw} & & & & 3}$$

\item  $w_{(0,4)}$ is the path

        $$\xymatrix{4 \ar[rd]& & & & & 4\\
        & 3 \ar[rrr]^{wvwvwvw}& & & 3 \ar[ru] &}$$

\item  $w_{(0,5)}$ is the path

        $$\xymatrix{5 \ar[rd]& & & & & & 5\\
        & 4 \ar[rd] & & & &4 \ar[ru] & \\
        & & 3 \ar[rr]^{w^2vwvw} & & 3 \ar[ru] & &}$$

\item  $w_{(0,6)}$ is the path

        $$\xymatrix{6 \ar[rd] & & & & & & & & 6\\
        & 5 \ar[rd]& & & & & & 5 \ar[ru] &\\
        & & 4 \ar[rd] & & & &4 \ar[ru] & &\\
        & & & 3 \ar[rr]^{w^2vw^2} & & 3 \ar[ru] & & &}$$

\end{enumerate}

Hence, we get the following table:

\begin{center}

\begin{tabular}{|c|c|c|c|} \hline
 $a$ & $p_a$  & $u(a)$ & $v(a)$\\  \hline
$\alpha :(0,1)\rightarrow (0,2)$ & $\beta v^2wvwv\beta'\alpha'$ & 0 & 0\\
\hline  $\beta :(0,2)\rightarrow (0,3)$ & $vwvwvwv\beta'$ & 0 & 0\\
\hline  $\gamma :(0,3)\rightarrow (0,4)$ & $\gamma 'wvwvwvw$ & 0 & 0\\
\hline $\delta :(0,4)\rightarrow (0,5)$ & $\delta'\gamma'wv^2wvw\gamma
$ & 0 & 1\\
\hline $\zeta :(0,5)\rightarrow (0,6)$ & $\zeta'\delta'\gamma'wv^3w\gamma
\delta$ & 0 & 0\\ \hline $\varepsilon :(0,3)\rightarrow (0,0)$ &
$\varepsilon'wvwvwvw$ & 0 & 1\\ \hline $\alpha ':(0,2)\rightarrow
(1,1)$ & $\alpha\beta vwvwv^2 \beta'$ & 0 & 1\\ \hline $\beta
':(0,3)\rightarrow (1,2)$ & $\beta vwvwvwv$ & 1 & 0\\ \hline $\gamma
':(0,4)\rightarrow (1,3)$ & $wvwvwvw\gamma$ & 0 & 1\\ \hline $\delta
':(0,5)\rightarrow (1,4)$ & $\gamma 'w^2vwvw\gamma\delta$ & 0 & 0\\
\hline $\zeta':(0,6)\rightarrow (1,5)$ & $\delta'\gamma'w^2vw^2\gamma\delta\zeta$ & 0 & 1\\
\hline $\varepsilon ':(0,0)\rightarrow (1,3)$ & $vwvwvwv\varepsilon$ &
0 & 0\\ \hline
 \end{tabular}

\end{center}

From this table the equalities in 4 follow.

\item When $\Delta= \mathbb{E}_8$:

As in the previous case, we consider the canonical slice and follow the same notation. In addition, note that we have the equalities $u^2=w^3=v^5=0$, $vwv=wvw-v^3$, $(vw)^3=(wv)^3+vwv^4-v^4wv$, $(vw)^6= (wv)^6+(wv)^3vwv^4-v^4wv^2wv^4$
and $(vw)^7=-(wv)^7$. Then, considering the following paths

\begin{enumerate}[i.]

\item  $w_{(0,0)}$ is  the path

        $$\xymatrix{ 0 \ar[r] & 3 \ar[rrr]^{(vw)^6v}& & & 3 \ar[r] & 0}$$

\item  $w_{(0,1)}$ is the path

        $$\xymatrix{& & 3 \ar[rr]^{v^2(wv)^5} & & 3 \ar[rd] & & \\
        & 2 \ar[ru] & & & &2 \ar[rd] & \\
        1 \ar[ru] & & & & & &1}$$

\item  $w_{(0,2)}$ is the path

        $$\xymatrix{& 3 \ar[rrr]^{(vw)^6v}& & & 3 \ar[rd] & \\
        2 \ar[ru] & & & & & 2}$$

\item  $w_{(0,3)}$ is the path

        $$\xymatrix{3 \ar[rrrr]^{(vw)^7} & & & & 3}$$

\item  $w_{(0,4)}$ is the path

        $$\xymatrix{4 \ar[rd]& & & & & 4\\
        & 3 \ar[rrr]^{(wv)^6w}& & & 3 \ar[ru] &}$$

\item  $w_{(0,5)}$ is the path

        $$\xymatrix{5 \ar[rd]& & & & & & 5\\
        & 4 \ar[rd] & & & &4 \ar[ru] & \\
        & & 3 \ar[rr]^{w^2(vw)^5} & & 3 \ar[ru] & &}$$

\item  $w_{(0,6)}$ is the path

        $$\xymatrix{6 \ar[rd] & & & & & & & & 6\\
        & 5 \ar[rd]& & & & & & 5 \ar[ru] &\\
        & & 4 \ar[rd] & & & &4 \ar[ru] & &\\
        & & & 3 \ar[rr]^{w^2(vw)^3vw^2} & & 3 \ar[ru] & & &}$$

\item  $w_{(0,7)}$ is the path

        $$\xymatrix{7 \ar[rd] & & & & & & & & & & 7\\
        & 6 \ar[rd] & & & & & & & & 6 \ar[ru] &\\
        & & 5 \ar[rd]& & & & & & 5 \ar[ru] & &\\
        & & & 4 \ar[rd] & & & &4 \ar[ru] & & &\\
        & & & & 3 \ar[rr]^{wv^4wv^2w^2} & & 3 \ar[ru] & & & &}$$

\end{enumerate}

we obtain the table below:

\begin{center}

\begin{tabular}{|c|c|c|c|} \hline
 $a$ & $p_a$  & $u(a)$ & $v(a)$\\
\hline$\alpha :(0,1)\rightarrow (0,2)$ & $\beta v^2(wv)^5\beta'\alpha'$ & 0 & 0\\
\hline  $\beta :(0,2)\rightarrow (0,3)$ & $(vw)^6v\beta'$ & 0 & 0\\
\hline  $\gamma :(0,3)\rightarrow (0,4)$ & $\gamma 'w(vw)^6$ & 0 & 0\\
\hline $\delta :(0,4)\rightarrow (0,5)$ & $\delta'\gamma'wv^2(wv)^4w\gamma$ & 0 & 1\\
\hline $\zeta :(0,5)\rightarrow (0,6)$ & $\zeta'\delta'\gamma'wv^3(wv)^3w\gamma\delta$ & 0 & 0\\
\hline $\theta :(0,6)\rightarrow (0,7)$ & $\theta'\zeta'\delta'\gamma'wv^4wv^2w^2\gamma\delta\zeta$ & 0 & 0\\
\hline $\varepsilon :(0,3)\rightarrow (0,0)$ &$\varepsilon'w(vw)^6$ & 0 & 1\\
\hline $\alpha ':(0,2)\rightarrow(1,1)$ & $\alpha\beta(vw)^5v^2\beta'$ & 0 & 1\\
\hline $\beta':(0,3)\rightarrow (1,2)$ & $\beta(vw)^6v$ & 1 & 0\\
\hline $\gamma':(0,4)\rightarrow (1,3)$ & $(wv)^6w\gamma$ & 0 & 1\\
\hline $\delta':(0,5)\rightarrow (1,4)$ & $\gamma 'w^2(vw)^5\gamma\delta$ & 0 & 0\\
\hline $\zeta':(0,6)\rightarrow (1,5)$ & $\delta'\gamma'w^2(vw)^4w\gamma\delta\zeta$ & 0 & 1\\
\hline $\theta':(0,7)\rightarrow (1,6)$ & $\zeta'\delta'\gamma 'wv^4wv^2w^2\gamma\delta\zeta\theta$ & 0 & 1\\
\hline $\varepsilon ':(0,0)\rightarrow (1,3)$ & $(vw)^6v\varepsilon$ &
0 & 0\\ \hline
 \end{tabular}

\end{center}

From this table the equalities in 5 follow.

\end{enumerate}

\end{enumerate}

\section{Acknowledgements}

The authors are supported by research projects of
the Spanish Ministry of Education and Science and the Fundaci\'on S\'eneca
of Murcia, with a part of FEDER funds. They thank both institutions for
their support.

The authors also thank Hideto Asashiba, Alex Dugas,
Karin Erdmann
 and Henning Krause for answering some questions that
were very helpful in the preparation of the paper.

Finally, the authors are specially thankful to both anonymous referees for the careful reading of the manuscript  and for many comments and suggestions that helped to improve a lot the earlier version of the paper.

\end{document}